\DeclareSymbolFont{rsfs}{U}{rsfs}{m}{n}
\DeclareSymbolFontAlphabet{\mathrsfs}{rsfs}
\definecolor{mydarkblue}{rgb}{0,0.08,0.45}
\definecolor{myteal}{RGB}{27,158,119}
\definecolor{myorange}{RGB}{217,95,2}
\definecolor{myred}{RGB}{231,41,138}
\definecolor{mypurple}{RGB}{152,78,163}
\definecolor{myblue}{RGB}{55,126,184}
\definecolor{mygreen}{RGB}{0,100,0}
\newtheorem{definition}{Definition}[section]
\newtheorem{proposition}{Proposition}[section]
\newtheorem{lemma}{Lemma}[section]
\newtheorem{theorem}{Theorem}[section]
\newtheorem{remark}{Remark}[section]
\newtheorem{corollary}{Corollary}[section]
\newtheorem*{theorem*}{Theorem}
\definecolor{myblue}{HTML}{D2E4FC}
\definecolor{Gray}{gray}{0.92}
\newif\ifshowcomments
\newcommand{\ep}[1]{\textcolor{olive}{[EP: #1]}}
\newcommand{\ep}[1]{}
\newcommand{\ba}[1]{}
\title{Hitting the High-Dimensional Notes: An ODE for SGD \\
\Large Learning dynamics on GLMs and multi-index models
}
\date{}
\newtheorem{assumption}{Assumption}
\newcommand{\vast}{\bBigg@{4}}
\newcommand{\Vast}{\bBigg@{5}}
\global\mdfdefinestyle{exampledefault}{%
outerlinewidth=0pt,innerlinewidth=0pt,
roundcorner=5pt,backgroundcolor=myblue,
leftmargin=0.2cm,rightmargin=0.2cm
}
\def\N{\mathbb{N}}
\def\R{\mathbb{R}}
\def\dif{\mathop{}\!\mathrm{d}}
\def\EE{{\mathbb E}\,}
\DeclareMathOperator*{\argmin}{{arg\,min}}
\DeclareMathOperator{\tr}{\text{Tr}}
\def\defas{\stackrel{\text{def}}{=}}
\DeclareDocumentCommand{\Prto} {o} {
  \IfNoValueTF {#1}
  {\overset{\Pr}{\longrightarrow}}
  { \xrightarrow[ #1 \to \infty]{\Pr }}
}
\DeclareDocumentCommand{\Asto} {o} {
  \IfNoValueTF {#1}
  {\overset{\text{\rm a.s.}}{\longrightarrow}}
  { \xrightarrow[ #1 \to \infty]{\text{\rm a.s.} }}
}
\DeclareDocumentCommand{\law} {o} {
  \IfNoValueTF {#1}
  {\overset{\text{law}}{=}}
  { \xrightarrow[ #1 \to \infty]{\Pr }}
}
\newcommand{\Id}{I}
\DeclareMathOperator{\Exp}{\mathbb{E}}
\newcommand{\e}{\varepsilon}
\newcommand{\cA}{\mathcal{A}}
\newcommand{\cF}{\mathcal{F}}
\newcommand{\cO}{\mathcal{O}}
\newcommand{\cM}{\mathcal{M}}
\DeclareMathOperator{\grad}{Grad}
\DeclareMathOperator{\hess}{Hess}
\newcommand{\beq}{ \begin{equation} }
\newcommand{\eeq}{ \end{equation} }
\DeclareMathOperator*{\proj}{Proj}
\newcommand{\ip}[1]{\langle {#1} \rangle }
\newcommand{\ipa}[1]{\left\langle {#1} \right\rangle }
\newcommand{\vertiii}[1]{{\left\vert\kern-0.25ex\left\vert\kern-0.25ex\left\vert #1 
    \right\vert\kern-0.25ex\right\vert\kern-0.25ex\right\vert}}
\newcommand{\WHSGD}{ \mathscr{X} }
\newcommand{\HSGD}{ \mathscr{W} }
\def\Dif{\mathop{}\!\mathrm{D}}
\begin{document}


\author{%
     Elizabeth Collins-Woodfin \footnotemark[3]
   \and Courtney Paquette\thanks{Corresponding author: email address: \url{courtney.paquette@mcgill.ca} } %
\thanks{Google DeepMind} \footnotemark[3] %
   \and Elliot Paquette \thanks{Department of Mathematics and Statistics, McGill University, Montreal, QC; C. Paquette is a Canadian Institute for Advanced Research (CIFAR) AI chair, Quebec AI Institute (MILA) and C. Paquette was supported by a Discovery Grant from the
 Natural Science and Engineering Research Council (NSERC) of Canada, NSERC CREATE grant Interdisciplinary Math and Artificial Intelligence Program (INTER-MATH-AI)", and Fonds de recherche du Québec – Nature et technologies (FRQNT) New University Researcher's Start-Up Program; Research by E. Paquette was supported by a Discovery Grant from the
 Natural Science and Engineering Research Council (NSERC) of Canada.} %
    \and Inbar Seroussi \thanks{Department of Applied Mathematics, School of Mathematical Sciences, Tel Aviv University, Tel Aviv, Israel} %
   }
\date{\today}


\maketitle
\begin{abstract}
We analyze the dynamics of streaming stochastic gradient descent (SGD) in the high-dimensional limit when applied to generalized linear models and multi-index models (e.g. logistic regression, phase retrieval) with general data-covariance.  In particular, we demonstrate a deterministic equivalent of SGD in the form of a system of ordinary differential equations that describes a wide class of statistics, such as the risk and other measures of sub-optimality.
This equivalence holds with overwhelming probability when the model parameter count grows proportionally to the number of data.  
This framework allows us to obtain learning rate thresholds for stability of SGD as well as convergence guarantees.  
In addition to the deterministic equivalent, 
we introduce an SDE with a simplified diffusion coefficient (homogenized SGD)
which allows us to analyze the dynamics of general statistics of SGD iterates.  
Finally, we illustrate this theory on some standard examples
and show numerical simulations which give an excellent match to the theory.
\end{abstract}

\section{Introduction} \label{sec:introduction}

\newcommand{\ex}{x}
\renewcommand{\tr}{\operatorname{Tr}}

Optimization theory seeks to design efficient algorithms for finding solutions of optimization problems, which are conventionally formulated as minimization problems 
\[
  \min_X \mathcal{R}(X)
\]
for an objective function or \emph{risk} $\mathcal{R}$.
The design of these algorithms and the measurement of their performance is then done within a class of functions $\{\mathcal{R}\}$, which is typically referred to as the structure of the optimization problem.
Typical examples of this structure are convexity, smoothness, or architectural assumptions on the function $\mathcal{R}$ such as the finite-sum structure or the convex-composite structure.

With the growth of machine learning and large-scale statistics,
an important feature of these objective functions is that they live in an intrinsically high-dimensional space;
if $X$ represents the parameters in a statistical model or neural network,
then the dimensionality itself of $X$ represents a tunable parameter, and this dimensionality
can easily grow into the millions or beyond.
Very frequently, optimization theory designed without consideration of this high-dimensionality will fail
to adequately describe the properties of these objective functions when the dimension is made large.

In this article, we consider a general class of risk minimization problems which can be considered as a composite of high-dimensional linear structure with low dimensional, non-linear structure.
We denote by $\mathcal{A} \cong \mathbb{R}^d$ the \textit{ambient} space;
the parameter $d$ will be large and all the content in this paper will suppose that $d \geq d_0$ some large value.
We let $\mathcal{O} \cong \mathbb{R}^{\ell}$ be the \emph{observable} space, which we will consider to be fixed-dimensional, independent of $d$ and which will have dimensions that are accessible to the optimization algorithm.
The full \emph{parameter space} over which we will minimize will be $\mathcal{A} \otimes \mathcal{O} \cong \mathbb{R}^{d} \otimes \mathbb{R}^{\ell} \cong \mathbb{R}^{d\ell}$.
Lastly, we let $\mathcal{T} \cong \mathbb{R}^{\ell^{\star}}$
be the \emph{latent} space 
of channels through which the objective function is influenced but which are hidden from the optimization algorithm.
In some cases, we will need to formally work on the full space, that is
we define $\mathcal{O}^+ \defas \mathcal{O} \oplus \mathcal{T}$
and look at $\mathcal{A} \otimes \mathcal{O}^+ \cong \mathbb{R}^d \otimes ( \mathbb{R}^{\ell} \oplus \mathbb{R}^{\ell^{\star}} ).$  
We shall use $|\mathcal{O}|$ and $|\mathcal{T}|$ to denote the dimensions of these spaces, which will be fixed throughout; all constants may depend on these dimensions and we do not quantify this dependence.

\paragraph{Key contributions:}
\begin{itemize}
\item We formulate a  class of optimization problems \eqref{eq:nlgc} -- a composition of a high-dimensional linear function with a general low-dimensional outer function -- 
where dimensionality enters as an explicit parameter. Consequently, for this class, one can take dimensionality to infinity while preserving non-linearity and other structures in the problem. This class includes standard inference problems such as GLMs.

\item Our main result 
is a comparison of SGD dynamics on \eqref{eq:nlgc} to a solution of deterministic ODEs
(Theorem \ref{thm:learning_curves}), which holds when dimension $d$ grows large (as opposed to the canonical small learning rate approximation). Solving these ODEs gives predictions for the risk curves of SGD with vanishing error as $d \to \infty$.


\item We further introduce a new SDE \eqref{eq:HSGD} which behaves the same way as SGD, when dimension grows large, even for large learning rate at or above the convergence threshold.  This can be compared to SGD or the deterministic equivalent on a large class of statistics (including most standard measures of suboptimality, Theorem \ref{Thm:SGD_HSGD_convergence}).
\item We analyze the deterministic equivalent to give a precise characterization of \emph{descent} \eqref{eq:DODE}, which is to say that we give a formula for the maximal learning rate that decreases suboptimality in a dimension-independent way.  This naturally leads to easy conditions for convergence, as well as rates of convergence under standard assumptions on the risk.  See Propositions \ref{prop:RSI} and \ref{prop:Courtneyrate_main}.
\item In Section \ref{sec:examples}, we apply our results to some key examples in learning theory including multivariate linear regression, multi-class logistic regression, phase retrieval, and phase chase -- a new model illustrating implicit bias effects of SGD in a high-dimensional nonconvex setting.
\end{itemize}

\paragraph{Tensor notation.}
We briefly summarize here the tensor notation used in this article;
see Section \ref{sec:notation} for full details.
We suppose that all of $\mathcal{A},\mathcal{O},$ and $\mathcal{T}$ are equipped with inner products and hence are finite-dimensional Hilbert spaces.  
This allows us to define the inner product of tensor products of these spaces, by the property that for simple tensors,
\[
  \ip{a_1\otimes o_1,a_2\otimes o_2 }_{\mathcal{A}\otimes \mathcal{O}}
  =
  \ip{a_1,a_2}_{\mathcal{A}}
  \ip{o_1,o_2}_{\mathcal{O}},
\]
and then extending this by bilinearity.
For higher tensors we also use the $\ip{A,B}_{\mathcal{A}}$ operator to denote 
\emph{partial contraction}, where the first $\mathcal{A}$ axis from each of $A$ and $B$ are contracted, and the output tensor has the shape of the uncontracted axes of $A$ followed by the uncontracted axes of $B$.  
Thus for example if $A,B$ are $2$-tensors in $\mathcal{A}^{\otimes 2}$, 
\[
  \ip{A,B}_{\mathcal{A}^{\otimes 2}} = \tr(AB^T)
  \quad\text{and}\quad
  \ip{A,B}_{\mathcal{A}} = A^TB \in \mathcal{A}^{\otimes 2}.
\]
When no space is indicated in the contraction, i.e., $\ip{\cdot, \cdot}$, we mean one does a full contraction across all spaces. Finally we let $\|\cdot\|$ be the Hilbert-space norm (which for the case of $2$-tensors/matrices is the Frobenius norm).  We will use $\|\cdot\|_\sigma$ for the injective norm:
\[
  \|A\|_{\sigma} = \sup_{\substack{\|f_j\|=1 \\ 1 \leq j \leq k}} \langle A, \otimes_1^k f_j\rangle,
\]
which for the case of matrices gives the $\ell^2$-operator norm.

\paragraph{High-dimensional structure.}
We shall consider objective functions $\mathcal{R}$ which are \emph{high-dimensional linear composites}
with outer function $f : \mathcal{O} \oplus \mathcal{T} \oplus \mathcal{T} \to \R$
, data distribution $\mathcal{D}$ on $\mathcal{A}\oplus \mathcal{T}$ 
\begin{equation}\label{eq:nlgc}
  \mathcal{R}(X) \defas \Exp_{a,\epsilon} \Psi(X; a,\epsilon),
  \quad\text{for}\quad (a,\epsilon) \sim \mathcal{D},
  \quad\text{where}\quad
  \Psi(X; a,\epsilon) \defas f( \langle X, a \rangle_{\mathcal{A}} \oplus \langle X^\star,a \rangle_{\mathcal{A}} ; \epsilon).
\end{equation}
A large class of natural regression problems fit into this framework, such as logistic regression, some simplified neural network training problems, and others; see Section \ref{sec:examples} for concrete examples.
As applied to statistical settings, $\mathcal{R}$ will often represent the expected risk and so we refer to it as the risk.
Finally, we shall also allow for $\ell^2$--regularized objective functions with regularization strength $\delta > 0$ in defining 
\begin{equation}\label{eq:Rdelta}
 \mathcal{R}_\delta(X) \defas \mathcal{R}(X) + \delta \|X\|^2/2 \quad \text{and} \quad \Psi_{\delta}(X; a, \epsilon) \defas f( \langle X, a \rangle_{\mathcal{A}} \oplus \langle X^\star,a \rangle_{\mathcal{A}} ; \epsilon) + \delta \|X\|^2/2.
\end{equation}

Many idealized machine learning problems fit the high-dimensional linear composite framework \eqref{eq:nlgc}.
The problem class is principally engineered to describe generalized linear models (GLMs) and multi-index models in a student-teacher framework.
We would take for simplicity $\mathcal{T} = \mathcal{O}.$
Then we consider 
a loss function $\ell : \R^m \times \R^m \to \R$,
and a non-linearity or \emph{link function} $g : \mathcal{O} \to \R^m$.
We further allow a source of noise $\epsilon\in\mathcal{T}$
which one could assume for simplicity perturbs the argument of $g$ and hence gives
\begin{equation}\label{eq:student-teacher}
  \Psi(X; a,\epsilon) = \ell( 
  g(\langle X, a \rangle_{\mathcal{A}}),
  g(\langle X^\star, a \rangle_{\mathcal{A}}+\eta\epsilon)),
\end{equation}
with noise level $\eta>0$. We give a more substantial discussion of examples in Section \ref{sec:examples} and provide connections to existing work.

For all the analyses we do of this class, 
we shall impose further restrictions on $(f,\mathcal{D}).$
However, as we shall take gradients of $f$, 
we shall always require, at a minimum:
\begin{assumption}[Pseudo-Lipschitz $f$] 
  \label{ass:pseudo_lipschitz}
  The outer function $f$ is $\alpha$-pseudo-Lipschitz with constant $L(f)$,
  in all its variables.
  That is, for all $r, \hat{r} \in \mathcal{O}^+$ and all $\epsilon \in \mathcal{T}$,
  \begin{equation}
    \begin{gathered}
      |f(r; \epsilon) - f(\hat{r}; {\epsilon})| \le L(f) \|r-\hat{r}\| 
      ( 1 + \|r\|^{\alpha} + \|\hat{r}\|^{\alpha} + \|\epsilon\|^{\alpha}).
    \end{gathered}
  \end{equation}
\end{assumption}
\noindent 
For the probabilistic analysis, it is important to express the dependence of $f$ on all its inputs.
For the optimization, in contrast, we would like to view $f$ as a function of $\mathcal{O}$ but where the $\mathcal{T}$-dependence enters as a hidden parameter.
We shall refer to the $\mathcal{O}$--valued input variable as $\ex$, the $\mathcal{O}^+$--valued input as $r$ and the $\mathcal{A} \otimes \mathcal{O}$--valued variables as $X$ (which for example appears as an input to $\Psi$).

%

\paragraph{Streaming Stochastic Gradient Descent (SGD).} 

For the problem class \eqref{eq:nlgc} satisfying Assumption \ref{ass:pseudo_lipschitz}, we consider \emph{streaming} SGD (also known as \emph{online} SGD, \emph{one-pass} SGD, or SGD with \emph{sample splitting}).
So we suppose that we are provided with a sequence of independent samples $\left\{ (a_k,y_k) \right\}_1^\infty$ drawn from the distribution $\mathcal{D}$, where $y_k$ is the target, which is a function of $\epsilon_k$ and $\langle X^\star,a_k \rangle_{\mathcal{A}}$.  Therefore, what determines the distribution of the data is only the input feature and the noise, i.e. the pair $(a,\epsilon$). 
Having specified an initial state $X_0 \in \mathcal{A} \otimes \mathcal{O}$, 
and a sequence of step-sizes $\gamma_k/d$ (which may be adapted to $\{a_j : j \leq k\}$), 
we define a sequence of iterates $\{ X_k \}$ which obeys the recurrence,
\begin{equation} \label{eq:SGD_1}
  X_{k+1} = X_k - \frac{\gamma_k}{d} ( \nabla_X \Psi(X_k; a_{k+1},\epsilon_{k+1}) + \delta X_k), 
\end{equation}
where $\nabla_X$ is the usual gradient operator with respect to the $X$ variable.

We shall work in a formulation where the norms of the iterates $\{X_k\}$ remain bounded, independent of dimension.
Within the class of high-dimensional linear composites, we note that the contractions $\langle X, a\rangle_{\mathcal{A}}$ should not carry dimension dependence, as otherwise the outer function $f$ (which can very well be non-linear) degenerates to its behavior at infinity.  Hence, we pose the following initialization assumption:
\begin{assumption}[Parameter scaling]
  \label{assumption:scaling} 
  The initialization point, 
  $X_0 \in \mathcal{A} \otimes \mathcal{O}$
  and the hidden parameters $X^\star \in \mathcal{A} \otimes \mathcal{T}$ 
  are bounded independent of $d$, i.e., $\max\{\|X^{\star}\|,\|X_0\|\} \le C$ for some $C > 0$ independent of $d$.
\end{assumption}
\noindent This must be matched by an appropriate assumption on the data distribution $\mathcal{D}$.  We will consider a generic centered Gaussian distribution $\mathcal{D}.$
\begin{assumption}[Data] \label{ass:data_normal}
  We assume that samples $(a,\epsilon) \sim \mathcal{D}$ are normally distributed $N(0,K \oplus \Id_\mathcal{T})$ (and so $a$ and $\epsilon$ are independent),
  with covariance $K \in \mathcal{A}^{\otimes 2}$ which is 
  bounded in operator norm independent of $d$, i.e.\ $\|K\|_{\sigma} \leq \bar{K}$ for $\bar{K}$.
  Hence in particular $\epsilon$ is independent of $a$.
\end{assumption}
\noindent Generalizing this is an interesting direction of research.  There is a small class of nice data distributions -- at the very least those which satisfy Lipschitz concentration -- for which the proof strategy in this paper should hold.
It would be interesting to generalize this in the direction of finitely supported distributions, which would allow one to consider multi-pass SGD methods.


The learning rate $\gamma_k/d$ in \eqref{eq:SGD_1} is scaled in a way that the SGD behaves well across 
different dimensions; without the factor of $d$, the algorithm would degenerate to pure noise or to gradient flow as dimension increases.  However, the $\gamma_k$ can still be sufficiently large 
to capture the stability threshold of the algorithm. 
\begin{assumption}\label{assum:learning_rate}
  There is a $\bar{\gamma} < \infty$ and 
  a deterministic scalar function $\gamma : [0,\infty) \to [0,\infty)$ which is bounded by
 $\bar{\gamma} < \infty$ so that $\gamma_k = \gamma(k/d).$
\end{assumption}
\noindent We are principally motivated by the constant step-size case,
but in a sufficiently non-uniform geometry, 
it would make more sense to consider adaptive (and hence random)
step-size algorithms such as 
Adagrad norm \cite{ward2020adagrad}.

\begin{figure}[t]
    \centering
\includegraphics[scale =0.3]{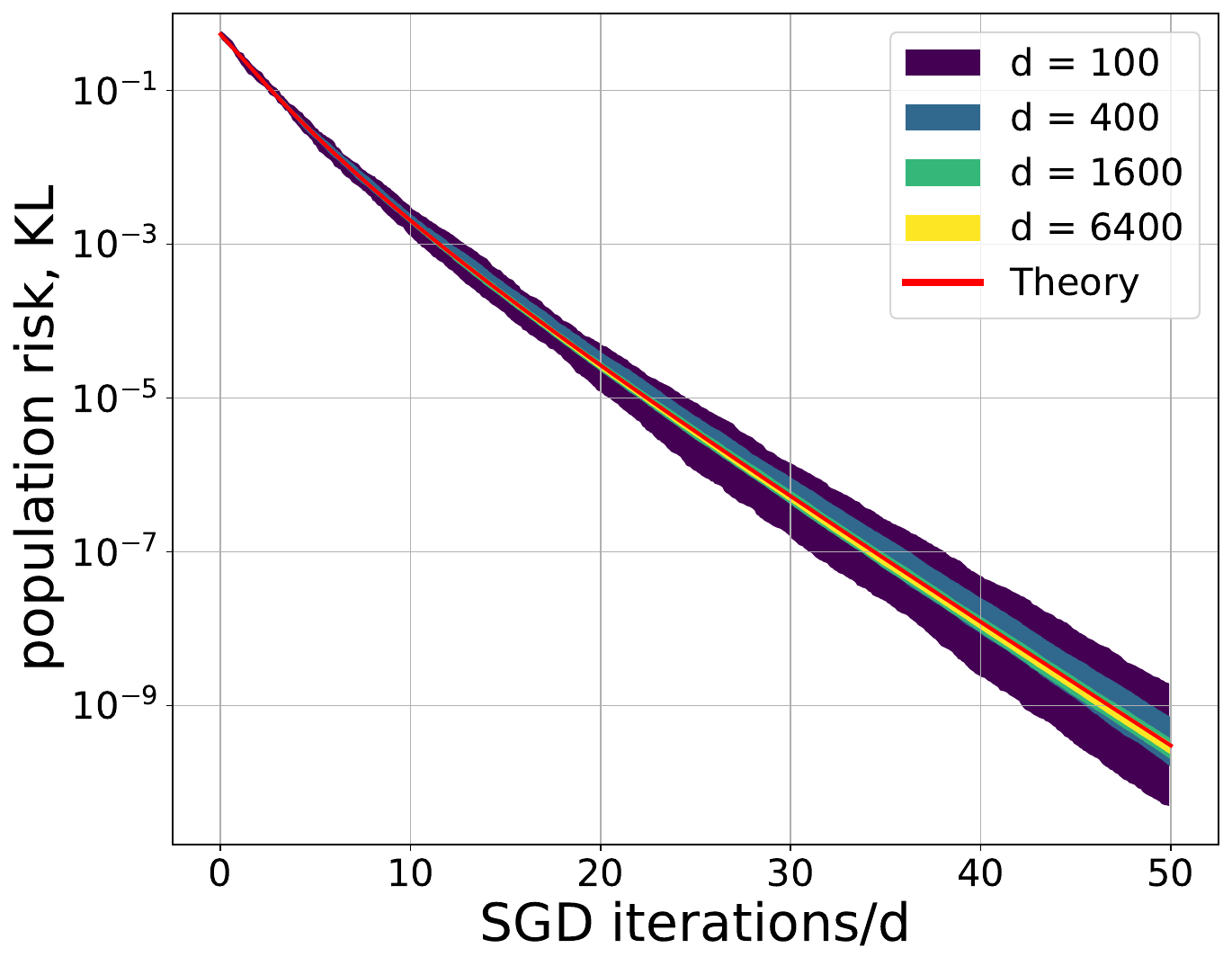}
 \caption{\textbf{Concentration of KL divergence (logistic regression) for SGD} on a (noiseless) binary logistic regression problem (Section~\ref{sec:logistic_regression}) where $X^{\star} \sim 1/\sqrt{d} \cdot N(0,I_d)$ is the ground truth signal and label noise $\epsilon = 0$, SGD was initialized at $X_0 = \tfrac{1.3}{\sqrt{d}} (1, 1, \hdots, 1)$, covariance matrix $K$ has spectrum generated from the Marchenko-Pastur distribution \cite{marvcenko1967distribution} with parameter $4$; an $80\%$ confidence interval (shaded region) over $10$ runs for each $d$, a constant learning rate for SGD was applied, $\gamma = 1.0$. The KL divergence becomes non-random in the large limit and all runs of SGD converge to a deterministic function $\phi$ (red) solving a system of ODEs (Theorem~\ref{Thm:SGD_HSGD_convergence}).
     }
\label{fig:logistic_concentration}

\end{figure}

\paragraph{High-dimensional deterministic equivalent.}
Our first result gives a 
deterministic description of the risk evolution
under streaming SGD (see, e.g., Figure~\ref{fig:logistic_concentration} for logistic regression). 
By assumption, $\mathcal{R}(X)$ involves an expectation over the correlated Gaussians
$\langle X, a \rangle$
and
$\langle X^\star, a \rangle$.
It follows that if we set $W \defas X \oplus X^{\star}$ (which as a matrix may be considered as the block matrix $(X, X^\star)$),
we may represent this expectation $\mathcal{R}(X) \defas h( W^T K W ),$
for some function $h : (\mathcal{O}^+)^{\otimes 2} \to \R$.
We note that it will be convenient to represent $W^T K W$ as the tensor contraction $\langle  W^{\otimes 2}, K\rangle_{\mathcal{A}^{\otimes 2}}$ (see Section \ref{sec:notation} for details). 
Now we need to connect the gradients of the risk to the gradient estimators in SGD \eqref{eq:SGD_1}.
Hence we assume the following:
\begin{assumption}[Risk representation]\label{assumption:unbiased}
  There is an open set $\mathcal{U} \subseteq (\mathcal{O}^+)^{\otimes 2}$
  such that $\langle (X_0 \oplus X^\star)^{\otimes 2}, K\rangle \in \mathcal{U}$
  and so that provided $\langle W^{\otimes 2} , K\rangle \in \mathcal{U}$
  the map
  $X \mapsto \mathcal{R}(X) \defas h(\langle W^{\otimes 2}, K \rangle)$ is differentiable and satisfies
  \[
    \nabla_X \mathcal{R}(X) = \Exp_{a,\epsilon} \nabla_X \Psi(X ; a,\epsilon).
  \]
Furthermore $h$ is continuously differentiable on $\mathcal{U}$ and its derivative $\nabla h$ is $\alpha$-pseudo-Lipschitz, i.e.
  there is a constant $L(h)>0$, so that for all $B, \hat{B} \in \mathcal{U}$, 
  \begin{equation}
    \begin{gathered}
      \|\nabla h(B) - \nabla h(\hat{B})\| \leq L(h) \|B-\hat{B}\| ( 1 + \|B\|^{\alpha} + \|\hat{B}\|^{\alpha} ).
    \end{gathered}
  \end{equation}
\end{assumption}
\noindent We emphasize that this commutation of expectation and gradient
holds trivially on $\mathcal{U}=(\mathcal{O}^+)^{\otimes 2}$ once $\Psi$ is continuously differentiable (in addition to Assumption \ref{ass:pseudo_lipschitz}).  See Section \ref{sec:examples} for some examples where the $\mathcal{U}$ is needed.

The final assumption we require is the well-behavior of the Fisher information matrix of the gradients of the outer function
on the same convex set.
\begin{assumption}[$\alpha$-pseudo-Lipschitz of the Fisher matrix] 
  \label{assumption:E_loss_pseudo_Lip}
Define  $I(B) \defas \EE_{a,\epsilon} [ \nabla_\ex f(r;\epsilon)^{\otimes 2}]$, 
where $I:\, (\mathcal{O}^{+})^{\otimes 2} \to \mathcal{O}^{\otimes 2}$ 
where $r = \ip{W,a}_{\mathcal{A}}$, $x = \ip{X,a}_{\mathcal{A}}$,
and $B = \langle W^{\otimes 2}, K \rangle$.
The function $I$ is $\alpha$-pseudo-Lipschitz with constant $L(I)>0$, 
that is, for all $B, \hat{B} \in \mathcal{U}$, 
\begin{equation}
\begin{gathered}
\|I(B) - I(\hat{B})\| \leq L(I) \|B-\hat{B}\| ( 1 + \|B\|^{\alpha} + \|\hat{B}\|^{\alpha} ), 
\end{gathered}
\end{equation}
\end{assumption}

The functions $h$ and $I$ allow us to construct closed, deterministic dynamics that describe
the high-dimensional limit of stochastic gradient descent.
To condense the notation, we shall use 
\begin{equation*}
    \begin{aligned}
        W_k \defas X_k \oplus X^{\star} \in \mathcal{A} \otimes \mathcal{O}^+, \quad r_k \defas \ip{W_k, a_{k+1}}_{\mathcal{A}} \in \mathcal{O}^+
	, \quad \text{and} \quad 
	B(W_k) \defas 
	\langle W_k^{\otimes 2}, K \rangle.
    \end{aligned}
\end{equation*}
Using this notation, we have that the SGD update \eqref{eq:SGD_1} simplifies as follows, 
\begin{equation} \label{eq:SGD}
    X_{k+1} = X_k - 
    \frac{\gamma_k}{d} \big ( a_{k+1} \otimes \nabla_\ex f(r_k;\epsilon_{k+1}) + \delta X_k \big ) 
    , \quad k = 0, 1, 2, \hdots
\end{equation}
where $\nabla_x$ gradient operators with respect to the $x = \ip{X,a}$ variable which is part of the vector $r$ (see Lemma~\ref{lem:derivative_loss} for the computation of $\nabla_X \Psi$).

To describe the limiting dynamics,
we define a coupled family of ordinary differential equations.
These coupled differential equations need to be sufficiently rich to describe the covariance matrix 
that enters into $h$ and $I$, and in particular, we give a high-dimensional limit of the covariance matrix 
\begin{equation}
  B(W_k) \defas 
   \begin{bmatrix}
    B_{11}(W_k) & B_{12}(W_k) \\
    B_{12}^T(W_k) & B_{22}(W_k)
  \end{bmatrix} 
  \defas
  \begin{bmatrix}
    \langle X_k \otimes X_k, K \rangle_{\mathcal{A}^{\otimes 2}} & \langle X_k \otimes X^\star, K \rangle_{\mathcal{A}^{\otimes 2}}
    \\
    \langle X^\star \otimes X_k, K \rangle_{\mathcal{A}^{\otimes 2}}
    &\langle X^\star \otimes X^\star, K \rangle_{\mathcal{A}^{\otimes 2}}
  \end{bmatrix} 
  ,
  \quad k = 0, 1, 2, \hdots
  \label{eq:Bk}
\end{equation}
where the block structure corresponds to the $\mathcal{O}$ and $\mathcal{T}$ spaces, respectively.

The corresponding limit variables, 
which evolve continuously in time, 
will be defined by an average over a $d$-dimensional family
of limit variables.
We let $((\lambda_i,\omega_i): 1 \leq i \leq d)$ be the eigenvalues and orthonormal eigenvectors of $K$.  Then we introduce the following ODEs on positive semidefinite matrices:
\begin{equation}   \label{eq:Bi}
  \mathrsfs{B}(t) \defas 
    \begin{bmatrix}
    \mathrsfs{B}_{11}(t) & \mathrsfs{B}_{12}(t) \\
    \mathrsfs{B}_{12}^T(t) & \mathrsfs{B}_{22}(t)
  \end{bmatrix},
  \quad\text{and}\quad
  \mathrsfs{B}_{i}(t) \defas 
  \begin{bmatrix}
      \mathrsfs{B}_{11,i}(t) & \mathrsfs{B}_{12,i}(t) \\
    \mathrsfs{B}_{12,i}^T(t) & \mathrsfs{B}_{22,i}(t)
  \end{bmatrix},
  \quad t \geq 0, \quad i \in \{1,2,\dots,d\}.
\end{equation}
These are then related by averaging over $i$. We also introduce at this time a secondary average:
\begin{equation}
  \label{eq:Bkavg}
  \mathrsfs{B}(t) 
  = \frac{1}{d} \sum_{i=1}^d \lambda_i \mathrsfs{B}_{i}(t)
  \quad\text{and}\quad
  \mathrsfs{N}(t)
  \defas \frac{1}{d} \sum_{i=1}^d \tr(\mathrsfs{B}_{i}(t)).
\end{equation}

Now we suppose that $h$ is defined symmetrically, so that $h( \sum x_i \otimes y_i) = h( \sum y_i \otimes x_i)$ for all $x_i,y_i \in \mathcal{O}^+$ (or as matrices $h(P) = h(P^T)$ for all $P \in (\mathcal{O}^+)^{\otimes 2}$.  Then we define
\[
  H_t \defas
  \nabla h( \mathrsfs{B}(t) ) = 
  \begin{bmatrix}
    H_{1,t} & H_{2,t} \\
    H_{2,t}^T & H_{3,t}
  \end{bmatrix}
  \quad\text{and}\quad
  I_t \defas I( \mathrsfs{B}(t) ).
\]
Finally, we give a family of coupled ODEs (c.f.\ \cite{Yoshida} where this is introduce for a class of problems with squared loss)
\begin{equation} 
  \begin{aligned}
  &\dif \mathrsfs{B}_{11,i}(t) = -2\lambda_i\gamma_t( \mathrsfs{B}_{11, i}(t) H_{1,t} + H_{1,t}  \mathrsfs{B}_{11,i}(t)+\mathrsfs{B}_{12,i}(t) H_{2,t}) - 2\delta\gamma_t \mathrsfs{B}_{11,i}(t) + \lambda_i\gamma_t^2I_t, \\
  &\dif \mathrsfs{B}_{12,i}(t) = -2\lambda_i\gamma_t( H_{1,t} \mathrsfs{B}_{12,i}(t) + H_{2,t}^T\mathrsfs{B}_{22,i}(t)) - 2\delta\gamma_t \mathrsfs{B}_{12,i}(t), \\
\end{aligned}
  \label{eq:coupledODElimit}
\end{equation}
with the initialization of $\mathrsfs{B}_{11,i},\mathrsfs{B}_{12,i},\mathrsfs{B}_{22,i}$ given by
\[
  \begin{bmatrix} 
    \mathrsfs{B}_{11,i}(0) & \mathrsfs{B}_{12,i}(0) \\
    \mathrsfs{B}_{12,i}^T(0) & \mathrsfs{B}_{22,i}(0)
  \end{bmatrix}
  = d \cdot\langle W_0^{\otimes 2}, \omega_i^{\otimes 2} \rangle
  =
  d \cdot
  \begin{bmatrix} 
    \langle X_0 \otimes X_0, \omega_i^{\otimes 2} \rangle
 & \langle X_0 \otimes X^{\star}, \omega_i^{\otimes 2} \rangle
 \\
    \langle  X^\star \otimes X_0, \omega_i^{\otimes 2} \rangle
 & \langle  X^\star \otimes X^{\star}, \omega_i^{\otimes 2} \rangle
  \end{bmatrix}.
\]
We shall also show in Section \ref{susec:SGD_optimality} how to analyze this system with general covariance to gain some optimization insights about SGD on GLMs and multi-index models.

The matrix $\mathrsfs{B}_{22,i}(t) = \mathrsfs{B}_{22,i}(0)$ is constant.
Note that \eqref{eq:coupledODElimit} is a coupled ($d$-dependent but finite) system of differential equations with locally Lipschitz coefficients, which therefore has unique solution up to the first time $\Theta$ that $\mathrsfs{B}_t$ either exits $\mathcal{U}$ or explodes (meaning it has norm that tends to $\infty$ in finite time).  It is also possible to efficiently numerically solve this system with standard ODE methods, which are the basis of the numerical simulations shown throughout the paper.

Under these assumptions, we can describe the limiting matrix of order parameters.
We say an event holds \emph{with overwhelming probability} 
if there is a function $\omega: \N \to \R$ with $\omega(d)/\log d \to \infty $ so that the event holds with probability at least $1-e^{-\omega(d)}.$
\begin{theorem}[Learning curves] \label{thm:learning_curves}
  Suppose 
  Assumptions 
  \ref{ass:pseudo_lipschitz},
  \ref{assumption:scaling},
  \ref{ass:data_normal},
  \ref{assum:learning_rate},
  \ref{assumption:unbiased},
  \ref{assumption:E_loss_pseudo_Lip}
  hold. 
  Let $\vartheta_M$ be the first time 
  that either $\mathrsfs{B}(t)$ or $B(W_{\lfloor td \rfloor })$ exits $\mathcal{U}$ 
  or that $\mathrsfs{N}(t) \geq M.$
  Then there is an $\varepsilon >0$ so that for any $T,M,$ with overwhelming probability
  \[
    \sup_{0 \leq t \leq T \wedge \vartheta_M} \| 
    \mathrsfs{B}(t)
    - 
    B(W_{\lfloor td \rfloor}) 
    \| \leq d^{-\varepsilon}.
  \]
  \label{thm:deterministic}
\end{theorem}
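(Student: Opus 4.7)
The plan is to compare the discrete trajectory $k\mapsto B(W_k)$ with the ODE solution $t\mapsto \mathrsfs{B}(t)$ on the stopped interval $[0, T\wedge\vartheta_M]$ via a Doob decomposition matched to the vector field of \eqref{eq:coupledODElimit}. The subtlety is that the increments of $B(W_k)$ depend on the spectrally resolved order parameters $B_{\cdot,i}(W_k) \defas d\,\langle W_k^{\otimes 2},\omega_i^{\otimes 2}\rangle_{\mathcal{A}^{\otimes 2}}$, whose per-$i$ fluctuations are $O(1)$ and do not decay in $d$. The ODE system \eqref{eq:coupledODElimit} reflects this: each $\mathrsfs{B}_i(t)$ has its own dynamics, coupled to the average only through the slowly-varying $H_t = \nabla h(\mathrsfs{B}(t))$ and $I_t = I(\mathrsfs{B}(t))$. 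I would therefore track all $d$ spectral channels simultaneously in a self-consistent argument whose consistency condition is the concentration of the averaged $B(W_k)$.

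Using the SGD update \eqref{eq:SGD}, the one-step increment of $B_{\cdot,i}(W_k)$ splits into a linear and a quadratic part in the stochastic gradient $a_{k+1}\otimes g_k$, with $g_k:= \nabla_x f(r_k;\epsilon_{k+1})$. Conditioning on $\mathscr{F}_k$ and applying Stein's lemma to the Gaussian $a_{k+1}$ yields
\[
\mathbb{E}[a_{k+1}\otimes g_k\mid\mathscr{F}_k] = KW_k\,\mathbb{E}[\nabla_r\nabla_x f(r_k;\epsilon_{k+1})\mid\mathscr{F}_k],
\]
whose right factor Assumption \ref{assumption:unbiased} identifies (up to symmetrization) with the columns $H_{1,k}$ and $H_{2,k}^T$ of $\nabla h(B(W_k))$; Assumption \ref{assumption:E_loss_pseudo_Lip} gives $\mathbb{E}[g_k^{\otimes 2}\mid\mathscr{F}_k] = I(B(W_k))$, and the $K$-weighted second moment of $a_{k+1}$ along $\omega_i$ contributes $\lambda_i$. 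Assembling these identities, the conditional drift of $B_{\cdot,i}(W_k)$ matches $\tfrac{1}{d}$ times the right-hand side of \eqref{eq:coupledODElimit} evaluated at $(B_{\cdot,i}(W_k), B(W_k))$, up to an $O(d^{-2})$ remainder coming from the $\delta^2$ and cross-terms in the quadratic expansion.

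The martingale increments $M_{k+1}^{(i)}$ in different spectral channels are uncorrelated, because $(\langle a_{k+1},\omega_i\rangle)_{i=1}^d$ are independent Gaussians of variances $\lambda_i$. Each $M_{k+1}^{(i)}$ has conditional variance of order $\lambda_i/d$; weighting by $\lambda_i/d$ and summing over $i$ gives the predictable variance of the martingale in $B(W_k)$ of order $d^{-2}$ per step, hence $O(d^{-1})$ total over $\lfloor Td\rfloor$ steps and typical fluctuations of size $d^{-1/2}$. To upgrade from $L^2$ to overwhelming probability I would truncate at $\|a_{k+1}\|\le d^{1/2+\eta}$ (whose complement has probability $e^{-cd^{2\eta}}$ by Gaussian concentration) and apply a Freedman-type exponential martingale inequality to the truncated increments, yielding $\|B(W_{\lfloor td\rfloor})_{\mathrm{mart}}\|\le d^{-\varepsilon}$ with overwhelming probability for some $\varepsilon>0$. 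Combining the drift matching and martingale estimate shows $B(W_{\lfloor td\rfloor})$ is a $d^{-\varepsilon}$-approximate solution of the ODE on $[0,T\wedge\vartheta_M]$, and a Gr\"onwall-type comparison, using the pseudo-Lipschitz constants of $\nabla h$ and $I$ on the stopped region $\{B: \tr(B)\le M\}\cap\mathcal{U}$, upgrades this to the asserted uniform bound. Passing from integer $k$ to continuous $t$ is standard, using that $\mathrsfs{B}(\cdot)$ is Lipschitz in $t$ with a $d$-independent constant on the stopped interval together with a union bound over a polynomially fine net.

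The main obstacle is coupling the self-consistent argument to the overwhelming-probability concentration. Individual $B_{\cdot,i}(W_k)$ do not concentrate, yet the drift of the averaged $B(W_k)$ requires $\nabla h$ and $I$ evaluated at $B(W_k)$ to be close to their ODE counterparts at $\mathrsfs{B}(t)$, which is the very quantity we are trying to control. Breaking this circularity calls for a careful bootstrap combining the stopping time $\vartheta_M$ (which keeps pseudo-Lipschitz constants and exponential moments under uniform control), the Gaussian truncation, and Freedman-type concentration, all simultaneously uniform in $i$ and $k$; this is precisely why the theorem is stated up to the stopping time $\vartheta_M$ rather than globally in time.
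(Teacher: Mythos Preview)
Your Doob decomposition and martingale estimates for $B(W_k)$ are correct, but the Gr\"onwall step does not close. The drift of $B(W_k)=\langle W_k^{\otimes 2},K\rangle$ is \emph{not} a function of $B(W_k)$ alone: the linear part of the conditional increment is $-\tfrac{2\gamma_k}{d}\langle KW_k,\nabla_X\mathcal{R}(X_k)\rangle$, and since $\nabla_X\mathcal{R}(X_k)$ factors through $KW_k$ and $\nabla h(B(W_k))$, this produces the second power sum $\langle W_k^{\otimes 2},K^2\rangle=\tfrac1d\sum_i\lambda_i^2 B_{\cdot,i}(W_k)$. On the ODE side, $\tfrac{d}{dt}\mathrsfs{B}(t)=\tfrac1d\sum_i\lambda_i\,\tfrac{d}{dt}\mathrsfs{B}_i(t)$ likewise involves $\tfrac1d\sum_i\lambda_i^2\mathrsfs{B}_i(t)$ via the $\lambda_i$ prefactors in \eqref{eq:coupledODElimit}. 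So to compare the two drifts you would need $\langle W_{td}^{\otimes 2},K^2\rangle\approx\tfrac1d\sum_i\lambda_i^2\mathrsfs{B}_i(t)$, which is a new statement whose own drift involves $K^3$, and so on---an unclosed moment hierarchy. Your two natural candidates both fail: individual channels $B_{\cdot,i}(W_k)$ have the right drift but $O(1)$ martingale (as you note), while the single average $B(W_k)$ has small martingale but unclosed drift.

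The paper resolves this by working with the resolvent statistic $S(W,z)=\langle W^{\otimes 2},(K-zI)^{-1}\rangle$ instead of with power sums. The identity $KR(z;K)=I+zR(z;K)$ makes the drift of $S(W,z)$ close in the family $\{S(W,z)\}_{z\in\Gamma}$ together with two contour integrals $\oint S(W,z)\,dz$ and $\oint zS(W,z)\,dz$ (which recover $\|W\|^2$ and $B(W)$); see \eqref{eq:F} and Lemma~\ref{lem:ODE_S}. Concentration is then established for $S(W_{td},z)$ uniformly over $z$ on a fixed contour (via a net argument, Lemma~\ref{lem:net_argument}) by showing it is an $(\varepsilon,M,T)$-approximate solution of the integro-differential equation \eqref{eq:ODE_resolvent_2}, and a stability/Gr\"onwall lemma (Proposition~\ref{prop:stability}) compares any two such approximate solutions. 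The theorem then follows by recovering $B(W_{td})$ and $\mathrsfs{B}(t)$ as $\tfrac{-1}{2\pi i}\oint_\Gamma z\,S(\cdot,z)\,dz$. Your approach would work directly only in the identity-covariance case (Corollary~\ref{cor:lc}), where all $\lambda_i=1$ and the hierarchy collapses.
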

\noindent We shall further extend the class of statistics of the coupled family of ODEs $(\mathrsfs{B}_i(t) : 1 \leq i \leq d)$ which can be compared to SGD statistics in Theorem \ref{Thm:SGD_HSGD_convergence}. We 
 also note that $\tfrac{1}{d}\sum_{i=1}^d\tr(\mathrsfs{B}_{i}(t))$ plays the role of $\|W_{\lfloor td \rfloor }\|^2$ for the family of ODEs, and we shall give some simple sufficient conditions that ensure $\tfrac{1}{d}\sum_{i=1}^d\tr(\mathrsfs{B}_{i}(t))$ remains bounded independent of dimension of all time in Section \ref{susec:SGD_optimality}.

We also note that in the case of identity covariance, the system simplifies dramatically: as all $\lambda_i = 1$, we may directly take the average on both sides of \eqref{eq:coupledODElimit} to conclude:
\begin{corollary}[Learning curves in identity covariance]\label{cor:lc}
Under the same hypotheses as Theorem \ref{thm:deterministic},
if we suppose that $K = \Id_d$, then $\mathrsfs{B}(t)$ solves the autonomous equation
\begin{equation}
  \begin{aligned}
    &\dif \mathrsfs{B}_{11}(t) = -2\gamma_t( \mathrsfs{B}_{11}(t) H_{1,t} + H_{1,t}  \mathrsfs{B}_{11}(t)+\mathrsfs{B}_{12}(t) H_{2,t}) - 2\delta\gamma_t \mathrsfs{B}_{11}(t) + \gamma_t^2I_t, \\
    &\dif \mathrsfs{B}_{12}(t) = -2\gamma_t( H_{1,t} \mathrsfs{B}_{12}(t) + H_{2,t}^T\mathrsfs{B}_{22}) - 2\delta\gamma_t \mathrsfs{B}_{12}(t), \\
  \end{aligned}
  \label{eq:identityODElimit}
\end{equation}
with initial conditions $\mathrsfs{B}(0) = \langle W_0, W_0\rangle_{\mathcal{A}}$.
\end{corollary}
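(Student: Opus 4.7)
\textbf{Proof plan for Corollary \ref{cor:lc}.}
The plan is to reduce the coupled $d$-dimensional ODE system \eqref{eq:coupledODElimit} to the autonomous equation \eqref{eq:identityODElimit} by a straightforward averaging argument, exploiting the fact that the non-autonomous parts (through $H_t$ and $I_t$) depend on $i$ only via the global quantity $\mathrsfs{B}(t)$. The main observation is that when $K = \Id_d$ we have $\lambda_i = 1$ for every $i \in \{1,\dots,d\}$, so the weight $\lambda_i$ appearing both in front of the right-hand side of \eqref{eq:coupledODElimit} and in the definition $\mathrsfs{B}(t) = \tfrac{1}{d}\sum_i \lambda_i \mathrsfs{B}_i(t)$ collapses to $1$. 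Consequently, averaging the $i$-th equation with weight $1/d$ directly produces an equation for $\mathrsfs{B}(t)$.

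Concretely, first I would apply the operator $\tfrac{1}{d}\sum_{i=1}^d$ to both sides of each line of \eqref{eq:coupledODElimit}. On the left the derivatives commute with the linear combination, giving $\dif \mathrsfs{B}_{11}(t)$ and $\dif \mathrsfs{B}_{12}(t)$ by definition \eqref{eq:Bkavg}. On the right, the terms linear in $\mathrsfs{B}_{\cdot\cdot,i}$ become linear in $\mathrsfs{B}_{\cdot\cdot}(t)$ once we pull $H_{1,t}, H_{2,t}$ (which do not depend on $i$) out of the sum; the regularization term contributes $-2\delta\gamma_t \mathrsfs{B}_{\cdot\cdot}(t)$; and the noise term $\lambda_i \gamma_t^2 I_t = \gamma_t^2 I_t$ averages to $\gamma_t^2 I_t$. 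This yields \eqref{eq:identityODElimit} verbatim.

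Second, I would check the initial condition. Using \eqref{eq:Bkavg} and the given initialization,
\begin{equation*}
\mathrsfs{B}(0) \;=\; \frac{1}{d}\sum_{i=1}^d \lambda_i \mathrsfs{B}_i(0) \;=\; \sum_{i=1}^d \langle W_0^{\otimes 2},\, \omega_i^{\otimes 2}\rangle_{\mathcal{A}^{\otimes 2}} \;=\; \Bigl\langle W_0^{\otimes 2},\, \sum_{i=1}^d \omega_i^{\otimes 2} \Bigr\rangle_{\mathcal{A}^{\otimes 2}}.
\end{equation*}
Since $\{\omega_i\}$ is an orthonormal basis of $\mathcal{A}$, the resolution of identity $\sum_{i=1}^d \omega_i \otimes \omega_i = \Id_{\mathcal{A}}$ gives, after partial contraction in one $\mathcal{A}$ axis, $\mathrsfs{B}(0) = \langle W_0, W_0 \rangle_{\mathcal{A}}$ as required.

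There is essentially no hard step here: the non-trivial content is packaged into Theorem \ref{thm:learning_curves}, and the corollary amounts to noting that the map sending the $d$-tuple $(\mathrsfs{B}_i(t))$ to its average satisfies a closed ODE precisely when all $\lambda_i$ are equal. The only technical care needed is to confirm autonomy, i.e.\ that $H_t$ and $I_t$ are functions of $\mathrsfs{B}(t)$ alone (which is true by the definitions preceding \eqref{eq:coupledODElimit}), so that the averaged system does not require further information about the individual $\mathrsfs{B}_i$.
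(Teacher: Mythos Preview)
Your proposal is correct and matches the paper's approach exactly: the paper states just before the corollary that ``as all $\lambda_i = 1$, we may directly take the average on both sides of \eqref{eq:coupledODElimit} to conclude,'' and you have simply filled in the details of that averaging (including the initial-condition check, which the paper leaves implicit).
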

\noindent Many instances of these ODEs have appeared in the literature before (see the discussion in Section \ref{sec:relatedwork}).

\paragraph{High-dimensional diffusion approximation.} 
This system of ODEs \eqref{eq:coupledODElimit} 
has complexity that increases substantially with dimension,
since the number of equations grows with the dimensionality of $K$.
It is possible to formulate this in a dimension independent way, either as a measure-valued process or (equivalently) as a evolution on resolvent-like curves (see Section \ref{sec:approximate_solutions_stability}).
Nonetheless, it does not give access to the iterates on parameter space, and one may wish to understand, for example, how the iterates $\{X_k\}$ evolve when tested against another interesting fixed direction $\{\hat X\}$.

So we introduce another tool, 
which is a  stochastic differential equation \textit{homogenized SGD},
and which is amenable to sharp dimension-independent analysis along more traditional 
optimization theory lines.
\begin{equation} \label{eq:HSGD}
\dif \WHSGD_t = 
-\gamma_t \nabla_X \mathcal{R}_\delta(\WHSGD_t)\dif t 
+\gamma_t \ip{\sqrt{K/d} \otimes \sqrt{ \EE_{a,\epsilon} [ \nabla_\ex f( \ip{\WHSGD_t \oplus X^{\star}, a}_{\mathcal{A}};\epsilon)^{\otimes 2} ] }, \dif B_t}_{\mathcal{A \otimes O}},
\end{equation}
where the initial conditions are given by $\WHSGD_0 = X_0$ and $(B_t, t \ge 0)$ a $d \times \ell$ dimensional standard Brownian motion. 
Analogously to the $(W_k,r_k)$ notation, we define
\[
\HSGD_t = \WHSGD_t \oplus X^{\star} \quad \text{and} \quad 
\rho_t \defas \ip{\HSGD_t, a }_{\mathcal{A}}.
\]
Homogenized SGD is connected to the coupled ODEs in the same way as SGD:
\begin{proposition}\label{prop:HSGD_median dynamics}
  Suppose 
  Assumptions 
  \ref{ass:pseudo_lipschitz},
  \ref{assumption:scaling},
  \ref{ass:data_normal},
  \ref{assum:learning_rate},
  \ref{assumption:unbiased},
  \ref{assumption:E_loss_pseudo_Lip}
  hold. 
  We let, for any $\eta > 0$,  
  \begin{equation} \label{eq:U_eta}
  \mathcal{U}_\eta \defas 
  \{ B \in \mathcal{U} : \inf_{V \in \mathcal{U}^c} \| B - V\| \geq \eta\}.
  \end{equation}
  Let $M > 0$ and let $\vartheta_M$ be the first time $\|\HSGD_t^{\otimes 2}\| \geq M$, or that $\HSGD_t$ exits $\mathcal{U}_\eta$.  
  There is an $\varepsilon > 0$ so that for any $T,M$ with overwhelming probability
  \[
    \max_{0 \leq t \leq T \wedge \vartheta_M}
    \|\mathrsfs{B}(t) - \langle \HSGD_t^{\otimes 2}, K \rangle_{\mathcal{A}^{\otimes 2}}\| \leq d^{-\varepsilon}.
  \]
\end{proposition}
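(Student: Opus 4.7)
The plan is to apply It\^o's formula to the $(\mathcal{O}^+)^{\otimes 2}$-valued functional $t \mapsto B(\HSGD_t) \defas \langle \HSGD_t^{\otimes 2}, K\rangle_{\mathcal{A}^{\otimes 2}}$ along the homogenized SGD diffusion \eqref{eq:HSGD}, show that its drift matches the right-hand side of the averaged ODE $\mathrsfs{B}(t)$ from \eqref{eq:Bkavg}--\eqref{eq:coupledODElimit}, and close via a Gronwall argument localized at $T \wedge \vartheta_M$. Since $X^{\star}$ is static only the $\WHSGD$-block carries an It\^o differential, matching the fact that $\mathrsfs{B}_{22,i}(t)$ is constant in \eqref{eq:coupledODElimit}. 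By Assumption~\ref{assumption:unbiased}, the drift $-\gamma_t \nabla_X \mathcal{R}_\delta(\WHSGD_t)\,dt$ is an affine combination of $K\WHSGD_t$, $K X^{\star}$, and $\delta\WHSGD_t$ with coefficient matrices $H_{1,t}, H_{2,t}$ evaluated at $B(\HSGD_t)$; a further contraction with $K$ (from differentiating $X \mapsto \langle X^{\otimes 2}, K\rangle$) produces exactly the $H$-drift terms in \eqref{eq:coupledODElimit}. The It\^o correction from the quadratic variation $\gamma_t^2 (K/d)\otimes I(B(\HSGD_t))\,dt$ of the noise coefficient in \eqref{eq:HSGD}, upon double contraction with $K$, yields $\tfrac{\tr(K^2)}{d}\gamma_t^2 I_t\,dt = \tfrac{1}{d}\sum_i \lambda_i^2 \gamma_t^2 I_t\,dt$, matching the diffusion contribution obtained by averaging \eqref{eq:coupledODElimit} with weights $\lambda_i/d$. (Equivalently and perhaps more transparently, projecting \eqref{eq:HSGD} onto each eigenvector $\omega_i$ and setting $\beta_i(t) \defas d(\omega_i^T \HSGD_t)^{\otimes 2}$ gives an eigenspace-wise It\^o decomposition of $B(\HSGD_t) = \tfrac{1}{d}\sum_i \lambda_i \beta_i(t)$ in which each $\beta_i(t)$ satisfies the same drift as $\mathrsfs{B}_i(t)$ plus a martingale.)

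Setting $E_t \defas B(\HSGD_t) - \mathrsfs{B}(t)$, the matching of drifts yields
\[
E_t = \int_0^t \gamma_s \bigl[\mathcal{F}_s(B(\HSGD_s)) - \mathcal{F}_s(\mathrsfs{B}(s))\bigr]\,ds + M_t,
\]
where $\mathcal{F}_s$ is the common drift operator built from $\nabla h$ and $I$ (hence pseudo-Lipschitz on $\mathcal{U}_\eta$ by Assumptions~\ref{assumption:unbiased} and \ref{assumption:E_loss_pseudo_Lip}) and $M_t$ is the It\^o martingale part. On $\{t \leq \vartheta_M\}$, both $B(\HSGD_t)$ and $\mathrsfs{B}(t)$ lie in $\mathcal{U}_\eta$ with norms bounded by $M$, so the relevant pseudo-Lipschitz constant is a deterministic $C = C(M,T,\eta,\bar{K},\bar{\gamma})$. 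Gronwall then gives $\sup_{s \leq T \wedge \vartheta_M}\|E_s\| \leq e^{CT}\sup_{s \leq T \wedge \vartheta_M}\|M_s\|$, reducing the proposition to an overwhelming-probability tail bound on $M_t$.

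The main obstacle is this martingale bound. Because the HSGD diffusion coefficient scales as $d^{-1/2}$, the integrand $d[M,M]_t$ has Frobenius norm at most $C(M)/d$ on the localized event, so $[M,M]_{T\wedge\vartheta_M} \leq CT/d$. Combining Burkholder-Davis-Gundy with an exponential supermartingale estimate (Bernstein or Freedman, or equivalently a Dubins-Schwarz time change together with Gaussian tails for continuous martingales of bounded quadratic variation) delivers sub-Gaussian deviations of scale $O(d^{-1/2})$ for $\sup_{s \leq T \wedge \vartheta_M}\|M_s\|$; since $|\mathcal{O}|$ and $|\mathcal{T}|$ are fixed independent of $d$, a union bound over the $O(1)$ matrix entries extends this to the full Hilbert-space norm and yields $\sup_{s \leq T \wedge \vartheta_M}\|M_s\| \leq d^{-\varepsilon}$ with overwhelming probability for any $\varepsilon < 1/2$. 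Plugging into the Gronwall bound completes the proof.
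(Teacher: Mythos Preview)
Your Gronwall step does not close for non-identity covariance $K$, and this is the essential difficulty the resolvent machinery in Section~\ref{sec:approximate_solutions_stability} is built to handle. Applying It\^o to $B(\HSGD_t)=\langle \HSGD_t^{\otimes 2},K\rangle_{\mathcal{A}^{\otimes 2}}$ gives a drift containing $\langle \WHSGD_t \otimes K\WHSGD_t,K\rangle_{\mathcal{A}^{\otimes 2}}=\langle \WHSGD_t^{\otimes 2},K^2\rangle_{\mathcal{A}^{\otimes 2}}$, because $\nabla\mathcal{R}$ already carries one factor of $K$ (Lemma~\ref{lem:derivative_risk}) and differentiating $X\mapsto\langle X^{\otimes 2},K\rangle$ contributes another. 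Equivalently, averaging \eqref{eq:coupledODElimit} with weights $\lambda_i/d$ produces in the $H$-terms the quantity $\tfrac{1}{d}\sum_i\lambda_i^2\mathrsfs{B}_i(t)$, not $\mathrsfs{B}(t)=\tfrac{1}{d}\sum_i\lambda_i\mathrsfs{B}_i(t)$. So there is no map $\mathcal{F}_s$ of $B$ alone for which your displayed identity for $E_t$ holds; the drift depends on a second-moment functional that $B$ does not determine. Your argument is exactly correct when $K=\Id$ (which is Corollary~\ref{cor:lc}), and breaks otherwise.

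The parenthetical eigenspace-wise route you sketch is closer to viable but also does not finish as written. Each $\beta_i(t)=d\,\langle \omega_i,\HSGD_t\rangle^{\otimes 2}$ does have drift matching $\mathrsfs{B}_i(t)$ (with $H,I$ evaluated at $B(\HSGD_t)$), but now you must run Gronwall on the whole coupled family $(\beta_i-\mathrsfs{B}_i)_{i=1}^d$, and the per-mode martingales $M_{i,t}$ have quadratic variation of order $d\lambda_i\|\omega_i^T\HSGD_t\|^2$, which is $O(1)$ rather than $O(1/d)$ in the modes where $\HSGD_t$ has mass; their $\tfrac{1}{d}$-average does not vanish. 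What the paper does instead is pass to the resolvent statistic $S(W,z)=\langle W^{\otimes 2},R(z;K)\rangle$, whose evolution \emph{is} closed in $z\mapsto \mathcal{S}(t,z)$ thanks to the identity $KR(z;K)=\Id+zR(z;K)$ (see \eqref{eq:ODE_resolvent_2}--\eqref{eq:F}). One then shows $S(\HSGD_t,\cdot)$ is an approximate solution (Proposition~\ref{prop:homogenized_SGD_approx_solution}), applies the stability Proposition~\ref{prop:stability} in the $\|\cdot\|_\Gamma$ norm, and recovers $\mathrsfs{B}(t)$ by contour integration. The martingale bound you propose (sub-Gaussian tails from bounded quadratic variation, union over $O(1)$ entries) is correct and is essentially Proposition~\ref{prop:HSGD_Martingale_Bound}; what is missing is the right state space on which to run Gronwall.
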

\noindent This proposition shows that in high-dimensions, SGD noise becomes effectively continuous (in time) and moreover has a diffusion coefficient that looks like $\tfrac{1}{d} K  \otimes  I(\ip{ \HSGD_t^{\otimes 2}, K}_{\mathcal{A}^{\otimes 2}})$.  The presence of the $1/d$ may at first suggest that the noise is becoming negligible as $d \to \infty$; however, this exactly balances the effect of the growing dimensionality in that it can be viewed as the origin of the non-negligible quadratic-in-$\gamma$ terms, i.e.,\ those with $I(\mathrsfs{B}(t))$, in \eqref{eq:coupledODElimit}.

We also note that we have formulated Proposition~\ref{prop:HSGD_median dynamics} in terms of the first time homogenized SGD has a norm-squared larger than $M$, and hence boundedness of homogenized SGD can be used to show boundededness of the system of ODEs. One can also reverse the roles of these, first showing boundedness for the ODEs to conclude the same for homogenized SGD

\paragraph{Other statistics.} 
While $\mathrsfs{B}$ is the most important statistic to describe if one wishes to capture the dynamical evolution of SGD, there are other natural statistics to consider such as contractions without the covariance $K$ 
(e.g., $\|X\|^2$ and $\|X-X^\star\|^2$) and functions such as $\mathcal{R}_\delta$.
The method transparently extends to the following class:
\begin{assumption}[Smoothness of the statistics, $\varphi$] \label{assumption:statistic} The statistic satisfies a composite structure, 
\begin{equation*}
\begin{gathered}
\varphi(X) = 
g( \ip{W \otimes W,q(K)}_{\mathcal{A}^{\otimes 2}}  ) 
\end{gathered}
\end{equation*}
where ${g} \, : \, \mathcal{O}^+ \otimes \mathcal{O}^+ \to \mathbb{R}$ is $\alpha$-pseudo-Lipschitz on $\mathcal{U}$
and $q$ is a polynomial.
\end{assumption}
For statistics satisfying the above, we may then directly compare SGD,  homogenized SGD, and the deterministic family of ODEs. For the ODEs, the relevant combination is
\[
\phi(t) \defas \frac{1}{d} \sum_{i=1}^d g( \mathrsfs{B}_i(t) q(\lambda_i)).
\]
\begin{theorem}\label{Thm:SGD_HSGD_convergence}
  Suppose 
  Assumptions 
  \ref{ass:pseudo_lipschitz},
  \ref{assumption:scaling},
  \ref{ass:data_normal},
  \ref{assum:learning_rate},
  \ref{assumption:unbiased},
  \ref{assumption:E_loss_pseudo_Lip}
  hold. 
  Let $\vartheta_M$ be the first time 
  that $\langle \HSGD_t^{\otimes 2},K\rangle$ exits $\mathcal{U}_\eta$ (see \eqref{eq:U_eta}) 
  or that $\mathrsfs{N}(t) \geq M.$
For any function $\varphi$, which satisfies Assumption \ref{assumption:statistic},
for any $M$, any $T$, and any $\varepsilon\in (0,1/2)$
there is a constant $C$ (not depending on $d$) so that with overwhelming probability
\begin{equation}\label{eq:phi_phi_final}
\sup_{0\leq t\leq T \wedge \vartheta_M} 
\biggl(
|\varphi(\WHSGD_{t})  - \varphi(X_{\lfloor td \rfloor})|
+|\varphi(\WHSGD_{t})  - \phi(t)|
\biggr)
\leq Cd^{-\varepsilon}.
\end{equation}

\end{theorem}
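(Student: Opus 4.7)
The plan is to reduce both displayed bounds to matrix-level comparisons of the quantity $\langle W^{\otimes 2}, q(K)\rangle$, exploiting pseudo-Lipschitzness of $g$ and routing everything through the ODE. Using the spectral decomposition $q(K) = \sum_i q(\lambda_i)\, \omega_i^{\otimes 2}$, we have $\langle W^{\otimes 2}, q(K)\rangle = \sum_i q(\lambda_i) \langle W^{\otimes 2}, \omega_i^{\otimes 2}\rangle$, which at the ODE level is tracked by $\tfrac{1}{d}\sum_i q(\lambda_i)\, \mathrsfs{B}_i(t)$ (recalling the initial normalization $\mathrsfs{B}_i(0) = d \langle W_0^{\otimes 2}, \omega_i^{\otimes 2}\rangle$). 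Thus the argument splits into three ingredients: (i) generalize Theorem \ref{thm:learning_curves} from $q = \mathrm{id}$ to arbitrary polynomial $q$; (ii) generalize Proposition \ref{prop:HSGD_median dynamics} similarly; (iii) apply pseudo-Lipschitz continuity of $g$ and conclude by triangle inequality through $\phi(t)$.

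For (i), I would mimic the argument for Theorem \ref{thm:learning_curves}: write the one-step increment of $B^{(q)}(W_k) \defas \langle W_k^{\otimes 2}, q(K)\rangle$ under the recursion \eqref{eq:SGD}, then decompose into a predictable drift with respect to $\Filt_k = \sigma(a_1,\dots,a_k)$, a martingale difference, and a higher-order remainder. The drift produces a cross term involving $\langle q(K) a_{k+1}, X_k\rangle \otimes \nabla_x f(r_k;\epsilon_{k+1})$, whose conditional expectation reproduces the $H_{1,t}$-weighted contribution eigenmode by eigenmode, and a Gaussian quadratic piece $a_{k+1}^T q(K) a_{k+1}$ that concentrates around $\tr(q(K))/d = \tfrac{1}{d}\sum_i q(\lambda_i)$ via Hanson--Wright. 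Doob's maximal inequality then controls the martingale uniformly over $k \le Td$, and a Gronwall estimate on the stopping set $\{\mathrsfs{N}(t)\le M\}$ propagates closeness of $B^{(q)}(W_k)$ to $\tfrac{1}{d}\sum_i q(\lambda_i)\mathrsfs{B}_i(t)$ at rate $d^{-\varepsilon}$. Step (ii) is strictly easier: Itô's formula applied to $\langle \HSGD_t^{\otimes 2}, q(K)\rangle$ produces a drift and quadratic variation of exactly the analogous $q$-weighted shape (the diffusion coefficient in \eqref{eq:HSGD} contributes $\tfrac{1}{d}$ times $q$-weighted traces), and BDG plus Gronwall again close the estimate.

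Once matrix-level closeness is established, step (iii) is immediate: on the region where $\mathrsfs{N}(t) \le M$ and $\langle \HSGD_t^{\otimes 2}, K\rangle \in \mathcal{U}_\eta$, all arguments of $g$ lie in a bounded set by the polynomial growth of $q$ on $[0,\bar K]$, so the pseudo-Lipschitz bound on $g$ transfers the matrix estimates to the $\varphi$-level estimates, and triangle inequality through the ODE quantity yields both terms in \eqref{eq:phi_phi_final}. The main obstacle is step (i), the uniform-in-time concentration of the polynomial quadratic forms and cross terms: single-time Hanson--Wright is standard, but obtaining a sup-in-$t$ rate of $d^{-\varepsilon}$ requires combining $Td$ accumulated fluctuations of $a_{k+1}^T q(K) a_{k+1}$ via a martingale maximal inequality with Gronwall, and demands careful handling of the stopping-time geometry so that the buffer $\eta$ in the definition \eqref{eq:U_eta} of $\mathcal{U}_\eta$ is preserved simultaneously for SGD, for HSGD, and for the ODE.
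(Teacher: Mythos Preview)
Your decomposition into steps (i)--(iii) is natural, and step (iii) is fine. The gap is in step (i), and it is structural rather than merely technical: the evolution of $B^{(q)}(W_k) \defas \langle W_k^{\otimes 2}, q(K)\rangle$ is \emph{not closed} in $B^{(q)}$. When you Taylor-expand the increment and take conditional expectation, the gradient term $\langle q(K) X_k, \nabla_X \mathcal{R}(X_k)\rangle$ involves $\nabla_X \mathcal{R}(X_k)$, which (by Lemma~\ref{lem:derivative_risk}) is of the form $K X_k \cdot H_1 + K X^\star \cdot H_2^T$. Contracting against $q(K) X_k$ therefore produces $\langle X_k^{\otimes 2}, q(K)K\rangle = B^{(z\cdot q)}(W_k)$, one degree higher. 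Equivalently, on the ODE side, $\tfrac{\dif}{\dif t}\bigl(\tfrac1d\sum_i q(\lambda_i)\mathrsfs{B}_i\bigr)$ involves $\tfrac1d\sum_i \lambda_i q(\lambda_i)\mathrsfs{B}_i$ through the $\lambda_i$-factor in \eqref{eq:coupledODElimit}. So your ``Gronwall on $B^{(q)}$'' cannot close: the right-hand side of the differential inequality features the difference at degree $\deg q + 1$, and there is no a priori bound relating $|B^{(z\cdot q)}(W_k) - \tfrac1d\sum_i \lambda_i q(\lambda_i)\mathrsfs{B}_i|$ to the corresponding difference at degree $\deg q$.

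The paper resolves this by working not with a fixed polynomial but with the resolvent $S(W,z) = \langle W^{\otimes 2}, (K - zI)^{-1}\rangle$ as a single master statistic indexed by $z$ on a contour $\Gamma$ enclosing the spectrum. The algebraic identity $K R(z;K) = I + z R(z;K)$ absorbs the degree-raising into multiplication by the scalar parameter $z$, so the drift of $S(\cdot,z)$ stays expressed in terms of $S(\cdot,z)$, $\oint S(\cdot,z)\,\dif z$, and $\oint zS(\cdot,z)\,\dif z$; this is exactly the integro-differential equation \eqref{eq:ODE_resolvent_2}. The paper then proves that both $S(W_{\lfloor td\rfloor},\cdot)$ and $S(\HSGD_t,\cdot)$ are approximate solutions (Propositions~\ref{prop:SGD_approx_solution} and~\ref{prop:homogenized_SGD_approx_solution}), invokes a single Gronwall-type stability lemma (Proposition~\ref{prop:stability}) in the $\|\cdot\|_\Gamma$ norm, and finally recovers your $B^{(q)}$ via Cauchy's formula $q(K) = \tfrac{-1}{2\pi i}\oint_\Gamma q(z) R(z;K)\,\dif z$; Proposition~\ref{prop:stability_varphi} is then precisely your step (iii). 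You could salvage your route by tracking the full eigenmode family $d\langle W_k^{\otimes 2}, \omega_i^{\otimes 2}\rangle$ for $i=1,\dots,d$ (each of which \emph{is} closed given $\mathrsfs{B}$), but uniform-in-$i$ control of $d$ martingales is exactly what the resolvent plus net argument (Lemma~\ref{lem:net_argument}) packages cleanly.
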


Finally, we give a simple condition under which one can remove the stopping time $\vartheta_M$ (provided one stays within the good set $\mathcal{U}$), which is to say that we can ensure the ODEs do not go to infinity in finite time.  
\begin{proposition}[Non-explosiveness] \label{prop:nonexplosiveness}
Suppose that 
Assumptions \ref{ass:pseudo_lipschitz},
\ref{assumption:scaling}, \ref{ass:data_normal} and \ref{assum:learning_rate} hold.
Suppose further that the objective function $f$ is $\alpha$-pseudo-Lipschitz with $\alpha=1$.  
Then there is a constant $C$ depending on $\|K\|_{\sigma}$, $\bar{\gamma}$, $\|X_0\|$, $\|X^\star\|$, $L(f)$ so that
\[
\mathrsfs{N}(t) \leq (1+ \mathrsfs{N}(0))e^{C t}
\]
for all time $t$ such that $\mathrsfs{B}(t)$ is in $\mathcal{U}.$
\end{proposition}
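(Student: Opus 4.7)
The argument is a Grönwall estimate on $\mathrsfs{N}(t)$.  The crucial simplification is that $\mathrsfs{B}_{22,i}(t)\equiv\mathrsfs{B}_{22,i}(0)$ is constant in time (it has zero right-hand side in \eqref{eq:coupledODElimit}), so $\tfrac{d}{dt}\mathrsfs{N}(t)=\tfrac{1}{d}\sum_i\tfrac{d}{dt}\tr(\mathrsfs{B}_{11,i}(t))$.  Taking the trace in the first line of \eqref{eq:coupledODElimit} and averaging in $i$ using $\tfrac{1}{d}\sum_i\lambda_i\mathrsfs{B}_i(t)=\mathrsfs{B}(t)$ and $\tfrac{1}{d}\tr(K)\le\|K\|_\sigma$, I get
\[
\tfrac{d}{dt}\mathrsfs{N}(t)
=-4\gamma_t\tr(\mathrsfs{B}_{11}(t)H_{1,t})-2\gamma_t\tr(\mathrsfs{B}_{12}(t)H_{2,t})-2\delta\gamma_t\mathrsfs{N}_{11}(t)+\gamma_t^2\tr(I_t)\tfrac{\tr(K)}{d},
\]
where $\mathrsfs{N}_{11}(t)=\tfrac{1}{d}\sum_i\tr(\mathrsfs{B}_{11,i}(t))\le\mathrsfs{N}(t)$.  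The goal is to bound the right-hand side by $C(1+\mathrsfs{N}(t))$, after which Grönwall combined with $\mathrsfs{N}(0)=\|X_0\|^2+\|X^\star\|^2$ yields the claim.

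The next step is to translate the pseudo-Lipschitz assumption into moment bounds.  Taking difference quotients in Assumption~\ref{ass:pseudo_lipschitz} with $\alpha=1$ gives $\|\nabla_\ex f(r;\epsilon)\|\le L(f)(1+2\|r\|+\|\epsilon\|)$.  For $r$ centered Gaussian with covariance $\mathrsfs{B}(t)$ and $\epsilon\sim N(0,\Id_{\mathcal{T}})$, $\Exp\|r\|^2=\tr(\mathrsfs{B}(t))\le\|K\|_\sigma\mathrsfs{N}(t)$, so
\[
\tr(I_t)\le\Exp\|\nabla_\ex f(r;\epsilon)\|^2\le C(1+\|K\|_\sigma\mathrsfs{N}(t)).
\]
A Gaussian integration-by-parts estimate yields $\|H_t\|_\sigma\le C'$ (bounded on $\mathcal{U}$, as discussed below).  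The linear ODE for $\mathrsfs{B}_{12,i}$ in \eqref{eq:coupledODElimit} has $\mathrsfs{B}_{22,i}$ constant as its inhomogeneity, and a separate Grönwall gives $\|\mathrsfs{B}_{12}(t)\|\le C''(1+\mathrsfs{N}(t)^{1/2})$ using $\|\mathrsfs{B}_{12,i}(0)\|\le\sqrt{\tr(\mathrsfs{B}_{11,i}(0))\tr(\mathrsfs{B}_{22,i}(0))}$ (Cauchy--Schwarz from the rank-one PSD initial structure).  Since $\|\mathrsfs{B}_{11}(t)\|\le\|K\|_\sigma\mathrsfs{N}(t)$, each term in the displayed identity is $\le C(1+\mathrsfs{N}(t))$; averaging over $i$ yields $\tfrac{d}{dt}\mathrsfs{N}(t)\le C(1+\mathrsfs{N}(t))$ and Grönwall concludes.

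The main obstacle is the bound on $\|H_t\|=\|\nabla h(\mathrsfs{B}(t))\|$.  Pseudo-Lipschitz with $\alpha=1$ controls $f$ and $\nabla_\ex f$ but not $\nabla_\ex^2 f$, so the naive Stein identity $\partial_{B_{jk}}h=\tfrac{1}{2}\Exp[\partial_j\partial_k f(r;\epsilon)]$ is not directly applicable.  The cleanest workaround integrates by parts only once, writing $\nabla h(B)\cdot E=\Exp[\nabla_\ex f(r;\epsilon)^\top D(B,E)z]$ where $D$ is the Fréchet derivative of $B\mapsto B^{1/2}$ and $z\sim N(0,\Id)$; one absorbs the resulting $B^{-1/2}$ factors using that $\mathrsfs{B}(t)$ stays in $\mathcal{U}$ (where one can appeal to a spectral lower bound, or directly to the pseudo-Lipschitzness of $\nabla h$ from Assumption~\ref{assumption:unbiased}).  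An alternative route bypasses square-root derivatives entirely via the chain-rule identity $\nabla_X\mathcal{R}(X)=\Exp[a\otimes\nabla_\ex f(r;\epsilon)]$ from Lemma~\ref{lem:derivative_loss}, which gives direct Gaussian-moment control over the $K$-weighted contractions of $H$ that actually appear in the trace dynamics above.
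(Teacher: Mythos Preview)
Your ``alternative route'' in the final paragraph is the right one and is precisely what the paper does; your primary line of attack through a bound on $\|H_t\|$ has a real gap.

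The gap: the proposition only assumes Assumptions~\ref{ass:pseudo_lipschitz}--\ref{assum:learning_rate}, so the pseudo-Lipschitz bound on $\nabla h$ from Assumption~\ref{assumption:unbiased} is not available.  Even if it were, with $\alpha=1$ it would yield only $\|H_t\|\le C(1+\|\mathrsfs{B}(t)\|^2)\le C(1+\mathrsfs{N}(t)^2)$, so $|\tr(\mathrsfs{B}_{11}H_{1,t})|$ could grow like $\mathrsfs{N}(t)^3$ and the differential inequality would be cubic rather than linear---Gr\"onwall then no longer precludes finite-time blowup.  Your square-root workaround needs $\mathrsfs{B}(t)$ nondegenerate, which is not assumed.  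The ``separate Gr\"onwall'' for $\mathrsfs{B}_{12}$ inherits the same defect, since the $\mathrsfs{B}_{12,i}$-ODE in \eqref{eq:coupledODElimit} also carries $H_{1,t}$ and $H_{2,t}$.

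The paper never touches $\|H_t\|$.  Corollary~\ref{cor:NODE} (proved by matching finite-variation parts of two semimartingale decompositions of homogenized SGD, Lemma~\ref{lem:D2}) gives directly
\[
\frac{\dif}{\dif t}\mathrsfs{N}(t)=-\gamma_t A_0(\mathrsfs{B}(t))+\frac{\gamma_t^2}{2d}\tr(K)\,I(\mathrsfs{B}(t)),
\qquad A_0(\mathrsfs{B})=\Exp_{a,\epsilon}[\langle x,\nabla_\ex f(x\oplus x^\star)\rangle],
\]
with $(x\oplus x^\star)\sim N(0,\mathrsfs{B})$.  In your notation, the combination $4\tr(\mathrsfs{B}_{11}H_{1,t})+2\tr(\mathrsfs{B}_{12}H_{2,t})$ is exactly $A_0$ (this is your chain-rule identity $\nabla_X\mathcal{R}=\Exp[a\otimes\nabla_\ex f]$, contracted with $X$), so one never needs the individual factors.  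From $\alpha=1$ one has $\|\nabla_\ex f(r;\epsilon)\|\le L(f)(1+\|r\|+\|\epsilon\|)$, and Cauchy--Schwarz yields $|A_0|,\,I\le C(1+\mathrsfs{N}(t))$ immediately.  The $-2\delta\gamma_t\mathrsfs{N}_{11}(t)$ term is nonpositive and can be dropped.  The linear Gr\"onwall then gives the bound, with no estimate on $\nabla h$, on $\mathrsfs{B}_{12}$, or on any block separately.
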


This leads us to the following simplified version of Theorem \ref{Thm:SGD_HSGD_convergence}
\begin{corollary}\label{cor:SGD_HSGD_convergence}
  Suppose 
  Assumptions 
  \ref{ass:pseudo_lipschitz},
  \ref{assumption:scaling},
  \ref{ass:data_normal},
  \ref{assum:learning_rate},
  \ref{assumption:unbiased},
  \ref{assumption:E_loss_pseudo_Lip}
  hold. 
  Suppose further that $\mathcal{U} = \mathcal{O}^+ \otimes \mathcal{O}^+$
  and that $f$ is $\alpha$-pseudo-Lipschitz with $\alpha \leq 1$.
For any function $\varphi$, which satisfies Assumption \ref{assumption:statistic},
any $T$, and any $\varepsilon\in (0,1/2)$
there is a constant $C$ (not depending on $d$) so that with overwhelming probability
\begin{equation*}
\sup_{0\leq t\leq T} 
\biggl(
|\varphi(\WHSGD_{t})  - \varphi(X_{\lfloor td \rfloor})|
+|\varphi(\WHSGD_{t})  - \phi(t)|
\biggr)
\leq Cd^{-\varepsilon}.
\end{equation*}
\end{corollary}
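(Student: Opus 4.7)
The plan is to observe that Corollary \ref{cor:SGD_HSGD_convergence} is a direct specialization of Theorem \ref{Thm:SGD_HSGD_convergence}, the only gap being that we must show that under the strengthened hypotheses the stopping time $\vartheta_M$ can be arranged to exceed $T$ (deterministically on the ODE side, and thereby with overwhelming probability on the SGD/homogenized-SGD sides via the theorem itself).

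First, I would unpack the definition of $\vartheta_M$. Recall $\vartheta_M$ is the first time that either $\langle \HSGD_t^{\otimes 2},K\rangle$ exits $\mathcal{U}_\eta$, or $\mathrsfs{N}(t) \geq M$. Since we are assuming $\mathcal{U} = \mathcal{O}^+ \otimes \mathcal{O}^+$, the complement $\mathcal{U}^c$ is empty and so by \eqref{eq:U_eta} we have $\mathcal{U}_\eta = \mathcal{O}^+ \otimes \mathcal{O}^+$ for every $\eta>0$. Hence the exit-from-$\mathcal{U}_\eta$ clause is vacuous, and $\vartheta_M$ is determined entirely by when $\mathrsfs{N}(t)$ first crosses level $M$. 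Moreover, the ODE system \eqref{eq:coupledODElimit} never exits $\mathcal{U}$, so it is globally defined in time.

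Next, I would invoke Proposition \ref{prop:nonexplosiveness}. Since $f$ is $\alpha$-pseudo-Lipschitz with $\alpha \leq 1$, it is also $1$-pseudo-Lipschitz (using $x^{\alpha} \leq 1 + x$ for $x \geq 0$, at the cost of enlarging the constant $L(f)$), so the hypotheses of the proposition hold. Combined with the previous step ($\mathrsfs{B}(t) \in \mathcal{U}$ for all $t$), the proposition yields
\[
\mathrsfs{N}(t) \leq (1 + \mathrsfs{N}(0))\, e^{C t}, \qquad \text{for all } t \geq 0,
\]
for a constant $C$ independent of $d$. The initial value satisfies
\[
\mathrsfs{N}(0) = \frac{1}{d}\sum_{i=1}^d \tr\bigl(\mathrsfs{B}_i(0)\bigr) = \sum_{i=1}^d \|\langle W_0, \omega_i\rangle\|^2 = \|W_0\|^2 \leq \|X_0\|^2 + \|X^\star\|^2,
\]
which is bounded independently of $d$ by Assumption \ref{assumption:scaling}. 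Hence there is a constant $M_0$, depending only on $T$ and on the fixed problem parameters, such that $\mathrsfs{N}(t) \leq M_0$ for all $t \in [0,T]$.

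Finally, take $M \defas M_0 + 1$. With this choice we have $\vartheta_M > T$, so $T \wedge \vartheta_M = T$, and applying Theorem \ref{Thm:SGD_HSGD_convergence} with this $M$ and any $\eta > 0$ yields \eqref{eq:phi_phi_final} with the supremum taken over all of $[0,T]$, which is exactly the statement of the corollary. The only substantive ingredient beyond the theorem is non-explosiveness of $\mathrsfs{N}(t)$, which is precisely why the assumption $\alpha \leq 1$ is imposed: it makes the drift of \eqref{eq:coupledODElimit} grow at most linearly in the size of $\mathrsfs{B}_i$, so Grönwall control applies; without this growth condition one cannot rule out finite-time blow-up and the stopping time $\vartheta_M$ is genuinely needed.
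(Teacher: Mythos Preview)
Your proposal is correct and follows exactly the approach the paper intends: the corollary is placed immediately after Proposition~\ref{prop:nonexplosiveness} with the remark ``This leads us to the following simplified version of Theorem~\ref{Thm:SGD_HSGD_convergence},'' and your argument supplies precisely those details --- using $\mathcal{U}=\mathcal{O}^+\otimes\mathcal{O}^+$ to make the exit clause vacuous, invoking non-explosiveness (via the $\alpha\le 1$ hypothesis) to bound $\mathrsfs{N}(t)$ on $[0,T]$, and then choosing $M$ large enough that $\vartheta_M>T$ so Theorem~\ref{Thm:SGD_HSGD_convergence} applies on the full interval.
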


\begin{remark}[Longer time horizons]
In cases where Assumptions \ref{assumption:unbiased},  \ref{assumption:E_loss_pseudo_Lip} and \ref{assumption:statistic} hold with $\alpha  = 0$, i.e. Lipschitz functions, one can show that Eq. \eqref{eq:phi_phi_final} holds for any $Td < c d\log d$ with some fixed constant $c>0$, which depends on the operator norm of $K$ and the Lipschitz constants of $\varphi$ and its derivatives. 
\end{remark}

\begin{remark}[Other directions]
Suppose one wishes to consider overlaps of the state $X_k$ of SGD with some other deterministic matrix of directions $\hat{X}$ in $\mathcal{A} \otimes \R^p$.  This is already covered by Theorem \ref{Thm:SGD_HSGD_convergence}, as it is possible to extend $X^\star$ by making the replacement $X^\star \to X^\star \oplus \hat{X}.$  The outer function $f$ should then not consider these additional direction, but Theorem \ref{Thm:SGD_HSGD_convergence} gives a deterministic equivalent.  For example, one may choose $\hat{X}$ to be a minimizer of $\mathcal{R}_\delta(X)$  and then $\varphi(X) = \|X-\hat{X}\|^2$.
\end{remark}

\begin{figure}[t] \centering
\includegraphics[scale =0.3]
{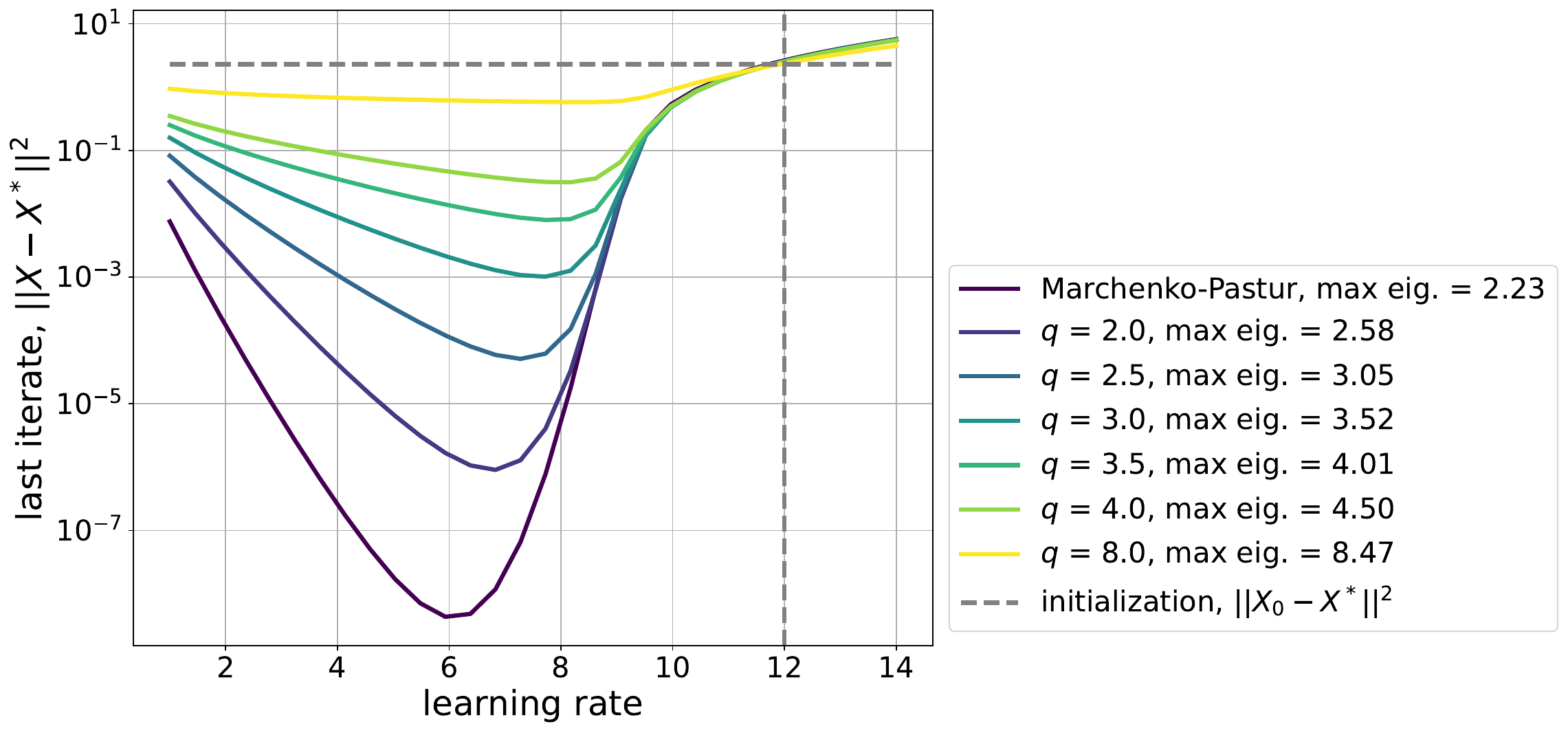}
\caption{\textbf{Descent and critical learning rate} on (binary, noiseless) logistic regression problem. Plotted are the last value of $\mathrsfs{D}^2(t)$ at time $t = 30$, $\mathrsfs{D}^2(t_{30})$, for binary, noiseless (i.e., $\epsilon = 0$) logistic regression problem. From Theorem~\ref{Thm:SGD_HSGD_convergence}, $\mathrsfs{D}^2(t) \approx \|X_{\lfloor td \rfloor}-X^{\star}\|^2$ where $X_{\lfloor td \rfloor}$ are the iterates of SGD. Initialization was random, $X_0 \sim N(0, I_{d})$, and then normalized so that $\|X_0\| = \sqrt{1.1}$ and $X^{\star} \sim \tfrac{1}{\sqrt{d}} N(0, I_{d})$ where $d = 1000$. Covariance matrix was constructed by specifying the spectrum, $\sigma_i \sim \text{Unif}(1.0,2.0)$, $i = 1, \hdots, d = 1000$ and setting the covariance matrix $K = \text{diag}( \sigma_i^{2q} \, : \, i = 1, \hdots, 1000)$.  Also plotted is a covariance matrix with Marchenko-Pastur spectrum (parameter $4$, darkest line). For all covariance matrices, the matrix $K$ was then normalized so that the average eigenvalue of $K$, $\tfrac{1}{d} \tr(K) = 1.0$. As the power $q$ in the spectrum of K, $\sigma_i$, increases, the largest eigenvalue of $K$ also increases while the average eigenvalue is fixed. In spite of $K$ having varying spectral distributions, all the curves reach the same (gray, dashed) initialization line at the same learning rate, $\gamma \approx 12$, suggesting that there is a universal learning rate, depending on the $\tfrac{1}{d}\tr(K)$, that dictates descent. Indeed, this supports our prediction in Corollary~\ref{cor:convexbounded} -- the learning rate threshold for descent \eqref{eq:learning_rate_descent_convex} seems to be controlled by the average eigenvalue and \textit{not} the max eigenvalue of $K$. The optimal learning rates do vary as max eigenvalue changes, as do the rates of convergence. This is also predicted, given that logistic regression satisfies a local strong convexity result, which degrades as the largest eigenvalue changes (see Proposition~\ref{prop:logistic_convergence_rate}) . 
} \label{fig:logistic_descent_critical_learning_rate}
\end{figure}

\subsection{Optimality and descent conditions for SGD \label{susec:SGD_optimality}}
An important part of stochastic optimization is understanding when the distance to optimality decreases; due to the intrinsic stochasticity it is usually too much to ask any measure of suboptimality to decrease at each iteration.  In our setting, the deterministic equivalent gives a method of producing a measure of suboptimality which can be reasonably expected to decrease monotonically and is uniformly close to a traditional metric of suboptimality applied to SGD; this monotone decrease of suboptimality we refer to as \emph{descent}.

Typically in the literature (see \cite{bottou2018optimization} and references therein), sufficient conditions for descent are formulated as upper bounds on the learning rates which depend on the operator norm of the covariance matrix $\|K\|_{\sigma}$, or even the smallest eigenvalue of $K$.\footnote{In fact, typical descent guarantees assume use smoothness or strong convexity constants of the risk $\mathcal{R}$, which when translated to this context involve the smallest and largest eigenvalues of $K$.}  
Instead, our analysis shows for a wide class of GLMs and multi-index models, including convex and strongly convex objectives, that the convergence rate and learning rate thresholds for the descent of SGD can be relaxed to the average eigenvalue of the covariance matrix (i.e., $\tfrac{1}{d} \tr(K)$). This is a significant improvement, as many data sets have $\|K\|_{\sigma} \gg \tfrac{1}{d} \tr(K)$.  Moreover, we can characterize the exact learning rate threshold for descent.

All these conclusions will be drawn by considering 
the evolution of various quadratic functionals.
For simplicity we work in the case $\mathcal{O}=\mathcal{T},$ $\delta=0$ 
and the case that $X^\star$ is itself a minimizer of the risk $\mathcal{R}$. 
Moreover, we assume a result about our outer function $f$, that is, it attains a \textit{global minimizer} at the same point as the global minimizer of the risk $\mathcal{R}$. 
\begin{assumption}[Risk and loss minimizer] \label{assumption:risk_loss_minimizer_1_main} Suppose that
  \[
    X^{\star} \in \argmin_{X} \big \{ \mathcal{R}(X) = \EE_{a, \epsilon}[f(\ip{X,a}_{\mathcal{A}} \oplus \ip{ X^{\star},a}_{\mathcal{A}})] \big \}
  \]
  exists and has norm bounded independent of $d.$
  Then one has, 
  \[ 
    \ip{X^{\star},a}_{\mathcal{A}} \in \argmin_x 
    \{
    f(x \oplus \ip{X^{\star},a}_{\mathcal{A}}) \}, \qquad \text{for almost surely $a \sim N(0, K)$.}
  \]
\end{assumption}
\noindent While at first, this assumption seems quite strong, in fact, in a typical student-teacher setup when label noise is $0$ (i.e., $\epsilon = 0$), where the targets have the same model as the outputs, the assumption is satisfied.   Our goal here is not to be exhaustive, but simply to illustrate that our framework admits a nontrivial and useful analysis and which gives nontrivial conclusions for the optimization theory of these problems.

For the analysis, we use extensively our coupled ODEs, $(\mathrsfs{B}_i(t) \, : \, i = 1, \hdots, d)$.  In particular, we consider the deterministic counterpart for $\|X-X^{\star}\|^2$.
When evolving according to the solution of \eqref{eq:coupledODElimit}, this is exactly:
\begin{equation} \label{eq:deterministic_distance_main}
\begin{aligned}
    \mathrsfs{D}^2(t) 
    &=
    \frac{1}{d}\sum_{i=1}^d 
    \tr\biggl(
    \mathrsfs{B}_{11,i}(t) 
    - 2\mathrsfs{B}_{12,i}(t) 
    +\mathrsfs{B}_{22,i}(t) 
    \biggr).
\end{aligned}
\end{equation}

We will show that for standard outer function assumptions and an upper bound on the learning rate $\gamma_t < \bar{\gamma}$ that the \textit{function $\mathrsfs{D}^2(t)$ is decreasing in $t$}. Since $\|X-X^{\star}\|^2$ is a statistic that satisfies Assumption~\ref{assumption:statistic}, fixing a $T > 0$, we have by Theorem~\ref{Thm:SGD_HSGD_convergence} for some $\varepsilon > 0$,
\[
\sup_{0 \le t \le T} | \|X_{\lfloor td \rfloor}-X^{\star}\|^2 - \mathrsfs{D}^2(t) | \le d^{-\varepsilon} \quad \text{with overwhelming probability} 
\]
In this way, $\mathrsfs{D}^2(t) \approx \|X_{\lfloor td \rfloor}-X^{\star}\|^2$ and since $\mathrsfs{D}^2(t)$ is decreasing, so is the distance to optimality of SGD. Consequently, we say SGD is \textit{descending} if $\mathrsfs{D}^2(t)$ is decreasing. 

As it turns out, the evolution in time of $\mathrsfs{D}^2$ is particularly simple,
as it solves the differential equation
\begin{equation}\label{eq:DODE}
\frac{\dif}{\dif t} \mathrsfs{D}^2(t) 
= -\gamma_t A( \mathrsfs{B}(t)) + \frac{\gamma^2_t}{2d} \tr(K) I(\mathrsfs{B}(t)),
\quad
\left\{
\begin{aligned}
&A( \mathrsfs{B}) = \Exp_{a,\epsilon} [\ip { x-x^\star, \nabla_x f(x\oplus x^\star)}], \\
&I( \mathrsfs{B}) = \Exp_{a,\epsilon} [\|\nabla_x f(x\oplus x^\star)\|^2], \quad \text{where} \\
&(x \oplus x^\star) \sim N(0, \mathrsfs{B}).
\end{aligned}
\right.
\end{equation}
See Lemma \ref{lem:D2} for a proof.  Thus the exact local descent threshold for $\mathrsfs{D}^2$ is given by
\begin{equation}\label{eq:stability}
\gamma_t \leq \gamma^{\text{stable}}_t \defas \frac{A( \mathrsfs{B}(t))}{\tfrac{\tr(K)}{2d}  I(\mathrsfs{B}(t))}.
\end{equation}
This should be compared to the \emph{Polyak step-size} in convex optimization.



\begin{proposition}[Descent of SGD] \label{prop:descent_SGD_main} Suppose the Assumptions of Theorem~\ref{Thm:SGD_HSGD_convergence} hold and suppose that $\mathcal{U} = \mathcal{O}^+ \otimes \mathcal{O}^+$. 
Moreover, suppose the following inequality holds for some constant $q > 0$,
\begin{equation} \label{eq:stability_2_main}
    q \cdot I(\mathrsfs{B}) \le A(\mathrsfs{B})
    \quad \text{ for all $\mathrsfs{B}$. }
\end{equation}
If the learning rate $\displaystyle \gamma_t < \bar{\gamma}$ for all $t \ge 0$, where 
\begin{equation} 
\bar{\gamma} = \frac{2 q}{\tfrac{1}{d} \tr (K)},
\end{equation}
then, the function $\mathrsfs{D}^2(t)$ defined in \eqref{eq:deterministic_distance_main} is decreasing for all $t \ge 0$. 
Moreover, for some $\varepsilon > 0$ and any $T > 0$, the iterates of SGD $\{X_k\}$ satisfy
\begin{equation}
\label{eq:descent_101_main}
\sup_{0 \le t \le T} | \|X_{\lfloor td \rfloor }-X^{\star}\|^2 - \mathrsfs{D}^2(t) | \le d^{-\varepsilon}, \quad \text{with overwhelming probability.}
\end{equation}
\end{proposition}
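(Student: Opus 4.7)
The plan is to derive monotonicity of $\mathrsfs{D}^2(t)$ directly from the scalar ODE \eqref{eq:DODE} (i.e.\ from Lemma~\ref{lem:D2}) using the assumed inequality \eqref{eq:stability_2_main}, and then transfer the conclusion to the SGD iterates via Theorem~\ref{Thm:SGD_HSGD_convergence} applied to the statistic $\varphi(X)=\|X-X^{\star}\|^2$. The one real subtlety is to show that the stopping time $\vartheta_M$ in Theorem~\ref{Thm:SGD_HSGD_convergence} does not cut off the interval $[0,T]$; I handle this by bootstrapping a dimension-free a priori bound on $\mathrsfs{N}(t)$ from the monotonicity of $\mathrsfs{D}^2$.

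Step one: substituting $q\cdot I(\mathrsfs{B}(t))\le A(\mathrsfs{B}(t))$ into \eqref{eq:DODE} and using that $I(\mathrsfs{B}(t)) = \Exp\|\nabla_x f\|^2\ge 0$ is nonnegative gives
\als{
\tfrac{\dif}{\dif t}\mathrsfs{D}^2(t)
\le -\gamma_t q\, I(\mathrsfs{B}(t)) + \tfrac{\gamma_t^2}{2d}\tr(K)\, I(\mathrsfs{B}(t))
= \gamma_t I(\mathrsfs{B}(t))\left[\tfrac{\gamma_t}{2d}\tr(K) - q\right].
}
Under the hypothesis $\gamma_t<\bar\gamma = 2q/(\tfrac{1}{d}\tr(K))$ the bracket is strictly negative, so $\mathrsfs{D}^2$ is non-increasing and $\mathrsfs{D}^2(t)\le\mathrsfs{D}^2(0)$ for all $t\ge 0$.

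Step two: I bound $\mathrsfs{N}(t)$. First, each per-eigenvalue block matrix $\mathrsfs{B}_i(t)\in(\mathcal{O}^+)^{\otimes 2}$ is positive semidefinite. At $t=0$, $\mathrsfs{B}_i(0) = d\cdot\langle W_0,\omega_i\rangle_{\mathcal{A}}^{\otimes 2}$ is a rank-$1$ PSD matrix, and the linear ODE system \eqref{eq:coupledODElimit} (treating $H_t,I_t$ as prescribed time-dependent data) is of Lyapunov type with a PSD source $\lambda_i\gamma_t^2 I_t$ in the $(1,1)$-block, so the PSD cone is invariant. (Equivalently, one may identify $\mathrsfs{B}_i(t) = \Exp[d\cdot\langle\HSGD_t^{\otimes 2},\omega_i^{\otimes 2}\rangle]$ via Ito's formula on homogenized SGD \eqref{eq:HSGD}, exhibiting it as an expectation of rank-$1$ PSD matrices.) Since $\mathcal{O}=\mathcal{T}$, the block Cauchy--Schwarz inequality gives $|\tr(\mathrsfs{B}_{12,i})|\le\sqrt{\tr(\mathrsfs{B}_{11,i})\tr(\mathrsfs{B}_{22,i})}$, which rearranges to
\als{
\sqrt{\tr(\mathrsfs{B}_{11,i})}\le \sqrt{\tr(\mathrsfs{B}_{22,i})}+\sqrt{\tr(\mathrsfs{B}_{11,i}-2\mathrsfs{B}_{12,i}+\mathrsfs{B}_{22,i})}.
}
Squaring, averaging over $i$, and using $\tfrac{1}{d}\sum_i\tr(\mathrsfs{B}_{22,i})=\|X^\star\|^2$ together with $\mathrsfs{D}^2(t)\le\mathrsfs{D}^2(0)$ yields
\als{
\mathrsfs{N}(t) = \tfrac{1}{d}\sum_{i=1}^d\tr(\mathrsfs{B}_{11,i}(t)) + \|X^\star\|^2
\le 3\|X^\star\|^2 + 2\mathrsfs{D}^2(0) \defas M_0,
}
a constant independent of $d$ and $t$ by Assumption~\ref{assumption:scaling}.

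Step three: conclude via Theorem~\ref{Thm:SGD_HSGD_convergence}. The statistic $\varphi(X)=\|X-X^\star\|^2$ satisfies Assumption~\ref{assumption:statistic} with $q(\lambda)\equiv 1$ (so that $\langle W^{\otimes 2},q(K)\rangle = \langle W,W\rangle_{\mathcal{A}}$) and $g:(\mathcal{O}^+)^{\otimes 2}\to\R$ the linear (hence Lipschitz) map sending a block matrix $M$ to $\tr(M_{11}-2M_{12}+M_{22})$ relative to the decomposition $\mathcal{O}^+=\mathcal{O}\oplus\mathcal{T}$; by construction the $\phi$ built out of $g$ and $q$ in Theorem~\ref{Thm:SGD_HSGD_convergence} is exactly $\mathrsfs{D}^2(t)$. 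Since $\mathcal{U}=\mathcal{O}^+\otimes\mathcal{O}^+$ is the full space the $\mathcal{U}$-exit condition in $\vartheta_M$ is vacuous, and choosing $M=M_0+1$ makes $\vartheta_M\ge T$ deterministically, so the theorem yields \eqref{eq:descent_101_main}. The main technical obstacle I foresee is the rigorous verification of PSD preservation for $\mathrsfs{B}_i(t)$ along \eqref{eq:coupledODElimit}: morally immediate because $\mathrsfs{B}_i$ \emph{is} a covariance, but the cleanest rigorous argument proceeds either through a direct cone-invariance check for the Lyapunov-type ODE or through the Ito identification with an expectation over realizations of $\HSGD_t$.
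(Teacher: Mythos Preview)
Your proposal is correct and follows essentially the same route as the paper. Step one is verbatim the paper's computation from Lemma~\ref{lem:D2}. In Step two the paper obtains the identical bound $\mathrsfs{N}(t)\le 2\mathrsfs{D}^2(t)+3\|X^\star\|^2$ by working directly with the average $\mathrsfs{P}(t)=\tfrac{1}{d}\sum_i\mathrsfs{B}_i(t)$ and the trace inequality $\tr\bigl(\mathrsfs{P}(t)\,[\begin{smallmatrix}I&-2I\\-2I&4I\end{smallmatrix}]\bigr)\ge 0$, which is algebraically equivalent to your per-$i$ block Cauchy--Schwarz followed by $(x+y)^2\le 2x^2+2y^2$; like you, the paper simply asserts positive semidefiniteness of $\mathrsfs{B}_i(t)$ without proving it, so your concern there is real but not a gap relative to the original. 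In Step three the paper invokes the packaged Corollary~\ref{cor:statistic_bounded} rather than unwinding $\vartheta_M$ by hand, but the content is the same.
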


The average eigenvalue's significant role in the threshold is supported numerically in Figure~\ref{fig:logistic_descent_critical_learning_rate} on a binary, noiseless logistic regression problem. The threshold for descent, as indicated by the dashed gray line, occurs at the same learning rate for a family of covariances with average eigenvalue $1$ and varying largest eigenvalue. 

We shall show that under further structural assumptions, it is possible to check the conditions of Proposition \ref{prop:descent_SGD_main}.
Moreover, we shall put these assumptions on the \emph{outer} function $f$, as opposed to the whole objective function $\mathcal{R}$.  
To start, we shall suppose that $f$ is \textit{$\hat{L}$-smooth}. This type of assumption is typical of many optimization convergence algorithms and it is dimension-independent in our setting.

\begin{definition}[$\hat{L}$-smoothness of outer function $f$] A $C^1$-smooth function $ f \, : \, \mathcal{O} \to \mathbb{R}$ is \textit{$\hat{L}(f)$-smooth} if the following quadratic upper bound holds for any $x, \hat{x} \in \mathcal{O}$
\begin{equation} \label{eq:quadratic_upper_bound_1_main}
     f(\hat{x}) \le   f(x) + \ip{\nabla_x f(x), \hat{x} - x} + \tfrac{\hat{L}(f)}{2} \|\hat{x}-x\|^2. 
\end{equation}
\end{definition}
\noindent Note that if $\nabla_x  f$ is $\hat{L}(f)$-Lipschitz, i.e., $\|\nabla f(x) - \nabla f(\hat{x})\| \le \hat{L}(f) \| x - \hat{x}\|$, then the inequality \eqref{eq:quadratic_upper_bound_1_main} holds with constant $\hat{L}$. Suppose $\displaystyle x^{\star} \in \argmin_x \{f(x)\}$ exists. An immediate consequence of \eqref{eq:quadratic_upper_bound_1_main} is that 
\begin{equation} \label{eq:L_smoothness_bound_main}
\frac{1}{2\hat{L}(f)}\| \nabla f(x) \|^2 \leq  f(x) -f(x^{\star})\leq  \frac{\hat{L}(f)}{2}\|x-x^\star\|^2.
\end{equation}

\begin{corollary}[Descent of convex, $\hat{L}(f)$-smooth outer function] Fix a constant $T > 0$. Suppose the Assumptions of Theorem~\ref{Thm:SGD_HSGD_convergence} hold and suppose that $\sup_{0 \le t \le T} \sup_{V \in \mathcal{U}^c} \| \mathrsfs{B}(t)- V\| > \eta$. In addition, let the outer function $f \, : \, \mathcal{O} \otimes \mathcal{T} \otimes \mathcal{T} \to \mathbb{R}$ be a convex and $\hat{L}(f)$-smooth function with respect to $x \in \mathcal{O}$. Suppose $X^{\star} \in argmin_{X} \{\mathcal{R}(X)\}$ exists bounded, independent of $d$ and Assumption~\ref{assumption:risk_loss_minimizer_1_main} holds. Then the inequality \eqref{eq:stability_2_main} holds with $q = \tfrac{1}{2\hat{L}(f)}$. Moreover, if $\displaystyle  \gamma_t \le \bar{\gamma}$ for all $t$ where
\begin{equation} \label{eq:learning_rate_descent_convex}
\bar{\gamma} = \frac{1}{\hat{L}(f)\tfrac{1}{d} \tr(K)}, 
\end{equation}
then, the function $\mathrsfs{D}^2(t)$ defined in \eqref{eq:deterministic_distance_main} is decreasing for all $t \ge 0$. Moreover, for some $\varepsilon > 0$, the iterates of SGD $\{X_k\}$ satisfy
\[
\sup_{0 \le t \le T} | \|X_{\lfloor td \rfloor }-X^{\star}\|^2 - \mathrsfs{D}^2(t) | \le d^{-\varepsilon}, \quad \text{with overwhelming probability.}
\]

\label{cor:convexbounded}
\end{corollary}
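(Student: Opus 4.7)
The plan is to reduce the corollary to Proposition~\ref{prop:descent_SGD_main} by verifying the stability inequality \eqref{eq:stability_2_main} with constant $q = 1/(2\hat{L}(f))$. Once this is done, the threshold produced by Proposition~\ref{prop:descent_SGD_main} is $\bar{\gamma} = 2q/(\tfrac{1}{d}\tr(K)) = 1/(\hat{L}(f)\cdot\tfrac{1}{d}\tr(K))$, which matches \eqref{eq:learning_rate_descent_convex}, and the monotone decrease of $\mathrsfs{D}^2(t)$ together with the overwhelming-probability approximation bound is then inherited from that proposition.

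The heart of the argument is a pointwise co-coercivity estimate: for almost every $(x\oplus x^\star)\sim N(0,\mathrsfs{B})$,
\[
\langle x - x^\star, \nabla_x f(x\oplus x^\star)\rangle \;\geq\; \frac{1}{2\hat{L}(f)}\,\|\nabla_x f(x\oplus x^\star)\|^2.
\]
By Assumption~\ref{assumption:risk_loss_minimizer_1_main}, for almost every $a\sim N(0,K)$ the point $x^\star = \langle X^\star, a\rangle_{\mathcal{A}}$ is a minimizer of the convex, $\hat{L}(f)$-smooth map $x\mapsto f(x\oplus x^\star)$. Applying the smoothness bound \eqref{eq:L_smoothness_bound_main} with minimizer $x^\star$ gives
\[
\tfrac{1}{2\hat{L}(f)}\,\|\nabla_x f(x\oplus x^\star)\|^2 \;\leq\; f(x\oplus x^\star) - f(x^\star\oplus x^\star),
\]
while convexity of $f(\cdot\oplus x^\star)$ (the tangent-line lower bound at $x$) gives
\[
f(x\oplus x^\star) - f(x^\star\oplus x^\star) \;\leq\; \langle \nabla_x f(x\oplus x^\star),\, x - x^\star\rangle.
\]
Chaining the two inequalities yields the pointwise estimate, and taking the Gaussian expectation produces $A(\mathrsfs{B}) \geq \tfrac{1}{2\hat{L}(f)}\, I(\mathrsfs{B})$, i.e.\ \eqref{eq:stability_2_main} with the claimed $q$.

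A minor bookkeeping step separates this corollary from the literal statement of Proposition~\ref{prop:descent_SGD_main}, because here $\mathcal{U}$ may be a proper subset of $\mathcal{O}^+\otimes \mathcal{O}^+$. The hypothesis that $\mathrsfs{B}(t)$ stays at distance at least $\eta$ from $\mathcal{U}^c$ on $[0,T]$ prevents the $\mathcal{U}$-exit part of the stopping time $\vartheta_M$ from Theorem~\ref{Thm:SGD_HSGD_convergence} from triggering, while the monotonicity of $\mathrsfs{D}^2(t)$, together with the constancy of $\mathrsfs{B}_{22,i}$ and a triangle-type bound $\mathrsfs{N}(t)\leq 2\,\mathrsfs{D}^2(t)+3\|X^\star\|^2$, keeps $\mathrsfs{N}(t)$ uniformly bounded on $[0,T]$, eliminating the other part of $\vartheta_M$. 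The final $d^{-\varepsilon}$ approximation of $\|X_{\lfloor td\rfloor} - X^\star\|^2$ by $\mathrsfs{D}^2(t)$ is then an application of Theorem~\ref{Thm:SGD_HSGD_convergence} to $\varphi(X) = \|X - X^\star\|^2$, which fits Assumption~\ref{assumption:statistic} with the polynomial taken to be the constant $1$.

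I do not anticipate a real obstacle: all the heavy lifting is already done in Proposition~\ref{prop:descent_SGD_main} and Theorem~\ref{Thm:SGD_HSGD_convergence}, and the only new ingredient is the textbook $\hat{L}(f)$-co-coercivity identity for smooth convex functions, applied conditionally on the realization of $x^\star$ as a minimizer by virtue of Assumption~\ref{assumption:risk_loss_minimizer_1_main}.
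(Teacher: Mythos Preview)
Your proposal is correct and follows essentially the same approach as the paper: verify \eqref{eq:stability_2_main} with $q = 1/(2\hat L(f))$ by combining convexity of $f$ with the $\hat L(f)$-smoothness bound \eqref{eq:L_smoothness_bound_main}, then invoke Proposition~\ref{prop:descent_SGD_main}. The only cosmetic difference is that you chain the two inequalities pointwise in $(x,x^\star)$ before taking the Gaussian expectation, whereas the paper first passes to $\mathcal{R}$ (using that convexity of $f$ implies convexity of $\mathcal{R}$) and bounds $\langle X-X^\star,\nabla\mathcal{R}(X)\rangle \ge \mathcal{R}(X)-\mathcal{R}(X^\star)$, then integrates the smoothness bound separately; both routes produce the identical inequality. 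Your extra bookkeeping about $\mathcal{U}$ and the bound $\mathrsfs{N}(t)\le 2\mathrsfs{D}^2(t)+3\|X^\star\|^2$ is exactly what the paper uses (inside the proof of Proposition~\ref{prop:descent_SGD_main}) to remove the stopping time.
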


To further guarantee convergence,
we need stronger assumptions, both on the outer function and on the covariance, $K$ (see Section \ref{sec:optimization} for proofs of following propositions).  So we consider functions which satisfy the \textit{restricted secant inequality}. 

\begin{definition}[Restricted Secant Inequality] A $C^1$-smooth function $f \, : \, \mathcal{O}\to \mathbb{R}$ satisfies the $(\mu,\theta)$--\textit{restricted secant inequality (RSI)} if, for any $x \in \mathcal{O}$ and $x^{\star} \in \argmin_x \{ f(x)\}$,
\[
\ip{x-x^{\star}, \nabla_x f(x)} \ge 
\begin{cases} 
  \mu \|x-x^{\star}\|^2, & \text{if } \max\{\|x^{\star}\|^2,\|x-x^{\star}\|^2\} \leq \theta,\\
0, &\text{otherwise}.
\end{cases}
\]
If $f$ satisfies the above for $\theta = \infty$, then we say $f$ satisfies the $\mu$--RSI.
\end{definition}

\noindent We note that simple strictly convex examples, such as those built from cross-entropy-loss cannot satisfy traditional uniform restricted secant inequality with $\theta=\infty$.  However, for local convergence, this is unneeded.

\begin{proposition}[Local convergence rate for fixed stepsize, $(\hat{\mu}(f),\hat{\theta}(f))$-RSI, $\hat{L}(f)$-smooth function, with covariance $K \succ 0$] 
\label{prop:RSI}
Fix a constant $T > 0$. Suppose the Assumptions of Theorem~\ref{Thm:SGD_HSGD_convergence} hold and suppose that $\sup_{0 \le t \le T} \sup_{V \in \mathcal{U}^c} \| \mathrsfs{B}(t)- V\| > \eta$. Let the outer function $f \, : \, \mathcal{O} \otimes \mathcal{T} \otimes \mathcal{T} \to \mathbb{R}$ be a $\hat{L}(f)$-smooth function satisfying $(\hat{\mu}(f),\hat{\theta}(f))$--RSI with respect to $x\in \mathcal{O}$. Suppose $X^{\star} \in \argmin_X \{ \mathcal{R}(X)\}$ is bounded, independent of $d$ and Assumption~\ref{assumption:risk_loss_minimizer_1_main} holds. Let the covariance matrix $K$ have a smallest eigenvalue bounded away from $0$, that is $\lambda_{\min}(K) > 0$.

Suppose the initialization $X_0$ satisfies that, for some $\zeta_0 \in (0,1)$,
\[
10 \exp \biggl( -\frac{\hat{\theta}(f)}{8 \|K\|_\sigma^2
  \max\{\|X_0-X^{\star}\|^2, \|X^{\star}\|^2\}
} \biggr) < \zeta_0,
\]
and suppose that $0 < \zeta < 1-\zeta_0$ and that
\[
\gamma_t = \gamma = \frac{2 \hat{\mu}(f) }{(\hat{L}(f))^2 \tfrac{1}{d} \tr(K)} \zeta.
\]
Then, with $a = \gamma (1-\zeta_0 - \zeta) \hat{\mu}(f) \lambda_{\min}(K)$,
we have, for all $t \ge 0$,
\[
\mathrsfs{D}^2(t) \le 2 e^{-at} \|X_0-X^{\star}\|^2.
\]
Moreover, for some $\varepsilon> 0$, the iterates of SGD $\{X_k\}$ satisfy
\begin{equation} \label{eq:convergence_101_main}
\sup_{0 \le t \le T} | \|X_{\lfloor td \rfloor }-X^{\star}\|^2 - \mathrsfs{D}^2(t) | \le d^{-\varepsilon}, \quad \text{with overwhelming probability.}
\end{equation}
\end{proposition}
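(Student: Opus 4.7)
The plan is to prove the deterministic exponential decay bound $\mathrsfs{D}^2(t) \leq 2 e^{-at}\|X_0-X^\star\|^2$ at the level of the limiting ODEs \eqref{eq:coupledODElimit}, and then transfer this to SGD by invoking Theorem~\ref{Thm:SGD_HSGD_convergence} with the statistic $\varphi(X) = \|X - X^\star\|^2$, which fits the template of Assumption~\ref{assumption:statistic} with $g(B) = \operatorname{tr}(B_{11} - 2B_{12} + B_{22})$ and $q \equiv 1$. Since Theorem~\ref{Thm:SGD_HSGD_convergence} immediately yields \eqref{eq:convergence_101_main} once $\mathrsfs{N}(t)$ is controlled (which will follow from our deterministic decay of $\mathrsfs{D}^2$), the real work is to analyze the scalar ODE \eqref{eq:DODE} under RSI + smoothness + $\lambda_{\min}(K) > 0$.

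The engine is to combine RSI and $\hat{L}(f)$-smoothness. On the event
\begin{equation*}
E \defas \{\max(\|x^\star\|^2,\|u\|^2) \leq \hat\theta(f)\}, \qquad u \defas x - x^\star,
\end{equation*}
RSI gives $\langle u,\nabla_x f\rangle \geq \hat\mu(f)\|u\|^2$ and smoothness gives $\|\nabla_x f\|^2 \leq (\hat L(f))^2 \|u\|^2$; together these imply the pointwise co-coercivity $\|\nabla_x f\|^2 \leq \tfrac{(\hat L(f))^2}{\hat\mu(f)} \langle u,\nabla_x f\rangle$ on $E$. Off $E$, RSI still yields $\langle u,\nabla_x f\rangle \geq 0$ and smoothness gives $\|\nabla_x f\|^2 \leq (\hat L(f))^2 \|u\|^2$. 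Integrating against $(x,x^\star) \sim N(0,\mathrsfs{B}(t))$ produces the two master inequalities
\begin{align*}
I(\mathrsfs{B}) &\leq \tfrac{(\hat L(f))^2}{\hat\mu(f)} A(\mathrsfs{B}) + (\hat L(f))^2 \,\Exp[\|u\|^2 \mathbf{1}_{E^c}], \\
A(\mathrsfs{B}) &\geq \hat\mu(f)\,\operatorname{tr}(\Sigma_u) - \hat\mu(f)\,\Exp[\|u\|^2 \mathbf{1}_{E^c}],
\end{align*}
where $\Sigma_u$ is the $|\mathcal{O}|\times|\mathcal{O}|$ covariance of $u$ (given by $\mathrsfs{B}_{11} - \mathrsfs{B}_{12} - \mathrsfs{B}_{12}^\top + \mathrsfs{B}_{22}$), and the spectral decomposition $\mathrsfs{B}(t) = \tfrac{1}{d}\sum_i \lambda_i \mathrsfs{B}_i(t)$ combined with $\lambda_{\min}(K) > 0$ gives $\operatorname{tr}(\Sigma_u) \geq \lambda_{\min}(K)\,\mathrsfs{D}^2(t)$.

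Next I would run a bootstrap/continuity argument over the stopping time $\tau \defas \inf\{t \geq 0 : \mathrsfs{D}^2(t) > 2\|X_0-X^\star\|^2\}$. On $[0,\tau)$, the covariances $\Sigma_u$ and $\mathrsfs{B}_{22}$ are controlled in operator norm by $2\|K\|_\sigma \|X_0-X^\star\|^2$ and $\|K\|_\sigma\|X^\star\|^2$ respectively, so a Hanson--Wright-style tail bound for Gaussian quadratic forms, combined with the initialization hypothesis on $\hat\theta(f)$, yields $\Pr(E^c) \leq \zeta_0$ uniformly, and then Cauchy--Schwarz/Gaussian moments produce $\Exp[\|u\|^2\mathbf{1}_{E^c}] \leq \zeta_0 \,\operatorname{tr}(\Sigma_u)$ (up to harmless absolute constants absorbed into the hypothesis). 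Plugging the two master inequalities into \eqref{eq:DODE} and using the prescribed stepsize $\gamma = 2\hat\mu(f)\zeta/((\hat L(f))^2 \operatorname{tr}(K)/d)$ collapses the bracket $1 - \tfrac{\gamma (\hat L(f))^2 \operatorname{tr}(K)/(2d)}{\hat\mu(f)}$ to $1-\zeta$, giving
\begin{equation*}
\tfrac{d}{dt} \mathrsfs{D}^2(t) \leq -\gamma\,(1-\zeta)\, A(\mathrsfs{B}(t)) \leq -\gamma\,(1-\zeta)(1-\zeta_0)\,\hat\mu(f)\,\lambda_{\min}(K)\,\mathrsfs{D}^2(t) \leq -a\,\mathrsfs{D}^2(t).
\end{equation*}
Grönwall's inequality then gives $\mathrsfs{D}^2(t) \leq e^{-at}\|X_0-X^\star\|^2$ on $[0,\tau)$, which is strictly below the bootstrap threshold, so $\tau = \infty$.

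The main obstacle is the Gaussian tail control during the bootstrap: ensuring that the event $E$ has probability at least $1-\zeta_0$ \emph{uniformly} in $t$, with quantitative constants matching the stated hypothesis on $\hat\theta(f)$, $\|K\|_\sigma$, and $\max\{\|X_0-X^\star\|^2,\|X^\star\|^2\}$. Once the tail is controlled, the rest is a clean one-line ODE computation plus Grönwall, and the SGD comparison \eqref{eq:convergence_101_main} follows directly from Theorem~\ref{Thm:SGD_HSGD_convergence} because $\mathrsfs{N}(t)$ is bounded uniformly (by the deterministic decay of $\mathrsfs{D}^2$ together with $\|X^\star\| = O(1)$), so the stopping time $\vartheta_M$ in Theorem~\ref{Thm:SGD_HSGD_convergence} can be taken to exceed $T$ for $M$ sufficiently large.
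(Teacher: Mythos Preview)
Your overall strategy---analyze the scalar ODE \eqref{eq:DODE}, control the Gaussian tails of the bad event $E^c$ via a bootstrap on $\mathrsfs{D}^2$, apply Gr\"onwall, then invoke Theorem~\ref{Thm:SGD_HSGD_convergence}---is exactly the paper's approach. The bootstrap stopping time and the inequality $\operatorname{tr}(\Sigma_u) \geq \lambda_{\min}(K)\,\mathrsfs{D}^2(t)$ are both used the same way.

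There is, however, a slip in your chain of inequalities. From your first master inequality $I \leq \tfrac{(\hat L)^2}{\hat\mu}A + (\hat L)^2 \Exp[\|u\|^2\mathbf{1}_{E^c}]$, plugging into \eqref{eq:DODE} with the prescribed $\gamma$ gives
\[
\tfrac{d}{dt}\mathrsfs{D}^2 \leq -\gamma(1-\zeta)A + \gamma\zeta\hat\mu\,\Exp[\|u\|^2\mathbf{1}_{E^c}],
\]
not simply $-\gamma(1-\zeta)A$; you have dropped the residual term. If you carry it through with $\Exp[\|u\|^2\mathbf{1}_{E^c}] \leq \zeta_0\operatorname{tr}(\Sigma_u)$ and $A \geq \hat\mu(1-\zeta_0)\operatorname{tr}(\Sigma_u)$, the algebra does collapse to $-\gamma\hat\mu(1-\zeta-\zeta_0)\operatorname{tr}(\Sigma_u)$, so the stated rate $a$ survives---but the displayed intermediate inequality is wrong as written. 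The paper avoids this detour entirely: it uses the \emph{unconditional} smoothness bound $I \leq (\hat L)^2\operatorname{tr}(\Sigma_u)$ (no event $E$ needed for this side), combines it only with the effective-RSI lower bound $A \geq \hat\mu(1-\zeta_0)\operatorname{tr}(\Sigma_u)$, and then recognizes this as exactly the setup of the global-RSI Proposition~\ref{prop:Courtneyrate_main} with RSI constant $\hat\mu(1-\zeta_0)$. That reduction is cleaner and sidesteps the pointwise coercivity $\|\nabla_x f\|^2 \leq \tfrac{(\hat L)^2}{\hat\mu}\langle u,\nabla_x f\rangle$ on $E$, which is the source of your extra term.

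On the tail estimate: the paper does not go through ``$\Pr(E^c)\leq\zeta_0$ then Cauchy--Schwarz'' for the piece $\|u\|^2\mathbf{1}\{\|u\|^2\geq\hat\theta\}$, because Cauchy--Schwarz costs a square root ($\sqrt{\zeta_0}$ rather than $\zeta_0$). Instead it uses a direct Gaussian-moment tail bound (their Lemma~\ref{lem:GVN}): for $X = \|AZ\|^2$ with $V = \Exp X$, one has $\Exp[X\mathbf{1}\{X\geq u\}] \leq 5Ve^{-u/(4V)}$. Cauchy--Schwarz is reserved for the cross piece $\|u\|^2\mathbf{1}\{\|x^\star\|^2\geq\hat\theta\}$, where the two factors are genuinely different random variables; this is why the factor $8$ (rather than $4$) appears in the denominator of the hypothesis.
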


We note as a corollary for $\mu$-strongly-convex (or more generally $(\hat{\mu}(f))$-RSI) objectives, this implies that we have convergence regardless of the initialization.

\begin{proposition}[Global convergence rate for fixed stepsize, $\hat{\mu}(f)$-RSI, $\hat{L}(f)$-smooth function, with covariance $K \succ 0$]  Fix a constant $T > 0$. Suppose the Assumptions of Theorem~\ref{thm:main_concentration_S} hold and suppose that $\sup_{0 \le t \le T} \sup_{V \in \mathcal{U}^c} \| \mathrsfs{B}(t)- V\| > \eta$. Let the outer function $f \, : \, \mathcal{O} \otimes \mathcal{T} \otimes \mathcal{T} \to \mathbb{R}$ be a $\hat{L}(f)$-smooth function satisfying the RSI condition with $\hat{\mu}(f)$ with respect to $x\in \mathcal{O}$. Suppose $X^{\star} \in \argmin_X \{ \mathcal{R}(X)\}$ is bounded, independent of $d$ and Assumption~\ref{assumption:risk_loss_minimizer_1_main} holds. Let the covariance matrix $K$ have a smallest eigenvalue bounded away from $0$, that is $\lambda_{\min}(K) > 0$. If the learning rate satisfies
\[
\gamma_t = \gamma = \frac{2 \hat{\mu}(f) }{(\hat{L}(f))^2 \tfrac{1}{d} \tr(K)} \zeta,
\]
for some $0 < \zeta < 1$, then for all $t \ge 0$
\[
\mathrsfs{D}^2(t) \le e^{-a t} \mathrsfs{D}^2(0), 
\]
where $a = \gamma (1-\zeta) \hat{\mu}(f) \lambda_{\min}(K)$. Moreover, for some $\varepsilon> 0$, the iterates of SGD $\{X_k\}$ satisfy
\begin{equation} \label{eq:convergence_10_main}
\sup_{0 \le t \le T} | \|X_{\lfloor td \rfloor }-X^{\star}\|^2 - \mathrsfs{D}^2(t) | \le d^{-\varepsilon}, \quad \text{with overwhelming probability}.
\end{equation}
\label{prop:Courtneyrate_main}
\end{proposition}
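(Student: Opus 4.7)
\textbf{Proof proposal for Proposition~\ref{prop:Courtneyrate_main}.}
The plan is to work entirely with the deterministic equivalent $\mathrsfs{D}^2(t)$, derive a one-line Gr\"onwall inequality via the RSI and smoothness hypotheses, and only at the end transfer the conclusion back to SGD by invoking Theorem~\ref{Thm:SGD_HSGD_convergence}. The starting point is the scalar ODE \eqref{eq:DODE},
\[
\tfrac{d}{dt}\mathrsfs{D}^2(t) \;=\; -\gamma_t A(\mathrsfs{B}(t)) + \tfrac{\gamma_t^2}{2d}\tr(K)\,I(\mathrsfs{B}(t)),
\]
with $A$ and $I$ defined there as Gaussian integrals against $\nabla_x f$ at $(x\oplus x^\star)\sim N(0,\mathrsfs{B}(t))$.

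The first step is to bound $A$ from below and $I$ from above by the same scalar quantity $E(t) \defas \Exp[\|x-x^\star\|^2]$. The global RSI assumption, combined with Assumption~\ref{assumption:risk_loss_minimizer_1_main} (which pushes the RSI under the outer expectation in $a,\epsilon$), gives $A(\mathrsfs{B}(t)) \ge \hat{\mu}(f) E(t)$. For $I$, the $\hat{L}(f)$--smoothness of $f$ and $\nabla_x f(x^\star\oplus x^\star)=0$ yield the pointwise bound $\|\nabla_x f(x\oplus x^\star)\| \le \hat{L}(f)\|x-x^\star\|$, and hence $I(\mathrsfs{B}(t)) \le \hat{L}(f)^2 E(t)$. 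Plugging in the prescribed step-size $\gamma = 2\hat{\mu}(f)\zeta/(\hat{L}(f)^2\,\tfrac{1}{d}\tr K)$ causes the bracket to collapse cleanly:
\[
\tfrac{d}{dt}\mathrsfs{D}^2(t) \;\le\; -\gamma\,E(t)\,\Bigl[\hat{\mu}(f) - \tfrac{\gamma}{2d}\tr(K)\hat{L}(f)^2\Bigr]
\;=\; -\gamma(1-\zeta)\hat{\mu}(f)\,E(t).
\]

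The next step is to relate $E(t)$ to $\mathrsfs{D}^2(t)$ itself. Writing $E(t) = \tr\bigl([I,-I]\mathrsfs{B}(t)[I,-I]^T\bigr)$ and using $\mathrsfs{B}(t) = \tfrac{1}{d}\sum_i \lambda_i \mathrsfs{B}_i(t)$ from \eqref{eq:Bkavg}, one obtains
\[
E(t) \;=\; \tfrac{1}{d}\sum_{i=1}^d \lambda_i\,\tr\bigl(\mathrsfs{B}_{11,i}(t)-2\mathrsfs{B}_{12,i}(t)+\mathrsfs{B}_{22,i}(t)\bigr).
\]
Each block $\mathrsfs{B}_i(t)$ stays positive semidefinite along the flow (it starts PSD as the outer product $d\langle W_0^{\otimes 2},\omega_i^{\otimes 2}\rangle$, and the coupled ODE \eqref{eq:coupledODElimit} is the Lyapunov-type evolution of a covariance -- this is the only place one must pause for care, and it can be verified directly by observing that the drift acts on $\mathrsfs{B}_i$ as a symmetric congruence plus an additive PSD term $\lambda_i\gamma_t^2 I_t$). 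In particular each trace in the sum is nonnegative, so $E(t) \ge \lambda_{\min}(K)\,\mathrsfs{D}^2(t)$. Combined with the previous display,
\[
\tfrac{d}{dt}\mathrsfs{D}^2(t) \;\le\; -a\,\mathrsfs{D}^2(t), \qquad a=\gamma(1-\zeta)\hat{\mu}(f)\lambda_{\min}(K),
\]
and Gr\"onwall delivers $\mathrsfs{D}^2(t)\le e^{-at}\mathrsfs{D}^2(0)$.

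Finally, one transfers this to the iterates of SGD. Since $\varphi(X)=\|X-X^\star\|^2$ satisfies Assumption~\ref{assumption:statistic} and since $\mathrsfs{D}^2(t)$ is bounded by $\mathrsfs{D}^2(0)$ for all $t\ge 0$, the quantity $\mathrsfs{N}(t)$ stays bounded on $[0,T]$ (use Proposition~\ref{prop:nonexplosiveness}, whose hypotheses follow from $\hat{L}(f)$-smoothness giving $\alpha=1$ pseudo-Lipschitzness of $\nabla f$, together with the fact that $\mathrsfs{B}_{22,i}$ is constant and $\tr(\mathrsfs{B}_{11,i})$ is controlled by $\mathrsfs{D}^2$ and $\tr(\mathrsfs{B}_{22,i})$ via the PSD inequality $|\tr\mathrsfs{B}_{12,i}|\le \tfrac12(\tr\mathrsfs{B}_{11,i}+\tr\mathrsfs{B}_{22,i})$). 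The hypothesis on $\mathcal{U}$ ensures the trajectory stays in $\mathcal{U}_\eta$ on $[0,T]$, so Theorem~\ref{Thm:SGD_HSGD_convergence} applies and yields \eqref{eq:convergence_10_main} with overwhelming probability.

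The main obstacle is the quiet step of certifying PSD-ness of $\mathrsfs{B}_i(t)$ along the flow, since everything downstream (the key bound $E\ge\lambda_{\min}(K)\mathrsfs{D}^2$, and the non-explosion control of $\mathrsfs{N}$) rests on it; once that is in hand, the argument is genuinely a few lines of Gr\"onwall, with all the analytic heavy lifting deferred to Theorem~\ref{Thm:SGD_HSGD_convergence}.
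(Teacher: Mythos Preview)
Your proposal is correct and follows essentially the same route as the paper: bound $A$ below and $I$ above in terms of $E(t)=\Exp\|x-x^\star\|^2$ via RSI and $\hat L(f)$-smoothness, collapse the bracket using the prescribed $\gamma$, pass from $E(t)$ to $\lambda_{\min}(K)\mathrsfs{D}^2(t)$ using positive semidefiniteness of each $\mathrsfs{B}_i(t)$, and conclude by Gr\"onwall. The only cosmetic difference is in the final transfer step: the paper does not invoke Proposition~\ref{prop:nonexplosiveness} at all, but instead uses the direct inequality $\mathrsfs{N}(t)\le 2\mathrsfs{D}^2(t)+3\|X^\star\|^2$ (equation \eqref{eq:DNnorm}, itself a consequence of the PSD property you already flagged) together with the just-established monotonicity of $\mathrsfs{D}^2$ to get a uniform-in-$t$ bound on $\mathrsfs{N}$, and then applies Corollary~\ref{cor:statistic_bounded}.
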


\subsection{Related work}\label{sec:relatedwork}

\subsubsection{Single and multi-index models under SGD}

A single-index model is a high-dimensional model $\mathcal{M}(a; X^\star) =  f(\langle X^\star, a\rangle_{\mathcal{A}})$
in which one may consider both $X^\star$ and the link function $f$ to be unknown.
A classic supervised learning setup is then to estimate both $X^\star$, and also sometimes $\mathcal{M}$ when tested by some data distribution on $a$.
\[
\Psi(X;a,\epsilon) = \ell( \mathcal{M}_1( a; X), \mathcal{M}_2( a; X^\star) + \epsilon),
\]
for some single-index models $\mathcal{M}_1$ and $\mathcal{M}_2$.  This extends to a multi-index model, in our notation, by taking multidimensional $X$ and $X^\star$ and hence having a finite collection of directions in high dimensions which influence the behavior of the algorithm.  

\paragraph{Limit theory: Identity covariance}

An early and influential work in this direction is \cite{saad1995dynamics}, which considered multi-index models of varying size with ReLU activation functions (soft--committee machines) and derived the ODEs in Corollary \ref{cor:lc}.  Many related results appeared around the same time in the physics literature, with different  extensions \cite{biehl1994line, biehl1995learning, saad1995exact}.  These were shown to be exact in \cite{goldt2019dynamics}, building on techniques 
which originate in \cite{wang2019solvable} and \cite{wang2017scaling}. We note that the general strategy of martingale arguments used here is similar to those in \cite{wang2019solvable}.  See also \cite{arnaboldi2023highdimensional} in which these ODEs are compared to other limits.

The ODEs stated can be viewed as describing a class of non-singular setups, in which one does not start too close to some saddle points (as described in the Lipschitz phase retrieval example).  For a large class of single-index models, \cite{ben2022high} considers spherically constrained SGD and characterizes a class, where for a cold initialization longer than $O(d)$, SGD develops a dimension-independent signal. This happens in a wide variety of problems, and this has led to a thread of analyses which study how problem geometries might be changed to improve the performance \cite{Bruno}, \cite{damian2023smoothing}.

Nonetheless, the non-singular setup remains an active area of research \cite{mousavihosseini2023neural} gives generalization guarantees for learning monotone target activation functions, which are a large and important subclass.
In a similar vein, \cite{bietti2022learning} give gradient flow guarantees\footnote{In the system of ODEs, this is achieved by sending $\gamma \to 0$ and rescaling time by a factor $1/\gamma$ in Theorem \ref{thm:deterministic}.}, even applying to some singular setups. 


\paragraph{Limit theory: Non-identity covariance}

Non-identity covariance might initially appear to have little impact on single and multi-index models, owing to the inner linear structure.  Indeed, for many ``statics'' questions -- such as those connecting empirical and population risks or information theoretic concerns -- there is no gain in considering the covariance.  However, this is no longer true once one considers the optimization: non-identity covariance affects the dynamical behavior of stochastic gradient descent and where the true covariance $K$ is unknown, one may well be compelled to work in a non-identity setting.

The literature is considerably smaller for this case.  A significant step in building a theory for non-identity covariance is given by \cite{Goldt} who give equations of motion supposing Gaussian equivalence principle for some multi-index models; they are in particular motivated by data distributions coming from random-features-model type distributions.  They further derive ODEs like \eqref{eq:coupledODElimit} (but also quite different) in the case of quadratic loss and non-Gaussian data.  In some cases they are able to simplify these ODEs.  This was extended in \cite{goldt2022gaussian} to data input distributions which come from deeper random features models.

The work of \cite{Yoshida} posed the system of ODEs in Theorem \ref{thm:deterministic} in the case of squared loss, although without a precise formulation of the connection of their solution to the learning behavior of SGD.  Hence Theorem \ref{thm:deterministic} can be viewed as a generalization and formal verification of the \cite{Yoshida}.  They further investigate how data covariance leads to long-plateau effects observed in training dynamics.  Finally, we mention 
\cite{CollinsWoodfinPaquette01}, which gives an exact high-dimensional limit as here, but solely for the case of linear regression; \cite{CollinsWoodfinPaquette01} works beyond the case of Gaussian data, however.

\paragraph{High-dimensional optimization literature for online SGD}

The optimization and machine learning literature also contains an independent line of research into properties of SGD, often formulated in terms of guarantees.  Some of these are formulated in such a way to be relevant in a high-dimensional regime like seen here.

Now, the majority of SGD literature considers the finite-sum setup, where multipass SGD is run on a finite-sum problem.  Many results then provide guarantees for the generalization error, and this has led to notions such as algorithmic stability \cite{hardt2016train}.  Others give empirical loss estimates, for example, \cite{schmidt2013fast} and \cite{roux2012stochastic}. 

Interest in convergence guarantees -- as well as qualitative properties of \emph{streaming} (or online, one-pass, etc.) SGD -- have recently gained attention, especially in the machine learning literature.  
\cite{karimi2016linear} give convergence rates under dimension-independent assumptions on the risk such as Polyak-{\L}ojasiewicz inequalities. \cite{pillaud2018exponential} gives linear convergence for least squares and classification problems.  \cite{dieuleveut2017harder} gives sharp convergence guarantees on least-squares problems.



\subsubsection{Other methods for high-dimensional limits}

\paragraph{Dynamical mean field theory}
A large body theory of high-dimensional limits comes in the form of dynamical mean field theory.  This gives systems of integro-differential equations for covariances, including $\mathrsfs{B}_t$ but also multi-time analogues of this covariance, and other auxiliary covariances.  The strength of this method is that it applies to a wide variety of high-dimensional statistical limits, while arguably the main drawback is the complexity of the resulting characterization.  
\cite{mignacco2020dynamical} gives a DMFT description of SGD for Gaussian mixture classification.
\cite{celentano2021highdimensional} gives a rigorous description of gradient flow dynamics on a similar class of problems, as well as other types of first order algorithms, by a description in terms of dynamical mean field theory.  \cite{gerbelot2022rigorous} performs a related analysis but with proportional batches, and also gives something like a discrete analogue of homogenized SGD.

\paragraph{Gordon methods}

The convex Gaussian minimax theorem \cite{gordon1988milman} has proven to be useful as a way of analyzing learning curve dynamics.  
\cite{chandrasekher2021sharp} gives an extensive analysis of SGD and other algorithms, based on the convex Gaussian minimax theorem, and in particular gives another method to derive some of the descriptions here in the case of identity covariance.  The methods in \cite{celentano2021highdimensional} are also based on this.

\subsubsection{Statics \& information theory and message-passing}

Our goal in this paper is to develop theory for the optimization theory of online SGD in high-dimensions, which may not be the most sample-efficient algorithm for finding the solution to a GLM.  For a large class of GLMs, there is a class of generalized message passing algorithms known to be optimal \cite{barbier2019optimal}.  There are additional specific studies for canonical GLMs such as logistic regression \cite{candes2020phase} and phase retrieval \cite{maillard2020phase}, the latter of which also shows that message passing achieves the information theoretic threshold for the solvability of the problem.

\paragraph{Outline of the paper. } The remainder of the article is structured as follows: in Section~\ref{sec:examples}, we provide some examples and specifically analyze SGD trajectories, applied to these examples, using the system of ODEs introduced in \eqref{eq:coupledODElimit}. For computations of specific example-dependent quantities needed to state the ODEs, see Appendix~\ref{sec:analysis_examples}. We give some preliminary tensor notation and derive derivatives of special functions used to prove Theorem~\ref{Thm:SGD_HSGD_convergence} in Section~\ref{sec:notation}. Our main results, Theorem~\ref{thm:learning_curves} and Theorem~\ref{Thm:SGD_HSGD_convergence} and their corollaries, are shown in Section~\ref{sec:approximate_solutions_stability}  for \textit{approximate solutions} to the system of ODEs \eqref{eq:coupledODElimit} (see for Definition~\ref{def:integro_differential_equation} for precise details). In Section~\ref{sec:SGD_homogenized_SGD}, we show that SGD and the SDE, homogenized SGD \eqref{eq:HSGD}, are approximate solutions to the ODEs in \eqref{eq:coupledODElimit}. Lastly, in Section~\ref{sec:optimization}, the deterministic system of ODEs is analyzed to give (and prove) critical thresholds on learning rates related to descent (proofs of Proposition~\ref{prop:descent_SGD_main}, Corollary~\ref{cor:convexbounded},  Proposition~\ref{prop:RSI}, and Proposition~\ref{prop:Courtneyrate_main}) and simple conditions on the outer function that ensure the ODEs do not go to infinity in finite time (proof of Proposition~\ref{prop:nonexplosiveness}). In Appendix~\ref{sec:Volterra_equation}, alternative interpretations of the ODEs \eqref{eq:coupledODElimit} are presented (e.g., as a solution to a Volterra equation, etc). 

\tableofcontents


\section{Examples}\label{sec:examples}

Throughout this section, we refer to the $K$-norm as $\|W\|_K^2 = \tr( \ip{W^{\otimes 2}, K}_{\mathcal{A}^{\otimes 2}})$. This is in comparison to the standard Euclidean norm, $\|W\|^2 = \tr(\ip{W, W}_{\mathcal{A}})$. In many examples, the $K$-norm plays a significant role. 

\subsection{Multivariate Linear regression.} 
The simplest example which satisfies \eqref{eq:student-teacher} is linear regression. 
Here we suppose that $g$ is rather the identity map, and $\ell$ is the squared loss $\ell(u,v)=\tfrac12 \|u-v\|^2$. 
Hence, we arrive at, with $\eta$ a constant
\begin{equation*}
  \Psi(X;a,\epsilon) = 
  \tfrac12
\|\langle X-X^\star, a \rangle_{\mathcal{A}} + \eta \epsilon\|^2.
\end{equation*}
Thus averaging over the data distribution and noise, we have
\begin{equation}
    \min_{X \in \mathbb{R}^d} 
    \bigg \{ 
      \mathcal{R}_\delta(X) 
      =
      \tfrac12 \eta^2
      +
      \tfrac12\EE_{a} [\tr(\langle (X-X^\star)^{\otimes 2},a^{\otimes 2} \rangle_{\mathcal{A}^{\otimes 2}})] + \tfrac{\delta}{2} \|X\|^2 \bigg \}.
\end{equation}
We note that this can be further simplified to be 
\[
  \mathcal{R}_\delta(X) 
  =
  \tfrac12 \eta^2
  +
  \tfrac12 \tr(\langle(X-X^\star)^{\otimes 2},K\rangle_{\mathcal{A}^{\otimes 2}}) + \tfrac{\delta}{2} \|X\|^2.
\]
In this case, the pair $h$ and $I$ can be evaluated simply:
\[
  h = \tr(\langle (X-X^\star)^{\otimes 2},K\rangle_{\mathcal{A}^{\otimes 2}})
  \quad
  \text{and}
  \quad
  I = \langle (X-X^\star)^{\otimes 2},K\rangle_{\mathcal{A}^{\otimes 2}},
\]
noting that both of these are linear functions of the block matrix $B(W) = \langle (X\oplus X^{\star})^{\otimes 2},K\rangle.$

The deterministic dynamics \eqref{eq:coupledODElimit} can be rearranged to give a particularly simple equation in this case.  For simplicity, we take $\delta = 0.$
Then we can express the loss $h$ as
\[
  h(\mathrsfs{B}(t)) = \langle (\Id_\mathcal{O} \oplus -\Id_\mathcal{T})^{\otimes 2}, \mathrsfs{B}(t) \rangle
  = \tr \mathrsfs{B}_{11}(t) - 2 \tr\mathrsfs{B}_{12}(t) + \tr\mathrsfs{B}_{22}(t).
\]
This leads us to (see Section~\ref{sec:lsq_analysis} for details)
\[
  h(\mathrsfs{B}(t)) 
= 
\tfrac{1}{2} \tr( \ip{(X_0-X^{\star})^{\otimes 2}, K e^{-2K \gamma t} }_{\mathcal{A}^{\otimes 2}} )  + \tfrac{1}{2} \eta^2 
+ \tfrac{\gamma^2}{d} \int_0^t \tr(K^2 e^{-2 \gamma K(t-s)} ) h(\mathrsfs{B}(s)) 
\, \dif s.
\]
This is a convolution Volterra equation, and it has appeared earlier in \cite{CollinsWoodfinPaquette01,PPAP02,PPAP01, PAquettes03}, in the case of univariate linear regression.
The descent threshold of this equation is simply $\gamma < \tfrac{2d}{\tr K}$.
Note this agrees with the stability threshold in Corollary \ref{cor:convexbounded} up to a factor of $2$.
Under the assumption that $K \succ 0,$ we also have that it converges linearly to $0$,
and this rate of convergence can be determined from solving a certain \emph{Malthusian exponent} problem. Taking $\gamma = d/({\tr K})$, the asymptotic rate is guaranteed to be at least $e^{-\lambda_\text{min}(K)\tfrac{d}{4\tr K}}.$ This objective function is $(1,\infty)$--RSI, and hence Proposition \ref{prop:RSI} gives an equivalent result up to absolute constant factors.  This is sharp up to an absolute constant in the exponent.

\subsection{Multi-class logistic regression.} 
\label{sec:logistic_regression}

\begin{figure}[t]
    \centering
        \includegraphics[scale =0.3]{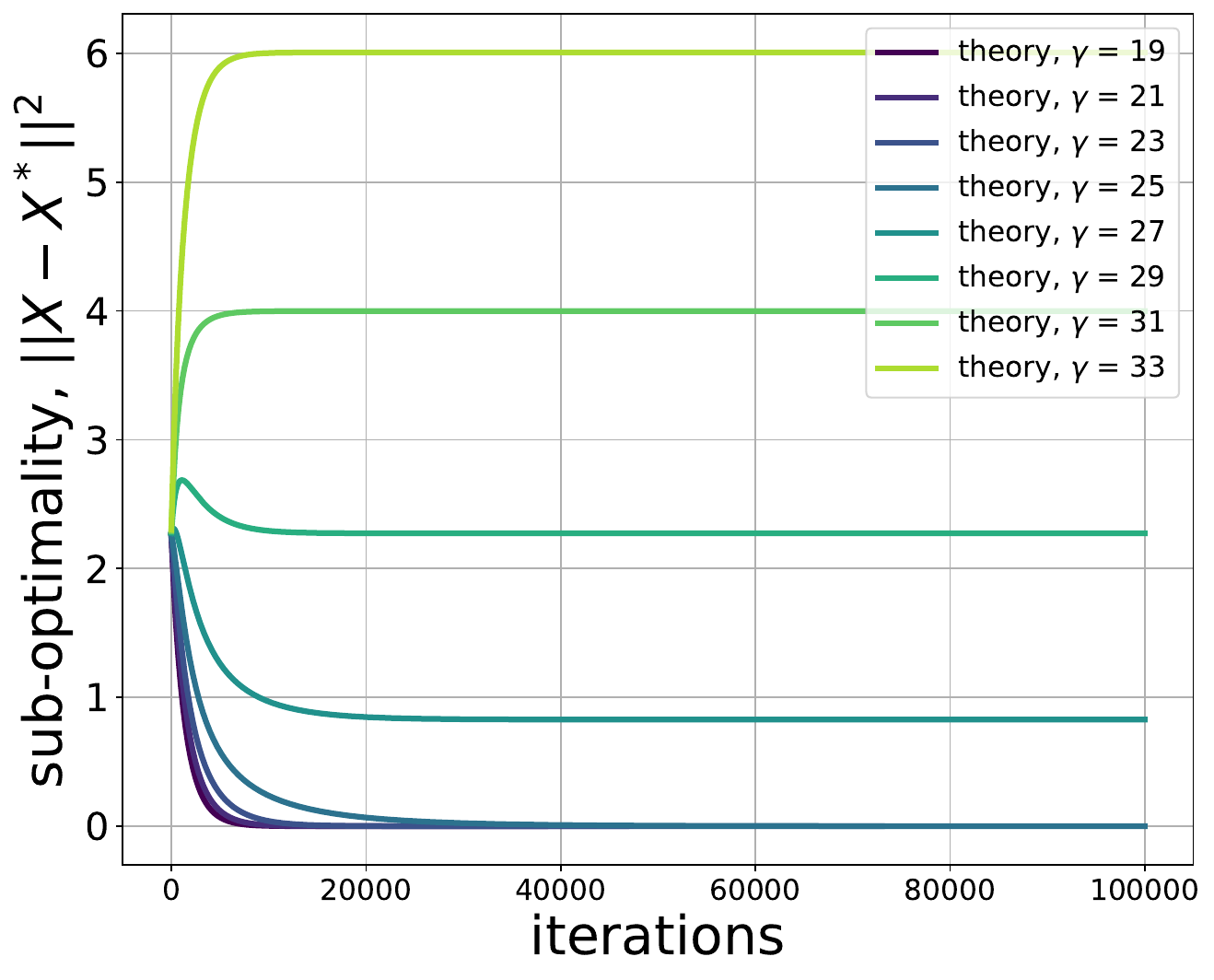} \qquad \includegraphics[scale =0.3]{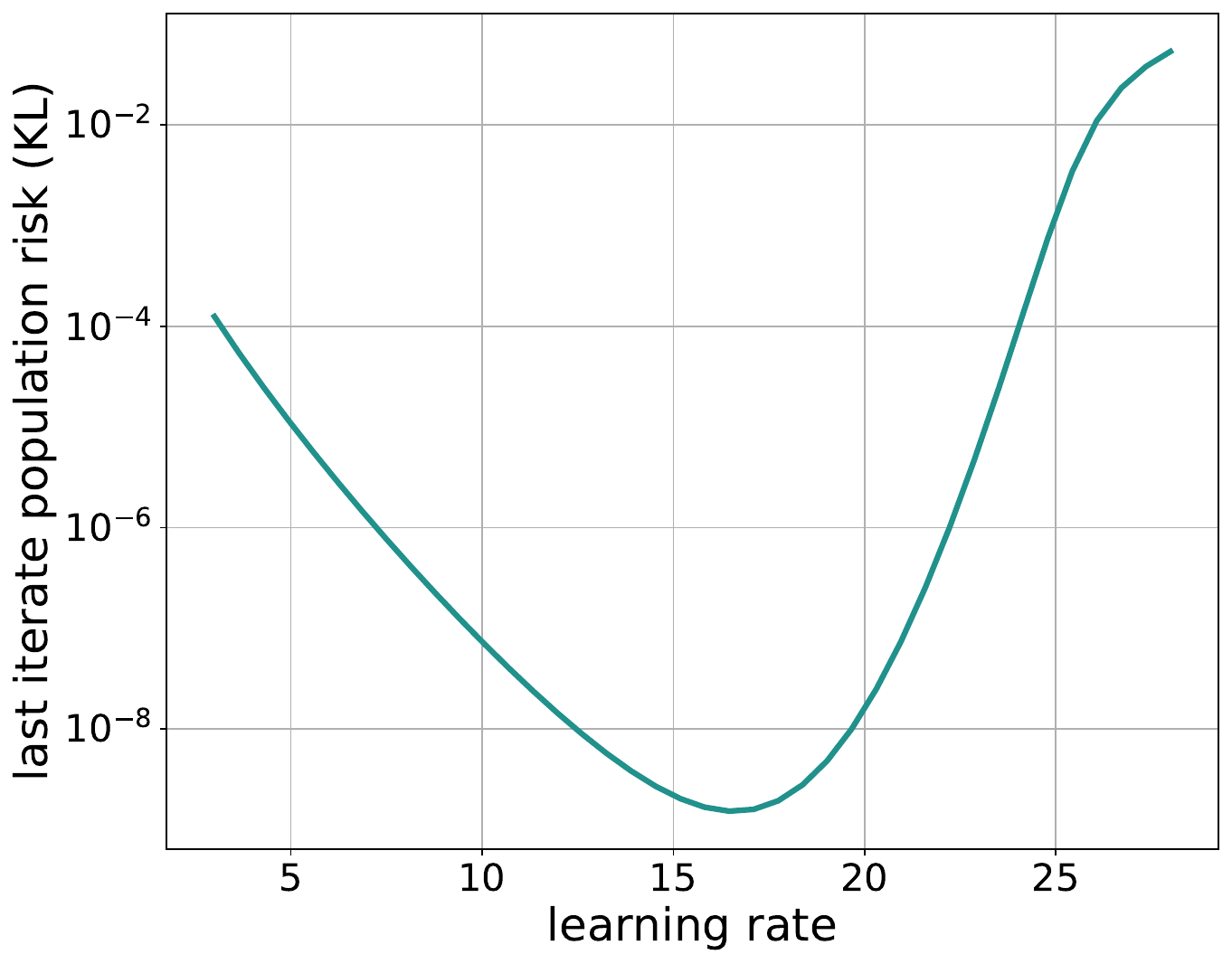}
    \caption{\textbf{Learning rate and stability of logistic regression descent.} Plot of the theory for various learning rates for the noiseless, binary logistic regression problem initialized at $  1.1 \cdot X_0 / \|X_0\|$ with $X_0 \sim N(0, \Id_d)$, $d = 1000$. The ground truth signal is also normally distributed, $X^{\star} \sim \tfrac{1}{\sqrt{d}}N(0, \Id_d)$. The covariance matrix is generated from Marchenko-Pastur (MP) with parameter $4$. \textbf{(Left):} Curves for $\mathrsfs{D}^2(t)$ are plotted for various learning rates $\gamma$. As predicted by Corollary \ref{cor:convexbounded}, there exists a learning rate at which $\mathrsfs{D}^2(t)$ is a decreasing function. Theory guarantees this to occur at $1/ (\hat{L}(f)  \tr(K)/d) \approx 12$ (Here $ \tr(K)/d \approx 1/3$, max. eigenvalue of $K$ is $0.75$, and smoothness constant is $\hat{L}(f) = 1/4$). \textbf{(Right):} last iterate of deterministic curve for the KL divergence, at $t = 25$, is plotted. The optimal learning rate occurs approximately $1/2$ the learning rate threshold where descent $\mathrsfs{D}^2(t)$ occurs.
    } \label{fig:logistic_regression_learning_rate}
\end{figure}

An important and motivating example is logistic regression. 
In this case, the dimension $\ell$ of $\mathcal{O}$ corresponds to the number of classes;
we let $\{o_j\}$ denote an orthonormal basis of $\mathcal{O}$. The data arrives in a pair $(a, y)$, a point $a$ in the feature space and a probability vector $y$, whose coordinates $\langle y, o_j\rangle$ correspond to the probability that $a$ comes from class $j$. 
We then look to fit an exponential model $p(a;X)$ parametrically described by weights $X \in \mathcal{A} \otimes \mathcal{O}$, by the formula 
\begin{equation}
    p(a;X) = \frac{\exp(\ip{X,a}_{\mathcal{A}})}{\mathcal{Z}(X,a)} \in \mathcal{O},
\end{equation}
where $\exp$ is applied entrywise, and $\bm{1} =\sum o_j$, and so 
\begin{equation}
  \mathcal{Z}(X,a)= \sum_{j=1}^\ell\exp(\ip{X, a\otimes o_j})
\end{equation}
is the sum of the exponentials, which ensures that $p(a;X)$ is indeed a probability vector.

The conventional loss to consider in this case is the KL-divergence,
and so we are brought, in a student-teacher setup, to
\[
  \hat{\Psi}(X;a,\epsilon) = \sum_{j=1}^\ell p_j(a;X^\star) \log \frac{p_j(a;X^\star)}{p_j(a;X)},
\]
where $p_j(a;X) = \ip{p({a;X}),o_j}.$
This differs from the cross-entropy only by a constant, namely
\[
  \Psi(X;a,\epsilon) = -\sum_{j=1}^\ell p_j(a;X^\star) \log{p_j(a;X)},
\]
which therefore has the same gradients.  
Setting $x_j = \langle X, a \oplus o_j \rangle$ 
and setting  $x_j^\star = \langle X^\star, a \oplus o_j \rangle,$
we have
\[
  \Psi(X;a,\epsilon) 
  =
  -\sum_{j=1}^\ell \biggl\{\tfrac{\exp(x_j^\star)}{\sum_i\exp(x_i^\star)}x_j\biggr\}
  +\log\biggl(\sum_{j=1}^\ell \exp(x_j)\biggr)
  \defas f( x \oplus x^\star).
\]
Cross-entropy is convex and attains a global minimizer at $x^\star$, but also at $x^\star + \alpha \mathbf{1}$ for any $\alpha.$
In the ambient space, we can let $\hat X = X^\star$ shifted to have the same center of mass as the initialization $X_0$ of SGD, i.e. for some $v \in \mathcal{A},$
\[
\hat X = X^{\star} + v \otimes \mathbf{1}
\quad \text{where}\quad
\ip{\hat X, \mathbf{1}}_{\mathcal{O}} 
=
\ip{X_0, \mathbf{1}}_{\mathcal{O}}.
\]
Then $p(a; X^\star) = p(a; \hat X)$.
Since $\nabla_\ex f$ gradient is orthogonal to $\mathbf{1}$,
this property is preserved by the optimization, i.e.\ both SGD and homogenized SGD have $\ip{\hat X, \mathbf{1}}_{\mathcal{O}} 
=
\ip{\WHSGD_t, \mathbf{1}}_{\mathcal{O}}$ for all time.
It follows that Assumption \ref{assumption:risk_loss_minimizer_1_main}
is satisfied with this minimizer.  The Lipschitz constant is known to be given by $1$ (see \cite[Chapter 5]{Beck}), and so we have a stability threshold given by
\[
\bar \gamma = \frac{1}{\tfrac1d \tr(K)}.
\]
by Corollary \ref{cor:convexbounded}. Figure~\ref{fig:logistic_regression_learning_rate} numerically supports this result (up to constants). 

We further claim that the outer function $f$ has a local RSI constant; we note that it suffices to do this for $x$ so that $x-\hat x$ is orthogonal to $\mathbf{1}$. Setting $\mathcal{Z} = \ip{ \exp(x), \mathbf{1}}$ and similarly for $\hat{\mathcal{Z}},$
\[
\ip{ x - \hat x, \nabla_\ex f(\ex)}
=
\ip{ x - \hat x, \frac{e^x}{\mathcal{Z}} - \frac{e^{\hat x}}{\hat{\mathcal{Z}}}}
=
\ip{ x - \hat x + \alpha \mathbf{1}, \frac{e^x}{\mathcal{Z}} - \frac{e^{\hat x}}{\hat{\mathcal{Z}}}},
\]
for any $\alpha \in \R$.  Setting $p = \frac{e^x}{\mathcal{Z}}$ and similarly for $\hat p,$ we thus have
\[
\ip{ x - \hat x, \nabla_\ex f(\ex)}
=
\ip{ \log \frac{p}{\hat p}, p - \hat p}.
\]
Now $\log(p_j/\hat{p}_j) \leq \frac{p_j- \hat{p}_j}{\hat{p}_j}$.  So for coordinates $j$ where $p_j > \hat{p}_j$, we may apply this bound to lower bound the contribution to the inner product by $\log(p_j/\hat{p}_j)^2 \hat{p}_j$.  We may do the same to coordinates where $p_j < \hat{p}_j$ after reversing the roles of the two, and so we conclude that with $u = 
\min \{ \hat{p}_j, p_j\}$,
\[
\ip{ x - \hat x, \nabla_\ex f(\ex)}
\geq
u
\|\log \frac{p}{\hat p}\|^2
=
u\| x - \hat x +\log(\hat{\mathcal{Z}}/\mathcal{Z})\mathbf{1}\|^2
\geq
u\| x - \hat x\|^2,
\]
where the final line follows since $x-\hat x$ is orthogonal to $\mathbf{1}.$  
Now if $\|x-\hat x\|^2 \leq \theta$ and $\|\hat x\|^2 \leq \theta,$ then it follows that $\|x\|_\infty$ and $\|\hat x\|_\infty$ are less than $\sqrt{2 \theta}$.  For these bounds, it follows that logistic regression is $(\mu,\theta)$--RSI with
\[
\mu = \tfrac{1}{\ell e^{\sqrt{4\theta}}}.
\]
Hence we have shown using Proposition \ref{prop:RSI}:
\begin{proposition}[Local convergence of logistic regression] \label{prop:logistic_convergence_rate}
Suppose $\hat X$ is the minimizer of $\mathcal{R}(X)$ with the same center of mass as $X_0,$ and set $\theta = 64 \|K\|_\sigma^2\max\{\|\hat X\|^2, \|X_0\|^2\}$.
Then for 
\[
\gamma_t = \gamma =
\frac{e^{-\sqrt{4\theta}}}{\tfrac \ell d \tr K},
\]
and for $a = c \frac{e^{-4\sqrt{\theta}}}{\tfrac {\ell^2}{ d }\tr K} \lambda_{\min}(K)$,
we have for all $t \ge 0$
\[
\mathrsfs{D}^2(t) \le 2 e^{-at} \|X_0-X^{\star}\|^2.
\]
\end{proposition}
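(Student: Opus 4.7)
The plan is to invoke Proposition \ref{prop:RSI} with $\hat X$ playing the role of the minimizer $X^\star$ in that proposition. Essentially all of the structural input is already in hand from the discussion preceding the statement: Assumption \ref{assumption:risk_loss_minimizer_1_main} holds at $\hat X$ via the center-of-mass argument (because $\nabla_\ex f$ is orthogonal to $\mathbf 1$ and so both SGD and homogenized SGD preserve $\ip{\WHSGD_t,\mathbf 1}_\mathcal{O} = \ip{\hat X, \mathbf 1}_\mathcal{O}$); the cross-entropy outer function is $\hat L(f)=1$ smooth by the Beck reference; and the local RSI with $\hat \mu(f) = 1/(\ell e^{\sqrt{4\theta}})$ was just derived (by the $\log(p/\hat p)$ bound and controlling $\|x\|_\infty,\|\hat x\|_\infty \le \sqrt{2\theta}$). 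So the remaining work is a purely quantitative verification that the parameters in Proposition \ref{prop:RSI} can be chosen to match the stated $\gamma$ and $a$.

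Next I would pin down the splitting parameters $\zeta,\zeta_0$. Substituting $\hat L(f)=1$ and $\hat\mu(f) = 1/(\ell e^{\sqrt{4\theta}})$ into the learning-rate formula of Proposition \ref{prop:RSI} gives
\[
\gamma = \frac{2\zeta}{\ell e^{\sqrt{4\theta}}\, \tfrac1d\tr K}.
\]
Matching this to the stated $\gamma = e^{-\sqrt{4\theta}}/(\tfrac{\ell}{d}\tr K)$ forces $\zeta=1/2$, and I then fix some small $\zeta_0\in(0,1/2)$, say $\zeta_0 = 1/4$, whose precise numerical value is absorbed into the absolute constant $c$ in the final rate.

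With these choices I verify the initialization condition
$10\exp\!\bigl(-\tfrac{\hat \theta(f)}{8\|K\|_\sigma^2 \max\{\|X_0-\hat X\|^2,\|\hat X\|^2\}}\bigr) < \zeta_0$ of Proposition \ref{prop:RSI}. Using the elementary inequality $\|X_0-\hat X\|^2 \le 2\|X_0\|^2+2\|\hat X\|^2 \le 4\max\{\|X_0\|^2,\|\hat X\|^2\}$, the denominator is at most a universal multiple of $\|K\|_\sigma^2\max\{\|X_0\|^2,\|\hat X\|^2\}$, so the choice $\theta = 64\|K\|_\sigma^2\max\{\|\hat X\|^2,\|X_0\|^2\}$ makes the exponent at least an absolute constant; choosing $\zeta_0$ sufficiently small (equivalently, absorbing any slack into the universal constant $c$) renders the condition satisfied. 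This is the one spot to be slightly careful with constants — the bound $\|X_0-\hat X\|^2 \le 4\max\{\|X_0\|^2,\|\hat X\|^2\}$ is loose and the constant $64$ leaves only a narrow margin, but no ideas beyond elementary inequalities are needed.

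Finally the convergence rate falls out by direct substitution: from Proposition \ref{prop:RSI} one has
\[
a = \gamma(1-\zeta_0-\zeta)\hat\mu(f)\lambda_{\min}(K)
= (1-\zeta_0-\zeta)\cdot \frac{e^{-\sqrt{4\theta}}}{\tfrac{\ell}{d}\tr K}\cdot \frac{e^{-\sqrt{4\theta}}}{\ell}\cdot \lambda_{\min}(K)
= c\cdot \frac{e^{-4\sqrt{\theta}}}{\tfrac{\ell^2}{d}\tr K}\,\lambda_{\min}(K),
\]
using $2\sqrt{4\theta} = 4\sqrt{\theta}$, which is exactly the claimed exponent. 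The bound $\mathrsfs{D}^2(t) \le 2 e^{-at}\|X_0-X^\star\|^2$ is then directly inherited from Proposition \ref{prop:RSI} (with $\mathrsfs{D}^2$ tracking the distance to $\hat X$ and to $X^\star$ interchangeably, up to the center-of-mass shift absorbed into the bound). The main (mild) obstacle in the plan is therefore the numerical verification of the initialization condition; no new analytic machinery beyond Proposition \ref{prop:RSI} and the already-derived RSI and smoothness constants for cross-entropy is required.
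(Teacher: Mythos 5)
Your route is the same as the paper's: Proposition \ref{prop:logistic_convergence_rate} is stated immediately after the derivation of $\hat L(f)=1$ and the $(\mu,\theta)$--RSI constant $\mu = 1/(\ell e^{\sqrt{4\theta}})$, and the intended proof is exactly the substitution into Proposition \ref{prop:RSI} that you carry out (your matching $\zeta=1/2$ and the computation $2\sqrt{4\theta}=4\sqrt{\theta}$ giving the exponent in $a$ are correct).

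One sub-step of your verification is backwards, and it matters for the constant. The initialization condition of Proposition \ref{prop:RSI} reads $10\exp(-\hat\theta(f)/(8\|K\|_\sigma^2\max\{\|X_0-\hat X\|^2,\|\hat X\|^2\}))<\zeta_0$, so \emph{shrinking} $\zeta_0$ makes the condition \emph{harder}, not easier; you cannot ``absorb the slack'' by taking $\zeta_0$ small. Since $\zeta=1/2$ is forced by matching $\gamma$, you need $\zeta_0<1/2$, hence the exponent must exceed $\ln(10/\zeta_0)>\ln 20\approx 3$. With your bound $\max\{\|X_0-\hat X\|^2,\|\hat X\|^2\}\le 4\max\{\|X_0\|^2,\|\hat X\|^2\}$ and $\theta=64\|K\|_\sigma^2\max\{\|X_0\|^2,\|\hat X\|^2\}$, the exponent is only guaranteed to be $64/32=2$, and $10e^{-2}\approx 1.35>1$, so the condition is not verified for any admissible $\zeta_0$. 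The argument goes through verbatim if $64$ is replaced by (say) $128$, or if one has a sharper bound on $\|X_0-\hat X\|$; as written this is a constant-factor deficiency that the paper's own terse ``Hence we have shown using Proposition \ref{prop:RSI}'' also glosses over, but your write-up should not claim the condition is satisfied by tuning $\zeta_0$ downward. The same caveat applies to your final remark that $\|X_0-\hat X\|^2$ and $\|X_0-X^\star\|^2$ are interchangeable: they differ by the center-of-mass shift $v\otimes\mathbf 1$ and are not equal in general, though here too you are mirroring an imprecision already present in the statement.
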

\noindent Unlike for descent threshold, here the operator norm of $K$ plays a role.  
The root of this problem is that for heavily distorted spectral distributions (in particular with many large eigenvalues but with bounded average-trace), the $K$-norm $\langle (\Id_\mathcal{O} \oplus -\Id_\mathcal{T})^{\otimes 2}, \mathrsfs{B}_t \rangle$ might grow quite large.  This in turn pushes the state of SGD to regions where the probabilities $\{p_j\}$ are very close to the extremes $\{0,1\}$, which in turn compresses the gradients (exponentially in the parameters $\|x\|$).

\begin{remark}
Another way to handle the overparameterization is to pin one column at $0$: we could subtract the final column of $X$ from all other columns to produce the same output, i.e. $p(a;X)=p(a;X - \langle X, o_\ell \rangle \otimes \bm{1})$. Hence, one can also work on an $(\ell-1)$--dimensional space $\mathcal{O}$, which is embedded in the $\ell$-dimensional space above, by adding a $0$-column.
In the specific case of two-class logistic regression, this brings us to the problem of \emph{binary logistic regression}, in which $\ell=1$ and the loss is given by
\[
  \begin{aligned}
  \Psi(X;a,\epsilon) 
  &= 
  -\frac
  {\exp(\ip{X^\star,a}_{\mathcal{A}})}
  {\exp(\ip{X^\star,a}_{\mathcal{A}} + 1)}
  \log
  \biggl(
  \frac
  {\exp(\ip{X,a}_{\mathcal{A}})}
  {\exp(\ip{X,a}_{\mathcal{A}} + 1)}
  \biggr)
  -
  \frac
  {1}
  {\exp(\ip{X^\star,a}_{\mathcal{A}} + 1)}
  \log
  \biggl(
  \frac
  {1}
  {\exp(\ip{X,a}_{\mathcal{A}} + 1)}
  \biggr) \\
  &= 
  -\frac
  {\exp(\ip{X^\star,a}_{\mathcal{A}})}
  {\exp(\ip{X^\star,a}_{\mathcal{A}}) + 1}
    \ip{X,a}_{\mathcal{A}}
  +
  \log
  \bigl(
  {\exp(\ip{X,a}_{\mathcal{A}}) + 1}
  \bigr). 
  \end{aligned}
\]
\noindent Some simplification of $h$ and the $I$ are given in Section \ref{sec:analysis_examples},
but ultimately these must be left as unevaluated Gaussian integrals.
\end{remark}

Logistic regression is a well--studied problem.  Information theoretic recovery bounds are known to exist \cite{candes2020phase} in the proportional scaling done here; in particular one needs sufficiently many samples $n > \alpha d$ for some $\alpha$ depending on $X^\star$ to have an MLE on taking $d\to\infty$.  It is not clear if any such transition in the high-dimensional SGD dynamics, which do not appear to display a phase transition, possibly suggesting some implicit regularization. See also extensions to regularized logistic regression \cite{salehi2019impact} (see also \cite{montanari2019generalization}).

\subsection{Lipschitz phase retrieval.}


\begin{figure}[t]
    \centering
        \includegraphics[scale =0.25]{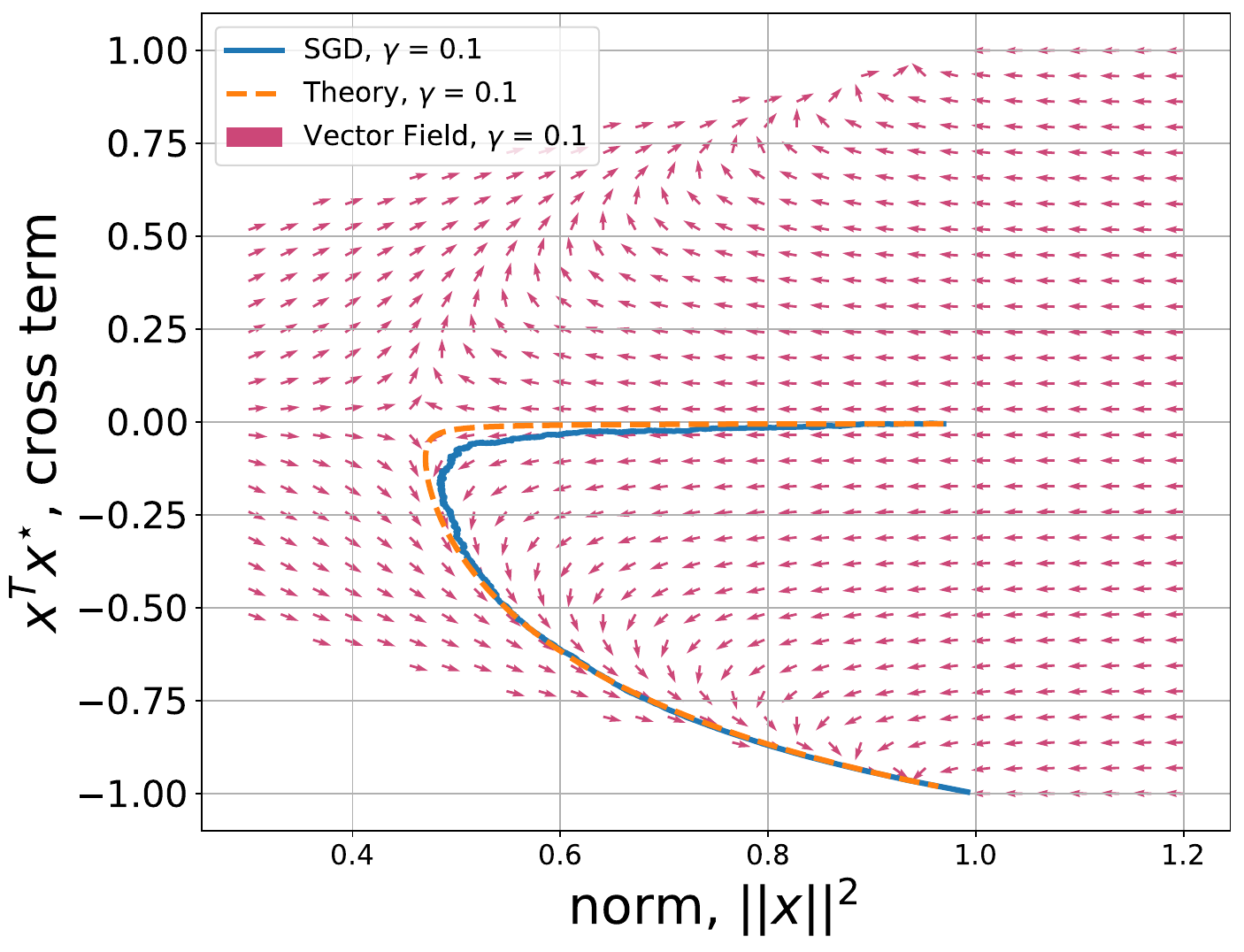}
        \qquad 
    \includegraphics[scale =0.25]{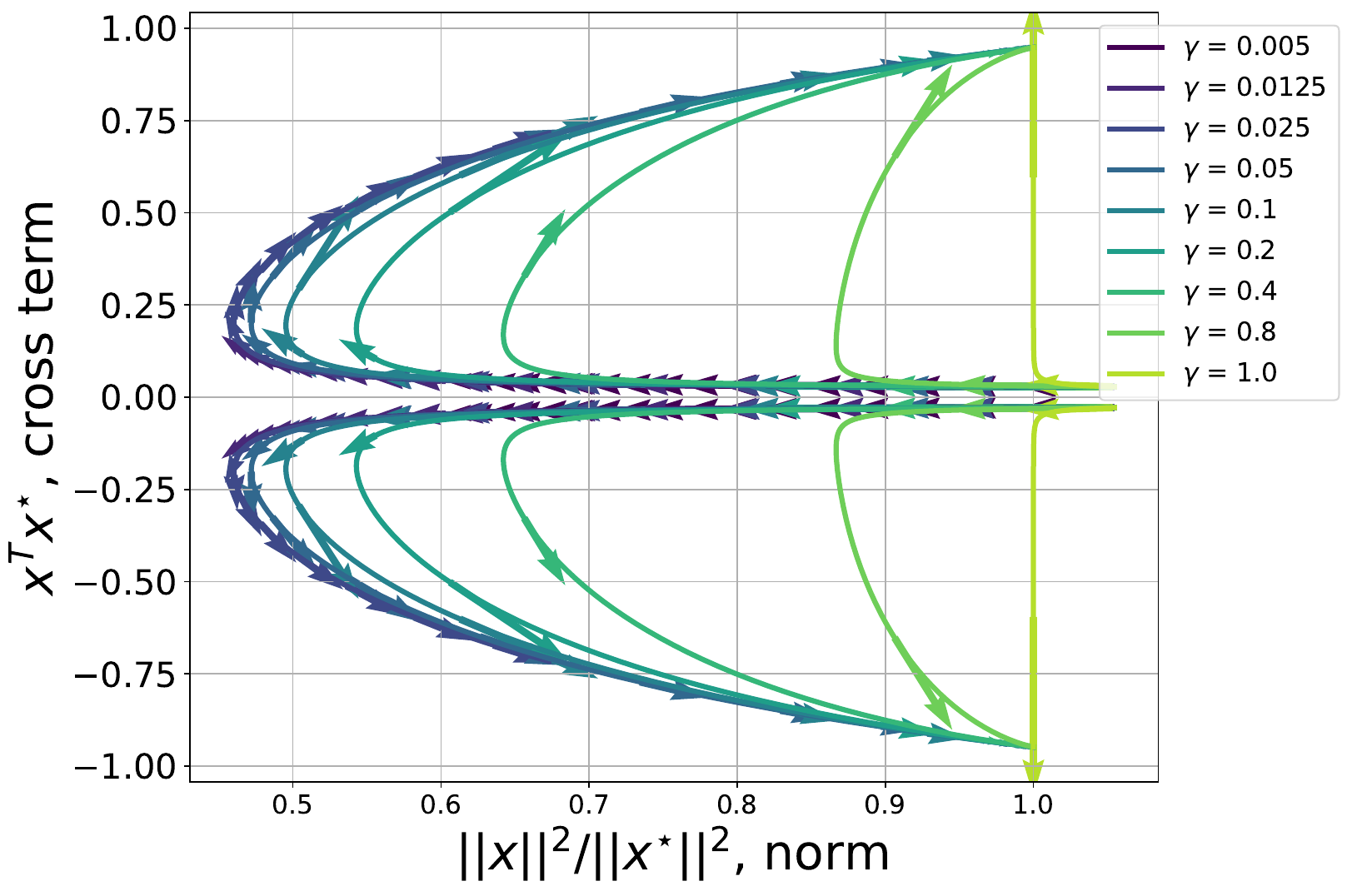}  \includegraphics[scale =0.25]{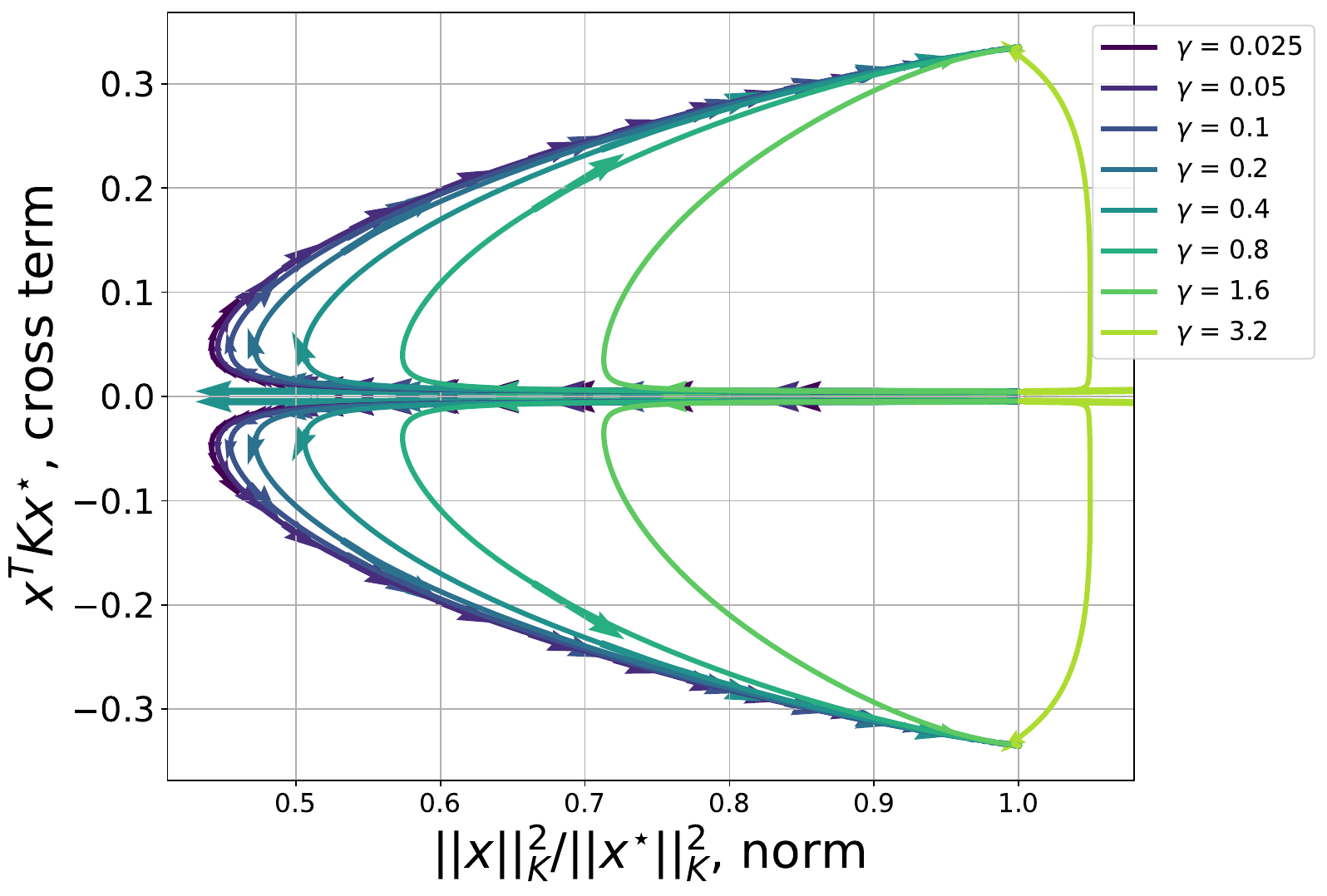} 
   \caption{\textbf{Evolution of the norm versus cross terms.} Plot of the theory for various learning rates for the noiseless phase retrieval problem initialized at $ \pm X_0 / \|X_0\|$ with $X_0 \sim N(0, \Id_d)$, $d = 2000$. The ground truth signal is also normally distributed, $X^{\star} \sim N(0, \Id_d)$. The \textbf{(top row)} are with identity covariance and the \textbf{(bottom row)} has a covariance matrix generated from Marchenko-Pastur (MP) with parameter $4$. The initialization is such that cross-term is initially $0$. All the trajectories converge to either $\pm \|X^{\star}\|$. The trajectories follow a path of first decreasing the norm $\|X\|^2_K = \tr(\ip{X^{\otimes 2}, K}_{\mathcal{A}^{\otimes 2}})$ until some fixed value ($\pi^2 / 4$, identity) and then SGD starts to match the cross term, i.e., $\tr(\ip{X \otimes X^{\star}, K}_{\mathcal{A}^{\otimes 2}}) \to \pm 1$. There exists critical learning rates, $\gamma = 1$ (identity covariance) and $\gamma \approx 3.2$ (MP covariance), such that no movement is observed and the SGD algorithm immediately starts making the cross term $\pm 1$. As learning rate $\to 0$, the trajectories start to behave as gradient flow. 
    } \label{fig:phase_retrieval_vector_field}
\end{figure}

The phase retrieval problem is to recover an underlying signal from linear observations of the modulus of the signal. This is a classic example in optimization theory, in that it is generally tractable to analyze but is nonconvex.  There are multiple formulations, but we consider the following ``Lipschitz'' version
(see also \cite{DDP} for the similar ``robust'' version), with no noise:
\begin{equation} \label{eq:phase_retrieval_nonsmooth_loss}
\mathcal{R}(X) \defas  \tfrac{1}{2} \EE_a[ \big ( |\ip{X,a}_{\mathcal{A}}| - |\ip{X^{\star},a}_{\mathcal{A}}| \big )^2 ]. 
\end{equation}
\noindent Here we take $\mathcal{O} = \mathcal{T} = \R$.

We can explicitly represent the risk in terms of the scalar overlap variables of $B$
\[
B(W) = \ip{W \otimes W, K}_{\mathcal{A}^{\otimes 2}} = 
\begin{pmatrix} 
B_{11}(W) & B_{12}(W)\\
B_{21}(W) & B_{22}(W)
\end{pmatrix}.
\]
We often drop the $W$ in $B$ when it is clear from context. The risk is then given by (using the symmetry of the inputs).
\begin{align*}
    \mathcal{R}(X) = h \left ( \begin{pmatrix} B_{11} & B_{12}\\
    B_{21} & B_{22}
    \end{pmatrix} \right ) 
    &
    = 
    \tfrac{1}{2} B_{11} + \tfrac{1}{2} B_{22} 
    -
    \tfrac{2}{\pi} \left ( {B_{12}} \arcsin \left (\tfrac{B_{12}}{\sqrt{B_{11}} \sqrt{B_{22}} } \right ) + \sqrt{ B_{11}B_{22} - B_{12}^2} \right).\\
\end{align*}
Note in particular that we lose differentiability at the extreme $B_{12}^2 = B_{11}B_{22}$ as well as at $B_{11}=0$ at which the $\arcsin$ degenerates to a step function.  So in particular to apply the theory in this paper to this example, we need to work on a set away from $\mathcal{U}$ given by
\[
\mathcal{U} \defas \{ B: B_{11} > 0, B_{12} < \sqrt{B_{11} B_{22}}\}.
\]
(Here we assume that $B_{22}$ is nonzero).

Computing the derivatives,
\footnote{ On differentiating $h$ with respect to $B_{12}$, one gets twice this formula for $H_2$.  The factor of $2$ is explained by needing to represent $h$ as a symmetric function of its inputs $B_{12}$ and $B_{21}$, and then treating these as independent variables and which effectively divides the derivative in $2$.}
\begin{align*}
H_1 = 
\frac{1}{2} - \frac{1}{\pi} \sqrt{\frac{B_{22}}{B_{11}} - \frac{B_{12}^2}{ B_{11}^2 } }
\quad \text{and} \quad
H_2 = 
-\frac{1}{\pi} \arcsin \left ( \frac{B_{12}}{ \sqrt{B_{11}} \sqrt{B_{22}} } \right ).
\end{align*}
It can also be checked that 
\[
\EE_a [\nabla_x f(\ip{X,a}_{\mathcal{A}})^{\otimes 2} ] = 2\mathcal{R}(X),
\]
and hence $I=2h.$


The dynamics example displays a natural saddle manifold, where $B_{12}$ is $0$.
Simplifying to the case of $K=\Id$, and constant learning rate for clarity,
Using \eqref{eq:coupledODElimit},
\begin{equation*}
  \begin{aligned}
  &\dif \mathrsfs{B}_{11}(t) = -2\gamma( 2\mathrsfs{B}_{11}(t) H_{1,t} +\mathrsfs{B}_{12}(t) H_{2,t})
  + \gamma^2 I_t, \\
  &\dif \mathrsfs{B}_{12}(t) = -2\gamma( H_{1,t} \mathrsfs{B}_{12}(t) + H_{2,t}\mathrsfs{B}_{22}) , \\
\end{aligned}
\end{equation*}
where we have 
$\mathrsfs{B}_{11}(t) = B_{11}(W_{\lfloor td \rfloor})$ and
$\mathrsfs{B}_{12} = B_{12}(W_{\lfloor td \rfloor}).$
In particular if we initialize $B_{12}(W_0) = 0,$ then $\mathrsfs{B}_{12} = H_2 = 0$ identically. Thus, in particular the limit dynamics are trapped close to this axis and, in fact, converge to a saddle point defined by (with $\beta = \tfrac{4}{\gamma}$)
\[
\beta B_{11} \biggl(\frac{1}{2} - \frac{1}{\pi} \sqrt{\frac{B_{22}}{B_{11}}}\biggr) =  
\biggl(\tfrac{1}{2} B_{11} + \tfrac{1}{2} B_{22} -\tfrac{2}{\pi}  \sqrt{ B_{11}B_{22} }\biggr)
\,\implies\,
\pi\sqrt{\frac{B_{22}}{B_{11}}}
=2-\beta\pm \sqrt{\beta^2 -(\pi^2-4)(1-\beta)}.
\]
Initializing off of this manifold allows the process to escape linearly provided $\gamma$ is small enough that $H_{2,t} \mathcal{T}$ can exceed $H_{1,t}\mathcal{V}_t$.  
Approximating $H_{2,t}$ in small $B_{12}$ shows that this threshold is determined by
\[
\sqrt{\frac{B_{22}}{B_{11}}} > 
\frac{\pi}{4}.
\]
This, in particular, is always satisfied at the saddle point for small $\gamma$ (at which $\sqrt{\frac{B_{22}}{B_{11}}} \approx \tfrac{\pi}{2}$) (see Figure~\ref{fig:phase_retrieval_vector_field} (top row) when $\gamma$ is small).  Hence, for large initial $B_{11}$ and small $B_{12} \approx \tfrac{1}{\sqrt{d}}$ (which can be guaranteed by random initialization), SGD first pushes $B_{11}$ towards the saddle.   Then it begins to develop a nontrivial overlap $\mathrsfs{B}_{12}(t)$, which then grows exponentially. These dynamics can be explicitly seen in Figure~\ref{fig:phase_retrieval_vector_field} (see this discrepancy of our theory and SGD in Figure~\ref{fig:phase_retrieval_concentration}). 
As it is initialized with $B_{12}$ small in $d,$ HSGD requires $O(\log d)$ time to reach equilibrium (and SGD requires $O(d\log d)$ steps).
See \cite{tanvershynin} in which this is proven rigorously directly for SGD (in part by explicitly considering a diffusion approximation like homogenized SGD).  See also \cite{ArousInformationExponent} in which a general class of related singular models is given, in which $O(d \log d)$ -- or even $O(d^\alpha)$ steps for $\alpha > 1$ -- is required.  

One solution to this problem is to do a ``warm start'' using a spectral method.  This has been shown rigorously to lead to linear sample complexity when combined with gradient methods \cite{candes2015phase}. 
 See also \cite{mondelli2021approximate} for similar considerations in the approximate message passing setting.

There are known information theoretic bounds for the phase retrieval problem. Especially for smooth isotropic phase retrieval, one needs at least $d$ samples to recover any signal in the problem \cite{maillard2020phase}.  
By increasing the amount of overparameterization in the ``student'' network, which is to say one rather considers a sum $\sum_{1}^m |\langle X_j, a\rangle_{\mathcal{A}}|$ for a family of $m$ parameters $(X_j : 1 \leq j \leq m)$ in $\mathcal{A}$, one can improve the rate. See especially \cite{sarao2020optimization}, \cite{damian2023smoothing} and \cite{Bruno} for various investigations of how to improve the landscape in these cases.


\begin{remark}
We note that Theorem \ref{Thm:SGD_HSGD_convergence} does not apply to super-linear time scales in $d$.  In some cases, it is possible to extend the range to $\epsilon d \log d$ for a small absolute constant $\epsilon.$ Nonetheless,  Theorem \ref{Thm:SGD_HSGD_convergence} does show that with small $B_{12}$ initialization, the process does tend towards the saddle (and reaches any small neighborhood in linear time) and it also shows that with a warm start, the process converges linearly (see Figure~\ref{fig:phase_retrieval_concentration} for numerical support).
\end{remark}


\begin{figure}[t]
    \centering
        \includegraphics[scale =0.3]{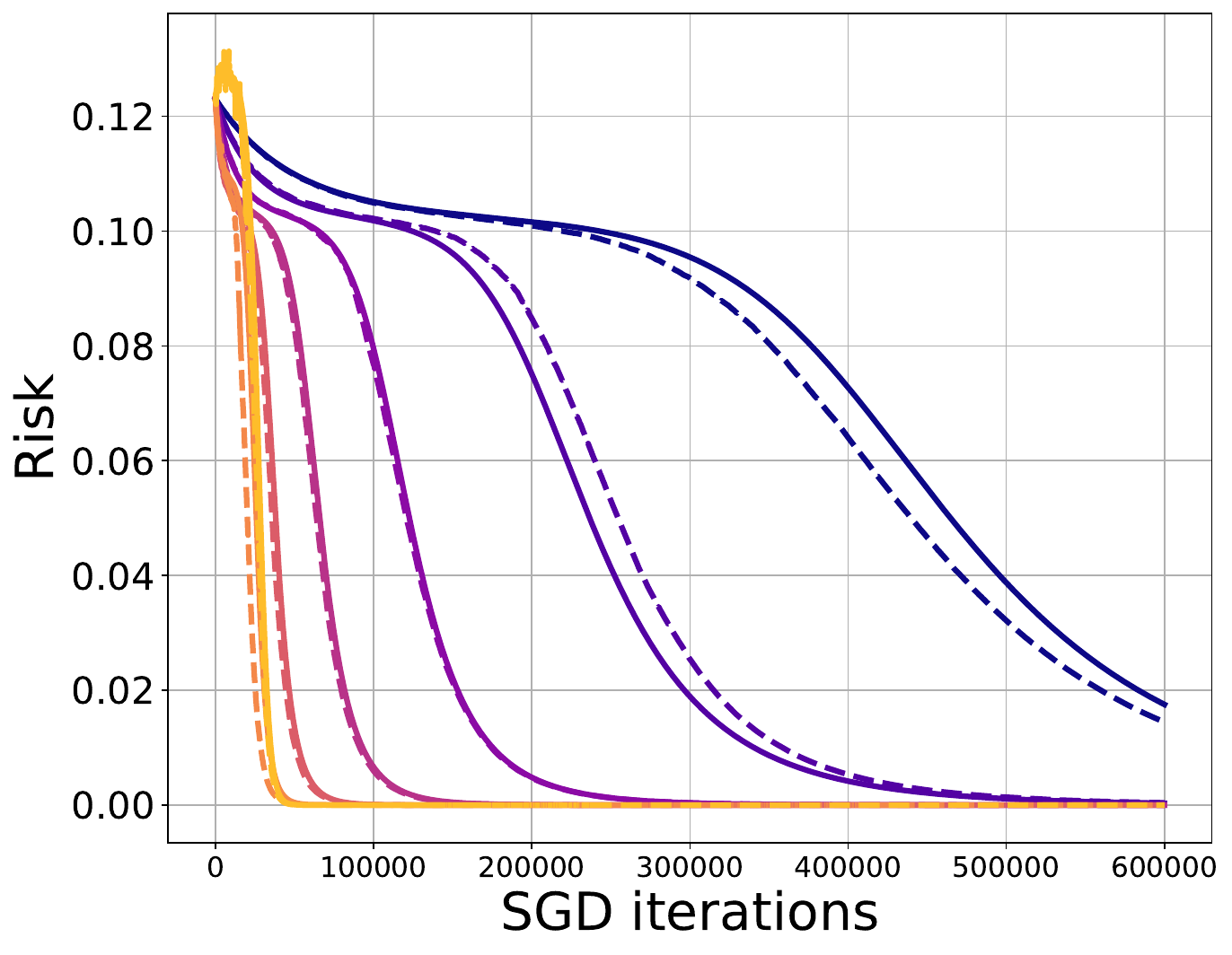}
    \includegraphics[scale =0.3]{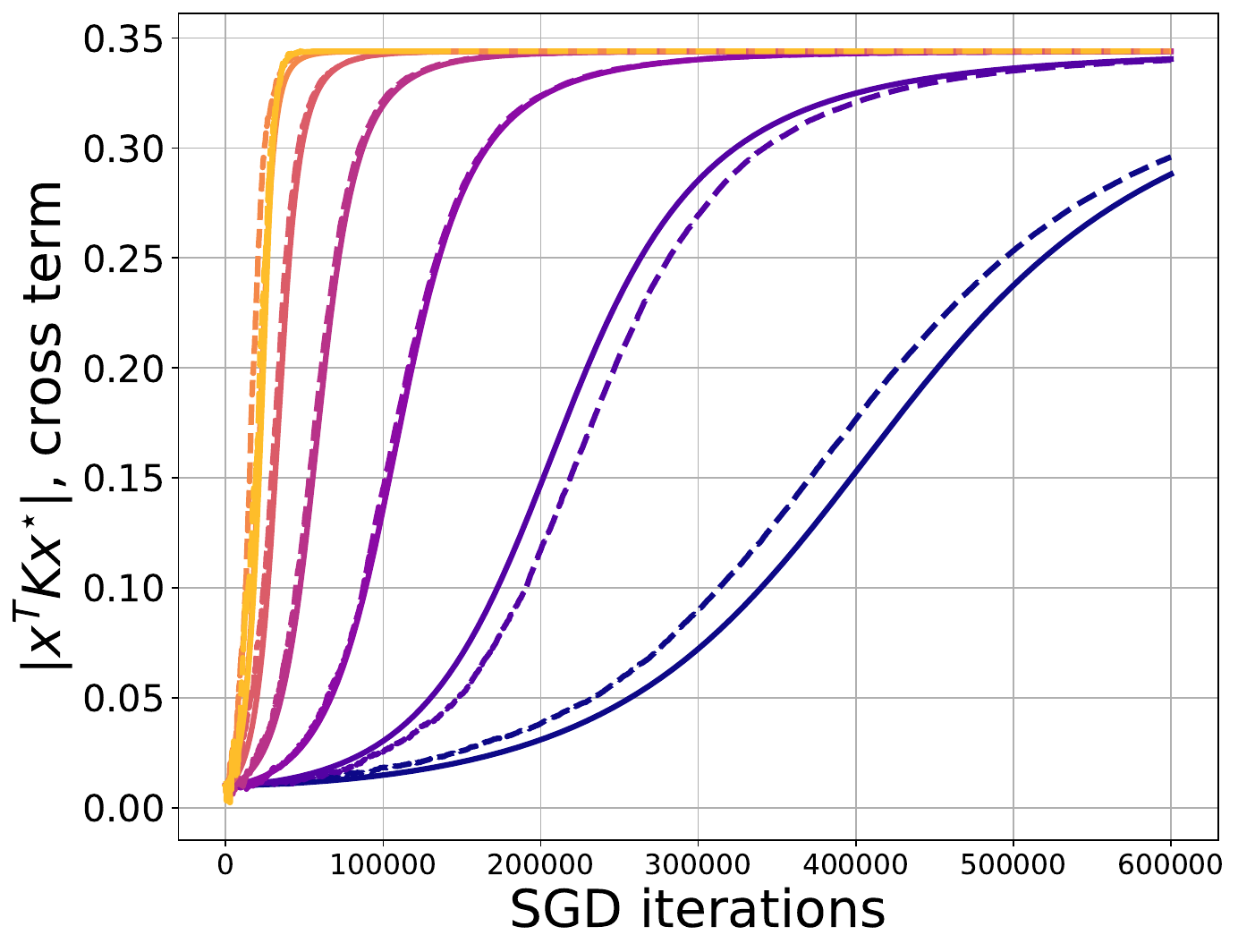} \\
    \includegraphics[scale =0.3]{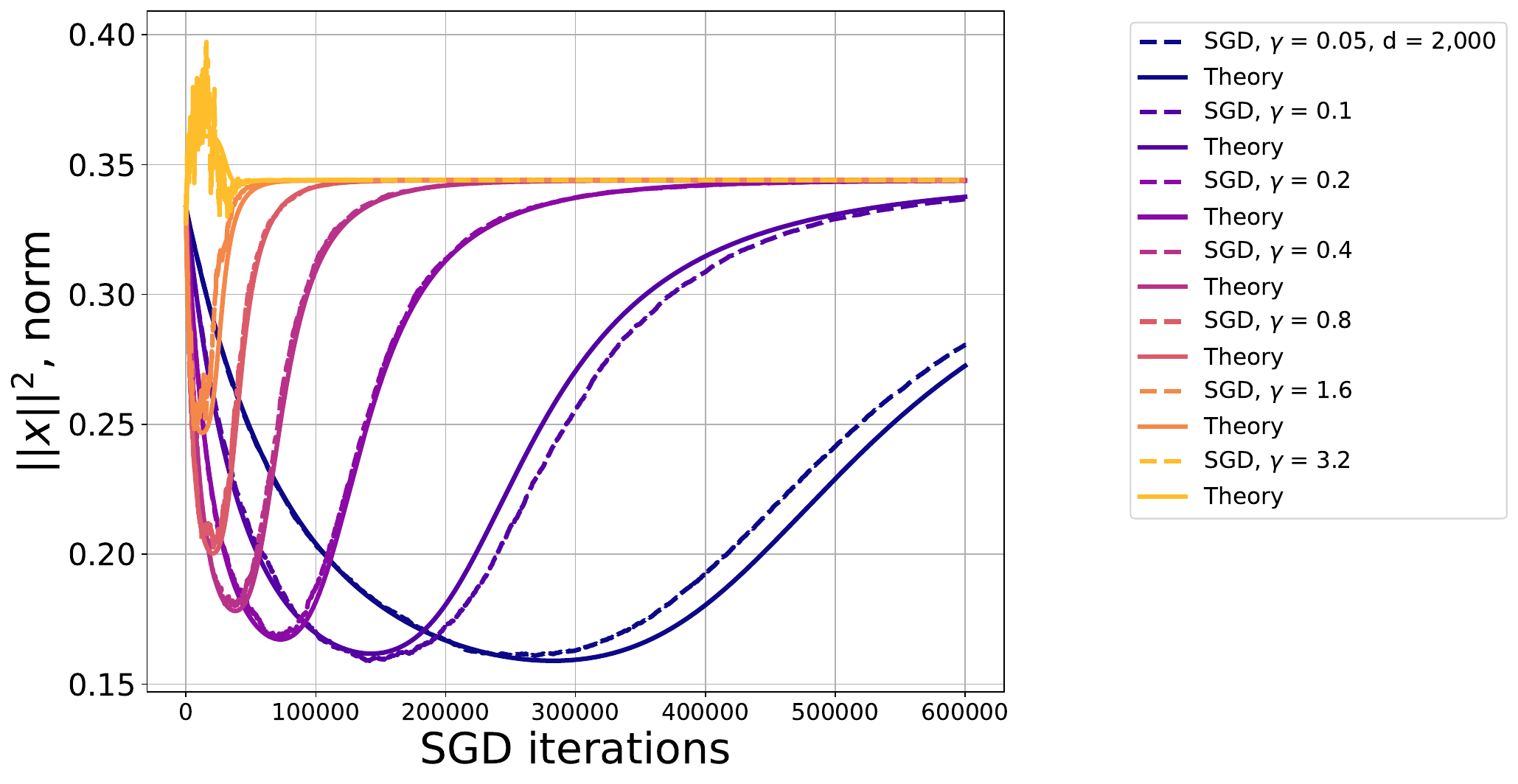} 
    \caption{\textbf{SGD versus Theory.} Plot of SGD in comparison with theory for various learning rates for the noiseless phase retrieval problem initialized at $X_{0,i} = \frac{1}{\sqrt{d}}$ for $i = 1, \hdots, d = 2000$. The ground truth signal is also normally distributed, $X^{\star} \sim \tfrac{1}{\sqrt{d}} N(0, \Id_d)$ and the covariance of the data $a$ is generated from Marchenko-Pastur (MP) with parameter $4$. Our prediction (theory), despite not expecting to match, has a good fit with SGD runs. 
    }\label{fig:phase_retrieval_concentration}
\end{figure}


%

\subsection{Phase chase.} 


In this problem, we consider an alteration of the phase-retrieval problem in which one trains both the $X$ and $X^\star$.  This can be considered as an idealization of a high-dimensional non-convex objective function with a high-degree of degeneracy in the set of minimizers (see \cite{agarwala2022second} for related quartic problems).
We can formulate this as the optimization problem:
\begin{equation} \label{eq:chasing_problem}
    \min_{X_1, X_2 \in \mathcal{A}} 
    \bigg \{ \mathcal{R}(X) = \EE_{a}\big ( \ip{X_1,a}_{\mathcal{A}}^2 - \ip{X_2,a}_{\mathcal{A}}^2 \big )^2\bigg \}.
\end{equation}
We have switched to the smooth formulation of phase retrieval for simplicity.

There are many solutions to this problem, all of which satisfy $X_1 = X_2$ or $X_1 = -X_2$, provided $K$ is non-degenerate (in the case of degenerate $K$, you get equality outside the kernel of $K$). Therefore, the dynamics of this problem are such that $X_1$ is \textit{chasing} $X_2$. 

\subsubsection{Dynamics of the $\mathrsfs{B}$ matrix for phase chase, non-symmetric} To understand these dynamics better and, in particular, the role of SGD noise, we invoke our homogenized SGD theorem. For this, we need the expressions for $h, \nabla h, \nabla f,$ and $\EE_a[ \nabla f(r)^{\otimes 2}]$. First, we note the target $X^{\star} = 0$ and thus, $B_{12} = \ip{X \otimes X^{\star}, K}_{\mathcal{A} \otimes 2}$ and $B_{22} = \ip{X^{\star} \otimes X^{\star}, K}_{\mathcal{A}^{\otimes 2}}$ are both identically $0$. This leaves the $B_{11} = \ip{X \otimes X, K}_{\mathcal{A}^{\otimes 2}}$ which is itself a $2 \times 2$ matrix and can be viewed as a norm and cross term with $X_1$ and $X_2$.  

With this in mind, we introduce notation to represent the norm and cross term between $X_1$ and $X_2$, as represented by a symmetric matrix, 
\begin{equation}
\begin{aligned}
B_{11} \defas Q = \begin{pmatrix} Q_{11} & Q_{12}\\
Q_{12} & Q_{22}
\end{pmatrix} = \ip{(X_1 \oplus X_2)^{\otimes 2}, K}_{\mathcal{A}^{\otimes 2}} = 
\begin{pmatrix} 
\ip{X_1 \otimes X_1, K}
&
\ip{X_1 \otimes X_2, K} \\
\ip{X_2 \otimes X_1, K}
&
\ip{X_2 \otimes X_2, K} \\
\end{pmatrix}.
\end{aligned}
\end{equation}

Under this notation, we represent the function $h$ and $\nabla h$:
\begin{align*}
    h(Q, B_{12}, B_{22}) 
    &
    = 
    3( Q_{11}^2 + Q_{22}^2 ) - 2 ( Q_{11} Q_{22} ) - 4 Q_{12}^2
    \\
    (\nabla h)(Q, B_{12}, B_{22}) 
    &
    =
    \begin{pmatrix}
        6 Q_{11} - 2 Q_{22} & -4 Q_{12}\\
        - 4 Q_{21} & 6 Q_{22} - 2Q_{11}
    \end{pmatrix}.
\end{align*}

The expression for the function $f$ is simply
\[
f(r_1, r_2) = ( r_1^2 - r_2^2)^2 \quad \text{and} \quad \nabla f(r) = 4 (r_1^2 - r_2^2) \begin{bmatrix} r_1 \\
-r_2 
\end{bmatrix},
\]
where $r_1 = \ip{x_1,a}_{\mathcal{A}}$ and $r_2 = \ip{x_2,a}_{\mathcal{A}}$. An application of Wick's formula yields that 
\begin{equation}
\begin{gathered}
\EE_a [ \nabla f( \ip{a, X}_{\mathcal{A}})^{\otimes 2} ] 
= 
16 \begin{bmatrix}
G_{11} & G_{12}\\
G_{12} & G_{22}
\end{bmatrix} \\
\text{where} 
\qquad 
G_{11} = 15 Q_{11}^3 - 6 Q_{11}^2 Q_{22} - 24 Q_{11} Q_{12}^2 + 3 Q_{11} Q_{22}^2 + 12 Q_{12}^2 Q_{22}
\\
G_{12} = - ( 15 Q_{12} Q_{22}^2 + 15 Q_{12} Q_{11}^2 - 18 Q_{11} Q_{12} Q_{22} - 12 Q_{12}^3 )
\\
G_{22} = 15 Q_{22}^3 - 6 Q_{22}^2 Q_{11} - 24 Q_{22} Q_{12}^2 + 3 Q_{22} Q_{11}^2 + 12 Q_{12}^2 Q_{11}.
\end{gathered}
\end{equation}
Under the differential equations, note there is an important \textit{symmetry} between $Q_{11} = \|X_1\|_K^2$ and $Q_{22} = \|X_2\|_{K}^2$. Provided that at initialization $X_1$ and $X_2$ have the same norm value, the evolution of $Q_{11}$ will be the same as $Q_{22}$. In essence, we can simplify look at the dynamics of only two quantities $Q_{11}$ and $Q_{12}$ \textit{and} replace $Q_{22}$ with $Q_{11}$ in the expressions.


\subsubsection{Dynamics when $K = I$} 

We will see from homogenized SGD that the evolution of $Q$ has interesting properties. In particular, for SGD, there are nontrivial effects on the solutions to which it converges. This does not occur for gradient flow, and hence gradient descent-- all learning rates go to the same optimum. 

When the covariance is identity, the expressions for the dynamics of $Q$ simplify to the system of ODEs
\begin{equation} \label{eq:Q_phase_chase}
\begin{aligned}
\dot{Q_{11}} 
&
=
-16 \gamma ( Q_{11}^2 - Q_{12}^2 ) + 192 \gamma^2 (Q_{11}^2 - Q_{12}^2) Q_{11}
\\
\dot{Q_{12}} 
&
=
- 192 \gamma^2 (Q_{11}^2 -Q_{12}^2) Q_{12}.
\end{aligned}
\end{equation}
In comparison to gradient flow with speed $\gamma$, we have that
\begin{equation}
\begin{aligned}
\dot{Q_{11}} 
&
= 
- 16 \gamma (Q_{11}^2 - Q_{12}^2)
\\
\dot{Q_{12}}
&
=
0. 
\end{aligned}
\end{equation}
In both cases, we have $Q^2_{11}-Q^2_{12} \to 0$ although with SGD the rate is slowed.  In gradient flow, $Q_{12}$ remains fixed while under SGD $Q_{12}$ decays.  Hence SGD finds a lower norm solution than gradient flow, and hence can be compared in a sense to a form of implicit regularization, in that an $\ell^2$ regularizer does the same. See Figure~\ref{fig:phase_chase_problem} illustrating numerically these observations even in the non-identity covariance setting.



\begin{figure}[t]
\centering
    \includegraphics[scale = 0.25]{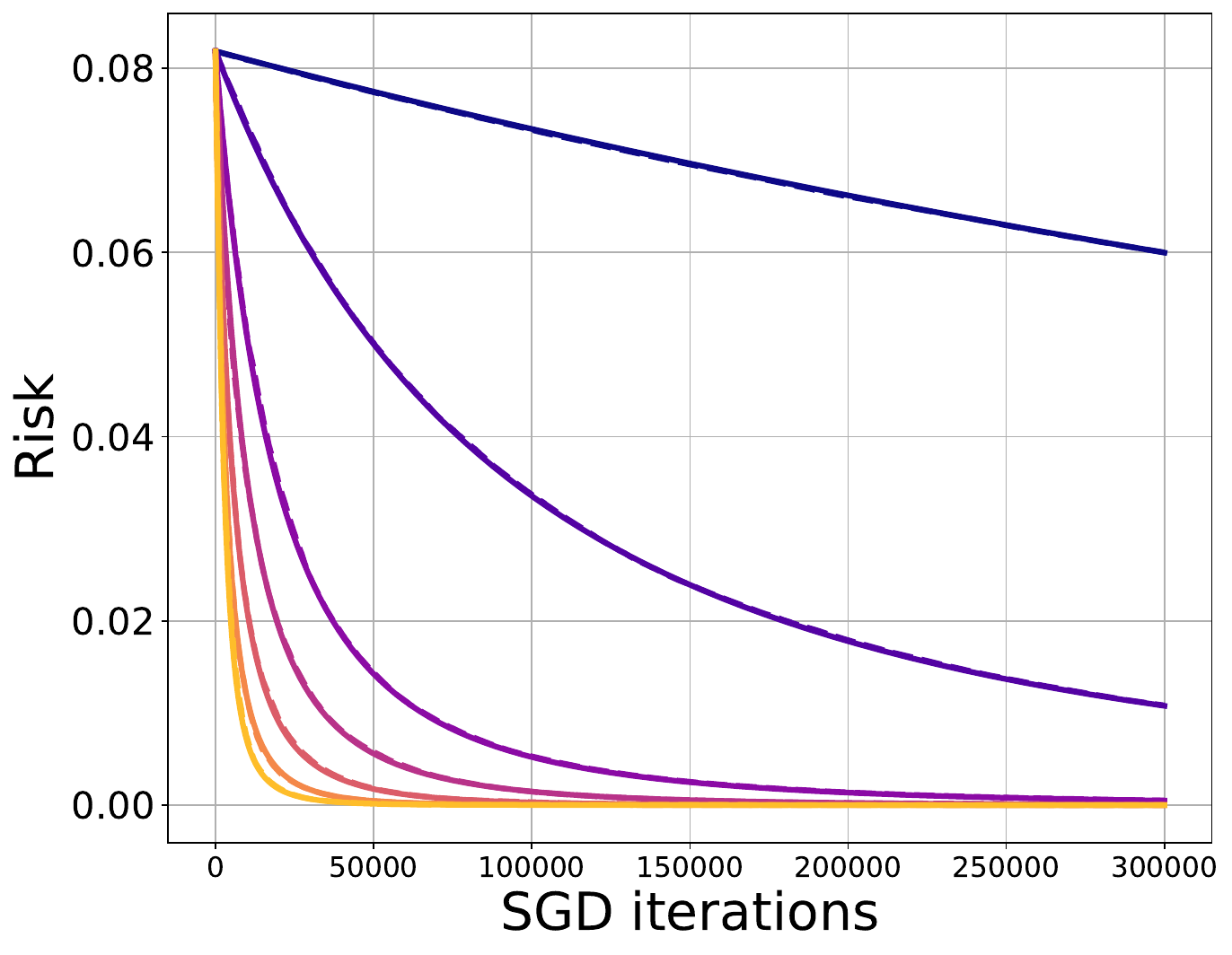} \qquad \includegraphics[scale =0.25]{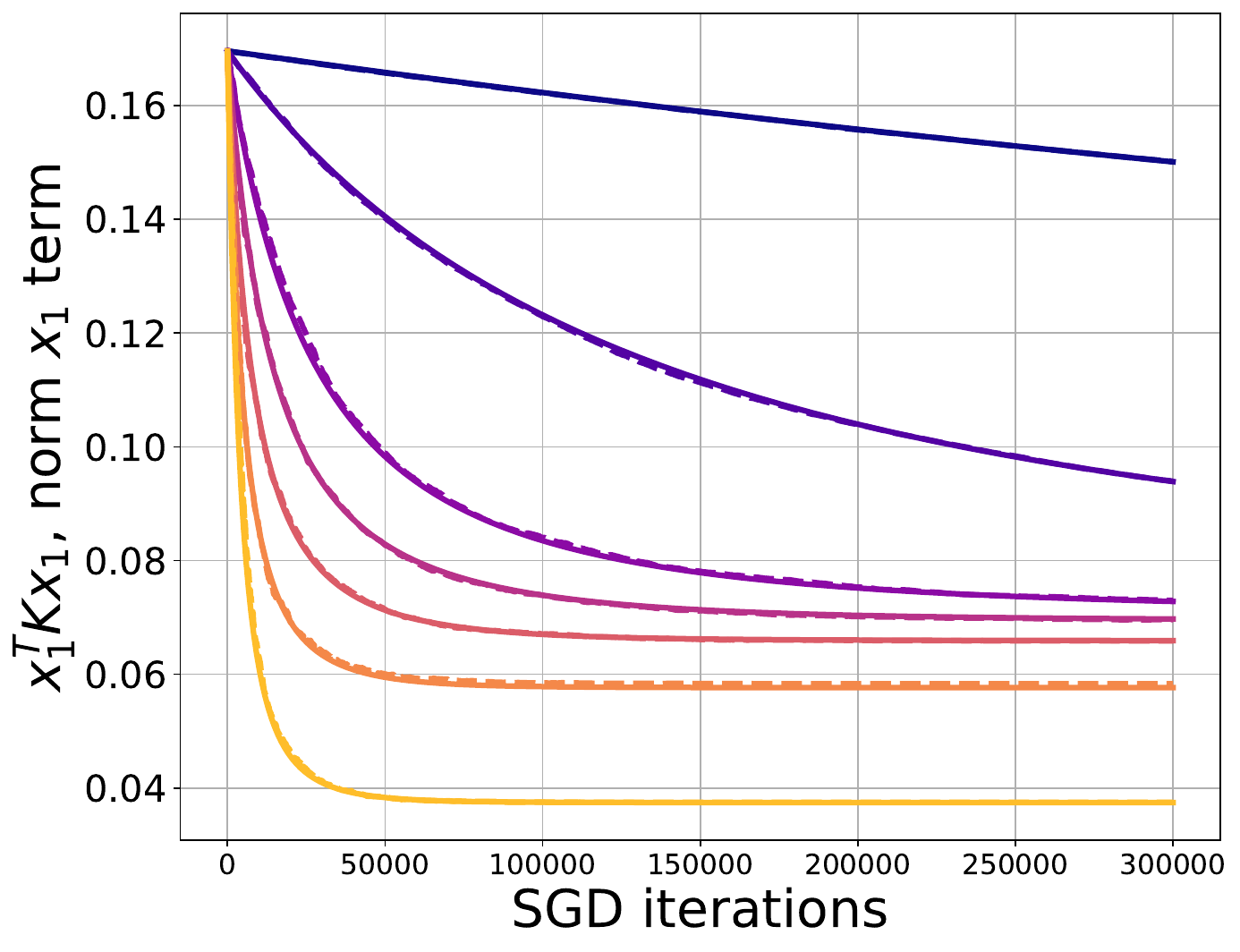} \\
    \includegraphics[scale =0.25]{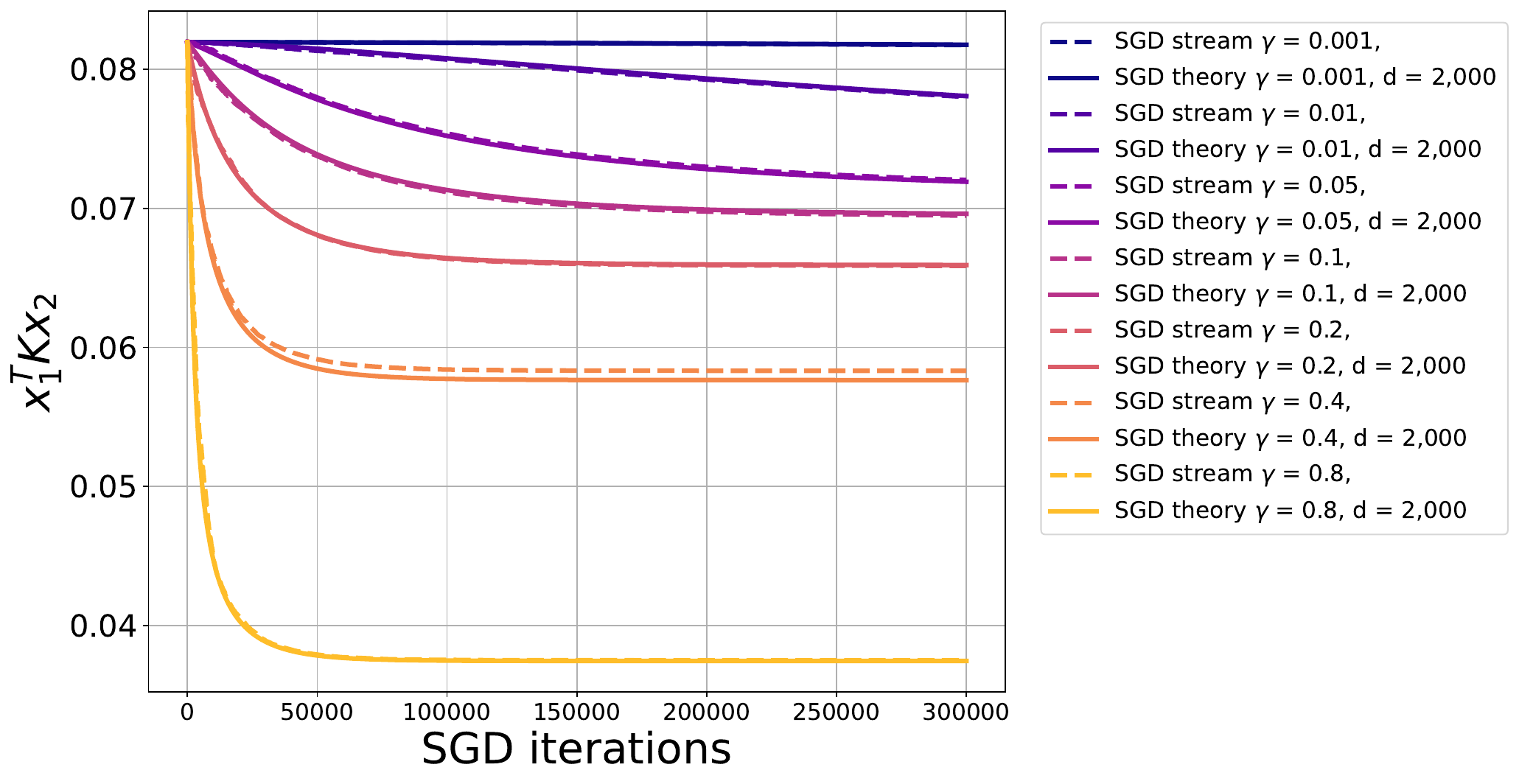}
 \caption{\textbf{SGD vs Theory on (noiseless) Chase Phase Problem.} Plot of SGD in comparison with theory for several statistics spanning learning rates for the noiseless phase chase problem \eqref{eq:chasing_problem} initialized at $X_{0,i} = 0.5 \cdot \frac{1}{\sqrt{d}} N(0, \Id_{d}) + 0.5 \cdot \frac{1}{\sqrt{d}} ( 1, \hdots, 1)^T$ for $i = 1, 2$ and $d = 2000$, a student-teacher model is employed with $X^{\star} = 0$ and covariance matrix $K$ having spectrum generated from a Marchenko-Pastur distribution with parameter $4$. First, the theoretical trajectories (solid) of SGD match \textit{single runs} of SGD (dashed) on all the statistics, see Theorem~\ref{Thm:SGD_HSGD_convergence}. The optimal solution occurs when $\|X_1\|_K = \|X_2\|_K$. We see that various learning rates pick out different solutions; the $K$-norm near convergence changes as the learning rate varies \textbf{(top right)}. Moreover, as the learning rate goes to $0$ (i.e. gradient flow, correctly scaled), we see that the cross term, $X_1^T K X_2$, does not change much from initialization \textbf{(bottom)}. SGD noise causes movement in the cross term, see \eqref{eq:Q_phase_chase}. Moreover, over the larger the learning rate, the slower $\|X_1\|^2 \to \|X_2\|^2$ while simultaneously speeding up the decreasing cross term. The result is we qualitatively see an $\ell^2$-regularized implicit bias, that is, larger learning rates lead to smaller coordinate values, $\|X_1\|_K$ and $\|X_2\|_K$.}
 \label{fig:phase_chase_problem}
\end{figure}

\clearpage


\section{Preliminaries} \label{sec:notation}

In this section, we give a more thorough discussion of the tensor notation used in this article, expanding on the discussion in the introduction.  We then show how the notation can be used to simplify derivative computations.  
We also include a discussion of the concentration of measure theory required for this work.



\subsection{Tensor products of Hilbert space}
We have posed three finite-dimensional real vector spaces $\mathcal{A},\mathcal{O}$ and $\mathcal{T}$, which we equip with inner products and so are finite dimensional Hilbert spaces.
Recall that as a vector space $\mathcal{A} \otimes \mathcal{O}$ is all (finite) linear combinations of \textit{simple} tensors, i.e., those of the form $a \otimes b$ where $a \in \mathcal{A}$ and $b \in \mathcal{O}$.  This becomes an algebra, allowing scalars to commute, i.e., for $c \in \mathbb{R}$
\[ c (a \otimes b) = (ca) \otimes b = a \otimes (c b),\]
and by allowing $\otimes$ to distribute over addition, 
\begin{equation}
    \begin{gathered}
        (a + b) \otimes c = (a \otimes c) + (b \otimes c) \quad 
        \text{and} \quad a \otimes (b + c) = ( a \otimes b) + (a \otimes c).
    \end{gathered}
\end{equation}

In what proceeds, we will need to consider general tensor contractions, which generalize matrix multiplication and dot products. We will use the inner product $\ip{\cdot,\cdot}$ operator in various ways to describe this contraction. Each $\mathcal{A}$ and $\mathcal{O}$ carries with it an inner product, and so $\mathcal{A} \otimes \mathcal{O}$ 
has a natural inner product which for simple tensors is defined by
\begin{equation}
    \ip{a \otimes b, c \otimes d}_{\mathcal{A} \otimes \mathcal{O}} = \ip{a,c}_{\mathcal{A}} \ip{b, d}_{\mathcal{O}}.
\end{equation}
This is extended to the full space $\mathcal{A} \otimes \mathcal{O}$ by bilinearity. 

This, for example, can be connected to the Frobenius inner product. If we represent an element $A \in \mathbb{R}^d \otimes \mathbb{R}^{\ell}$ in the orthonormal basis $\{e_i \otimes e_j\}$ as 
\begin{equation}
    A = \sum_{i,j} A_{ij} e_i \otimes e_j,
\end{equation}
then we have the identification 
\[\ip{A, B}_{\mathcal{A} \otimes \mathcal{O}} = \sum_{i,j} A_{ij} B_{ij} = \tr(A B^T). \]

\subsection{Higher tensor powers} 
For taking higher derivatives, we will be led naturally to expressions which involve higher order tensor powers. In particular, the dot products written above extend naturally to
\begin{equation}
    (\mathcal{A} \otimes \mathcal{O})^{\otimes 2} \defas (\mathcal{A} \otimes \mathcal{O}) \otimes (\mathcal{A} \otimes \mathcal{O}) \cong \mathcal{A}^{\otimes 2} \otimes \mathcal{O}^{\otimes 2},
\end{equation}
where the last isomorphism corresponds to reshaping the tensor to have its ambient directions listed first, and its observable directions second. In some cases, we also need to consider the target space $\mathcal{T}$ this will be listed third. We will try to always work with this convention. 

We will always sort the simple tensors into $\mathcal{A}$ first and then $\mathcal{O}$, if applicable, but within each space we must preserve the ordering. For instance, supposing $o_i \in \mathcal{O}$ with $i = 1,2,3$ and $\alpha_i \in \mathcal{A}$ with $i = 1, 2$, then
\begin{align*}
    o_1 \otimes a_1 \otimes o_2 \otimes a_2 \otimes o_3 \cong a_1 \otimes a_2 \otimes o_1 \otimes o_2 \otimes o_3,
\end{align*}
but the following is not allowed
\begin{align*}
o_1 \otimes a_1 \otimes o_2 \otimes a_2 \otimes o_3 \not \cong a_1 \otimes a_2 \otimes o_2 \otimes o_1 \otimes o_3.
\end{align*}
The above fails to preserve the ordering in the observable $\mathcal{O}$ space. This, particularly, will be important when we do derivatives.

Tensor computations naturally give rise to an inner product on higher tensor products, which we define first for simple tensors, $t_i \defas (a_i \otimes o_i)$ for $i = 1, 2, 3, 4$,
\begin{equation}
\begin{aligned}
    \ip{t_1 \otimes t_2, t_3 \otimes t_4}_{(\mathcal{A} \otimes \mathcal{O})^{\otimes 2}} 
    &
    =
    \ip{t_1, t_3}_{\mathcal{A} \otimes \mathcal{O}} \ip{t_2, t_4}_{\mathcal{A} \otimes \mathcal{O}}
    \\
    &
    =
    \ip{a_1, a_3}_{\mathcal{A}} \ip{a_2, a_4}_{\mathcal{A}} \ip{o_1, o_3}_{\mathcal{O}} \ip{o_2, o_4}_{\mathcal{O}}.
\end{aligned}
\end{equation}
This is once more extended by multi-linearity, and we further extend it to higher tensor powers. 

\subsection{Partial contractions}
When we contract in the ambient direction (which is to say, we form dot products in the ambient direction), we anticipate concentration of measure and central limit theorem effects. So for working with random tensors, it is especially helpful if we consider partial contractions, in which we contract tensors only in their $\mathcal{A}$ directions. Once more, for simple tensors, $t_i = (a_i \otimes o_i)$ for $i = 1,2$,
\begin{equation} \label{eq:higher_tensor}
    \ip{t_1, t_2}_{\mathcal{A}} \defas \ip{a_1, a_2}_{\mathcal{A}} (o_1 \otimes o_2) \in \mathcal{O}^{\otimes 2}. 
\end{equation}
This is also extended to all $\mathcal{A} \otimes \mathcal{O}$ to be bilinear. This extends to higher tensor powers analogously, and also to the more general situation of products of $V_0 \otimes V_1$ with $V_0 \otimes V_2$ as a bilinear mapping:
\begin{equation}
    \ip{\cdot, \cdot}_{V_0} \, : \, (V_0 \otimes V_1) \otimes (V_0 \otimes V_2) \to V_1 \otimes V_2 
\end{equation}
by the formula for simple tensors in \eqref{eq:higher_tensor}. In particular, one of $V_1$ or $V_2$ may be a 1-dimensional space or a tensor product of other spaces.
To summarize, the contraction operation $\ip{a,b}_{V_0}$ contracts all $V_0$ axes of $a$ with $b$ and outputs a tensor having the shape of the un-contracted axes of $a$ followed by those of $b$.

When we have multiple axes indicated by a tensor power of $\mathcal{O}$, contractions are taken left to right. For instance, for $o_i \in \mathcal{O}$ for $i = 1, 2, 3, 4$, we use 
\begin{align*}
\ip{o_1 \otimes o_2, o_3 \otimes o_4}_{ \mathcal{O}} \cong \ip{o_1, o_3}_{\mathcal{O}}\,\cdot\,o_2 \otimes o_4.
\end{align*}

We shall reserve the notation $\ip{\cdot, \cdot}$ for the contraction which contracts the most axes possible of the tensor, in whichever space they reside, and we shall add the subscript whenever a partial contraction is needed. We note that having done the partial contraction, it may be helpful to complete the contraction to a full contraction. This is performed by the \textit{trace} operation, which on the Hilbert space $V \otimes V$, is defined for simple tensors by
\begin{equation}
    \text{Tr}(v \otimes w) = \ip{v, w}_V, 
\end{equation}
and which extends to all $V \otimes V$ by linearity. In the context of \eqref{eq:higher_tensor}, we can then write 
\[ \text{Tr} ( \ip{t_1, t_2}_{\mathcal{A}} ) = \ip{a_1, a_2}_{\mathcal{A}} \ip{o_1, o_2}_{\mathcal{O}} = \ip{t_1, t_2}, \]
which by linearity therefore identifies $\tr( \ip{\cdot, \cdot}_{\mathcal{A}})$ as the full contraction.

\subsection{Norms on tensors} 
Recall that for a matrix $A \in \R^{d \times d}$,
which we can identify with a $2$-tensor,
the operator norm can be defined explicitly as 
\[
\sup_{\substack{\|y\|_2 =1,\\ \|z\|_2 =1}} \ip{A, y \otimes z} = \sup_{\substack{\|y\|_2 =1,\\ \|z\|_2 =1}}  y^T A z  = \|A\|_{\text{op}}.
\]
To generalize this idea to higher tensors, one can generalize this as a supremum over simple unit tensors. We will notate this by $\|\cdot\|_{\sigma}$; this norm is also commonly known as the \textit{injective tensor norm}. Explicitly, if $\varphi = x_1 \otimes x_2 \otimes \hdots \otimes x_k \in V_1 \otimes V_2 \otimes \hdots \otimes V_k$, for simple tensors, then we define its $\sigma$-norm by
\[
\|\varphi\|_{\sigma} \defas \sup_{\substack{\|y_i\|_{V_i} = 1 \\ i = 1,2, \hdots, k}} \ip{\varphi, y_1 \otimes y_2 \otimes \hdots \otimes y_k},
\]
where $y_1 \otimes y_2 \otimes \hdots \otimes y_k \in V_1 \otimes V_2 \otimes \hdots \otimes V_k$ is a simple tensor. 

The second norm we will use is the Hilbert-Schmidt norm, or simply the Hilbert-space norm, on a tensor $A$, which is given by
\[
\|A\| = \ip{A,A} = \sup_{\|B\|=1} \langle A, B\rangle.
\]
Finally 
we define 
the dual norm to the injective norm, which we still call the \emph{nuclear norm} by analogy with the matrix case,
and which is given by
    \[
      \|A\|_{*} \defas
      \sup_{\|B\|_\sigma =1} 
      \ip{A, B}.
    \]
Using the variational representations we observe
\begin{equation}\label{eq:tensornorms}
\|A\|_{\sigma} \leq \|A\| \leq \|A\|_*.
\end{equation}

\subsection{Calculus for tensors} 
We recall briefly how we represent differential calculus with the tensor notation introduced above. 
 For a  (smooth) function $f \, : \, V_0 \to V_1$ on (finite dimensional) Hilbert spaces $V_0, V_1$, its (Fr\'{e}chet) derivative $\Dif f$ can be identified as a mapping from $V_0 \to \mathcal{L}(V_0, V_1)$, the space of linear operators from $V_0 \to V_1$ so that for all $x, h \in V_0$
\[
\lim_{t \downarrow 0} \frac{f(x+th) - f(x)}{t} = (\Dif f)(x)[h].
\]

The space $\mathcal{L}(V_0, V_1)$ can be represented as elements of the tensor product $V_1 \otimes V_0$, by picking an orthonormal basis $\{e_j\}$ for $V_0$ and then identifying, 
\[
(\Dif f)(x) \leftrightarrow \sum_j (\Dif f)(x)[e_j] \otimes e_j,
\]
which is (in effect) its Jacobian matrix representation. This procedure can now be iterated, as $\Dif f$ is a mapping between $V_0$ and a new vector space $\mathcal{L}(V_0, V_1) \cong V_1 \otimes V_0$, and hence
\[
\Dif^2 f \, : \, V_0 \to \mathcal{L}(V_0, \mathcal{L}(V_0, V_1)) \cong V_1 \otimes V_0 \otimes V_0.
\]
In the case that the output of $f$ is $1$-dimensional (so that $V_1 \cong \mathbb{R}$) we may furthermore identify the second derivative $(\Dif^2 f)(x)$ with an element of $V_0 \otimes V_0$. A parallel approach identifies the third derivative as
\[
\Dif^3 f \, : \, V_0 \to \mathcal{L}( V_0,\mathcal{L}(V_0, \mathcal{L}(V_0,V_1))) \cong V_1 \otimes V_0^{\otimes 3}.
\]
In this way, we have that
\[
\Dif^k f \, : \, V_0 \to V_1 \otimes V_0^{\otimes k}. 
\]
Similarly, when $V_1 \cong \mathbb{R}$, we can identify $V_1 \otimes V_0^{\otimes k} \cong V_0^{\otimes k}$. 

\subsubsection{Chain rule with tensors} 

The class of statistics (and losses) we consider are compositions of smooth maps. In this section, we show how one can use the tensor notation to simplify the chain rule for higher order derivatives. Supposing one has two smooth maps $f,g$ with $f \, : \, V_0 \to V_1$ and $g \, : \, V_1 \to V_2$, the chain rule states that $g \circ f$ is a smooth map from $V_0 \to V_2$ and its derivative is a map from $V_0$ to $\mathcal{L}(V_0, V_2)$. Moreover, its derivative is given by 
\[
\Dif (g \circ f)(x)[h] = (\Dif g)(f(x))[(\Dif f)(x)[h]].
\]
If we represent these as tensors, then $(\Dif g)(f(x))$ is in $V_2 \otimes V_1$ and $(\Dif f)(x)$ is in $V_1 \otimes V_0$, and hence we can as well represent the chain rule by
\begin{equation} \label{eq:chain_rule}
    \Dif (g \circ f)(x) = \ip{(\Dif g)(f(x)), (\Dif f)(x)}_{V_1} \in V_2 \otimes V_0,
\end{equation}
showing along which axis the contraction is taken. We note that the ordering is important here. The input space is always taken to be on the right. 

Applying this in the case of a directional derivative, suppose we take a smooth function $\varphi \, : \, V \to \mathbb{R}$. Then for any fixed $x, \Delta \in V$, the map $\psi \, : \, t \mapsto \varphi(x + t \Delta)$ is a smooth function of $\mathbb{R}$, and we may compute its Taylor approximation. In particular, we are interested in approximating $\varphi(x + \Delta)$ or equivalently $\psi(1)$. If we approximate $\varphi(x + \Delta)$ by the third order Taylor expansion at $x$ with remainder, we have
\[
\varphi(x + \Delta) = \psi(1) = \psi(0) + \psi'(0) + \tfrac{1}{2} \psi''(0) +  \frac{1}{2} \int_0^1 (1-t)^2 \psi^{(3)}(t) \, \dif t.
\]
Applying the chain rule, if we set $x(t) = x + t \Delta$, then $(\Dif x)(t)$ is constant and equal to $\Delta$. Therefore, we deduce that
\[
\begin{aligned}
    &
    \psi'(0) = \ip{(\Dif \varphi)(x), \Delta}, 
    \quad 
    \psi''(0) = \ip{(\Dif^2 \varphi)(x), \Delta^{\otimes 2}},
    \quad
    \text{and}
    \quad
    \psi^{(3)}(t) = \ip{(\Dif^3 \varphi)(x(t)), \Delta^{\otimes 3}}.
\end{aligned}
\]
To derive this, in particular, the 2nd and 3rd derivatives, we used linearity to conclude
\[
\begin{aligned}
    \psi''(t) = \Dif ( \ip{ (\Dif \varphi) (x(t)), \Delta}_{V} ) 
    &
    = 
    \ip{\Dif ( (\Dif \varphi) (x(t)) ), \Delta}_{V}
    \\
    &
    =
    \ip{ \ip{ (D^2\varphi) (x(t)), \Delta}_V, \Delta }_V
    \\
    &
    =
    \ip{(D^2 \varphi) (x(t)), \Delta^{\otimes 2}}_{V \otimes V}.
\end{aligned}
\]
We note that in the second line, there is in principle an ambiguity $\ip{(D^2 \varphi)(x(t)), \Delta}_V$, in that $(D^2 \varphi)(x(t))$ is an element of $V \otimes V$. However, as the second derivative is symmetric (as $\varphi$ is smooth and so mixed partials can be interchanged), contraction along either axis works. We summarize with the following generic directional derivative expansion for scalar $C^3$-smooth functions $\varphi \, : \, V \to \mathbb{R}$
\begin{equation}
\varphi(x + \Delta) = \varphi(x) + \ip{ (\Dif \varphi) (x), \Delta} + \frac{1}{2} \ip{(\Dif^2 \varphi) (x), \Delta^{\otimes 2}} + \frac{1}{2} \int_0^1 (1-t)^2 \ip{(\Dif^3 \varphi)( x + t \Delta), \Delta^{\otimes 3}} \, \dif t. 
\end{equation}

\subsection{Derivative of special statistics} 
In this section, we compute the derivatives of the functions $f$, $\Psi_{\delta}$, and the risk function $\mathcal{R}_{\delta}$. 
\paragraph{Derivative of $\Psi_{\delta}$ and bounds on $\nabla_x f$.} The function $f \, : \, \mathcal{O} \oplus \mathcal{T} \oplus \mathcal{T} \to \mathbb{R}$ as in \eqref{eq:nlgc} is $\alpha$-pseudo-Lipschitz and so the derivatives of $f$, $\nabla_x f$ and $\Psi_{\delta} \, : \mathcal{A} \times \mathcal{O} \to \mathbb{R}$, defined in \eqref{eq:Rdelta}, $\nabla_X \Psi_{\delta}$, exist a.e. 

To reduce notation, we write 
\begin{equation}
\begin{aligned}
 \Psi_\delta(X) 
 &
 \defas \Psi_{\delta}(X; a, \epsilon), \quad 
 \Psi(X) 
 \defas 
 \Psi(X; a, \epsilon),
 \\
 \text{and} \quad f(\ip{W, a}_{\mathcal{A}}) 
 &
 \defas f(\ip{X,a}_{\mathcal{A}}) \defas f(\ip{X,a}_{\mathcal{A}} \oplus \ip{X^{\star}, a}_{\mathcal{A}}; \epsilon), \quad \text{where $W = X \otimes X^{\star}$.} 
\end{aligned}
\end{equation}
This is to emphasize various dependencies on $a, X, X^{\star}$, and the noise $\epsilon$ in the proofs that follow. For further simplicity, 
\[
r \defas \ip{W,a}_{\mathcal{A}} \quad \text{and} \quad f(r) \defas f(\ip{W,a}_{\mathcal{A}}) = f(\ip{X,a}_{\mathcal{A}} \oplus \ip{X^{\star}, a}_{\mathcal{A}}; \epsilon).
\]
Analogously, we do the same for gradients:
\begin{equation}
\begin{aligned}
 \nabla_X \Psi_\delta(X) 
 &
 \defas \nabla_X \Psi_{\delta}(X; a, \epsilon), \quad
 \nabla_X \Psi(X) 
 \defas 
 \nabla_X \Psi(X; a, \epsilon)
 \\
 \text{and} \quad 
 \nabla_x f(r) \defas \nabla_x f(\ip{W, a}_{\mathcal{A}}) 
 &
 \defas \nabla_x f(\ip{X,a}_{\mathcal{A}}) \defas \nabla_x f(\ip{X,a}_{\mathcal{A}} \oplus \ip{X^{\star}, a}_{\mathcal{A}}; \epsilon),
\end{aligned}
\end{equation}


Given the composite structure of $\Psi_{\delta}$, 
\begin{equation} \label{eq:loss_function_1}
\Psi_{\delta}(X; a, \epsilon) = f( \ip{X, a}_{\mathcal{A}} \oplus \ip{X^{\star}, a}_{\mathcal{A}}; \epsilon) + \tfrac{\delta}{2} \|X\|^2,
\end{equation}
we compute its derivative. For this, we need to introduce the \textit{identity mapping} 
\[
\text{Id}_{\mathcal{A} \otimes \mathcal{O}} \, : \, \mathcal{A} \otimes \mathcal{O} \to \mathcal{A} \otimes \mathcal{O}
\quad \text{such that $\text{Id}(X) = X$.}
\]
Moreover, with this, we have that $\Dif (X \mapsto X) \, : \, \mathcal{A} \otimes \mathcal{O} \to \mathcal{L}(\mathcal{A} \otimes \mathcal{O}, \mathcal{A} \otimes \mathcal{O}) \cong \mathcal{A}^{\otimes 3} \otimes \mathcal{O}^{\otimes 3}$. The derivative of the mapping $X \to X$, $\Dif X$, is the identity mapping, 
\[
\Dif X \cong \text{Id}_{\mathcal{A} \otimes \mathcal{O}}.
\]

Let us now consider the derivative of $X \in \mathcal{A} \otimes \mathcal{O} \mapsto \ip{X,a}_{\mathcal{A}}$, $\Dif ( \ip{X, a}_{\mathcal{A}} ) \in \mathcal{L}(\mathcal{A} \otimes \mathcal{O}, \mathcal{O})$. Then we see that 
\begin{align*}
    \Dif (\ip{X, a}_{\mathcal{A}} ) 
    = 
    \ip{ \Dif X, a}_{\mathcal{A}}
    \cong
    \ip{\text{Id}_{\mathcal{A} \otimes \mathcal{O}}, a}_{\mathcal{A}} \in \mathcal{L}(\mathcal{A} \otimes \mathcal{O}, \mathcal{O}).
\end{align*}
We now choose an orthogonal basis $\{e_{\alpha} \otimes f_o\}$ for $\mathcal{A} \otimes \mathcal{O}$, and
\begin{equation} \label{eq:derivative_X_a}
\begin{aligned}
    \Dif (\ip{X, a}_{\mathcal{A}} ) 
    \cong
    \ip{\text{Id}_{\mathcal{A} \otimes \mathcal{O}}, a}_{\mathcal{A}}
    &
    \cong 
    \sum_{\alpha, o} \ip{e_{\alpha} \otimes f_o, a}_{\mathcal{A}} \otimes e_{\alpha} \otimes f_o \\
    &
    = \sum_{\alpha, o} \ip{e_{\alpha}, a}_{\mathcal{A}} f_o \otimes e_{\alpha} \otimes f_o
    \\
    &
    =
    \sum_{\alpha, o} \ip{e_{\alpha}, a} e_{\alpha} \otimes f_o \otimes f_o
    \\
    &
    =
    \sum_{o} a \otimes f_o \otimes f_o
    \\
    &
    \cong a \otimes \text{Id}_{\mathcal{O}}.
\end{aligned}
\end{equation}
We make explicit the connection between the operator definition of $\Dif (X \mapsto \ip{a,X}_{\mathcal{A}})$ and the tensor definition just seen \eqref{eq:derivative_X_a}. Consider a perturbation $H \in \mathcal{A} \otimes \mathcal{O}$ and evaluate $(\Dif \ip{\cdot,a}_{\mathcal{A}})(W)[H]$, 
\[
\Dif(\ip{X, a}_{\mathcal{A}})[H] = \lim_{t \downarrow 0} \frac{\ip{a, X + t H}_{\mathcal{A}} - \ip{a, X}_{\mathcal{A}}}{t} = \ip{a, H}_{\mathcal{A}} = \ip{a \otimes \text{Id}_{\mathcal{O}}, H}. 
\]
Thus, once more sorting the coordinates, the derivative of the loss $\Psi_{\delta}(X) = f( \ip{X,a}_{\mathcal{A}}) + \delta \|X\|^2 /2$ using chain rule \eqref{eq:chain_rule} and the basis $\{f_o\}$ for $\mathcal{O}$
\begin{equation} \label{eq:loss_derivative}
\begin{aligned}
\nabla_X \Psi_{\delta} (X)
&
\cong
\ip{(\nabla_x f)(\ip{X, a}_{\mathcal{A}}), a \otimes \text{Id}_{\mathcal{O}} }_{\mathcal{O}} + \delta X \cong \sum_o \ip{(\nabla_x f)(\ip{X,a}_{\mathcal{A}}), a \otimes f_o}_{\mathcal{O}} \otimes f_o + \delta X
\\
& 
\cong a \otimes (\nabla_x f)(\ip{X, a}_{\mathcal{A}}) + \delta X \in \mathcal{A} \otimes \mathcal{O}.
\end{aligned}
\end{equation}
We have shown our first important result:
\begin{lemma}[Derivative of $\Psi_{\delta}$] \label{lem:derivative_loss}
    Setting the loss $\Psi_\delta(X) = f(\ip{W, a}_{\mathcal{A}}) + p(X)$ and letting $k \in \mathbb{N}$, we define
    \begin{gather*}
        \nabla_X \Psi_{\delta}(X) 
        = 
        a \otimes \nabla_x f( \ip{W,a}_{\mathcal{A}} ) + \delta X
    \end{gather*}
where we represent the differentials in the sorted coordinates $\mathcal{A} \otimes \mathcal{O}$ and preserve the ordering (left to right) of the $\mathcal{A}$ and $\mathcal{O}$ tensor contractions. 
\end{lemma}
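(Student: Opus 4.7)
The plan is to decompose $\Psi_\delta$ as $f\circ L + p$, where $L:\mathcal{A}\otimes\mathcal{O}\to\mathcal{O}^+$ is the affine map $X\mapsto \langle X,a\rangle_{\mathcal{A}}\oplus\langle X^\star,a\rangle_{\mathcal{A}}$ (with $X^\star$ fixed and regarded as a constant in the trainable argument) and $p(X)=\delta\|X\|^2/2$. The differentiability of $f$ at almost every point follows from Assumption \ref{ass:pseudo_lipschitz}, so it suffices to compute the derivative formally via the tensor chain rule \eqref{eq:chain_rule} and then read off the result in sorted $\mathcal{A}\otimes\mathcal{O}$ coordinates.

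First I would compute $\Dif L(X)$. Since the trainable part of $L$ is linear in $X$, its Fr\'echet derivative is the constant linear map $H\mapsto \langle H,a\rangle_{\mathcal{A}}$ from $\mathcal{A}\otimes\mathcal{O}$ to $\mathcal{O}$, while the $X^\star$-component contributes nothing. Representing this operator as an element of $\mathcal{O}\otimes(\mathcal{A}\otimes\mathcal{O})$ in an orthonormal basis $\{e_\alpha\otimes f_o\}$ of $\mathcal{A}\otimes\mathcal{O}$ gives precisely the unpacked calculation carried out in \eqref{eq:derivative_X_a}; after resorting the factors so that the $\mathcal{A}$ axis precedes the $\mathcal{O}$ axes one obtains the compact identity $\Dif L(X)\cong a\otimes \text{Id}_{\mathcal{O}}$, acting only on the $\mathcal{O}$-component of the output (the $\mathcal{T}$-component of $L$ is constant in $X$).

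Next I would apply the tensor chain rule to $f\circ L$: one has $\Dif(f\circ L)(X) = \langle \nabla_x f(\langle W,a\rangle_{\mathcal{A}}),\,\Dif L(X)\rangle_{\mathcal{O}}$, where only the $\mathcal{O}$-axis contributes in the contraction because $\nabla_x f$ denotes the gradient with respect to the $\mathcal{O}$-input $x=\langle X,a\rangle_{\mathcal{A}}$ (the $\mathcal{T}$-components of $L$ being constant in $X$). Contracting $\nabla_x f\in\mathcal{O}$ against $a\otimes \text{Id}_{\mathcal{O}}\cong \sum_o a\otimes f_o\otimes f_o$ along the leading $\mathcal{O}$ axis collapses the pairing to $a\otimes \nabla_x f(\langle W,a\rangle_{\mathcal{A}})\in\mathcal{A}\otimes\mathcal{O}$, exactly as displayed in \eqref{eq:loss_derivative}. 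Derivative of the regularizer $p(X)=\delta\|X\|^2/2$ is immediate from differentiating the Hilbert-space norm on $\mathcal{A}\otimes\mathcal{O}$, yielding $\nabla_X p(X)=\delta X$. Summing the two pieces gives the claimed formula.

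There is no genuine obstacle here; the lemma is essentially a bookkeeping statement, and the only subtle point is making sure the tensor-sorting convention (Section \ref{sec:notation}) is applied consistently so that $a$ is placed in the $\mathcal{A}$-slot and $\nabla_x f$ in the $\mathcal{O}$-slot. The pseudo-Lipschitz hypothesis on $f$ is used only to justify that $\nabla_x f$ exists and is locally well-behaved; beyond that, the argument is a direct application of the chain-rule formula \eqref{eq:chain_rule} and the identification $\Dif L(X)\cong a\otimes \text{Id}_{\mathcal{O}}$.
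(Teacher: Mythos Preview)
Your proposal is correct and follows essentially the same approach as the paper: compute $\Dif(X\mapsto\langle X,a\rangle_{\mathcal{A}})\cong a\otimes\text{Id}_{\mathcal{O}}$ via the basis expansion \eqref{eq:derivative_X_a}, apply the tensor chain rule \eqref{eq:chain_rule} to contract against $\nabla_x f$, and add the regularizer gradient $\delta X$. The paper's argument is exactly the computation leading to \eqref{eq:loss_derivative}, which you have reproduced.
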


We are now ready to compute the derivative of the risk $\mathcal{R}(X)$. 

\begin{lemma}[Derivatives of the statistic, $\varphi$] \label{lem:derivative_risk}
Suppose the risk is $\mathcal{R}(X) = h( \ip{ W \otimes W, K }_{\mathcal{A}^{\otimes 2}} )$. Then, one has
\begin{align*}
    \nabla \mathcal{R}(X) 
    &=
        \ip{\nabla h, (\text{\rm Id}_{\mathcal{O}} \oplus 0_{\mathcal{T}}) \otimes \ip{K,W}_{\mathcal{A}} }_{\mathcal{(O^+)}^{\otimes 2}}
        \\
        & 
        \quad 
        + \ip{\nabla h, \ip{K, W}_{\mathcal{A}} \otimes (\text{ \rm Id}_{\mathcal{O}} \oplus 0_{\mathcal{T}}) }_{(\mathcal{O}^{+})^{\otimes 2}}.
    \end{align*}
where $\nabla h$ is evaluated at $ \ip{W \otimes W, K}_{\mathcal{A}^{\otimes 2} } $. We represent the differentials in the sorted coordinates $\mathcal{A}$ and then $\mathcal{O}$.
\end{lemma}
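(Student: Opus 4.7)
The plan is to apply the chain rule to $\mathcal{R}(X) = h \circ B(X)$ where $B(X) \defas \langle W^{\otimes 2}, K\rangle_{\mathcal{A}^{\otimes 2}}$ and $W = X \oplus X^\star \in \mathcal{A} \otimes \mathcal{O}^+$. The composition factors through three steps: the affine embedding $X \mapsto W$ (whose derivative is the natural inclusion $\mathcal{O} \hookrightarrow \mathcal{O}^+$, representable as the tensor $\mathrm{Id}_\mathcal{O} \oplus 0_\mathcal{T} \in \mathcal{O}^+ \otimes \mathcal{O}$), the quadratic contraction $W \mapsto \langle W^{\otimes 2}, K\rangle$, and the smooth outer function $h$. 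The proof amounts to carrying out a directional derivative and re-sorting axes in the tensor chain rule developed earlier in Section \ref{sec:notation}.

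First I compute the derivative of $B$ via a directional derivative. Fix $H \in \mathcal{A} \otimes \mathcal{O}$ and set $H' \defas H \oplus 0 \in \mathcal{A} \otimes \mathcal{O}^+$, so that $W(t) \defas (X+tH) \oplus X^\star = W + tH'$. Expanding $(W+tH')^{\otimes 2}$ and differentiating at $t=0$ gives $\Dif B(X)[H] = \langle H' \otimes W + W \otimes H', K\rangle_{\mathcal{A}^{\otimes 2}}$ in $(\mathcal{O}^+)^{\otimes 2}$. Chaining with $h$ (using Assumption \ref{assumption:unbiased} for differentiability of $h$) yields
\[
\Dif \mathcal{R}(X)[H] = \langle \nabla h, \langle H' \otimes W, K\rangle_{\mathcal{A}^{\otimes 2}}\rangle + \langle \nabla h, \langle W \otimes H', K\rangle_{\mathcal{A}^{\otimes 2}}\rangle,
\]
with $\nabla h$ evaluated at $B(X) = \langle W^{\otimes 2}, K\rangle_{\mathcal{A}^{\otimes 2}}$.

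The final step is to extract $\nabla_X \mathcal{R}(X) \in \mathcal{A} \otimes \mathcal{O}$ as the tensor $T$ satisfying $\Dif\mathcal{R}(X)[H] = \langle T, H\rangle$ for every $H$. For each of the two terms, I first reassociate so that $H'$ is isolated: the two $\mathcal{A}$-contractions with $K$ can be performed by first forming $\langle K, W\rangle_\mathcal{A} \in \mathcal{A} \otimes \mathcal{O}^+$ and leaving the $\mathcal{A}$-axis of $H'$ free. Since $H' = \langle \mathrm{Id}_\mathcal{O} \oplus 0_\mathcal{T}, H\rangle_\mathcal{O}$ (the embedding $\mathcal{O} \hookrightarrow \mathcal{O}^+$ applied on the $\mathcal{O}$-axis of $H$), this embedding factor pulls out of the contraction as a tensor on $\mathcal{O}^+ \otimes \mathcal{O}$. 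The placement of $H'$ to the left or right of $W$ is preserved under this rearrangement, producing the two summands of the lemma with $(\mathrm{Id}_\mathcal{O} \oplus 0_\mathcal{T})$ appearing on the left in one term and on the right in the other.

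The only real obstacle is bookkeeping: one must track both the order of the two $\mathcal{O}^+$-axes (so they pair correctly with the two axes of $\nabla h$ under the full $(\mathcal{O}^+)^{\otimes 2}$ contraction) and the sorting convention that puts $\mathcal{A}$-axes before $\mathcal{O}$-axes in the output tensor $\nabla_X \mathcal{R}(X) \in \mathcal{A} \otimes \mathcal{O}$. Once the convention (left-to-right within each space, $\mathcal{A}$ first) is fixed consistently throughout, the identification is essentially tautological. No analytic content beyond $C^1$-smoothness of $h$ (from Assumption \ref{assumption:unbiased}) is required; commutation of gradient and expectation is not at issue here because the statement is purely about the deterministic map $X \mapsto h(\langle W^{\otimes 2}, K\rangle)$.
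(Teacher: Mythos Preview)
Your proposal is correct and follows essentially the same route as the paper: compute the derivative of the inner map $X \mapsto \langle W^{\otimes 2}, K\rangle_{\mathcal{A}^{\otimes 2}}$ via the product rule (the paper records this separately as \eqref{eq:statistic_inner_function}) and then contract with $\nabla h$ by the chain rule. The only cosmetic difference is that you work with directional derivatives $W + tH'$ and then extract the gradient by duality, whereas the paper works directly in the Fr\'echet-derivative tensor representation using an orthonormal basis expansion; the identifications are the same.
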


\begin{proof}
The result is immediate from \eqref{eq:statistic_inner_function} and chain rule. 

\end{proof}

\paragraph{Derivative of the risk $\mathcal{R}$.} Now we turn to evaluate the (composite) risk
\begin{equation} \label{eq:statistic_information_2}
\mathcal{R}(X) = h( \ip{W \otimes W , K}_{\mathcal{A}^{\otimes 2}}) \quad \text{where $W = X \oplus X^{\star}$,}
\end{equation}
and its corresponding chain rule. We introduce the zero tensor in the vector space $\mathcal{T}$, denoted by $0_{\mathcal{T}}$. We emphasize the space in which the zero tensor lives to avoid confusion. First, the mapping $X \mapsto W = X \oplus X^{\star}$ has a nice, simple derivative
\[
\Dif(W) = \Dif(X \oplus X^{\star}) \cong \text{Id}_{\mathcal{A} \otimes \mathcal{O}} \oplus 0_{\mathcal{A} \otimes \mathcal{T}}.
\]
Now to compute the chain rule of \eqref{eq:statistic_information_2}. For this, we need to compute the derivative of the inside function $\Dif (X \mapsto \ip{W \otimes W, K}_{\mathcal{A}^{\otimes 2}} \in \mathcal{L}(\mathcal{A} \otimes \mathcal{O}, (\mathcal{O}^+)^{\otimes 2})$. The product rule gives
\begin{align*}
    \Dif ( \ip{W \otimes W, K}_{\mathcal{A}^{\otimes 2}}) 
    &
    = 
    \ip{\Dif W \otimes W, K}_{\mathcal{A}^{\otimes 2}} + \ip{W \otimes \Dif W, K}_{\mathcal{A}^{\otimes 2}}
    \\
    &
    \cong
    \ip{(\text{Id}_{\mathcal{A} \otimes \mathcal{O}} \oplus 0_{\mathcal{A} \otimes \mathcal{T}}) \otimes W, K}_{\mathcal{A}^{\otimes 2}} + \ip{W \otimes (\text{Id}_{\mathcal{A} \otimes \mathcal{O}} \oplus 0_{\mathcal{A} \otimes \mathcal{T}}), K}_{\mathcal{A}^{\otimes 2}}.   
\end{align*}
Choosing an orthonormal basis $\{e_{\alpha} \otimes f_o\}$ for $\mathcal{A} \otimes \mathcal{O}$, 
\begin{align*}
    \ip{\Dif W \otimes W, K}_{\mathcal{A}^{\otimes 2}} 
    \cong
    \ip{(\text{Id}_{\mathcal{A} \otimes \mathcal{O}} \oplus 0_{\mathcal{A} \otimes \mathcal{T}}) \otimes W, K}_{\mathcal{A}^{\otimes 2}} 
    &
    \cong
    \sum_{o, \alpha} \ip{e_{\alpha} \otimes (f_o \oplus 0_{\mathcal{T}}) \otimes W, K}_{\mathcal{A}^{\otimes 2}} \otimes e_{\alpha} \otimes f_o
    \\
    & 
    =
    \sum_{o, \alpha} (f_o \oplus 0_{\mathcal{T}}) \otimes \ip{e_{\alpha} \otimes W, K}_{\mathcal{A}^{\otimes 2}} \otimes e_{\alpha} \otimes f_o
    \\
    \text{($K = \EE[ a \otimes a]$)} \quad  
    &
    =
    \sum_o (f_o \oplus 0_{\mathcal{T}}) \otimes \ip{W,K}_{\mathcal{A}} \otimes f_o
    \\
    &
    \cong ( \text{Id}_{\mathcal{O}} \oplus 0_{\mathcal{T}} ) \otimes \ip{W, K}_{\mathcal{A}}. 
\end{align*}
A similar computation, making sure to preserve the ordering of the contractions in $\mathcal{O}$, yields 
\[
\ip{W \otimes \Dif W, K}_{\mathcal{A}^{\otimes 2}} \cong \ip{ W \otimes (\text{Id}_{ \mathcal{A} \otimes \mathcal{O}} \oplus 0_{\mathcal{A} \otimes \mathcal{T}} ), K}_{\mathcal{A}^{\otimes 2}} \cong \ip{W, K}_{\mathcal{A}} \otimes (\text{Id}_{\mathcal{O}} \oplus 0_{\mathcal{T}}). 
\]
It immediately follows that 
\begin{equation} \label{eq:statistic_inner_function}
\begin{aligned}
    \Dif ( \ip{W \otimes W, K}_{\mathcal{A}^{\otimes 2}}) 
    &
    =
    \ip{\Dif W \otimes W, K}_{\mathcal{A}^{\otimes 2}} + \ip{W \otimes \Dif W, K}_{\mathcal{A}^{\otimes 2}}
    \\
    &
    \cong
    (\text{Id}_{\mathcal{O}} \oplus 0_{\mathcal{T}}) \otimes \ip{W, K}_{\mathcal{A}} + \ip{W, K}_{\mathcal{A}} \otimes ( \text{Id}_{\mathcal{O}} \oplus 0_{\mathcal{T}}). 
\end{aligned}
\end{equation}

\subsection{Concentration and pseudo-Lipschitz}

 For convenience, we will also use the subgaussian norm $\|\cdot\|_{\psi_2}$ (see e.g., \citep{vershynin2018high} for more details) which is equivalent up to universal constants to the optimal variance proxy in a Gaussian tail bound for a random variable $X$ i.e.,
\begin{equation}\label{eq:subgaussian_norm}
\| X \|_{\psi_2} 
\asymp
\inf \{ V  > 0 : \forall~t > 0~\Pr( |X| > t) \leq 2 e^{-t^2/V^2}\}.
\end{equation}

Gaussian variables are naturally subgaussian.
Moreover, they satisfy a vastly stronger property, \emph{Lipschitz concentration}, which gives concentration inequalities for nonlinear functions of Gaussian vectors.  
If $V_0$ is a Hilbert space, say that a function $f : V_0 \to \R$ is Lipschitz with constant $L$ if for all $x,y \in V_0,$
\[
|f(x) - f(y)| \leq L \|x-y\|.
\]
Then for $Z$ which is an isotropic, centered Gaussian vector on $V_0$ and Lipschitz $f$,
\[
\|f(Z) - \Exp f(Z)\|_{\psi_2} \leq C L(f).
\]
The constant $C$ is an absolute universal constant.  In particular, this concentration is dimension-free. 

\paragraph{Pseudo-Lipschitz.} In our setting, we shall also work with functions which are not-quite Lipschitz, in that they are locally-Lipscthiz (Lipschitz on compact sets) and moreover have polynomial growth of their Lipschitz on norm-balls.  Specifically:
\begin{definition}[Pseudo-Lipschitz functions] For $\alpha \ge 0$ and a function $f \, : \, V_0 \to V_1$ is called pseudo-Lipschitz of order $\alpha$ if there exists a constant $L \defas L(\alpha, f)$ such that 
\begin{equation}
\sup_{x,y \in V_0} \left ( \frac{\|f(x)-f(y)\|_{V_1}}{\|x-y\|_{V_0}}  \right ) \le L ( 1 + \|x\|_{V_0}^{\alpha} + \|y\|_{V_0}^{\alpha}).
\end{equation}
The constant $L$ is the $\alpha$-pseudo-Lipschitz constant for the function $f$ (for shorthand, we will often call $L$ the Lipschitz constant of $f$). 
\end{definition}

We will often work with outer functions and statistics whose gradients are $\alpha$-pseudo-Lipschitz. In order to invoke a bound on the $\alpha$-pseudo-Lipschitz gradient, $\nabla f$, which involves the norms of $\|y\|$ and $\|x\|$, we introduce the \textit{projection operator onto the ball of radius $\beta$, $\text{\rm Proj}_{\beta} \, : \, V_0 \to V_0$}, by
\begin{equation}
    \begin{aligned}
        \text{Proj}_{\beta}(x) 
        &
        \defas
        \argmin_{y \in \beta \mathbb{B} } \, \big \{ \|x-y\|^2_{V_0}  \big \}, \quad \text{where $\mathbb{B}$ is the unit ball in $V_0$}
        \\
        &
        = \begin{cases}
        x & \text{ if $\|x\|_{V_0} \le \beta$ } \\
        \beta \left ( \frac{x}{\|x\|_{V_0}} \right ) & \text{otherwise.}
        \end{cases}
    \end{aligned}
\end{equation}
It immediately follows by taking compositions of projections with $\alpha$-pseudo-Lipschitz functions that we have Lipschitz functions.

\begin{lemma}
    Suppose $f \, : \, V_0 \to V_1$ is $\alpha$-pseudo-Lipschitz with constant $L$. Then the composition $f \circ \text{\rm Proj}_\beta$ is Lipschitz with constant $L(1 + 2 \beta^{\alpha})$. 
\end{lemma}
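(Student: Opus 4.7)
The plan is to combine two standard facts: first, that the projection $\text{Proj}_\beta$ onto the closed convex ball of radius $\beta$ in a Hilbert space is $1$-Lipschitz (non-expansive); and second, that by definition of the projection, $\|\text{Proj}_\beta(x)\|_{V_0} \le \beta$ and $\|\text{Proj}_\beta(y)\|_{V_0} \le \beta$ for every $x,y \in V_0$. The first fact is classical for orthogonal projections onto nonempty closed convex subsets of a Hilbert space, and in this specific case it can also be verified directly from the explicit formula $\text{Proj}_\beta(x) = \min(1, \beta/\|x\|_{V_0})\cdot x$ by a short case analysis on whether each of $x,y$ lies inside or outside the ball.

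With these two ingredients in hand, one simply chains inequalities: for any $x,y \in V_0$, applying the $\alpha$-pseudo-Lipschitz bound for $f$ at the pair $(\text{Proj}_\beta(x), \text{Proj}_\beta(y))$ gives
\begin{align*}
\|f(\text{Proj}_\beta(x)) - f(\text{Proj}_\beta(y))\|_{V_1}
&\le L\bigl(1 + \|\text{Proj}_\beta(x)\|_{V_0}^{\alpha} + \|\text{Proj}_\beta(y)\|_{V_0}^{\alpha}\bigr) \|\text{Proj}_\beta(x) - \text{Proj}_\beta(y)\|_{V_0} \\
&\le L\bigl(1 + 2\beta^{\alpha}\bigr)\,\|x - y\|_{V_0},
\end{align*}
using the norm bound on the projected points for the factor in parentheses, and non-expansiveness of $\text{Proj}_\beta$ for the distance factor. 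This is exactly the claimed Lipschitz constant $L(1+2\beta^{\alpha})$ for $f \circ \text{Proj}_\beta$.

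There is no real obstacle here; the only non-trivial input is non-expansiveness of projection onto a convex set, which is standard. If one prefers an entirely self-contained argument specific to the ball, one can split into cases ($\|x\|,\|y\|\le\beta$; both outside; one inside, one outside) and verify $\|\text{Proj}_\beta(x)-\text{Proj}_\beta(y)\|_{V_0}\le \|x-y\|_{V_0}$ directly, which in the two-outside case reduces to the elementary inequality $\bigl\|\tfrac{x}{\|x\|}-\tfrac{y}{\|y\|}\bigr\|_{V_0}\le \tfrac{2}{\max(\|x\|,\|y\|)}\|x-y\|_{V_0}$ followed by multiplication by $\beta$.
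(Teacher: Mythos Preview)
Your proposal is correct and follows essentially the same approach as the paper: both invoke non-expansiveness of the projection onto a convex set and the bound $\|\text{Proj}_\beta(x)\|_{V_0}\le\beta$, then chain the pseudo-Lipschitz inequality at the projected pair. Your write-up is in fact slightly more detailed (offering an optional case-by-case verification of non-expansiveness), but the core argument is identical.
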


\begin{proof}
    First, the projection onto any convex set is $1$-Lipschitz. From this, a simple computation shows that
    \begin{equation}
    \begin{aligned}
        \|(f \circ \text{Proj}_{\beta})(x) &- (f \circ \text{Proj}_{\beta})(y)\|_{V_1} 
        \\
        &
        \le L \| \text{Proj}_{\beta}(x)-\text{Proj}_{\beta}(y) \|_{V_0} \big ( 1 + \|\text{Proj}_{\beta}(x)\|^{\alpha}_{V_0} + \|\text{Proj}_{\beta}(y)\|_{V_0}^{\alpha} \big )
        \\
        &
        \le L \|x-y\|_{V_0} \big (1 + 2 \beta^{\alpha} \big ).
    \end{aligned}
    \end{equation}
\end{proof}

The $\alpha$-pseudo-Lipschitz property of $f$, Assumption~\ref{ass:pseudo_lipschitz}, in addition, gives us a rate of growth on moments of $\nabla_x f$ in terms of $W = X \oplus X^{\star}$. 

\begin{lemma}[Growth of $\nabla_x f$] \label{lem:growth_grad_f} Suppose the function $f \, : \, \mathcal{O} \oplus \mathcal{T} \oplus \mathcal{T} \to \mathbb{R}$ is $\alpha$-pseudo-Lipschitz with Lipschitz constant $L(f)$ (see Assumption~\ref{ass:pseudo_lipschitz}) and the noise $\epsilon \sim N(0, I_{\mathcal{T}})$ independent of $a$ (see Assumption~\ref{ass:data_normal}). Then for $p > 0$ and any $r \in \mathcal{O}^+$,
\begin{equation} \label{eq:growth_grad_f}
\| \nabla_x f(r)\|^p \le C(\alpha, p) (L(f))^p (1 + \|r\| + \|\epsilon\|)^{\max \{1, \alpha p\}},
\end{equation}
Moreover, if $r = \ip{W,a}_{\mathcal{A}}$, there is a growth rate on $\nabla_x f(r)$ and sub-Gaussian norm on $r$ in terms of $W$, 
    \begin{equation} \label{eq:expectation_f_growth}
    \begin{gathered}
        \EE_{a,\epsilon}[\|\nabla_x f(r) \|^p] \le C(\alpha, p, |\mathcal{T}|)  (L(f))^p \big (1 + \|K\|_{\sigma}^{1/2} \|W\|\big )^{\max \{1, \alpha p\} }
        \\
            \text{and} \qquad \| (1 + \|r\| + \|\epsilon\| )\|_{\psi_2} \le C ( 1 + \|K\|_{\sigma}^{1/2} \|W\| ).
    \end{gathered}
    \end{equation}
\end{lemma}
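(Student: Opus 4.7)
The plan is to first derive a pointwise bound on $\|\nabla_x f(r)\|$ from the pseudo-Lipschitz Assumption~\ref{ass:pseudo_lipschitz}, then consolidate the polynomial factors into the stated form, and finally integrate against the Gaussian law of $r$ using standard Gaussian concentration.

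\textbf{Pointwise gradient bound.} Fix $r \in \mathcal{O}^+$ and $\epsilon \in \mathcal{T}$. Applying the pseudo-Lipschitz bound for $f$ between $r$ and $r + hv$ for unit $v \in \mathcal{O}$ and letting $h \downarrow 0$ yields, at any point of differentiability,
\[
\|\nabla_x f(r;\epsilon)\| \le L(f)\bigl(1 + 2\|r\|^\alpha + \|\epsilon\|^\alpha\bigr).
\]
Raising to the $p$-th power and absorbing numerical constants gives $\|\nabla_x f(r;\epsilon)\|^p \le C_p L(f)^p (1 + \|r\|^\alpha + \|\epsilon\|^\alpha)^p$. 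To massage this into the stated form I would split into two cases. If $\alpha p \ge 1$, the inequality $(x+y+z)^p \le 3^p(x^p+y^p+z^p)$ plus $u^{\alpha p} \le (1+u)^{\alpha p}$ gives a bound by $C(1 + \|r\| + \|\epsilon\|)^{\alpha p}$. If $\alpha p < 1$, use the subadditivity $(x+y+z)^p \le x^p+y^p+z^p$ for $p\le 1$, followed by $u^{\alpha p} \le 1 + u$ (valid for $\alpha p \le 1$, $u \ge 0$), which yields a bound by $C(1 + \|r\| + \|\epsilon\|)^1$. Together these give \eqref{eq:growth_grad_f}.

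\textbf{Sub-Gaussian norm of $r$.} Under Assumption~\ref{ass:data_normal}, $a\sim N(0,K)$ so $r = \ip{W,a}_{\mathcal{A}}$ is a centered Gaussian in $\mathcal{O}^+$ with covariance $\ip{W^{\otimes 2},K}_{\mathcal{A}^{\otimes 2}}$, whose operator norm is at most $\|K\|_\sigma \|W\|^2$ (by the variational characterization of the operator norm together with $\|Wu\| \le \|W\|$ for unit $u$). Writing $r = \Sigma^{1/2} g$ with $g$ standard Gaussian on $\mathcal{O}^+$, the map $g \mapsto \|r\|$ is $\|\Sigma\|_\sigma^{1/2}$-Lipschitz, so by Gaussian Lipschitz concentration $\|\,\|r\| - \Exp\|r\|\,\|_{\psi_2} \le C \|K\|_\sigma^{1/2}\|W\|$; since $\Exp\|r\| \le \sqrt{\Exp\|r\|^2} \le \sqrt{|\mathcal{O}^+|\,\|K\|_\sigma}\|W\|$, adding these and treating $\epsilon$ analogously (with covariance $\Id_\mathcal{T}$) gives
\[
\bigl\|\,1 + \|r\| + \|\epsilon\|\,\bigr\|_{\psi_2} \le C(|\mathcal{T}|)\bigl(1 + \|K\|_\sigma^{1/2}\|W\|\bigr).
\]

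\textbf{Moment bound.} Combining the pointwise inequality \eqref{eq:growth_grad_f} with the elementary fact that a non-negative random variable $Y$ with $\|Y\|_{\psi_2}\le \sigma$ satisfies $\Exp Y^q \le C_q \sigma^q$ for every $q>0$, applied with $Y = 1+\|r\|+\|\epsilon\|$ and $q = \max\{1,\alpha p\}$, gives
\[
\Exp_{a,\epsilon}\|\nabla_x f(r)\|^p
\le C(\alpha,p) L(f)^p\, \Exp(1+\|r\|+\|\epsilon\|)^{\max\{1,\alpha p\}}
\le C(\alpha,p,|\mathcal{T}|) L(f)^p \bigl(1+\|K\|_\sigma^{1/2}\|W\|\bigr)^{\max\{1,\alpha p\}}.
\]

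No step is truly an obstacle; the only mildly fiddly piece is the case analysis $\alpha p \lessgtr 1$ needed to unify the polynomial factors and the bookkeeping of dimension constants (the constant in the sub-Gaussian norm bound a priori depends on $|\mathcal{O}^+| = |\mathcal{O}|+|\mathcal{T}|$, but since $|\mathcal{O}|$ is fixed throughout the paper this is absorbed into the stated $|\mathcal{T}|$-dependence).
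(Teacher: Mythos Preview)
Your proposal is correct and follows essentially the same approach as the paper's proof: derive the pointwise gradient bound from the pseudo-Lipschitz assumption via a difference quotient, then use Gaussian Lipschitz concentration to control the sub-Gaussian norm of $1+\|r\|+\|\epsilon\|$, and finally convert this into moment bounds. The only cosmetic difference is that you apply Gaussian concentration in the low-dimensional space $\mathcal{O}^+$ (writing $r=\Sigma^{1/2}g$), whereas the paper applies it in the ambient space $\mathcal{A}$ (writing $a=\sqrt{K}v$ and treating $(v,\epsilon)\mapsto 1+\|\langle\sqrt{K},W\rangle_{\mathcal{A}}v\|+\|\epsilon\|$ as Lipschitz); both are equally valid and yield the same constants.

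One small comment: your case split $\alpha p\lessgtr 1$ is more elaborate than necessary. The paper simply observes $1+\|r\|^\alpha+\|\epsilon\|^\alpha\le 3(1+\|r\|+\|\epsilon\|)^\alpha$ (each summand is bounded by $(1+\|r\|+\|\epsilon\|)^\alpha$), raises to the $p$, and then notes $(1+\|r\|+\|\epsilon\|)^{\alpha p}\le(1+\|r\|+\|\epsilon\|)^{\max\{1,\alpha p\}}$ since the base is at least $1$. Your argument is fine but the second case can be absorbed into this single line.
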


\begin{proof} Consider an arbitrary vector $v = v_{\ell} \oplus 0_{\mathcal{T}}$ where $v_{\ell} \in \mathcal{O}$ and $\|v\|_{\mathcal{O}^+} = \|v_{\ell}\|_{\mathcal{O}} = 1$. By the definition of a directional derivative, we can write the norm of the gradient of $f$ as 
\begin{equation}
    \begin{aligned}
        \| \nabla_x f(r) \| 
        &
        =
        \max_{\|v_{\ell}\| =1} \ip{\nabla_x f(r), v_\ell}
        =
        \max_{ \|v_{\ell}\| = 1} \lim_{s \downarrow 0} \frac{f(r + sv) - f(r)}{s}.
    \end{aligned}
\end{equation}
For any $\delta > 0$, there exists an $s < 1$ such that 
\[
\max_{ \|v_{\ell}\| = 1} \lim_{s \downarrow 0} \frac{f(r + sv) - f(r )}{s} \le \max_{ \|v_{\ell}\| = 1}\frac{\|f(r + sv) - f(r )\|}{s\|v\|} + \delta.
\]
By $\alpha$-pseudo-Lipschitz, we deduce that 
\begin{equation}
    \begin{aligned}
        \|\nabla f_x(r) \| 
        &
        \le 
        \max_{\|v_{\ell}\| = 1} \frac{\|f(r + sv)- f(r) \|}{s \|v\|} + \delta
        \\
        &
        \le
        \max_{ \|v_{\ell}\| = 1} L(f) (1 + \|r + sv\|^{\alpha} + \|r\|^{\alpha} + 2\|\epsilon\|^{\alpha}) + \delta
        \\
        &
        \le 
        \max_{\|v_{\ell}\|= 1} L(f) \big (1 + (\|r\| + \|v\| )^{\alpha} + \|r\|^{\alpha} + 2\|\epsilon\|^{\alpha} \big ) + \delta.
    \end{aligned}
\end{equation}
We set $L \defas L(f)$. Sending $\delta \to 0$ and using that $\|v\| = 1$, we get that 
\begin{equation} \label{eq:grad_f_bound}
    \|\nabla_x f(r)\|^p \le C(\alpha, L, p) \big ( 1 + \|r\| + \|\epsilon\| \big )^{\alpha p} \le C(\alpha, L, p) \big ( 1 + \|r\| + \|\epsilon\| \big )^{\max \{1, \alpha p \} }, 
\end{equation}
where $C(\alpha, L, p)$ is a constant depending on $\alpha$, $p$, and the Lipschitz constant $L$. This gives the first expression in \eqref{eq:growth_grad_f}.  

Given the above expression \eqref{eq:grad_f_bound}, we need to compute $\EE[ (1+\|r\| + \|\epsilon\|)^{\alpha'} ] = \EE[ (1 + \|\ip{W, a}_{\mathcal{A}}\| + \|\epsilon\|)^{\alpha'}]$ with the expectation taken over $(a,\epsilon)$ and for some $\alpha' \ge 1$. In the process, we will also get a bound $\| 1 + \|r\| + \|\epsilon\|\|_{\psi_2}$.

The idea is to use Gaussian concentration of Lipschitz functions to get the bound, for any $\alpha' \ge 1$,
\begin{equation} \label{eq:grad_f_Lipschitz_4}
    \EE_{a, \epsilon} [ (1+ \|r\| + \|\epsilon\|)^{\alpha'} ] \le C(\alpha') \big ( 1 + \|K\|_{\sigma}^{1/2} \|W \| \big )^{\alpha'},
\end{equation}
where $C(\alpha')$ is a constant. 

For this, write $a = \sqrt{K} v$ where $v \sim N(0, I_{\mathcal{A}})$. It immediately follows that $\| \ip{W, a}_{\mathcal{A}} \| = \|\ip{ \ip{\sqrt{K}, W }_{\mathcal{A}}, v}_{\mathcal{A}}\|$. We will apply Gaussian concentration of Lipschitz function to the mapping $(v, \epsilon) \mapsto 1 +  \|\ip{ \ip{\sqrt{K}, W }, v}_{\mathcal{A}}\| + \|\epsilon\|$. The mapping is clearly Lipschitz in $(v, \epsilon)$ and the Lipschitz constant is $\|\ip{\sqrt{K}, W}_{\mathcal{A}}\| + 1$. 

Defining $X \defas \|\ip{ \ip{\sqrt{K}, W}_{\mathcal{A}}, v}_{\mathcal{A}}\| + \|\epsilon\|$ and $\hat{X} \defas 1 + X$, Gaussian concentration of Lipschitz functions \cite[Thorem 5.2.2]{vershynin2018high} gives that there exists an absolute constant $C$ such that 
\begin{align*}
    \| \hat{X} - \EE[\hat{X}] \|_{\psi_2} \le C ( 1+  \|\ip{\sqrt{K}, W}_{\mathcal{A}} \| ),
\end{align*}
where the concentration is taken with respect to the sub-Gaussian norm \eqref{eq:subgaussian_norm}. This, in particular, means that 
\begin{equation} \label{eq:grad_f_Lipschitz_1}
\begin{aligned}
\|\hat{X}\|_{\psi_2} 
&\le 
C (1 + \|\ip{\sqrt{K}, W }_{\mathcal{A}}\| )  + \| \EE[\hat{X}] \|_{\psi_2} \le C ( 1 + \|K\|_{\sigma}^{1/2} \|W\| + \|\EE[\hat{X}]\|_{\psi_2} )
\\
&
\le C ( 2 + \|K\|_{\sigma}^{1/2} \|W \|  + \|\EE[X]\|_{\psi_2} ),
\end{aligned}
\end{equation}
where $C$ is an absolute constant. With this expression in mind, we only need to compute a bound on $\|E[X]\|_{\psi_2}$. For this, we first observe that $(\EE[Z])^2 \le \EE[Z^2]$
and 
\begin{equation} 
    \begin{gathered}
        \EE[ \|\ip{W, a}_{\mathcal{A}}\|^2 ] = \tr \big ( \ip{K, W }_{\mathcal{A}} \big ) \le \|W \|^2 \|K\|_{\sigma} 
        \\
        \Rightarrow \qquad \EE[ \|\ip{W, a}_{\mathcal{A}}\|] \le \sqrt{\EE[ \|\ip{W, a}_{\mathcal{A}}\|^2 ]} \le \|K\|_{\sigma}^{1/2} \|W \|.
    \end{gathered}
\end{equation}
Moreover, as $\epsilon \sim N(0, I_{\mathcal{T}})$, we have $\EE[\|\epsilon\|] = \sqrt{|\mathcal{T}|}$ which is independent of $d$. Thus, $\EE[\|X\|] \le \|K\|_{\sigma}^{1/2} \|W\| + \sqrt{|\mathcal{T}|}$. 

By the definition of the sub-gaussian norm \eqref{eq:subgaussian_norm}, we have that there exists an absolute constant $C$ such that 
\begin{equation} \label{eq:grad_f_Lipschitz_2}
\|\EE[X]\|_{\psi_2} \le C (\|K\|_{\sigma}^{1/2} \|W \| + \sqrt{|\mathcal{T}|} ). 
\end{equation}

Now to get a bound on $\EE[(1 +\|r\| + \|\epsilon\|)^{\alpha'}] = \EE[ \|\hat{X}\|^{\alpha'}]$ from a bound on the sub-gaussian norm, we use the property that sub-gaussian norm bounds all norms, \cite[Property (ii), Proposition 2.5.2]{vershynin2018high},
\begin{equation} \label{eq:grad_f_Lipschitz_3}
(\EE [ (1 + \|r\| + \|\epsilon\|)^{\alpha'} ] )^{1/\alpha'} = (\EE [ \|\hat{X}\|^{\alpha'} ] )^{1/\alpha'} \le C \sqrt{\alpha'} \cdot \EE[\|\hat{X}\|_{\psi_2}], 
\end{equation}
where $C$ is an absolute constant. Putting this together, \eqref{eq:grad_f_Lipschitz_1}, \eqref{eq:grad_f_Lipschitz_2}, and \eqref{eq:grad_f_Lipschitz_3}, for any $\alpha' \ge 1$
\begin{equation} \label{eq:grad_f_Lipschitz_5}
\begin{aligned}
    \EE[ (1 + \|r\| + \|\epsilon\|)^{\alpha'} ] = \EE[ \|\hat{X}\|^{\alpha'} ] \le C(\alpha') (\EE[\|\hat{X}\|_{\psi_2} ])^{\alpha'} \le C(\alpha', |\mathcal{T}|) \big ( 1+ \|K\|_{\sigma}^{1/2} \|W \| \big )^{\alpha'},
\end{aligned}
\end{equation}
which shows \eqref{eq:grad_f_Lipschitz_2}. The first result \eqref{eq:expectation_f_growth} immediately follows from \eqref{eq:grad_f_Lipschitz_5} and \eqref{eq:grad_f_bound}. 

By combining \eqref{eq:grad_f_Lipschitz_1} and \eqref{eq:grad_f_Lipschitz_2}, the result \eqref{eq:growth_grad_f} on $\|1 + \|r\|\|_{\psi_2}$ also follows.
\end{proof}

\section{The Dynamical Nexus} \label{sec:approximate_solutions_stability}

A goal of this paper is to show that statistics $\varphi \, : \, \mathcal{A} \otimes \mathcal{O} \to \mathbb{R}$ satisfying Assumption~\ref{assumption:statistic} applied to SGD converge to a deterministic function \textit{and} statistics of homogenized SGD, $\WHSGD_t$, and SGD, $X_{\lfloor td \rfloor}$, are close. This argument hinges on understanding the deterministic dynamics of one important statistic, defined as 
\begin{equation} \label{eq:S_statistic}
S(W,z) = \ip{W \otimes W, R(z;K)}_{\mathcal{A}^{\otimes 2}},
\end{equation}
applied to $\HSGD_t$ (homogenized SGD updates) and $W_{\lfloor td \rfloor}$ (SGD updates). Here $W = X \oplus X^{\star}$ and $R(z;K)= (K-zI_d)^{-1}$ for $z \in \mathbb{C}$ is the resolvent of the matrix $K$. The argument we present is twofold. First, we compare the iterates of homogenized SGD, $\HSGD_t$, and SGD, $W_{\lfloor td \rfloor}$  under $S(\cdot, z)$ and show the two are close. Then we show that $S(W, z)$, with either homogenized SGD or SGD, is, itself, close to a deterministic function $(t,z) \mapsto \mathcal{S}(t,z)$ which satisfies an integro-differential equation (see \eqref{eq:ODE_resolvent_2}). Knowledge about the $S$ statistic is quite powerful as from it we recover the deterministic dynamics of \textit{any} statistic $\varphi$. We will make this idea explicit in Section~\ref{sec:compare_SGD_HSGD}. Beyond this, the dynamics of the mapping $S(W,z)$ itself often provide useful insights into analyzing the optimization trajectories of particular optimization problems (see Section~\ref{sec:analysis_examples}). Indeed, properties of the solutions to which the algorithms converge can be derived by looking at the mapping $S(W,z)$.

\subsection{Approximate solutions and stability} 
To introduce the integro-differential equation, recall by Assumption \ref{assumption:unbiased} and  \ref{assumption:E_loss_pseudo_Lip} that 
\[
\mathcal{R}(X) = h \circ B(W) \quad \text{and} \quad \EE_{a, \epsilon}[\nabla_x f(\ip{W, a}_{\mathcal{A}})^{\otimes 2}] = I \circ B(W) \quad \text{with} \quad \, B(W) = \ip{W^{\otimes 2}, K}_{\mathcal{A}^{\otimes 2}},
\] 
and $\alpha$-pseudo-Lipschitz functions $h \, : \, (\mathcal{O}^+)^{\otimes 2} \to \mathbb{R}$ differentiable and $I \, : \, (\mathcal{O}^+)^{\otimes 2} \to \mathbb{R}$. It will be useful, throughout the remaining paper, to decompose the derivative of $h$, i.e., $\nabla h$, in terms of its $\mathcal{O}$ and $\mathcal{T}$ components. The easiest and succinct way to do this is to consider a matrix structure
\begin{equation} \label{eq:matrix_form_1}
    (a \oplus b) \otimes (c \oplus d) \cong \left [ \begin{array}{c|c}
        a \otimes c & a \otimes d\\
        \hline
        b \otimes c & b \otimes d
    \end{array} \right ].
\end{equation}
In this regard, we express $\nabla h$ in terms of this matrix,
\begin{align*}
\nabla h 
&
\cong
\left[ \begin{array}{c|c} 
    \nabla h_{11} & \nabla h_{12}
    \\
    \hline
    \nabla h_{21} & \nabla h_{22}
    \end{array} \right ] \in \left[ \begin{array}{c|c} 
    \mathcal{O} \otimes \mathcal{O} & \mathcal{O} \otimes \mathcal{T}
    \\
    \hline
    \mathcal{T} \otimes \mathcal{O} & \mathcal{T} \otimes \mathcal{T}
    \end{array} \right ].
\end{align*}
With these recollections, the integro-differential equation is defined below. 

\begin{mdframed}[style=exampledefault]
\textbf{Integro-Differential Equation for $\mathcal{S}(t, z)$.} For any contour $\Gamma \subset \mathbb{C}$ enclosing the eigenvalues of $K$, we have an expression for the derivative of $\mathcal{S}$:
\begin{equation}\label{eq:ODE_resolvent_2}
    \dif \mathcal{S}(t,\cdot) 
    = \mathscr{F}(z, \mathcal{S}(t, \cdot)) \, \dif t
\end{equation}
\begin{align}
    \text{where} \, \, 
    \mathscr{F}(z, \mathcal{S}(t, \cdot))
    &
    \defas
    - 2\gamma_t \bigg ( \bigg ( \frac{-1}{2\pi i} \oint_{\Gamma} \mathcal{S}(t,z) \, \dif z \bigg ) H( \mathrsfs{B}(t)) + H^T(\mathrsfs{B}(t)) \bigg ( \frac{-1}{2\pi i} 
 \oint_{\Gamma} \mathcal{S}(t,z) \, \dif z \bigg ) \bigg ) \,  \nonumber
 \\
    & \qquad
    + \frac{\gamma_t^2}{d}\left [ \begin{array}{c|c} 
    \tr(K R(z;K)) I(\mathrsfs{B}(t)) & 0\\ \hline 0 & 0
    \end{array} \right ]  \label{eq:F}\\
    & \qquad 
    - \gamma_t (\mathcal{S}(t,z) (2z H(\mathrsfs{B}(t)) + \delta D) + ( 2 z H^T( \mathrsfs{B}(t) ) + \delta D) \mathcal{S}(t,z)). \nonumber
\end{align}
\begin{gather} \nonumber
\text{Here} \, \, \mathrsfs{B}(t) = \frac{-1}{2\pi i} \oint_{\Gamma} z \mathcal{S}(t,z) \, \dif z, 
 \quad 
 H(\mathrsfs{B}) = \left [ \begin{array}{c|c} 
\nabla h_{11}(\mathrsfs{B}) & 0\\
 \hline
 \nabla h_{21}(\mathrsfs{B}) & 0
 \end{array} \right ],  \quad \text{and} \quad D = \left [ \begin{array}{c|c} 
 I_{\mathcal{O}} & 0\\
 \hline
 0 & 0
 \end{array} \right ], \\
 \label{eq:ODE_IC}
 \text{and initialization} \quad \mathcal{S}(0,z) = \ip{W_0 \otimes W_0, R(z; K)}_{\mathcal{A}^{\otimes 2}}. 
\end{gather}
\end{mdframed}
In this section, we will be interested in approximate solutions to the integro-differential equation \eqref{eq:ODE_resolvent_2} (see below for specifics). The idea is that both $S(\HSGD_t, z)$ and $S(W_{\lfloor td \rfloor}, z)$, which are functions of both homogenized SGD and SGD respectively, are approximate solutions.  We also note that there is in fact an actual solution to the integro-differential equation, which is a re-representation of \eqref{eq:coupledODElimit}.
\begin{lemma}[Equivalence to coupled ODEs]\label{lem:ODE_S}
The unique solution of \eqref{eq:ODE_resolvent_2} with initial condition \eqref{eq:ODE_IC} is given by
\[
\mathcal{S}(t,z) = \frac{1}{d}\sum_{i=1}^n \frac{1}{\lambda_i - z} \mathrsfs{B}_{t,i}
\quad \text{for all } z \in \Gamma.
\]
\end{lemma}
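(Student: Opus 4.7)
The plan is to check that the proposed ansatz satisfies the integro-differential equation with the prescribed initial condition, and then to invoke local Lipschitzness to get uniqueness. The key analytical input is residue calculus and the identity $\frac{z}{\lambda_i - z} = -1 + \frac{\lambda_i}{\lambda_i - z}$, which will reveal that the ``$z$-independent'' block in $\mathscr{F}$ is exactly the one needed to make the overall RHS a linear combination of $(\lambda_i - z)^{-1}$.

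First I would verify the initial condition. Using the spectral decomposition $R(z;K) = \sum_{i=1}^d (\lambda_i - z)^{-1}\,\omega_i \otimes \omega_i$, one has
\[
\mathcal{S}(0,z) = \langle W_0^{\otimes 2}, R(z;K)\rangle_{\mathcal{A}^{\otimes 2}}
= \sum_{i=1}^d \frac{\langle W_0^{\otimes 2}, \omega_i^{\otimes 2}\rangle}{\lambda_i - z}
= \frac{1}{d}\sum_{i=1}^d \frac{\mathrsfs{B}_{0,i}}{\lambda_i - z},
\]
by the definition of $\mathrsfs{B}_{i}(0) = d\,\langle W_0^{\otimes 2}, \omega_i^{\otimes 2}\rangle$, which matches the ansatz at $t=0$. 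Next I would compute the two relevant contour integrals via residues: since $\Gamma$ encloses each $\lambda_i$ once counterclockwise,
\[
\frac{-1}{2\pi i}\oint_{\Gamma}\frac{dz}{\lambda_i - z} = 1,
\qquad
\frac{-1}{2\pi i}\oint_{\Gamma}\frac{z\,dz}{\lambda_i - z} = \lambda_i,
\]
so the definitions in \eqref{eq:F} reduce to $\frac{-1}{2\pi i}\oint_\Gamma \mathcal{S}(t,z)\,dz = \frac{1}{d}\sum_i \mathrsfs{B}_{t,i}$ and $\mathrsfs{B}(t) = \frac{1}{d}\sum_i \lambda_i \mathrsfs{B}_{t,i}$, which agrees with \eqref{eq:Bkavg}. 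Likewise, $\tr(K R(z;K)) = \sum_i \lambda_i/(\lambda_i - z)$.

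Next, I would substitute the ansatz into both sides of \eqref{eq:ODE_resolvent_2}. The LHS reads $\partial_t\mathcal{S}(t,z) = \frac{1}{d}\sum_i (\lambda_i - z)^{-1}\dot{\mathrsfs{B}}_{t,i}$. For the RHS, using the key identity above,
\[
-2\gamma_t\,\mathcal{S}(t,z)\,zH = 2\gamma_t\!\left(\tfrac{1}{d}\sum_i \mathrsfs{B}_{t,i}\right)\! H \;-\; 2\gamma_t \cdot \frac{1}{d}\sum_i \frac{\lambda_i}{\lambda_i - z}\mathrsfs{B}_{t,i} H,
\]
and analogously for the $H^T$ term. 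The first, $z$-independent pieces exactly cancel the $-2\gamma_t(\mathcal{B}(t)H + H^T\mathcal{B}(t))$ that sits at the top of $\mathscr{F}$. Combined with $\tr(KR(z;K)) = \sum_i \lambda_i/(\lambda_i - z)$, every surviving term carries a factor $(\lambda_i - z)^{-1}$, and $\mathscr{F}(z,\mathcal{S}(t,\cdot))$ becomes
\[
\frac{1}{d}\sum_{i=1}^d \frac{1}{\lambda_i - z}\Bigl[
-2\gamma_t\lambda_i\bigl(\mathrsfs{B}_{t,i} H(\mathrsfs{B}(t)) + H(\mathrsfs{B}(t))^T \mathrsfs{B}_{t,i}\bigr)
+\gamma_t^2\lambda_i\,\widetilde{I}(\mathrsfs{B}(t))
-\gamma_t\delta\bigl(\mathrsfs{B}_{t,i} D + D\mathrsfs{B}_{t,i}\bigr)
\Bigr],
\]
where $\widetilde{I} = \begin{pmatrix} I & 0 \\ 0 & 0\end{pmatrix}$. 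Matching coefficients of the linearly independent rational functions $\{(\lambda_i - z)^{-1}\}$ and expanding into the $(1,1)$, $(1,2)$, $(2,2)$ blocks using the explicit form $H = \begin{pmatrix} H_1 & 0 \\ H_2^T & 0 \end{pmatrix}$ and $D = \begin{pmatrix} I_{\mathcal{O}} & 0 \\ 0 & 0 \end{pmatrix}$ recovers exactly the coupled system \eqref{eq:coupledODElimit}, including the fact that $\mathrsfs{B}_{22,i}$ is constant (both $D$ and $H$ annihilate the bottom-right block).

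Finally, uniqueness follows from standard ODE theory applied to \eqref{eq:coupledODElimit}: the coefficients $H = \nabla h(\mathrsfs{B}(t))$ and $\widetilde{I}(\mathrsfs{B}(t))$ depend on $\mathrsfs{B}(t)$ through locally Lipschitz maps by Assumptions \ref{assumption:unbiased} and \ref{assumption:E_loss_pseudo_Lip}, so the system has a unique solution up to the exit time $\Theta$ from $\mathcal{U}$ or explosion; conversely, any smooth solution $\mathcal{S}(t,z)$ of \eqref{eq:ODE_resolvent_2} that is holomorphic in $z$ away from $\mathrm{spec}(K)$ and decays at $\infty$ is determined by its residues $\mathrsfs{B}_{t,i}$, which must then satisfy \eqref{eq:coupledODElimit}. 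I expect the main obstacle to be purely bookkeeping: carefully tracking the block structure so that the paper's convention $H_{2,t} = \nabla h_{12}$ (and hence the ``$\mathrsfs{B}_{12,i} H_{2,t}$'' notation in \eqref{eq:coupledODElimit}) correctly pairs with the $\mathcal{O}$--$\mathcal{T}$ axis assignments in $H(\mathrsfs{B})$, and checking that the symmetrizations produced by the $H + H^T$ structure coincide with the terms written componentwise in \eqref{eq:coupledODElimit}.
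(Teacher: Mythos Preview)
Your proposal is correct and follows essentially the same route as the paper: verify the ansatz via the identity $\frac{\lambda_i}{\lambda_i-z} = 1 + \frac{z}{\lambda_i-z}$ (which you wrote as $\frac{z}{\lambda_i-z} = -1 + \frac{\lambda_i}{\lambda_i-z}$), then argue uniqueness by passing to the coupled system \eqref{eq:coupledODElimit} and invoking local Lipschitzness. The paper's only additional wrinkle is that, in the converse direction, it explicitly handles the case of non-simple spectrum: when an eigenvalue $\lambda$ has multiplicity, contour integrating around it only recovers the \emph{sum} $\sum_{i:\lambda_i=\lambda}\mathrsfs{X}_{t,i}$ of the residues, not the individual ones, but this sum still satisfies \eqref{eq:coupledODElimit} with the same initial data as $\sum_{i:\lambda_i=\lambda}\mathrsfs{B}_{t,i}$, so uniqueness of the coupled ODEs again closes the argument.
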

\begin{proof}
    We first observe that this satisfies \eqref{eq:ODE_resolvent_2}, which can be checked directly from \eqref{eq:coupledODElimit} using the identity
    \[
     \frac{1}{d}\sum_{i=1}^d \frac{\lambda_i}{\lambda_i - z} \mathrsfs{B}_{t,i}
     =
     \frac{1}{d}\sum_{i=1}^d \mathrsfs{B}_{t,i}
     +
     z\frac{1}{d}\sum_{i=1}^d \frac{1}{\lambda_i - z} \mathrsfs{B}_{t,i}
     = \frac{-1}{2\pi i} \oint \mathcal{S}(t,y) \dif y + z\mathcal{S}(t,z).
    \]
    Conversely, given a solution to \eqref{eq:ODE_resolvent_2}, we observe that the process $\mathcal{S}(t,z)$ is a meromorphic function in $z$, with simple poles at the spectrum of $K$ and tending to $0$ as $z \to \infty$.  Hence by analyticity, \eqref{eq:F} holds at all $z$ not in the spectrum of $K$.
    It follows that we have a partial fraction decomposition
    \[
    \mathcal{S}(t,z)
    = \sum_{i=1}^d \frac{1}{\lambda_i - z}\mathrsfs{X}_{t,i}.
    \]
    In the case that $K$ has $d$ distinct eigenvalues, by contour integrating \eqref{eq:F} around a simple contour enclosing a single eigenvalue $\lambda_i$, we conclude that \eqref{eq:coupledODElimit} holds for the family $(d \mathrsfs{X}_{t,i} : 1 \leq i \leq d)$.  By uniqueness of the coupled family of ODEs, we are done.
    In the case of non-simple spectrum, we have that for all $\lambda \in \operatorname{Spec}(K)$
    \[
    \sum_{i : \lambda_i = \lambda} d \mathrsfs{X}_{t,i}
    =
    \sum_{i : \lambda_i = \lambda} \mathrsfs{B}_{t,i},
    \]
    since they both again satisfy \eqref{eq:coupledODElimit} (with $\lambda_i \to \lambda$) and have the same initial conditions -- as those ODEs have unique solutions, we conclude that there is a unique solution of \eqref{eq:ODE_resolvent_2}.
\end{proof}

For working with approximate solutions to \eqref{eq:ODE_resolvent_2}, we introduce some notation. 
We shall always work on a fixed contour $\Gamma$ surrounding the spectrum of $K$, given by 
$\Gamma \defas \{ z \, : \, |z| = \max\{1, 2\|K\|_{\sigma}\} \}$. We note that this contour is always distance at least $\tfrac12$ from the spectrum of $K$.
We define a norm, $\| \cdot \|_{\Gamma}$ on a continuous function $A \, : \, \mathbb{C} \to (\mathcal{O}^+)^{\otimes 2}$ by
\[
\|A\|_{\Gamma} = \max_{z \in \Gamma} \|A(z)\|.
\]
We note that up to constants that depend on $\|K\|_{\sigma}$, this norm applied to $\mathcal{S}(t,\cdot)$, $S(\HSGD_t, \cdot)$ and $S(W_{\lfloor td \rfloor}, \cdot)$ has an equivalent representation in terms of the norm-squared of the parameters:
\begin{lemma}\label{lem:normequivalence}
Let $\mathrsfs{N}(t) \defas  \frac{-1}{2\pi i} \oint_\Gamma \tr \mathcal{S}(t, z) \dif z$ which is positive.  Then for a constant $C$ depending on the $\|K\|_{\sigma}$ and $|\mathcal{O}^+|,$
\[
C \leq 
\frac{\| S(\HSGD_t, \cdot ) \|_\Gamma}{ \|\HSGD_t\|^2},
\frac{\| S(W_{td}, \cdot ) \|_\Gamma}{ \|W_{td}\|^2},
\frac{ \|\mathcal{S}(t, \cdot )\|_{\Gamma}}{\mathrsfs{N}(t)} \leq 2.
\]
\end{lemma}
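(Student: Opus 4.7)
The plan is to reduce all three pairs of inequalities to a single computation based on the eigendecomposition $K = \sum_i \lambda_i\omega_i\omega_i^T$ and on positive semidefiniteness of the underlying tensors. Throughout I write $|z_0| = \max\{1,2\|K\|_\sigma\}$, so $\Gamma = \{|z| = |z_0|\}$ sits at distance at least $|z_0|/2 \geq 1/2$ from $\operatorname{Spec}(K)$.

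First I would handle $S(W,z) = \langle W^{\otimes 2}, R(z;K)\rangle_{\mathcal{A}^{\otimes 2}}$ uniformly in $W$ (so both the $W=\HSGD_t$ and $W = W_{\lfloor td \rfloor}$ cases are covered at once). Expanding the resolvent in the eigenbasis of $K$ gives
\[
S(W,z) = \sum_{i=1}^{d} \frac{(W^T\omega_i)(W^T\omega_i)^T}{\lambda_i - z},
\]
a scalar-weighted sum of rank-one PSD matrices in $(\mathcal{O}^+)^{\otimes 2}$. The bound $|\lambda_i - z| \geq |z_0| - \|K\|_\sigma \geq |z_0|/2$ on $\Gamma$, combined with the Frobenius triangle inequality, yields $\|S(W,z)\| \leq \tfrac{2}{|z_0|}\sum_i \|W^T\omega_i\|^2 \leq 2\|W\|^2$ since $|z_0|\geq 1$. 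For the matching lower bound I would specialize to the real point $z = -|z_0|\in\Gamma$, where $S(W,-|z_0|)$ is itself PSD. Using $\lambda_i+|z_0| \leq \tfrac{3}{2}|z_0|$ produces $\tr S(W,-|z_0|) \geq \tfrac{2}{3|z_0|}\|W\|^2$, and the elementary inequality $\|A\|_F \geq \tr(A)/\sqrt{|\mathcal{O}^+|}$ for PSD $A$ then gives $\|S(W,\cdot)\|_\Gamma \geq \tfrac{2}{3|z_0|\sqrt{|\mathcal{O}^+|}}\,\|W\|^2$, which is the required $C$ depending only on $\|K\|_\sigma$ and $|\mathcal{O}^+|$.

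Next I would treat $\mathcal{S}(t,z)$ via Lemma~\ref{lem:ODE_S}, which supplies the partial-fraction representation $\mathcal{S}(t,z) = \tfrac{1}{d}\sum_i (\lambda_i - z)^{-1}\mathrsfs{B}_{t,i}$ and, by residue calculus, $\mathrsfs{N}(t) = \tfrac{1}{d}\sum_i \tr(\mathrsfs{B}_{t,i})$. The crucial input is that each block $\mathrsfs{B}_{t,i} \in (\mathcal{O}^+)^{\otimes 2}$ remains PSD along the flow. The initial condition $\mathrsfs{B}_i(0) = d\langle W_0^{\otimes 2},\omega_i^{\otimes 2}\rangle$ is rank-one PSD, and the coupled drift in \eqref{eq:coupledODElimit}, interpreted with the implicit symmetrization needed to keep $\mathrsfs{B}_{11,i}$ symmetric, admits a matrix Lyapunov form $\dot{\mathrsfs{B}}_i = -L_i(t)\mathrsfs{B}_i - \mathrsfs{B}_i L_i(t)^T + Q_i(t)$ with $Q_i(t) = \lambda_i\gamma_t^2\,\mathrm{diag}(I_t,\,0) \succeq 0$, since $I_t = \EE[\nabla_x f^{\otimes 2}]$ is a Fisher matrix. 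Granted PSD, the upper-bound side $\|\mathrsfs{B}_{t,i}\|_F \leq \tr(\mathrsfs{B}_{t,i})$ together with $|\lambda_i - z|^{-1} \leq 2$ on $\Gamma$ gives $\|\mathcal{S}(t,z)\| \leq 2\mathrsfs{N}(t)$, while evaluation at $z = -|z_0|$ plus the PSD Cauchy--Schwarz bound produces $\|\mathcal{S}(t,\cdot)\|_\Gamma \geq \tfrac{2}{3|z_0|\sqrt{|\mathcal{O}^+|}}\,\mathrsfs{N}(t)$.

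The main obstacle I anticipate is the PSD-preservation step for the coupled ODE. One must identify the correct symmetric operator-valued drift in \eqref{eq:coupledODElimit} (so that the asymmetric-looking $\mathrsfs{B}_{12,i}H_{2,t}$ term pairs with its transpose and $\mathrsfs{B}_{11,i}$ stays symmetric), verify $Q_i \succeq 0$ from the Fisher-information structure of $I_t$, and then invoke the standard fact that Lyapunov equations $\dot X = -LX - XL^T + Q$ with $Q \succeq 0$ preserve PSD starting from a PSD initial condition. Once this is in place, all six inequalities reduce to the resolvent bound $|\lambda_i - z| \geq |z_0|/2$ on $\Gamma$ combined with the two-sided PSD Cauchy--Schwarz inequality $\tr(A)/\sqrt{|\mathcal{O}^+|} \leq \|A\|_F \leq \tr(A)$.
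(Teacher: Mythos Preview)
Your argument is correct, and the upper bounds match the paper's exactly. For the lower bounds, however, you take a genuinely different route: you evaluate at the real point $z=-|z_0|\in\Gamma$, where each of $S(W,-|z_0|)$ and $\mathcal{S}(t,-|z_0|)$ is manifestly PSD, and then compare trace to Frobenius norm. The paper instead uses the Cauchy integral identity
\[
\|\HSGD_t\|^2 = \frac{-1}{2\pi i}\oint_\Gamma \tr S(\HSGD_t,z)\,\dif z
\quad\text{and}\quad
\mathrsfs{N}(t) = \frac{-1}{2\pi i}\oint_\Gamma \tr \mathcal{S}(t,z)\,\dif z,
\]
and bounds the integral by $(|\Gamma|/2\pi)\sqrt{|\mathcal{O}^+|}\,\|\cdot\|_\Gamma$. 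This yields the lower bound on $\|\cdot\|_\Gamma$ directly, without singling out a point or invoking PSD at that point. Your approach is slightly more constructive (giving an explicit witness $z$), while the paper's contour-integral trick is more uniform and avoids any PSD discussion for the \emph{lower} bound.

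On the PSD-preservation obstacle you flag: you are right that both approaches ultimately need $\mathrsfs{B}_i(t)\succeq 0$ for the \emph{upper} bound on $\|\mathcal{S}(t,\cdot)\|_\Gamma$ (to pass from $\|\mathrsfs{B}_i\|$ to $\tr\mathrsfs{B}_i$). The paper simply asserts this and moves on, whereas you sketch the Lyapunov-equation justification. Your sketch is on the right track; the only care needed is, as you note, reading the drift in \eqref{eq:coupledODElimit} as the appropriately symmetrized operator so the full block matrix $\mathrsfs{B}_i$ indeed evolves by $\dot{\mathrsfs{B}}_i = -L_i\mathrsfs{B}_i - \mathrsfs{B}_iL_i^T + Q_i$ with $Q_i = \lambda_i\gamma_t^2\,\mathrm{diag}(I_t,0)\succeq 0$.
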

\begin{proof}
    For homogenized SGD,
    \[
        \|\HSGD_t\|^2 = \frac{-1}{2\pi i} \oint_\Gamma \tr S(\HSGD_t, z) \dif z \leq C \sqrt{ |\mathcal{O}^+|} \|K\|_\sigma \| S(\HSGD_t, \cdot ) \|_\Gamma.
    \]
    On the other hand, 
    \[
    \| S(\HSGD_t, \cdot ) \|_\Gamma
    = 
    \max_{z \in \Gamma} \|\ip{ \HSGD_t^{\otimes 2}, R(z ; K )}\|
    \leq \|\HSGD_t\|^2 \max_{z \in \Gamma} \|R(z ; K )\|_\sigma
    \leq 2\|\HSGD_t\|^2.
    \]
    The same bounds hold for SGD with obvious changes. 

    For the integro-differential equation, 
    we start by observing that
    \[
    \mathrsfs{N}(t) = 
    \frac{-1}{2\pi i} \oint_\Gamma \tr \mathcal{S}(t, z) \dif z = \frac{1}{d}\sum_{i=1}^d \tr(\mathrsfs{B}_{i}(t)),
    \]
    which is positive.  Then with $|\Gamma|$ given by the length of $\Gamma$,
    \[
    \frac{-1}{2\pi i} \oint_\Gamma \tr \mathcal{S}(t, z) \dif z \leq \frac{1}{2\pi} |\Gamma| \sqrt{ |\mathcal{O}^+|}  \| S(\HSGD_t, \cdot ) \|_\Gamma.
    \]
    Using Lemma \ref{lem:ODE_S}, we have
    \[
    \|\mathcal{S}(t, \cdot )\|_{\Gamma}
    \leq
    \frac{1}{d}\sum_{i=1}^d \max_{z \in \Gamma} \biggl| \frac{1}{\lambda_i-z} \biggr| \|\mathrsfs{B}_{i}(t)\|
    \leq 
    \frac{2}{d}\sum_{i=1}^d \|\mathrsfs{B}_{i}(t)\|.
    \]
    As each $\mathrsfs{B}_{i}(t)$ is positive semidefinite, we have $\|\mathrsfs{B}_{i}(t)\| \leq \|\mathrsfs{B}_{i}(t)\|_* = \tr \mathrsfs{B}_{i}(t)$, and so the same bound holds.
    
\end{proof}

We will be working with \textit{approximate solutions to the integro-differential equation} defined as: 
\begin{definition}[$(\varepsilon, M, T)$-approximate solution to the integro-differential equation]  \label{def:integro_differential_equation}
{\rm 
 For constants $M, T, \varepsilon > 0$, we call continuous functions $\mathscr{S} \, : \, \{t \ge 0\} \otimes \mathbb{C} \to (\mathcal{O}^+)^{\otimes 2}$ an \textit{$(\varepsilon, M, T)$-approximate solution} of \eqref{eq:ODE_resolvent_2} if with 
 \[
 \hat{\tau}_{M}(\mathscr{S}) \defas \inf \bigg \{ t \ge 0 \, : \, \|\mathscr{S}(t,\cdot)\|_{\Gamma} > M \quad \text{or}  \quad \frac{-1}{2 \pi i} \oint_{\Gamma} z \mathscr{S}(t,z) \, \dif z \not \in \mathcal{U} \bigg \},
 \]
 then
\[
\sup_{0 \le t \le (\hat{\tau}_M \wedge T)} \big \| \mathscr{S}(t, \cdot) - \mathscr{S}(0, \cdot) - \int_0^t \mathscr{F}(\cdot, \mathscr{S}(s, \cdot) ) \, \dif s \big \|_{\Gamma} \le \varepsilon
\]
and $\mathscr{S}(0,\cdot) = \ip{W_0 \otimes W_0, R(\cdot, K)}_{\mathcal{A}^{\otimes 2}}$, where $W_0 = X_0 \otimes X^{\star}$ is the initialization of SGD. 

We suppress the $\mathscr{S}$ in the notation for $\hat{\tau}_M$, that is $\hat{\tau}_M = \hat{\tau}_M(\mathscr{S})$, when it is clear the function $\mathscr{S}$ from context. 
}
\end{definition}

\begin{remark} In Section~\ref{sec:SGD_homogenized_SGD}, we prove that SGD and homogenized SGD, $S(W_{\lfloor td \rfloor}, z)$ and $S(\HSGD_t, z)$, respectively, are $(\varepsilon, M,T)$-approximate solutions. Note that we must extend the discrete time of SGD to a continuous time (see Section~\ref{sec:SGD_approx_solution} for details). It is clear by the definition of the solution to the deterministic integro-differential equation, $\mathcal{S}$, in \eqref{eq:ODE_resolvent_2} is an $(\varepsilon, M, T)$-approximate solution with $\varepsilon = 0$. 
\end{remark}

Our first result of this section is a \textit{stability} statement, that is, if we have two $(\varepsilon, M,T)$-approximate solutions, $\mathscr{S}_1$ and $\mathscr{S}_2$, then $\mathscr{S}_1$ and $\mathscr{S}_2$ are uniformly close. 

\begin{proposition}[Stability] \label{prop:stability} For all $(\varepsilon, M, T)$-approximate solutions $\mathscr{S}_1$ and $\mathscr{S}_2$, there exists a positive constant $C = C(M,T, \|K\|_{\sigma}, \bar{\gamma})$ such that 
\[
\sup_{0 \le t \le T} \, \|\mathscr{S}_1(t \wedge \tau_M, \cdot)- \mathscr{S}_2(t \wedge \tau_M, \cdot) \|_{\Gamma} \le C \cdot \varepsilon,
\]  
where $\tau_{M} = \min \{ \hat{\tau}_M(\mathscr{S}_1), \hat{\tau}_M(\mathscr{S}_2) \}$.
\end{proposition}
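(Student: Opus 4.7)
The plan is a standard stability-via-Gronwall argument, leveraging local Lipschitz continuity of the integro-differential operator $\mathscr{F}$. Both $\mathscr{S}_1$ and $\mathscr{S}_2$ agree at time $0$ (by Definition~\ref{def:integro_differential_equation}), so subtracting their defining integral relations gives, for $t \leq \tau_M \wedge T$,
\[
\|\mathscr{S}_1(t,\cdot) - \mathscr{S}_2(t,\cdot)\|_\Gamma
\leq 2\varepsilon + \int_0^t \|\mathscr{F}(\cdot,\mathscr{S}_1(s,\cdot)) - \mathscr{F}(\cdot,\mathscr{S}_2(s,\cdot))\|_\Gamma\, \dif s.
\]
Thus the whole statement reduces to a local Lipschitz estimate on $\mathscr{F}$ over $\{\mathscr{S} : \|\mathscr{S}\|_\Gamma \leq M,\ \frac{-1}{2\pi i}\oint z \mathscr{S}(z)\dif z \in \mathcal{U}\}$, followed by Gronwall's inequality.

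To obtain the Lipschitz estimate, I would examine each term of \eqref{eq:F} separately. The linear contour integrals $\bar{\mathscr{S}} := \frac{-1}{2\pi i}\oint \mathscr{S}(z)\dif z$ and $\mathrsfs{B} := \frac{-1}{2\pi i}\oint z\mathscr{S}(z)\dif z$ are each bounded, linear functionals of $\mathscr{S}$ on $\Gamma$; the Lipschitz constants are controlled by $|\Gamma|$ and $\max_{z\in\Gamma}|z|$, both depending only on $\|K\|_\sigma$. In particular, $\|\mathrsfs{B}\| \leq C(\|K\|_\sigma) M$, so $\mathrsfs{B}$ remains in a bounded subset of $\mathcal{U}$. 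On this bounded subset, Assumption~\ref{assumption:unbiased} and Assumption~\ref{assumption:E_loss_pseudo_Lip} imply that $H = \nabla h \circ \mathrsfs{B}$ and $I \circ \mathrsfs{B}$ are actually Lipschitz (the pseudo-Lipschitz factor $(1 + \|\mathrsfs{B}\|^\alpha + \|\hat{\mathrsfs{B}}\|^\alpha)$ is bounded by a constant depending on $M$ and $\|K\|_\sigma$). Products of the form $\mathscr{S}_j(z) H(\mathrsfs{B}_j) + H^T(\mathrsfs{B}_j)\mathscr{S}_j(z)$ and $\bar{\mathscr{S}}_j H + H^T \bar{\mathscr{S}}_j$ are bilinear in $(\mathscr{S}, H(\mathrsfs{B}))$, so the telescoping identity
\[
\mathscr{S}_1 H_1 - \mathscr{S}_2 H_2 = (\mathscr{S}_1 - \mathscr{S}_2) H_1 + \mathscr{S}_2 (H_1 - H_2)
\]
combined with the bounds $\|\mathscr{S}_j\|_\Gamma \leq M$ and $\|H_j\| \leq C(M)$ gives a Lipschitz constant of order $C(M, \|K\|_\sigma, \bar\gamma)$ for each such product. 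The remaining $\tr(K R(z;K))$ factor in the $I$-term is uniformly bounded on $\Gamma$ (since $\Gamma$ is at distance at least $1/2$ from the spectrum of $K$, so $\|R(z;K)\|_\sigma \leq 2$, giving $|\tfrac{1}{d}\tr(K R(z;K))| \leq 2\|K\|_\sigma$). Summing these contributions and absorbing $\gamma_t \leq \bar\gamma$ yields
\[
\|\mathscr{F}(\cdot,\mathscr{S}_1) - \mathscr{F}(\cdot,\mathscr{S}_2)\|_\Gamma \leq L \|\mathscr{S}_1 - \mathscr{S}_2\|_\Gamma
\]
for some $L = L(M, \|K\|_\sigma, \bar\gamma)$ whenever both $\mathscr{S}_j$ lie in the admissible set.

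Inserting this into the integral inequality and applying Gronwall gives
\[
\sup_{0 \leq t \leq T} \|\mathscr{S}_1(t\wedge\tau_M,\cdot) - \mathscr{S}_2(t\wedge\tau_M,\cdot)\|_\Gamma \leq 2\varepsilon e^{LT},
\]
which is the desired bound with $C = 2 e^{LT}$. The main obstacle, and the only step that requires real care, is verifying the Lipschitz bound on $\mathscr{F}$ uniformly in $z \in \Gamma$: one needs to track that each factor produced by the contour integrals is controlled by $\|\mathscr{S}\|_\Gamma$ (not some stronger norm) and that the pseudo-Lipschitz constants from Assumptions~\ref{assumption:unbiased}–\ref{assumption:E_loss_pseudo_Lip} become genuine Lipschitz constants once we use the stopping time $\tau_M$ to confine $\mathrsfs{B}$ to a compact subset of $\mathcal{U}$. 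Everything else is bookkeeping and a direct application of Gronwall.
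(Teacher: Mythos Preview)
Your proposal is correct and follows essentially the same approach as the paper: both reduce the stability claim to a local Lipschitz bound on $\mathscr{F}$ over the set where $\|\mathscr{S}\|_\Gamma \leq M$ and $\mathrsfs{B} \in \mathcal{U}$, establish that bound term-by-term using the telescoping identity for bilinear products together with the pseudo-Lipschitz Assumptions~\ref{assumption:unbiased}--\ref{assumption:E_loss_pseudo_Lip} (which become genuine Lipschitz bounds once $\mathrsfs{B}$ is confined by the stopping time), and then conclude via Gronwall.
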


\begin{proof} First note that $\tau_M \le \hat{\tau}_M(\mathscr{S}_1)$ and $\tau_M \le \hat{\tau}_M(\mathscr{S}_2)$. Therefore, we can work on the smaller time $\tau_M$. Write $\mathscr{S}_1$ and $\mathscr{S}_2$ as
\begin{align} \label{eq:S_1_S_2}
\mathscr{S}_1(t, \cdot) 
&
= 
\mathscr{S}_1(0, \cdot) + \int_0^t \mathscr{F}(\cdot, \mathscr{S}_1(s, \cdot)) \, \dif s + \varepsilon(\mathscr{S}_1) \, \,
\text{and} \, \, \mathscr{S}_2(t, \cdot)
= 
\mathscr{S}_2(0, \cdot) + \int_0^t \mathscr{F}(\cdot, \mathscr{S}_2(s, \cdot)) \, \dif s + \varepsilon(\mathscr{S}_2),
\end{align}
where $\varepsilon(\mathscr{S}_i)$ are error terms from the $(\varepsilon, M, T)$-approximate solution inequality and we have for $i = 1,2$ 
\[
\displaystyle \sup_{0 \le t \le (T \wedge \tau_M)} \|\varepsilon(\mathscr{S}_i)\|_{\Gamma} \le \varepsilon.
\] 
Let us suppose that there exists a positive constant $C = C(M, \|K\|_{\sigma}, \bar{\gamma})$ such that for all $s$
\begin{equation} \label{eq:pseudo_Lip_F}
\| \mathscr{F}(\cdot, \mathscr{S}_1(s \wedge \tau_M, \cdot) ) - \mathscr{F}(\cdot, \mathscr{S}_2(s \wedge \tau_M, \cdot) ) \|_{\Gamma} \le C \|\mathscr{S}_1(s \wedge \tau_M, \cdot) - \mathscr{S}_2(s \wedge \tau_M, \cdot) \|_{\Gamma}. 
\end{equation}
We defer the proof of the Lipschitz condition \eqref{eq:pseudo_Lip_F} for $\mathscr{F}$ until later. Equation~\eqref{eq:pseudo_Lip_F} and \eqref{eq:S_1_S_2} imply
\begin{align*}
    \sup_{0 \le t \le T \wedge \tau_M} \| \mathscr{S}_1(t, \cdot) - \mathscr{S}_2(t, \cdot) \|_{\Gamma} 
    &
    \le 
    2 \varepsilon + \sup_{0 \le t \le T \wedge \tau_M} \int_0^t \| \mathscr{F}( \cdot, \mathscr{S}_1(s, \cdot) ) - \mathscr{F}(\cdot, \mathscr{S}_2(s, \cdot) ) \|_{\Gamma} \, \dif s
    \\
    &
    \le
    2 \varepsilon + \sup_{0 \le t \le T} \int_0^t \| \mathscr{F}( \cdot, \mathscr{S}_1(s \wedge \tau_M, \cdot) ) - \mathscr{F}(\cdot, \mathscr{S}_2(s \wedge \tau_M, \cdot) ) \|_{\Gamma} \, \dif s
    \\
    &
    \le 2 \varepsilon + C(M, \|K\|_{\sigma}, \bar{\gamma} ) \int_0^T \| \mathscr{S}_1(s \wedge \tau_M, \cdot) - \mathscr{S}_2(s \wedge \tau_M, \cdot) \|_{\Gamma} \, \dif s.
\end{align*}
Define $\displaystyle Q_T \defas \sup_{0 \le t \le T} \| \mathscr{S}_1( t \wedge \tau_M, \cdot) - \mathscr{S}_2(t \wedge \tau_M, \cdot) \|_{\Gamma}$. Then one has that 
\begin{align*}
    Q_T = \sup_{0 \le t \le T \wedge \tau_M} \| \mathscr{S}_1(t, \cdot) - \mathscr{S}_2(t, \cdot) \|_{\Gamma} \le 2 \varepsilon + C \int_0^T Q_s \, \dif s.
\end{align*}
By an application of Gronwall's inequality, the result is shown. 

It remains now to show that $\mathscr{F}$ is Lipschitz, that is, the expression \eqref{eq:pseudo_Lip_F} holds. We will do this in steps. First, define $\mathscr{B}_i(\cdot) \defas \frac{-1}{2 \pi i} \oint_{\Gamma} z \mathscr{S}_i(\cdot, z) \, \dif z$ and $\mathscr{I}_i(\cdot) \defas \frac{-1}{2 \pi i} \oint_{\Gamma} \mathscr{S}_i(\cdot, z) \, \dif z$ for $i = \{1,2\}$. We will use the shorthand $\mathscr{B}_i^{\tau_M}(s) \defas \mathscr{B}_i(s \wedge \tau_M)$,  $\mathscr{I}_i^{\tau_M}(s) = \mathscr{I}_i( s \wedge \tau_M)$, and $\mathscr{S}_i^{\tau_M}(s, \cdot ) = \mathscr{S}_i(s \wedge \tau_M, \cdot)$. Now by the $\alpha$-pseudo-Lipschitz of $\nabla h$ (Assumption~\ref{assumption:statistic} ), 
\begin{align*}
    \|H(\mathscr{B}_1^{\tau_M}(s) )- H(\mathscr{B}_2^{\tau_M}(s))\| 
    &
    \le 
    ( 1 + \|\mathscr{B}_1^{\tau_M}(s)\|^{\alpha} + \|\mathscr{B}_2^{\tau_M}(s)\|^{\alpha} ) \|\mathscr{B}_1^{\tau_M}(s) - \mathscr{B}_2^{\tau_M}(s)\|
    \\
    &
    \le
    C(M, L(h), \alpha) \|\mathscr{B}_1^{\tau_M}(s) - \mathscr{B}_2^{\tau_M}(s)\|
\end{align*}
since 
\begin{equation} \label{eq:B_bound}
    \|\mathscr{B}_i^{\tau_M}(s)\| = \big \| \frac{-1}{2\pi i} \oint_{\Gamma} z \mathscr{S}_i^{\tau_M}(s, z) \, \dif z \big \| \le C( |\Gamma|) \|\mathscr{S}_i^{\tau_M}(s, \cdot) \|_{\Gamma} \le C( \|K\|_{\sigma} ) \cdot M.
\end{equation}
Here we used the stopping time $\tau_M$ explicitly. Now we see that 
\begin{equation} \label{eq:B_Lip}
    \|\mathscr{B}^{\tau_M}_1(s) - \mathscr{B}^{\tau_M}_2(s)\| \le C \oint_\Gamma |z| \|\mathscr{S}_1^{\tau_M}(s, \cdot) - \mathscr{S}_2^{\tau_M}(s, \cdot) \|_{\Gamma} \, \dif |z| \le C( \|K\|_{\sigma}) \|\mathscr{S}_1^{\tau_M}(s, \cdot) - \mathscr{S}_1^{\tau_M}(s, \cdot) \|_{\Gamma}.
\end{equation}
Consequently, there exists a positive constant (independent of $s$) such that 
\begin{equation} \label{eq:H_Lip}
    \|H(\mathscr{B}_1^{\tau_M}(s)) - H(\mathscr{B}_2^{\tau_M}(s))\| \le C(M, \|K\|_{\sigma}, L(h), \alpha) \cdot \|\mathscr{S}_1^{\tau_M}(s, \cdot) - \mathscr{S}_2^{\tau_M}(s, \cdot) \|_{\Gamma}.
\end{equation}
Analogous to \eqref{eq:B_bound} and \eqref{eq:B_Lip}, 
\begin{equation} \label{eq:I_Lip}
\begin{aligned}
    \|\mathscr{I}_1^{\tau_M}(s) - \mathscr{I}_2^{\tau_M}(s) \| 
    &
    \le 
    C(M, \|K\|_{\sigma}) \cdot \| \mathscr{S}_1^{\tau_M}(s, \cdot) - \mathscr{S}_2^{\tau_M}(s, \cdot)\|_{\Gamma} 
    \\
    \|\mathscr{I}_i^{\tau_M}(s)\|
    &
    \le 
    C(|\Gamma|) \|\mathscr{S}_i^{\tau_M}(s, \cdot)\|_{\Gamma} \le C(\|K\|_{\sigma}) \cdot M. 
\end{aligned}
\end{equation}
Moreover by Assumption~\ref{assumption:unbiased} and the bound on $\mathscr{B}_i^{\tau_M}(s)$ in \eqref{eq:B_bound}
\begin{equation} \label{eq:H_bound}
    \|H(\mathscr{B}_i^{\tau_M}(s)) \| \le L(h) (1 + \|\mathscr{B}_i^{\tau_M}(s)\|)^{\alpha} \le C(\|K\|_{\sigma}, L(h), \alpha, M).
\end{equation}
It follows from Equations~\eqref{eq:B_bound}, \eqref{eq:B_Lip}, \eqref{eq:H_Lip}, \eqref{eq:I_Lip}, and \eqref{eq:H_bound} the existence of a positive constant $C = C(M, \|K\|_{\sigma}, L(h), \alpha, \bar{\gamma})$ such that
\begin{equation} \label{eq:F_1}
\|2 \gamma(s) \mathscr{I}_1^{\tau_M}(s) H( \mathscr{B}_1^{\tau_M}(s)) - 2 \gamma(s) \mathscr{I}_2^{\tau_M}(s) H( \mathscr{B}_2^{\tau_M}(s)) \| \le  C \cdot \| \mathscr{S}_1^{\tau_M}(s, \cdot) - \mathscr{S}_2^{\tau_M}(s, \cdot) \|_{\Gamma}.
\end{equation}
An analogous argument shows
\begin{equation} \label{eq:F_2}
\|2 \gamma(s) H^T( \mathscr{B}_1^{\tau_M}(s)) \mathscr{I}_1^{\tau_M}(s) - 2 \gamma(s) H^T( \mathscr{B}_2^{\tau_M}(s)) \mathscr{I}_2^{\tau_M}(s)  \| \le  C \cdot \| \mathscr{S}_1^{\tau_M}(s, \cdot) - \mathscr{S}_2^{\tau_M}(s, \cdot) \|_{\Gamma}.
\end{equation}

Next we consider the term $\mathscr{S}(s,z) ( 2 z H(\mathscr{B}(s)) + \delta D)$ and noting that an analogous proof holds for $( 2 z H^T(\mathscr{B}(s)) + \delta D)\mathscr{S}(s,z)$. We immediately have that
\begin{equation} \label{eq:S_term}
    \begin{aligned}
    \|\delta D (\mathscr{S}_1^{\tau_M}(s, \cdot) - \mathscr{S}_2^{\tau_M}(s, \cdot) \big )\|_{\Gamma} 
    &
    \le 
    \delta |\mathcal{O}| \|\mathscr{S}_1^{\tau_M}(s, \cdot) - \mathscr{S}_2^{\tau_M}(s, \cdot)\|_{\Gamma}
    \\
    \text{and} \quad 
    \|2z ( \mathscr{S}_1^{\tau_M}(s, z) - \mathscr{S}_2^{\tau_M}(s, z) ) \|_{\Gamma}
    &
    \le
    C(\|K\|_{\sigma}) \|\mathscr{S}_1^{\tau_M}(s, \cdot) - \mathscr{S}_2^{\tau_M}(s, \cdot)\|_{\Gamma}
    \end{aligned}
\end{equation}
and $\|2z \mathscr{S}_i^{\tau_M}(s, \dot)\|_{\Gamma}, \|\delta D \mathscr{S}_i^{\tau_M}(s, \cdot)\|_{\Gamma} \le C(\|K\|_{\sigma}, \delta, |\mathcal{O}|) \cdot M$ where $|\mathcal{O}| = \ell$ is independent of $d$. Consequently, by \eqref{eq:H_Lip} and \eqref{eq:H_bound} for $H(\mathscr{B}(s))$, we have that 
\begin{equation}
\begin{aligned} \label{eq:F_term_2}
\|\gamma(s) ( \mathscr{S}_1^{\tau_M}(s, z) & (2z H(\mathscr{B}_1^{\tau_M}(s) ) + \delta D) - \mathscr{S}_2^{\tau_M}(s, z) (2z H(\mathscr{B}_2^{\tau_M}(s) ) + \delta D) ) \|_{\Gamma} 
\\
&
\le
C \cdot \|\mathscr{S}_1^{\tau_M}(s, \cdot) - \mathscr{S}_2^{\tau_M}(s, \cdot) \|_{\Gamma}
\end{aligned}
\end{equation}
where $C = C(M, \|K\|_{\sigma}, L(h), \alpha, \bar{\gamma}, \delta, |\mathcal{O}|)$ is a positive constant. 

What remains is the third and final term in $\mathscr{F}$, $\tfrac{\gamma(s)^2}{2} \tr(KR(z;K)) I(\mathscr{B}(s))$. Lastly,
\begin{equation} 
\begin{aligned}
\| \tfrac{\gamma(s)^2}{d} \tr (K R(z;K))
& 
\big ( I(\mathscr{B}_1^{\tau_M}(s)) - I(\mathscr{B}_2^{\tau_M}(s)) \big ) \|_{\Gamma}
\\
&
\le \tfrac{\bar{\gamma}^2}{d} | \tr(K) + z \tr(R(z; K))|_{\Gamma} \|I(\mathscr{B}_1^{\tau_M}(s)) - I(\mathscr{B}_2^{\tau_M}(s))\|_{\Gamma} 
\\
&
\le
\bar{\gamma}^2 ( \|K\|_{\sigma} + 1) \|I(\mathscr{B}_1^{\tau_M}(s)) - I(\mathscr{B}_2^{\tau_M}(s))\|_{\Gamma}. 
\end{aligned}
\end{equation}
By $\alpha$-pseudo-Lipschitz of Fisher matrix (Assumption~\ref{assumption:E_loss_pseudo_Lip}) and the inequalities \eqref{eq:B_bound} and \eqref{eq:B_Lip} 
\[
\|I(\mathscr{B}_1^{\tau_M}(s)) - I(\mathscr{B}_2^{\tau_M}(s))\|_{\Gamma} \le C(M, \alpha, L(I)) \|\mathscr{S}_1^{\tau_M}(s, \cdot) - \mathscr{S}_2^{\tau_M}(s, \cdot) \|_{\Gamma}. 
\]
Therefore, we deduce that 
\begin{equation}\label{eq:F_3}
\begin{aligned}
\| \tfrac{\gamma(s)^2}{d} \tr (K R(z;K))
& 
\big ( I(\mathscr{B}_1^{\tau_M}(s)) - I(\mathscr{B}_2^{\tau_M}(s)) \big ) \|_{\Gamma}
\le C \cdot \|\mathscr{S}_1^{\tau_M}(s, \cdot) - \mathscr{S}_2^{\tau_M}(s, \cdot) \|_{\Gamma}, 
\end{aligned}
\end{equation}
where $C = C(M, \|K\|_{\sigma}, L(I), \alpha, \bar{\gamma})$ is a positive constant.

The Lipschitz condition for $\mathcal{F}$ \eqref{eq:pseudo_Lip_F} holds after applying expressions \eqref{eq:F_1}, \eqref{eq:F_2}, \eqref{eq:F_term_2}, and \eqref{eq:F_3}. 
\end{proof}

Having established stability (Proposition~\ref{prop:stability}), we now show the same result holds for any statistic $\varphi(X) = (g \circ Q)(W)$
satisfying Assumption~\ref{assumption:statistic}. Here 
\[Q(W) \defas \ip{W^{\otimes 2}, q(K)}_{\mathcal{A}^{\otimes 2}}, 
\]
where $q(K)$ is a polynomial in $K$. For this, we introduce the notation: for $\mathscr{S}_i$ an $(\epsilon, M, T)$-approximate solution, we define
\begin{equation} \label{eq:q_function}
\mathscr{Q}_i(t) \defas \frac{-1}{2\pi i} \oint_{\Gamma} q(z) \mathscr{S}_i(t, z) \, \dif z.
\end{equation}
The following proposition shows that given two approximate solution, $\mathscr{S}_1$ and $\mathscr{S}_2$, $g \circ \mathscr{Q}_1(t)$ is close to $g \circ \mathscr{Q}_2(t)$. The idea is that the pseudo-Lipschitzness of $g$ allows us to show that 
\[
\sup_{0 \le t \le T} \| g( \mathscr{Q}_1(t \wedge \tau_M) ) - g( \mathscr{Q}_2(t \wedge \tau_M) ) \| \le \sup_{0 \le t \le T} \|\mathscr{S}_1(t \wedge \tau_M, \cdot)- \mathscr{S}_2(t \wedge \tau_M, \cdot) \|_{\Gamma}
\]
and then Proposition~\ref{prop:stability} finishes the result. 

\begin{proposition}\label{prop:stability_varphi} Suppose $\varphi \, : \, \mathcal{A} \otimes \mathcal{O} \to \mathbb{R}$ is a statistic satisfying Assumption~\ref{assumption:statistic} such that $\varphi(X) = g \circ Q(W)$. Suppose $\mathscr{S}_1$ and $\mathscr{S}_2$ are $(\varepsilon, M, T)$-approximate solutions. Then there exists a positive constant $C = C(M, T, \|K\|_{\sigma}, \|q\|_{\Gamma}, \bar{\gamma})$ such that
\[
\sup_{0 \le t \le T} \| g \big ( \tfrac{-1}{2 \pi i} \oint_{\Gamma} q(z) \mathscr{S}_1^{\tau_M}(t, z) \, \dif z \big ) - g \big ( \tfrac{-1}{2 \pi i} \oint_{\Gamma} q(z) \mathscr{S}_2^{\tau_M}(t, z) \, \dif z \big ) \|  \le C \cdot \varepsilon,
\]
where $\tau_{M} = \inf \{ t \ge 0 \, : \, \|\mathscr{S}_1(t, \cdot)\|_{\Gamma} \ge M \, \, \text{or} \, \, \|\mathscr{S}_2(t, \cdot)\|_{\Gamma} \ge M \}$. Here $\mathscr{S}_i^{\tau_M}(t, \cdot) =\mathscr{S}_i(t \wedge \tau_M), \cdot)$.
\end{proposition}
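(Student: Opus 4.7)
The plan is to reduce the statement to the previous stability result, Proposition~\ref{prop:stability}, by showing that the map $\mathscr{S} \mapsto g\bigl(\tfrac{-1}{2\pi i}\oint_\Gamma q(z)\mathscr{S}(t,z)\,\dif z\bigr)$ is Lipschitz, uniformly on the stopped trajectory $t \leq \tau_M$, with a constant depending only on $M$, $\|K\|_\sigma$, $\|q\|_\Gamma$, $\alpha$, and $L(g)$.

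First I would pass the contour integral through the norm. Since $\Gamma$ is a fixed compact contour of length $|\Gamma|$ depending only on $\|K\|_\sigma$, for any $t \leq \tau_M$,
\[
\|\mathscr{Q}_1(t) - \mathscr{Q}_2(t)\|
\leq \frac{|\Gamma|}{2\pi}\,\|q\|_\Gamma\,
\sup_{z \in \Gamma}\|\mathscr{S}_1^{\tau_M}(t,z) - \mathscr{S}_2^{\tau_M}(t,z)\|
= \frac{|\Gamma|}{2\pi}\,\|q\|_\Gamma\,\|\mathscr{S}_1^{\tau_M}(t,\cdot) - \mathscr{S}_2^{\tau_M}(t,\cdot)\|_\Gamma.
\]
The same estimate applied to each $\mathscr{S}_i$ individually, together with the bound $\|\mathscr{S}_i^{\tau_M}(t,\cdot)\|_\Gamma \leq M$ coming from the definition of $\tau_M$, yields $\|\mathscr{Q}_i(t)\| \leq C(M,\|K\|_\sigma,\|q\|_\Gamma)$ uniformly on $[0, T\wedge \tau_M]$. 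In particular the two arguments fed to $g$ live in a fixed compact set.

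Next I would invoke the $\alpha$-pseudo-Lipschitz property of $g$ (Assumption~\ref{assumption:statistic}). On the compact set just identified, pseudo-Lipschitzness degenerates to ordinary Lipschitzness with effective constant $L(g)(1 + 2 C^\alpha)$, so
\[
\bigl|g(\mathscr{Q}_1(t)) - g(\mathscr{Q}_2(t))\bigr|
\leq L(g)\bigl(1 + \|\mathscr{Q}_1(t)\|^\alpha + \|\mathscr{Q}_2(t)\|^\alpha\bigr)\,\|\mathscr{Q}_1(t) - \mathscr{Q}_2(t)\|
\leq C'\,\|\mathscr{S}_1^{\tau_M}(t,\cdot) - \mathscr{S}_2^{\tau_M}(t,\cdot)\|_\Gamma,
\]
with $C'$ depending only on $M$, $\|K\|_\sigma$, $\|q\|_\Gamma$, $\alpha$, $L(g)$. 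Taking the supremum over $t \in [0,T]$ and applying Proposition~\ref{prop:stability} to bound the right-hand side by $C\,\varepsilon$ finishes the proof. Note that one must check that the stopping time $\tau_M$ used here is the same as (or dominates the one used in) Proposition~\ref{prop:stability}; since it is defined identically in both statements, this is immediate.

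The only mildly delicate step is the verification that $g$ is applied on a bounded domain so that its pseudo-Lipschitz constant can be absorbed into an absolute constant; this is precisely what the stopping at $\tau_M$ buys us. If one wanted to remove $\tau_M$, one would have to argue (via Proposition~\ref{prop:nonexplosiveness} or similar non-explosion estimates) that $\|\mathscr{S}_i(t,\cdot)\|_\Gamma$ remains bounded for all $t \leq T$, but that is not needed for the current statement.
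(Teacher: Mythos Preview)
Your proposal is correct and follows essentially the same route as the paper: bound $\mathscr{Q}_i(t)=\tfrac{-1}{2\pi i}\oint_\Gamma q(z)\mathscr{S}_i^{\tau_M}(t,z)\,\dif z$ by $C(\|K\|_\sigma,\|q\|_\Gamma)M$, pass the contour integral through the difference to get Lipschitz dependence on $\|\mathscr{S}_1^{\tau_M}-\mathscr{S}_2^{\tau_M}\|_\Gamma$, use the $\alpha$-pseudo-Lipschitz property of $g$ on this bounded set, and then invoke Proposition~\ref{prop:stability}. One small caveat: the stopping time $\tau_M$ in the statement of Proposition~\ref{prop:stability_varphi} involves only the norm condition, whereas $\hat\tau_M$ in Definition~\ref{def:integro_differential_equation} (and hence in Proposition~\ref{prop:stability}) also includes exit from $\mathcal{U}$, so your remark that they are ``defined identically'' is not literally accurate; the paper handles this by noting $\tau_M\le\hat\tau_M(\mathscr{S}_i)$ at the outset, which you should do as well.
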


\begin{proof} Since $\tau_M \le \hat{\tau}_M(\mathscr{S}_1)$ and $\tau_M \le \hat{\tau}_M(\mathscr{S}_2)$, we can always work on the smaller time $\tau_M$. We define $\mathscr{Q}_i(t) = \tfrac{-1}{2\pi i} \oint_{\Gamma} q(z) \mathscr{S}_i(t,z) \, \dif z$ and the stopped process $\mathscr{Q}_i^{\tau_M}(t) = \mathscr{Q}_i(t \wedge \tau_M)$ for $i = 1,2$. First, we observe that 
\begin{equation} \label{eq:Q_bound_statistic}
\|\mathscr{Q}_i^{\tau_M}(t)\| \le C \oint_{\Gamma} |q(z)| \|\mathscr{S}_i^{\tau_M}(t,z)\| \, \dif z \le C(\|K\|_{\sigma}, \|q\|_\Gamma) \| \mathscr{S}_i^{\tau_M}(t, \cdot) \|_{\Gamma} \le  C(\|K\|_{\sigma}, \|q\|_\Gamma) \cdot M. 
\end{equation}
Moreover, the function $\mathscr{Q}$ is Lipschitz, that is, 
\begin{equation} \label{eq:Q_Lip_statistic}
\begin{aligned}
\|\mathscr{Q}_1^{\tau_M}(t) - \mathscr{Q}_2^{\tau_M}(t)\|  
&
\le 
C(\|q\|_{\Gamma} ) \oint_{\Gamma} \|\mathscr{S}_1^{\tau_M}(t, z) - \mathscr{S}_2^{\tau_M}(t, z) \| \, \dif |z| \\
&
\le C(\|K\|_{\sigma}, \|q\|_{\Gamma}) \|\mathscr{S}_1^{\tau_M}(t, \cdot) - \mathscr{S}_2^{\tau_M}(t, \cdot) \|_{\Gamma}.
\end{aligned}
\end{equation}
Since $g$ is $\alpha$-pseudo-Lipschitz (Assumption~\ref{assumption:statistic}) and the boundedness and Lipschitzness of  $\mathscr{Q}$ (see \eqref{eq:Q_bound_statistic} and \eqref{eq:Q_Lip_statistic}),
\begin{equation}
\begin{aligned}
    \| g( \mathscr{Q}_1^{\tau_M}(t) ) - g( \mathscr{Q}_2^{\tau_M}(t) ) \| 
    &
    \le 
    L(g) \|\mathscr{Q}_1^{\tau_M}(t) - \mathscr{Q}_2^{\tau_M}(t) \| \big ( 1 + \|\mathscr{Q}_1^{\tau_M}(t)\|^{\alpha} + \|\mathscr{Q}_2^{\tau_M}(t)\|^{\alpha} \big ) \\
    &
    \le
    C \cdot \|\mathscr{S}_1^{\tau_M}(t, \cdot) - \mathscr{S}_2^{\tau_M}(t, \cdot) \|_{\Gamma},
\end{aligned}
\end{equation}
where $C = C(\|K\|_{\sigma}, M, \|q\|_{\Gamma}, L(g), \alpha)$ is a positive constant. Taking the supremum over all $0 \le t \le T$ and applying Proposition~\ref{prop:stability} finishes the result. 
\end{proof}

\subsection{Main argument of the proof -- concentration of SGD and homogenized SGD under $S$} \label{sec:compare_SGD_HSGD}

In this section, we derive one of our main results -- concentration of both homogenized SGD and SGD under the statistic $S$ to the deterministic function $\mathcal{S}(t,z)$ that satisfies the integro-differential equation \eqref{eq:ODE_resolvent_2}. We will first prove a more general result than Theorem~\ref{thm:learning_curves} involving the resolvent, see Theorem~\ref{thm:main_concentration_S}.  The important statistic which will play a pivotal role is 
\begin{equation}
S(W,z) = \ip{W \otimes W, R(z;K)}_{\mathcal{A}^{\otimes 2}},
\end{equation}
as well as the function
\[
B(W) = \ip{W^{\otimes 2}, K}_{\mathcal{A}^{\otimes 2}}. 
\]

We will extend the iterates of SGD, $\{X_k\}$ defined on discrete time $k$, to continuous time. This is so that we can compare SGD and homogenized SGD, $\{\WHSGD_t\}$. We relate the $k$-th iterate of SGD to the continuous time parameter $t$ in homogenized SGD through the relationship $k = \lfloor td \rfloor$. Thus, when $t = 1$, SGD has done exactly $d$ updates. Under this mapping, we write the iterates of SGD with the continuous time parameter as $X_{td} = X_{\lfloor td \rfloor}$ (see Section~\ref{sec:SGD_homogenized_SGD} for additional details). 

We are now ready to state and prove one of our main results. 

\begin{theorem}[Concentration of SGD, Homogenized SGD, and deterministic function $\mathcal{S}(t,z)$] \label{thm:main_concentration_S_min}
Suppose the risk function $\mathcal{R}_{\delta}(X)$ \eqref{eq:Rdelta} satisfies Assumptions~\ref{ass:pseudo_lipschitz}, \ref{assumption:unbiased}, and \ref{assumption:E_loss_pseudo_Lip}. Suppose the learning rate schedule satisfies Assumption \ref{assum:learning_rate}, and the initialization $X_0$ and hidden parameters $X^{\star}$ satisfy Assumption~\ref{assumption:scaling}. Moreover the data $a \sim N(0,K)$ and label noise $\epsilon$ satisfy Assumption~\ref{ass:data_normal}. Let $\{W_{\lfloor td \rfloor}\}$ be generated from the iterates of SGD \eqref{eq:SGD} and $\HSGD_t$ generated from the solution of homogenized SGD \eqref{eq:HSGD} through $W = X \otimes X^{\star}$ and initialized with $X_0 = \WHSGD_0$.  
  Then there is an $\varepsilon >0$ so that for any $T,M > 0$ and $d$ sufficiently large, with overwhelming probability
  \begin{equation}
  \begin{gathered}
    \sup_{0 \leq t \leq T \wedge \tau_{M}(S(W,\cdot), \mathcal{S})} \! \! \! \! \!\! \! \! \! \| 
    S(W_{\lfloor td \rfloor}, \cdot)
    - 
    \mathcal{S}(t,\cdot)
    \|_{\Gamma} \leq d^{-\varepsilon}, \quad \sup_{0 \leq t \leq T \wedge \tau_{M}(S(\HSGD,\cdot), \mathcal{S})} \| 
    S(\HSGD_t, \cdot)
    - 
    \mathcal{S}(t,\cdot)
    \|_{\Gamma} \leq d^{-\varepsilon},
    \\
       \text{and} \qquad \sup_{0 \leq t \leq T \wedge \tau_M(S(W,\cdot), S(\HSGD,\cdot) )} \! \! \! \! \!\! \! \! \!  \| 
    S(W_{\lfloor td \rfloor}, \cdot)
    - 
    S(\HSGD_t,\cdot)
    \|_{\Gamma} \leq d^{-\varepsilon},
    \end{gathered}
  \end{equation}
where the deterministic function $\mathcal{S}(t,z)$ solves the integro-differential equation \eqref{eq:ODE_resolvent_2} and
\[
\tau_{M}(\mathscr{S}_1, \mathscr{S}_2) = \min\{\hat{\tau}_M(\mathscr{S}_1), \hat{\tau}_M(\mathscr{S}_2) \}.
\]
\end{theorem}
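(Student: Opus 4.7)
The plan is to reduce all three bounds to the stability framework developed in Proposition \ref{prop:stability}. Since the deterministic $\mathcal{S}(t,\cdot)$ is trivially a $(0,M,T)$-approximate solution of \eqref{eq:ODE_resolvent_2}, it suffices to show that both $t\mapsto S(W_{\lfloor td\rfloor},\cdot)$ (suitably extended to continuous time) and $t\mapsto S(\HSGD_t,\cdot)$ are $(d^{-\varepsilon},M,T)$-approximate solutions with overwhelming probability. Then Proposition \ref{prop:stability} applied pairwise among the three objects yields all three inequalities simultaneously, once one checks that the stopping times $\hat\tau_M$ interlock — which follows from Lemma \ref{lem:normequivalence} converting bounds on $\|S(\cdot,\cdot)\|_\Gamma$ into bounds on $\|\HSGD_t\|^2$ and $\|W_{\lfloor td \rfloor}\|^2$ up to constants depending only on $\|K\|_\sigma$.

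For homogenized SGD, I would apply It\^o's formula to $z\mapsto S(\HSGD_t,z)=\langle \HSGD_t^{\otimes 2}, R(z;K)\rangle_{\mathcal{A}^{\otimes 2}}$. The drift coming from the gradient part of \eqref{eq:HSGD} produces, via Lemma \ref{lem:derivative_risk} and Assumption \ref{assumption:unbiased}, exactly the first and last blocks of $\mathscr{F}$ (the $H$ terms and the $\delta D$ terms), since contracting $\nabla\mathcal{R}_\delta$ against the resolvent gives the required $H(\mathscr{B}(t))$ and $zH(\mathscr{B}(t))$ pieces after using $R(z;K)K = I + zR(z;K)$. The It\^o correction from the diffusion matrix $K/d \otimes I(\mathscr{B}(t))$ produces $\tfrac{\gamma_t^2}{d}\tr(KR(z;K))\cdot I(\mathscr{B}(t))$, matching the middle term of $\mathscr{F}$. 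The martingale remainder must be shown to be $O(d^{-\varepsilon})$ uniformly in $z\in\Gamma$; I would do this by a net argument on $\Gamma$ of polynomial cardinality in $d$ together with Burkholder--Davis--Gundy applied to a time--stopped version bounded by $\tau_M$, using Lemma \ref{lem:growth_grad_f} to control the moments of the integrand in terms of $\|\HSGD_t\|$, which is in turn bounded via Lemma \ref{lem:normequivalence}.

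For SGD, I would introduce a continuous-time interpolation $\tilde S(t,z)$ of $S(W_{\lfloor td\rfloor},z)$ and compute the conditional increment $\EE[S(W_{k+1},z)-S(W_k,z)\mid \Filt_k]$ via a Taylor expansion of $S$ in the update direction $\tfrac{\gamma_k}{d}(a_{k+1}\otimes\nabla_x f(r_k)+\delta X_k)$, through third order. Taking the Gaussian expectation over $a_{k+1}$ (using Wick's formula for the second-order term and Assumption \ref{ass:data_normal} for the higher moments of $\ip{a_{k+1},\cdot}_{\mathcal{A}}$) gives precisely the Riemann-sum form of $\tfrac{1}{d}\mathscr{F}(z,S(W_{\lfloor sd\rfloor},\cdot))$, with the cubic and higher remainders producing $O(d^{-2})$ per step and hence $O(d^{-1})$ after summing $Td$ steps. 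The martingale differences are summed by the same net-plus-BDG strategy, controlled uniformly up to the stopping time $\tau_M$.

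The main obstacle will be establishing uniform-in-$z\in\Gamma$ martingale concentration with rate $d^{-\varepsilon}$ under only the $\alpha$-pseudo-Lipschitz assumption (Assumption \ref{ass:pseudo_lipschitz}), rather than globally Lipschitz, data. The pseudo-Lipschitz control gives moments of $\|\nabla_x f(r)\|$ with polynomial tails in $\|W_k\|$; these are made dimension-free by truncating against $\tau_M$, which bounds $\|W_k\|$ by a constant depending only on $M$ and $\|K\|_\sigma$ by Lemma \ref{lem:normequivalence}. A Freedman- or BDG-type tail bound then yields the required overwhelming probability estimate. A secondary technical point is that the discrete Taylor remainder involves up to sixfold Gaussian contractions with $K$; these are bounded via $\|K\|_\sigma^{\mathrm{const}}$ from Assumption \ref{ass:data_normal} and moment bounds of $\|\nabla_x f\|$ from Lemma \ref{lem:growth_grad_f}. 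Once SGD and HSGD are both $(d^{-\varepsilon},M,T)$-approximate solutions, Proposition \ref{prop:stability} delivers the three advertised bounds.
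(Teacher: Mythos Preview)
Your proposal is correct and follows essentially the same route as the paper: show that $\mathcal{S}$, $S(\HSGD_t,\cdot)$, and $S(W_{\lfloor td\rfloor},\cdot)$ are each $(d^{-\varepsilon},M,T)$-approximate solutions (the latter two via Propositions~\ref{prop:homogenized_SGD_approx_solution} and~\ref{prop:SGD_approx_solution}, whose proofs are exactly the It\^o/Doob--Taylor computations and net-plus-martingale bounds you describe), then invoke Proposition~\ref{prop:stability} pairwise. Two minor remarks: since $S(W,z)$ is exactly quadratic in $W$, the Taylor expansion terminates at second order and no cubic remainder arises; and the paper controls the SGD martingales via Azuma after a projection/truncation argument rather than BDG, though your Freedman-type approach would work equally well.
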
  

\begin{proof} We will consider $\mathscr{S}_1(t,z) = S(\HSGD_t,\cdot)$ and $\mathscr{S}_2(t,z) = S(W_{ td },z)$ and suppress the notation by setting $\tau_M(\mathscr{S}_1, \mathscr{S}_2) = \tau_M$. We also note that the cases when $\mathscr{S}_1(t,z) = S(W_{ td }, z)$ and $\mathscr{S}_2(t,z)= \mathcal{S}(t,z)$ and $\mathscr{S}_1(t,z) = S(\HSGD_t, z)$ and $\mathscr{S}_2(t,z)= \mathcal{S}(t,z)$ follow an analogous proof, so for brevity, we do not present them. 

By Proposition~\ref{prop:homogenized_SGD_approx_solution}, for some $\tilde{\varepsilon} > 0$, we have that $S(\HSGD_t, z)$ is an $(d^{-\tilde{\varepsilon}}, M, T)$-approximate solution with overwhelming probability. Moreover, by Proposition~\ref{prop:SGD_approx_solution}, the function $\mathcal{S}(W_{td },z)$ is an $(d^{-\tilde{\varepsilon}}, M, T)$-approximate solution. (For the deterministic function $\mathcal{S}$, it is an $(0, M+1, T)$-approximate solution by definition.) We now apply the stability result, Proposition~\ref{prop:stability}, to conclude that there exists a $\varepsilon > 0 $ such that
\begin{equation} \label{eq:A_event}
\sup_{0 \le t \le T \wedge \tau_{M}} \|S(\HSGD_t, z) - S(W_{ td },z)\|_{\Gamma} \le d^{-\varepsilon}, \quad w.o.p. 
\end{equation}
The result immediately follows. 
\end{proof}

In the next theorem, we note that one can remove the condition that \textit{both} processes must remain good and reduce this to show that we need only \textit{one} of the processes to remain good. In this way, we can show, for instance, that homogenized SGD is well-behaving and then conclude that SGD must also be well-behaving. 

For any $(\epsilon, M,T)$- approximate solution $\mathscr{S}(t,\cdot)$, we define
\[
\hat{\tau}_{M, \eta}(\mathscr{S}) = \inf \{ t \ge 0 \, : \|\mathscr{S}(t,\cdot)\|_{\Gamma} > M \, \, \text{or} \, \, \sup_{V \in \mathcal{U}^c} \|\mathscr{B}(t,\mathscr{S})-V\| \le \eta\} \quad \text{where} \, \, \mathscr{B}(t, \mathscr{S}) = \frac{-1}{2 \pi i} \oint_{\Gamma} z\mathscr{S}(t, z) \, \dif z,
\]
and where $\mathcal{U}^c$ is the set complement of $\mathcal{U}$.
Our main theorem requires that \textit{only one} of the statistics stays bounded, and not, in particular, both. To define this, we introduce a stopping time 
\begin{equation} \label{eq:stopping_time_thm}
\begin{aligned}
\Theta_{M, \eta}^{\mathscr{S}_1, \mathscr{S}_2} 
& 
= \max \{
\inf\{t \ge 0 \, : \, \|\mathscr{S}_i(t,\cdot)\|_{\Gamma} > M\} \, : \, i = 1,2 \} 
\\
&
\quad \wedge \max \{ \inf \{ t \ge 0 \, : \, \sup_{V \in \mathcal{U}^c} \|\mathscr{B}(t,\mathscr{S}_i)-V\| \le \eta \} \, : \, i = 1,2 \}.
\end{aligned}
\end{equation} 
 We note that $\hat{\tau}_{M,0} = \hat{\tau}_M$ with $\hat{\tau}_M$ defined in the $(\epsilon, M, T)$-approximate solution definition. 

\begin{theorem}[Concentration of SGD, Homogenized SGD, and deterministic function $\mathcal{S}(t,z)$] \label{thm:main_concentration_S}
Suppose the risk function $\mathcal{R}_{\delta}(X)$ \eqref{eq:Rdelta} satisfies Assumptions~\ref{ass:pseudo_lipschitz}, \ref{assumption:unbiased}, and \ref{assumption:E_loss_pseudo_Lip}. Suppose the learning rate schedule satisfies Assumption \ref{assum:learning_rate}, and the initialization $X_0$ and hidden parameters $X^{\star}$ satisfy Assumption~\ref{assumption:scaling}. Moreover the data $a \sim N(0,K)$ and label noise $\epsilon$ satisfy Assumption~\ref{ass:data_normal}. Let $\Theta_M$ be defined as in \eqref{eq:stopping_time_thm} and let $\{W_{\lfloor td \rfloor}\}$ be generated from the iterates of SGD \eqref{eq:SGD} and $\HSGD_t$ generated from the solution of homogenized SGD \eqref{eq:HSGD} through $W = X \otimes X^{\star}$ and initialized with $X_0 = \WHSGD_0$.  
  Then there is an $\varepsilon >0$ so that for any $T,M, \eta > 0$ and $d$ sufficiently large, with overwhelming probability
  \begin{equation}
  \begin{gathered}
    \sup_{0 \leq t \leq T \wedge \Theta_{M,\eta}^{S(W,\cdot), \mathcal{S}} } \| 
    S(W_{ td }, \cdot)
    - 
    \mathcal{S}(t,\cdot)
    \|_{\Gamma} \leq d^{-\varepsilon}, \quad \sup_{0 \leq t \leq T \wedge \Theta_{M,\eta}^{S(\HSGD,\cdot), \mathcal{S}} } \| 
    S(\HSGD_t, \cdot)
    - 
    \mathcal{S}(t,\cdot)
    \|_{\Gamma} \leq d^{-\varepsilon},
    \\
       \text{and} \qquad \sup_{0 \leq t \leq T \wedge \Theta_{M, \eta}^{S(W,\cdot), S(\HSGD,\cdot)}} \| 
    S(W_{td}, \cdot)
    - 
    S(\HSGD_t,\cdot)
    \|_{\Gamma} \leq d^{-\varepsilon},
    \end{gathered}
  \end{equation}
where the deterministic function $\mathcal{S}(t,z)$ solves the integro-differential equation \eqref{eq:ODE_resolvent_2}.
\end{theorem}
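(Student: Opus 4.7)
My plan is to prove Theorem~\ref{thm:main_concentration_S} by bootstrapping from Theorem~\ref{thm:main_concentration_S_min} and using that closeness between two processes transfers the ``good set'' membership of one to an almost-identical membership of the other. Concretely, I would first apply Theorem~\ref{thm:main_concentration_S_min} at enlarged parameters $(M', \eta') = (M+1, \eta/2)$, obtaining an overwhelming-probability event $\mathcal{G}$ on which
\[
\sup_{0 \le t \le \tau_{M'}(\mathscr{S}_1,\mathscr{S}_2) \wedge T}\|\mathscr{S}_1(t,\cdot) - \mathscr{S}_2(t,\cdot)\|_\Gamma \le d^{-\varepsilon'},
\]
where $\tau_{M'}(\mathscr{S}_1,\mathscr{S}_2) = \min_i \hat\tau_{M',\eta'}(\mathscr{S}_i)$. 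Here I have only to verify that $\mathscr{S}_1$ and $\mathscr{S}_2$ (whichever two of $S(W_{\lfloor td\rfloor},\cdot)$, $S(\HSGD_t,\cdot)$, $\mathcal{S}(t,\cdot)$ we pick) remain $(\varepsilon, M+1, T)$-approximate solutions in the sense of Definition~\ref{def:integro_differential_equation}; for the two random ones this follows from Propositions~\ref{prop:SGD_approx_solution} and~\ref{prop:homogenized_SGD_approx_solution}, and for $\mathcal{S}$ it is automatic.

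Next I would run a continuity/bootstrap argument on $\mathcal{G}$ to show $\tau_{M'}(\mathscr{S}_1,\mathscr{S}_2) \wedge T \ge \Theta_{M,\eta}^{\mathscr{S}_1,\mathscr{S}_2} \wedge T$. The key mechanical fact is: on $\mathcal{G}$, for any $s$ at which closeness holds, if $\|\mathscr{S}_i(s,\cdot)\|_\Gamma \le M$ then $\|\mathscr{S}_{3-i}(s,\cdot)\|_\Gamma \le M + d^{-\varepsilon'} < M+1$, and similarly if $\mathscr{B}(s,\mathscr{S}_i)$ is $\eta$-far from $\mathcal{U}^c$ then $\mathscr{B}(s,\mathscr{S}_{3-i})$ is at least $(\eta - d^{-\varepsilon'}) > \eta/2$ far. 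Consequently, on $\mathcal{G}$, the event that the \emph{minimum} exit from the $(M+1, \eta/2)$-good set occurs strictly before the \emph{maximum} exit from the $(M,\eta)$-good set is empty.

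Because the stopping time $\Theta_{M,\eta}^{\mathscr{S}_1,\mathscr{S}_2} = \max_i\tau^{\mathrm{norm}}_i \wedge \max_i\tau^{U}_i$ is a min of two maxes rather than $\max_i\hat\tau_{M,\eta}(\mathscr{S}_i)$, I would handle the norm-exit and $\mathcal{U}$-exit separately by a short case analysis: for $t < \Theta$ we know at least one process stays norm-bounded on $[0,t]$ \emph{and} at least one (possibly different) process stays in $\mathcal{U}_\eta$ on $[0,t]$. The closeness transfer above forces both properties to hold on $[0,t]$ simultaneously for both processes at the relaxed level $(M+1,\eta/2)$, so $t \le \tau_{M'}(\mathscr{S}_1,\mathscr{S}_2)$. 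This gives the desired containment of stopping times, after which Theorem~\ref{thm:main_concentration_S_min} supplies the $d^{-\varepsilon}$ bound on $[0, \Theta \wedge T]$. Applying this to the three pairings $(S(W,\cdot),\mathcal{S})$, $(S(\HSGD,\cdot),\mathcal{S})$, and $(S(W,\cdot),S(\HSGD,\cdot))$ yields the full statement.

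The main obstacle will be the subtlety created by the min-of-maxes form of $\Theta$: in the ``mixed'' regime where $\mathscr{S}_1$ loses norm-boundedness while $\mathscr{S}_2$ loses $\mathcal{U}_\eta$-membership, neither process is individually good, yet the conclusion must still hold. Resolving this requires the separate bootstraps for the two conditions rather than a single combined one, together with an appeal to continuity of the processes (left-continuity suffices for the SGD step-function extension) to pass from strict to non-strict inequalities at the stopping instants.
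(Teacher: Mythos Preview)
Your proposal is correct and essentially identical to the paper's proof: both apply Theorem~\ref{thm:main_concentration_S_min} with enlarged parameter $M+1$, obtain closeness on the event $A_\varepsilon$, and then argue by case analysis that $\Theta_{M,\eta} > \tau_{M+1}$ forces a contradiction on $A_\varepsilon$ (because closeness transfers norm-boundedness and $\mathcal{U}$-membership from one process to the other). One small technical point: Theorem~\ref{thm:main_concentration_S_min} as stated has no $\eta$ parameter (its stopping time $\hat{\tau}_M$ from Definition~\ref{def:integro_differential_equation} uses exit from $\mathcal{U}$, not from $\mathcal{U}_{\eta'}$), so your ``enlarged $(M+1,\eta/2)$'' should simply be $M+1$ with the $\mathcal{U}$-exit condition; your transfer argument then reads ``if $\mathscr{B}(s,\mathscr{S}_i)$ is $\eta$-far from $\mathcal{U}^c$ then $\mathscr{B}(s,\mathscr{S}_{3-i})$ is at least $\eta - C d^{-\varepsilon'} > 0$ far, hence still in $\mathcal{U}$,'' where the constant $C$ comes from the contour integral relating $\mathscr{B}$ to $\mathscr{S}$.
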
     

\begin{proof} Fix an $\eta > 0$. For two mappings $\mathscr{S}_1$ and $\mathscr{S}_2$, we define the stopping time
\begin{equation} \label{eq:tau_M_twice}
\tau_{M+1, 0}^{\mathscr{S}_1, \mathscr{S}_2} = \min \{ \hat{\tau}_{M+1, 0}(\mathscr{S}_1), \hat{\tau}_{M+1, 0}(\mathscr{S}_2) \}. 
\end{equation}
As in the previous theorem, we will consider $\mathscr{S}_1(t,z) = S(\HSGD_t,\cdot)$ and $\mathscr{S}_2(t,z) = S(W_{ td },z)$ and suppress the notation by setting $\tau_{M,\eta}^{\mathscr{S}_1, \mathscr{S}_2} = \tau_{M,\eta}$. We also note that the cases when $\mathscr{S}_1(t,z) = S(W_{ td }, z)$ and $\mathscr{S}_2(t,z)= \mathcal{S}(t,z)$ and $\mathscr{S}_1(t,z) = S(\HSGD_t, z)$ and $\mathscr{S}_2(t,z)= \mathcal{S}(t,z)$ follow an analogous proof so for brevity we do not present them. 

By Theorem~\ref{thm:main_concentration_S_min}, we have that 
\begin{equation} \label{eq:A_event_2}
\sup_{0 \le t \le T \wedge \tau_{M+1, 0}} \|S(\HSGD_t, z) - S(W_{td},z)\|_{\Gamma} \le d^{-\varepsilon}, \quad w.o.p. 
\end{equation}

The remaining component is to replace the stopping time $\tau_{M+1,0}$ which requires \textit{both} statistics to have $\Gamma$-norm less than $M+1$ with $\Theta_{M,\eta}$ which only requires \textit{one} of the statistics to remain in the good set. Denote the event that \eqref{eq:A_event_2} occurs by $A_{\varepsilon}$ and its complement by $A_{\varepsilon}^c$. Then for sufficiently large $d$, 
\begin{equation} \label{eq:probability_last}
\Pr(\Theta_{M,\eta} > \tau_{M+1,0}) \le \Pr(A_{\varepsilon}^c). 
\end{equation}
To see this, suppose $\Theta_{M,\eta} > \tau_{M+1,0}$. 
Let $t = \tau_{M+1,0}$. Then four things could have happened either $\|S(\HSGD_t,\cdot)\|_{\Gamma} \ge M+1$ or $\sup_{V \in \mathcal{U}^c} \|\mathscr{B}(t, S(\HSGD_t,\cdot))-V\| \le 0$ or $\|S(W_{td},\cdot)\|_{\Gamma} \ge M+1$ or $\sup_{V \in \mathcal{U}^c} \|\mathscr{B}(t, S(W_{td}, \cdot))-V\| \le 0$. 
On the other hand, since $\tau_{M+1,0} = t < \Theta_{M, \eta}$, then either $\|S(\HSGD_t,\cdot)\|_{\Gamma} \le M$ or  $\|S(W_{td},\cdot)\|_{\Gamma} \le M$ and the following happens $\sup_{V \in \mathcal{U}^c} \|\mathscr{B}(t, S(\HSGD_t, \cdot))-V\| > \eta$ or $\sup_{V \in \mathcal{U}^c} \|\mathscr{B}(t, S(W_{td}, \cdot))-V\| > \eta$. 

Now we consider cases. Suppose $\|S(\HSGD_t, \cdot)\|_{\Gamma} \ge M+1$. Then $\|S(\HSGD_t, \cdot)\|$ can not be less than or equal to $M$ so it must have been that $\|S(W_{td}, \cdot)\|_{\Gamma} \le M$. Since $t = \tau_{M+1,0}$, working on the event that \eqref{eq:A_event_2} occurs, we have that 
\[
\|S(\HSGD_t,\cdot) \|_{\Gamma} \le \|S(\HSGD_t, \cdot)-S(W_ {td},\cdot)\|_{\Gamma} + \|S(W_{td},\cdot)\|_{\Gamma} \le d^{-\varepsilon} + M. 
\]
For sufficiently large $d$, then $\|S(\HSGD_t,\cdot)\|_{\Gamma} < M+1$ which is a contradiction. 

Suppose $\|S(W_{td},\cdot)\| \ge M+1$. Then by reversing the roles of $W_{td}$ and $\HSGD_t$ in the previous case, we see that this cannot occur. 

Next suppose that $\sup_{V \in \mathcal{U}^c} \|\mathscr{B}(t,S(\HSGD_t, \cdot))- V\| \le 0$. Then $\sup_{V \in \mathcal{U}^c} \|\mathscr{B}(t,S(\HSGD_t, \cdot))- V\|$ can not be greater than $\eta$. Thus it had to be the case that $\|\mathscr{B}(t,S(W_{td}, \cdot))- V\| > \eta$. Now working on the event that $\eqref{eq:A_event_2}$ occurs, we have that 
\begin{align*}
    \|\mathscr{B}(t,S(W_{td}, \cdot))- V\| 
    &
    \le 
    \|\mathscr{B}(t,S(W_{ td }, \cdot)) - \mathscr{B}(t,S(\HSGD_t, \cdot))\|
    \\
    &
    \le C \cdot \sup_{z \in \Gamma} |z| \cdot \|S(W_{ td}, \cdot)- S(\HSGD_t, \cdot) \|_{\Gamma}
    \\
    &
    \le \tilde{C} \cdot d^{\varepsilon},
\end{align*}
where $C, \tilde{C}$ are positive constants. Hence for sufficiently large $d$,  $\sup_{V \in \mathcal{U}^c} \|\mathscr{B}(t,S(W_{td}, \cdot))- V\| < \eta$. Hence a contradiction. 

Lastly suppose $\sup_{V \in \mathcal{U}^c} \|\mathscr{B}(t,S(W_{td}, \cdot))- V\| \le 0$. By reversing the roles of $W_{td}$ and $\HSGD_t$, we reach the same conclusion as the previous case. 

Hence the inequality \eqref{eq:probability_last} holds and thus, $\tau_{M+1,0} \ge \Theta_{M,\eta}$ with overwhelming probability. The result immediately follows. 
\end{proof}

We immediately get a corollary which shows that SGD and homogenized SGD concentrates around the deterministic function $\mathcal{S}(t,z)$ which is a solution to the integro-differential equation \eqref{eq:ODE_resolvent_2} provided that either homogenized SGD or the solution to the integro-differential equation stay bounded, i.e., the quantity $\mathrsfs{N}(t) = \frac{-1}{2 \pi i} \oint_{\Gamma} \tr( \mathcal{S}(t,z) ) \, \dif z$ is bounded. 

\begin{corollary}[Bounded $\mathrsfs{N}$ and concentration] \label{cor:bounded_iterates} Suppose the Assumptions of Theorem~\ref{thm:main_concentration_S} hold. Suppose, in addition, for a fixed $T > 0$ and $\eta > 0$ that
\begin{equation} \label{eq:concentration_condition}
 \sup_{0 \le t \le T} \mathrsfs{N}(t) \le M  \quad \text{and} \quad \sup_{0 \le t \le T} \sup_{V \in \mathcal{U}^c} \|\mathrsfs{B}(t)-V\| > \eta \quad \text{hold w.o.p.}
\end{equation}
 by a positive constant $M$ which is independent of $d$.
 Then there is an $\varepsilon > 0$ so that for $d$ sufficiently large, with overwhelming probability, 
  \begin{equation}
  \begin{gathered} \label{eq:something_1_1}
    \sup_{0 \leq t \leq T } \| 
    S(\HSGD_t, \cdot)
    - 
    \mathcal{S}(t,\cdot)
    \|_{\Gamma} \leq d^{-\varepsilon} \quad 
       \text{and} \qquad \sup_{0 \leq t \leq T} \| 
    S(W_{td}, \cdot)
    - 
    \mathcal{S}(t,\cdot)
    \|_{\Gamma} \leq d^{-\varepsilon}.
    \end{gathered}
  \end{equation}
  Moreover, by a simple triangle inequality, one has
  \begin{equation} \label{eq:something_2_1}
  \sup_{0 \le t \le T} \| S(W_{td}, \cdot) - S(\HSGD_t,\cdot)\|_{\Gamma} \le 2d^{-\varepsilon}. 
  \end{equation}
\end{corollary}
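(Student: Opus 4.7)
The plan is to derive the corollary directly from Theorem \ref{thm:main_concentration_S}, exploiting the fact that the stopping time $\Theta_{M',\eta'}^{\mathscr{S}_1,\mathscr{S}_2}$ in that theorem is defined with a $\max$ over the two processes in each clause: the process must exit the good region \emph{for both} $\mathscr{S}_1$ and $\mathscr{S}_2$ simultaneously before $\Theta$ can trigger. Hence, if one of the pair is the deterministic $\mathcal{S}$ and we can guarantee that $\mathcal{S}$ itself never exits the good region on $[0,T]$, then $\Theta_{M',\eta'}$ is pushed past $T$ and the theorem applies on the whole interval $[0,T]$ regardless of the random companion process.

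First I would translate the hypothesis $\sup_{0\le t\le T}\mathrsfs{N}(t)\le M$ into a bound on the $\Gamma$-norm of $\mathcal{S}(t,\cdot)$. By Lemma \ref{lem:normequivalence}, $\|\mathcal{S}(t,\cdot)\|_{\Gamma}\le 2\mathrsfs{N}(t)\le 2M$ uniformly on $[0,T]$. The second hypothesis (read as the distance from $\mathrsfs{B}(t)$ to $\mathcal{U}^c$ being bounded below by $\eta$ on $[0,T]$) says $\mathrsfs{B}(t)\in\mathcal{U}_\eta$ throughout. Setting $M'\defas 2M+1$ and $\eta'\defas \eta$, this means the deterministic curve $\mathcal{S}$ satisfies $\|\mathcal{S}(t,\cdot)\|_\Gamma<M'$ and $\inf_{V\in\mathcal{U}^c}\|\mathrsfs{B}(t)-V\|>\eta'$ for every $t\in[0,T]$.

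Next, apply Theorem \ref{thm:main_concentration_S} with this $M'$ and $\eta'$ to the pair $(\mathscr{S}_1,\mathscr{S}_2)=(S(\HSGD_{\cdot},\cdot),\mathcal{S})$. Since $\mathcal{S}$ never violates either clause of the definition of $\Theta_{M',\eta'}^{\mathscr{S}_1,\mathscr{S}_2}$ on $[0,T]$, the $i=2$ infimum in each outer $\max$ is $\ge T$, and so the $\max$ itself is $\ge T$; consequently $\Theta_{M',\eta'}^{\mathscr{S}_1,\mathscr{S}_2}\ge T$ deterministically. The theorem then yields
\[
\sup_{0\le t\le T}\bigl\|S(\HSGD_t,\cdot)-\mathcal{S}(t,\cdot)\bigr\|_\Gamma\le d^{-\varepsilon}
\]
with overwhelming probability. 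An identical argument with $(\mathscr{S}_1,\mathscr{S}_2)=(S(W_{\lfloor\cdot d\rfloor},\cdot),\mathcal{S})$ yields the same bound with SGD in place of HSGD, establishing \eqref{eq:something_1_1}. Finally, \eqref{eq:something_2_1} is immediate by the triangle inequality on $\|\cdot\|_\Gamma$, union-bounding the two overwhelming-probability events.

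There is no substantive obstacle here; the whole content of the corollary is the observation that, because $\Theta$ was designed with the $\max$ (rather than $\min$) over the two processes, a purely deterministic regularity assertion about $\mathcal{S}$ alone suffices to remove the stopping time from the conclusion of Theorem \ref{thm:main_concentration_S}. The only care needed is in matching constants so that the prescribed $M$ and $\eta$ in the hypothesis genuinely control $\|\mathcal{S}(t,\cdot)\|_\Gamma$ and $\mathrsfs{B}(t)$ through the dictionary provided by Lemma \ref{lem:normequivalence} and the definition of $\mathrsfs{B}(t)$ as a contour integral of $z\mathcal{S}(t,z)$.
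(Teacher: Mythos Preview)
Your proposal is correct and follows essentially the same route as the paper: translate the $\mathrsfs{N}$-bound to a $\Gamma$-norm bound via Lemma~\ref{lem:normequivalence}, then exploit the $\max$ in the definition of $\Theta_{M',\eta'}$ so that controlling the deterministic curve $\mathcal{S}$ alone forces $\Theta\ge T$, and finally invoke Theorem~\ref{thm:main_concentration_S}. The paper routes this through an auxiliary stopping time $\tilde\Theta$ before landing on $\Theta_{C\cdot M,\eta}$, but your direct argument with $M'=2M+1$ is equivalent and arguably cleaner.
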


\begin{proof}  Define the following stopping time similar to $\Theta_{M,\eta}$ in \eqref{eq:stopping_time_thm} by 
\begin{align*}
\tilde{\Theta}_{M, \eta}^{\mathscr{S}_1, \mathscr{S}_2} 
& 
\defas \max \big \{ \inf \{t \ge 0 \, : \, \|\frac{-1}{2\pi i} \oint_{\Gamma} \mathscr{S}_i(t,\cdot) \, \dif z\|> M \, \} \, : \, i = 1,2\big \}
\\
&
\quad \wedge \max \{ \inf \{ t \ge 0 \, : \, \sup_{V \in \mathcal{U}^c} \|\mathscr{B}(t,\mathscr{S}_i)-V\| \le \eta \} \, : \, i = 1,2\}.
\end{align*}
Here we think of $\mathscr{S}_1$ as either SGD or homogenized SGD and $\mathscr{S}_2 = \mathcal{S}$. 
The idea is that $\Theta_{M,\eta}$ (see \eqref{eq:stopping_time_thm}) and $\tilde{\Theta}_{M,\eta}$ are related by our assumptions. By Lemma~\ref{lem:normequivalence}, there exists positive constants $c, C > 0$ such that $c \cdot \mathrsfs{N}(t) \ \le \|\mathcal{S}(t,\cdot)\|_{\Gamma} \le C \cdot \mathrsfs{N}(t)$. Consequently, this translates into 
\begin{align*}
 \{ t \ge 0 \, : \, \|\mathcal{S}(t, \cdot)\|_{\Gamma} > C \cdot M \} \subset \{t \ge 0 \, : \, \mathrsfs{N}(t) > M\} 
\end{align*}
and so the infimum of the right-hand-side is smaller than the infimum of the left-hand-side. Moreover, we have by assumption that 
\[
T \le \inf \{t \ge 0 \, : \, \mathrsfs{N}(t) > M \} \quad w.o.p. 
\]
Similarly 
we have that 
\[
T \le \inf \{ t \ge 0 \, : \, \sup_{V \in \mathcal{U}^c} \|\mathscr{B}(t, \mathcal{S}(t,\cdot))-V\| \le \eta\} \quad w.o.p. 
\]
Thus, we have that 
\[
T \le \tilde{\Theta}_{M, \eta}^{\mathcal{S}(t, \cdot), \mathscr{S}_2} \le \Theta_{C \cdot M,\eta}^{\mathcal{S}(t, \cdot), \mathscr{S}_2} \quad w.o.p, 
\]
where $\mathscr{S}_2$ is either $S(W_{td},\cdot)$ or $S(\HSGD_t, \cdot)$. By Theorem~\ref{thm:main_concentration_S}, we immediately get the result \eqref{eq:something_1_1}. A simple triangle inequality gives the result in \eqref{eq:something_2_1}. 
\end{proof}

\begin{remark} One can replace $(\mathrsfs{N}(t), \mathrsfs{B}(t))$ in \eqref{eq:concentration_condition}with $(\|W_{td}\|^2, B(W_{td}))$ or $(\|\HSGD_t\|^2, B(\HSGD_t))$ and the conclusion of Corollary~\ref{cor:bounded_iterates} would still hold. 

\end{remark}

In Section~\ref{susec:SGD_optimality}, we gave conditions on the risk function and on the learning rate for which the condition in \eqref{eq:concentration_condition} hold. Lastly, we make one final connection to Theorem~\ref{thm:learning_curves} and Proposition~\ref{prop:HSGD_median dynamics}, proving the result below.  

\begin{proof}[Proof of Theorem~\ref{thm:learning_curves} and Proposition~\ref{prop:HSGD_median dynamics}] The result immediately follows from Theorem~\ref{thm:main_concentration_S} and Corollary~\ref{cor:bounded_iterates} (and the remark following it) after noting that 
\begin{align*}
B(W_{td})
&
= 
\frac{-1}{2 \pi i} \oint_{\Gamma} zS(W_{td}, z) \, \dif z, \, \,
\ip{\HSGD_t^{\otimes 2}, K}_{\mathcal{A}^{\otimes 2}}
= 
\frac{-1}{2 \pi i} \oint_{\Gamma} zS(\HSGD_{td}, z) \, \dif z, 
\, \, 
\text{and} \, \, 
\mathrsfs{B}(t) 
=
\frac{-1}{2 \pi i} \oint_{\Gamma} z \mathcal{S}(t, \cdot) \, \dif z
\end{align*}
and Lipschitzness of the integral, that is, 
\[
\bigg \| \oint_{\Gamma} z \mathscr{S}_1(t,\cdot) \dif z- \oint_{\Gamma} z \mathscr{S}_2(t, \cdot) \, \dif z \bigg \| \le C \cdot \|\mathscr{S}_1(t, \cdot)- \mathscr{S}_2(t,\cdot) \|_{\Gamma}, \quad \text{for some positive $C > 0$.} 
\]
\end{proof}
 
\subsection{Concentration result for any statistic}

In this section, we show an extension of  Theorem~\ref{thm:main_concentration_S} to any statistic $\varphi \, : \, \mathcal{A} \otimes \mathcal{O} \to \mathbb{R}$ satisfying Assumption~\ref{assumption:statistic}. Indeed, this result, Theorem~\ref{thm: concentration_statistic}, a reformulation of Theorem~\ref{Thm:SGD_HSGD_convergence}, applies to the risk curve, $\mathcal{R}_{\delta}(X)$ as well as to a host of other generalization metrics. The result is that SGD under any statistic concentrates around a deterministic function. 

In this section, the statistics $\varphi \, : \mathcal{A} \otimes \mathcal{O} \to \mathbb{R}$ of interest satisfy a composite structure
\begin{equation*}
\begin{gathered}
\varphi(X) = 
g( \ip{W \otimes W,q(K)}_{\mathcal{A}^{\otimes 2}}  ) 
\end{gathered}
\end{equation*}
where ${g} \, : \, \mathcal{O}^+ \otimes \mathcal{O}^+ \to \mathbb{R}$ is $\alpha$-pseudo-Lipschitz on $\mathcal{U}$
and $q$ is a polynomial (see Assumption~\ref{assumption:statistic}). The deterministic equivalence of this statistic for $\varphi(\WHSGD_t)$ and $\varphi(X_{ td })$ is precisely
\begin{equation} \label{eq:statistic_deterministic_equivalence}
\phi(t) \defas g \left ( \frac{-1}{2 \pi i} \oint_{\Gamma} q(z) \mathcal{S}(t,z) \, \dif z \right ), \quad \text{where $\mathcal{S}(t,z)$ solves \eqref{eq:ODE_resolvent_2}.}
\end{equation}
Thus we state our concentration theorem for $\varphi(\WHSGD_t)$ and $\varphi(X_{ td })$. 


 
\begin{theorem}[Concentration of any statistic] \label{thm: concentration_statistic} Suppose the Assumptions of Theorem~\ref{thm:main_concentration_S} hold. Suppose, in addition, the statistic satisfies a composite structure, 
\begin{equation*}
\begin{gathered}
\varphi(X) = 
g( \ip{W \otimes W,q(K)}_{\mathcal{A}^{\otimes 2}}  ) 
\end{gathered}
\end{equation*}
where ${g} \, : \, \mathcal{O}^+ \otimes \mathcal{O}^+ \to \mathbb{R}$ is $\alpha$-pseudo-Lipschitz on $\mathcal{U}$
and $q$ is a polynomial (see Assumption~\ref{assumption:statistic}). Then there is an $\varepsilon > 0$ so that for any $T,M > 0$ and $d$ sufficiently large, with overwhelming probability
  \begin{equation}
  \begin{gathered}
    \sup_{0 \leq t \leq T \wedge \Theta_M^{S(W,\cdot), \mathcal{S}} } \| 
    \varphi(W_{ td })
    - 
    \phi(t)
    \|_{\Gamma} \leq d^{-\varepsilon}, \quad \sup_{0 \leq t \leq T \wedge \Theta_M^{S(\HSGD,\cdot), \mathcal{S}} } \| 
    \varphi(\HSGD_t)
    - 
    \phi(t)
    \|_{\Gamma} \leq d^{-\varepsilon},
    \\
       \text{and} \qquad \sup_{0 \leq t \leq T \wedge \Theta_M^{S(W,\cdot), S(\HSGD,\cdot)}} \| 
    \varphi(W_{td})
    - 
    \varphi(\HSGD_t)
    \|_{\Gamma} \leq d^{-\varepsilon},
    \end{gathered}
  \end{equation}
where $\phi$ is defined in \eqref{eq:statistic_deterministic_equivalence} and where the stopping time $\Theta_M^{\mathscr{S}_1, \mathscr{S}_2}$ is defined in \eqref{eq:stopping_time_thm}. 
\end{theorem}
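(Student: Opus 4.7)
The plan is to reduce this statistic-level concentration to the resolvent-level concentration already established in Theorem~\ref{thm:main_concentration_S}, via a Cauchy-type contour integral representation. The key identity is that, since $\Gamma$ encloses the spectrum of $K$ and $q$ is a polynomial, functional calculus gives
\[
q(K) = \frac{-1}{2\pi i}\oint_\Gamma q(z) R(z;K)\,\dif z,
\]
so contracting both sides with $W\otimes W$ yields
\[
\langle W\otimes W, q(K)\rangle_{\mathcal{A}^{\otimes 2}} = \frac{-1}{2\pi i}\oint_\Gamma q(z) S(W,z)\,\dif z.
\]
Setting $Q(W)$ equal to the right-hand side and $\mathscr{Q}(t) \defas \tfrac{-1}{2\pi i}\oint_\Gamma q(z)\mathcal{S}(t,z)\,\dif z$, we then have $\varphi(X) = g(Q(W))$ and $\phi(t) = g(\mathscr{Q}(t))$, exactly as in the setup of Proposition~\ref{prop:stability_varphi}.

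Next, I would apply the $\alpha$-pseudo-Lipschitz property of $g$ (Assumption~\ref{assumption:statistic}) to get, for example in the SGD case,
\[
\big|g(Q(W_{td})) - g(\mathscr{Q}(t))\big| \le L(g)\,\|Q(W_{td}) - \mathscr{Q}(t)\|\,\big(1 + \|Q(W_{td})\|^\alpha + \|\mathscr{Q}(t)\|^\alpha\big),
\]
and bound the difference factor by a contour integral,
\[
\|Q(W_{td}) - \mathscr{Q}(t)\| \le \tfrac{|\Gamma|}{2\pi}\,\|q\|_\Gamma\,\|S(W_{td},\cdot) - \mathcal{S}(t,\cdot)\|_\Gamma,
\]
with an analogous bound in the homogenized SGD case. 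The $\|Q\|$ and $\|\mathscr{Q}\|$ terms are controlled by the stopping time $\Theta_M$: on $\{t \le \Theta_M\}$ the $\Gamma$-norms of $S(W_{td},\cdot)$, $S(\HSGD_t,\cdot)$ and $\mathcal{S}(t,\cdot)$ remain bounded by $M$ (using Lemma~\ref{lem:normequivalence} to translate between $\|\cdot\|_\Gamma$ and the trace/norm quantities appearing in the definition of $\Theta_M$), so both $\|Q\|$ and $\|\mathscr{Q}\|$ are bounded by a constant $C(M,\|K\|_\sigma,\|q\|_\Gamma)$.

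Finally I would invoke Theorem~\ref{thm:main_concentration_S} to obtain the resolvent-level bound $\sup_{0 \le t \le T \wedge \Theta_M}\|S(W_{td},\cdot) - \mathcal{S}(t,\cdot)\|_\Gamma \le d^{-\tilde\varepsilon}$ with overwhelming probability (and the analogous inequality for homogenized SGD), then combine it with the above two displays to get $\sup_{0 \le t \le T \wedge \Theta_M}|\varphi(W_{td}) - \phi(t)| \le C\cdot d^{-\tilde\varepsilon}$ with a constant $C$ depending only on $M,T,\|K\|_\sigma,\|q\|_\Gamma,L(g),\alpha$. The third inequality (between $\varphi(W_{td})$ and $\varphi(\HSGD_t)$) follows by either a triangle inequality on the first two, or by applying the same argument directly to the third conclusion of Theorem~\ref{thm:main_concentration_S}. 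Absorbing the $d$-independent constant $C$ into $d^{-\tilde\varepsilon}$ by taking any strictly smaller $\varepsilon \in (0,\tilde\varepsilon)$ yields the stated bound $d^{-\varepsilon}$.

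There is no real obstacle here: the heavy lifting was done in Theorem~\ref{thm:main_concentration_S} (and the $(\varepsilon,M,T)$-approximate solution machinery underlying it), and the stability-style manipulation appearing in Proposition~\ref{prop:stability_varphi} transfers unchanged. The main bookkeeping subtlety is ensuring the stopping time $\Theta_M$ in the hypothesis of the present theorem is at least as restrictive as the one needed for Theorem~\ref{thm:main_concentration_S} after translating $\Gamma$-norm conditions via Lemma~\ref{lem:normequivalence}; this is straightforward because both quantities in the definition of $\Theta_M$ (the $\Gamma$-norm of $\mathscr{S}_i$ and the distance of $\mathscr{B}$ from $\mathcal{U}^c$) appear in both stopping times.
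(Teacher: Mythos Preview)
Your proposal is correct and follows essentially the same approach as the paper. The only packaging difference is that the paper invokes Propositions~\ref{prop:homogenized_SGD_approx_solution} and~\ref{prop:SGD_approx_solution} (approximate-solution status of $S(\HSGD_t,\cdot)$ and $S(W_{td},\cdot)$) and then applies Proposition~\ref{prop:stability_varphi} directly, whereas you invoke the already-assembled Theorem~\ref{thm:main_concentration_S} and reproduce the pseudo-Lipschitz/contour-integral estimate of Proposition~\ref{prop:stability_varphi} by hand; the content is identical. One small clarification: on $\{t\le\Theta_M\}$ only \emph{one} of the two $\Gamma$-norms is a priori bounded by $M$ (since $\Theta_M$ is a maximum), so to bound $\|Q\|$ for the other process you need the $d^{-\tilde\varepsilon}$ closeness from Theorem~\ref{thm:main_concentration_S} first---exactly the ``$\tau_{M+1,0}\to\Theta_{M,\eta}$'' step the paper refers back to at the end of its proof, and which you correctly flag as the bookkeeping subtlety.
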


\begin{proof} As in the proof of Theorem~\ref{thm:main_concentration_S}, we define the stopping time $\tau_{M+1, \eta}^{\mathscr{S}_1, \mathscr{S}_2}$ as in \eqref{eq:tau_M_twice} and suppress the notation by setting  $\tau_{M+1}^{\mathscr{S}_1, \mathscr{S}_2} = \tau_{M}$. We will consider the case when $\mathscr{S}_1(t,\cdot) = S(\HSGD_t,\cdot)$ and $\mathscr{S}_2(t,\cdot) = S(W_{ td }, \cdot)$. The other cases will follow by analogous proof. 

By Proposition~\ref{prop:homogenized_SGD_approx_solution}, we have that $S(\HSGD_t, z)$ is an $(d^{-\tilde{\varepsilon}}, M+1, T)$-approximate solution with overwhelming probability. Moreover, by Proposition~\ref{prop:SGD_approx_solution}, the function $\mathcal{S}(W_{ td },z)$ is an $(d^{-\tilde{\varepsilon}}, M+1, T)$-approximate solution. (For the deterministic function $\mathcal{S}$, it is a $(0, M+1, T)$-approximate solution by definition.) We observe that 
\[
\frac{-1}{2 \pi i} \oint_{\Gamma} q(z) S(\HSGD_t, z) \, \dif z = q(\HSGD_t) \quad \text{and} \quad  \frac{-1}{2 \pi i} \oint_{\Gamma} q(z) S(W_{td }, z) \, \dif z = q(W_{td}).
\]
Now we apply Proposition~\ref{prop:stability_varphi} to conclude that there exists a $\varepsilon > 0 $ such that
\begin{equation} \label{eq:A_event_1}
\sup_{0 \le t \le T \wedge \tau_{M+1,0}} |\varphi(\HSGD_t) - \varphi(W_{ td}) |_{\Gamma} \le d^{-\varepsilon}, \quad w.o.p. 
\end{equation}
Using the same argument as in Theorem~\ref{thm:main_concentration_S}, we can remove the stopping time $\tau_{{M+1},0}$ and replace it with $\Theta_{M, 0}$ for sufficiently large $d$. 
\end{proof}

Lastly we formulate an immediate corollary which follows immediately from the proofs of Theorem~\ref{thm: concentration_statistic} and Corollary~\ref{cor:bounded_iterates}. 

\begin{corollary} \label{cor:statistic_bounded}
    Suppose the Assumptions of Theorem~\ref{thm: concentration_statistic} and Corollary~\ref{cor:bounded_iterates} hold. Then there is an $\varepsilon > 0$ so that for $d$ sufficiently large, with overwhelming probability, 
  \begin{equation}
  \begin{gathered} \label{eq:something_10}
    \sup_{0 \leq t \leq T } | 
    \varphi(\WHSGD_t)
    - 
    \phi(t)
    | \leq d^{-\varepsilon} \quad 
       \text{and} \qquad \sup_{0 \leq t \leq T} | 
    \varphi(X_{td})
    - 
    \phi(t)
    | \leq d^{-\varepsilon}.
    \end{gathered}
  \end{equation}
  Moreover, by a simple triangle inequality, one has
  \begin{equation} \label{eq:something_20}
  \sup_{0 \le t \le T} | \varphi(X_{td}) - \varphi(\WHSGD_t) | \le 2d^{-\varepsilon}. 
  \end{equation}
\end{corollary}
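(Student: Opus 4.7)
The proof strategy is to combine Theorem~\ref{thm: concentration_statistic} with the stopping-time removal argument from Corollary~\ref{cor:bounded_iterates}, but carried out at the level of the scalar statistic $\varphi$ rather than the $\Gamma$-norm of the resolvent. Since the statement in the paper explicitly says the result follows immediately from the proofs of those two earlier results, the plan is really one of assembly rather than substantial new argument.

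First I would re-invoke the step in the proof of Corollary~\ref{cor:bounded_iterates} that turns the hypothesis~\eqref{eq:concentration_condition} into a lower bound on the stopping time~\eqref{eq:stopping_time_thm}. The assumption gives, with overwhelming probability, $\sup_{0\le t\le T}\mathrsfs{N}(t) \le M$ and $\sup_{0 \le t \le T} \sup_{V \in \mathcal{U}^c} \|\mathrsfs{B}(t) - V\| > \eta$. Via Lemma~\ref{lem:normequivalence}, the first bound transfers to $\sup_{0 \le t \le T} \|\mathcal{S}(t, \cdot)\|_\Gamma \le CM$ for a universal constant $C$ depending only on $\|K\|_\sigma$ and $|\mathcal{O}^+|$. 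Because $\Theta_{M,\eta}^{\mathscr{S}_1, \mathscr{S}_2}$ in~\eqref{eq:stopping_time_thm} is defined via a \emph{maximum} (not a minimum) over the two candidate processes, it suffices for one of the two inputs to remain in the good set. Taking $\mathscr{S}_2 = \mathcal{S}$ and $\mathscr{S}_1 \in \{S(W_{td},\cdot),\, S(\HSGD_t, \cdot)\}$, we conclude $T \le \Theta_{CM, \eta}^{\mathscr{S}_1, \mathcal{S}}$ with overwhelming probability.

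Next I would apply Theorem~\ref{thm: concentration_statistic} at this threshold. On the overwhelming-probability event just identified, $T \wedge \Theta_{CM, \eta}^{\mathscr{S}_1, \mathcal{S}} = T$, so the supremum over $[0, T \wedge \Theta]$ coincides with the supremum over $[0, T]$. Intersecting this event with the overwhelming-probability event produced by Theorem~\ref{thm: concentration_statistic} (which still has overwhelming probability, as such events are closed under finite intersection) yields both
\[
\sup_{0 \le t \le T} |\varphi(\WHSGD_t) - \phi(t)| \le d^{-\varepsilon} \quad \text{and} \quad \sup_{0 \le t \le T} |\varphi(X_{td}) - \phi(t)| \le d^{-\varepsilon},
\]
which is~\eqref{eq:something_10}. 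The final bound~\eqref{eq:something_20} then comes from a single application of the triangle inequality inside the supremum.

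There is no substantive obstacle, since both the underlying statistic-level concentration result (Theorem~\ref{thm: concentration_statistic}) and the stopping-time removal mechanism (used in Corollary~\ref{cor:bounded_iterates}) are already in place. The only bookkeeping consists of replacing $M$ by $CM$ where $C$ is the constant from Lemma~\ref{lem:normequivalence}, and verifying that the maximum structure of $\Theta$ lets a good bound on $\mathcal{S}$ alone suffice to drive $\Theta \ge T$ — precisely the mechanism already exploited for Corollary~\ref{cor:bounded_iterates}.
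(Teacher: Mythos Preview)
Your proposal is correct and matches the paper's own approach: the paper states that the corollary follows immediately from the proofs of Theorem~\ref{thm: concentration_statistic} and Corollary~\ref{cor:bounded_iterates}, and your assembly---using Lemma~\ref{lem:normequivalence} to show $T \le \Theta_{CM,\eta}$ via the maximum structure of $\Theta$, then invoking Theorem~\ref{thm: concentration_statistic} on $[0,T]$---is exactly that combination.
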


\begin{remark} As in the remark after Corollary~\ref{cor:bounded_iterates}, one can replace $(\mathrsfs{N}(t), \mathrsfs{B}(t))$ in \eqref{eq:concentration_condition}with $(\|W_{td}\|^2, B(W_{td}))$ or $(\|\HSGD_t\|^2, B(\HSGD_t))$.

\end{remark}

The proof of Theorem~\ref{Thm:SGD_HSGD_convergence} immediately follows from Corollary~\ref{cor:statistic_bounded} and the remark that follows it. 
 
\section{SGD and homogenized SGD are approximate solutions} \label{sec:SGD_homogenized_SGD}
In order to compare SGD and homogenized SGD, we use a version of the martingale method in diffusion approximation (see \cite{ethier86markov}). In effect, we show that any statistic $\varphi(X_k)$ applied to SGD \eqref{eq:SGD} is nearly identical to the same statistic under homogenized SGD. The main argument hinges on the dynamics of one important statistic,
defined as,
\begin{equation}
S(W,z) = \ip{W \otimes W, R(z;K)}_{\mathcal{A}^{\otimes 2}}, 
\end{equation}
which plays an overly significant role in our analysis and the function
\[
B(W) = \ip{W \otimes W, K}_{\mathcal{A}^{\otimes 2}}. 
\]
Here $W = X \oplus X^{\star}$ and $R(z;K) = (K-z I_d)^{-1}$ for $z \in \mathbb{C}$ is the resolvent of $K$. We first show that both homogenized SGD and SGD on $S(\cdot, z)$ are $(\e, M, T)$-approximate solutions as defined in Definition~\ref{def:integro_differential_equation}. Then by Proposition \ref{prop:stability}, it is immediately implied that both homogenized SGD and SGD on $S(\cdot, z)$ are uniformly close. Finally, Proposition \ref{prop:stability_varphi}, establishes that the same hold for any statistics $\varphi(X)$ satisfying Assumption \ref{assumption:statistic}. 
In order to show that both homogenized SGD and SGD on $S(\cdot, z)$ are $(\e, M, T)$-approximate solutions, we perform a Doob's decomposition for both homogenized SGD and SGD and then show that both martingale terms are small.

For the comparison between homogenized SGD and SGD to hold, we introduce a rescaling of time. We relate the $k$-th iteration of SGD to the continuous time parameter $t$ in homogenized SGD through the relationship $k = \lfloor td \rfloor $. Thus, when $t = 1$, SGD has done exactly $d$ updates. Since the parameter $t$ is continuous and the iteration counter $k$ (integer) discrete, to simplify the discussion below, we \textit{extend} $k$ to continuous values through the floor operation, $X_k \defas X_{\lfloor k \rfloor }$. Using the continuous parameter $t$, the iterates are related by
\[ X_{td} = X_{\lfloor td \rfloor} \, \, \text{(SGD)} \, \quad \text{and} \quad \WHSGD_t \, \, \text{(HSGD)}.\]
When $td$ is an integer, we will show that homogenized SGD and SGD agree on statistics. For non-integer values, the two will agree up to a term that vanishes like $1/d$. Throughout the paper, we will generally work with the continuous time parameter. 

Our first argument is a net argument showing that we do not need to work with every $z$, but only polynomially many in $d$. For this, recall the contour $\Gamma = \{ z \, : \, |z| = \{2 \|K\|_{\sigma}, 1\}\}$. For a fixed $\delta > 0$, we say that $\Gamma_{\delta}$ is a \textit{$d^{-\delta}$-mesh of $\Gamma$} if $\Gamma_{\delta} \subset \Gamma$ and for every $z \in \Gamma$ there exists a $\bar{z} \in \Gamma_{\delta}$ such that $|z-\bar{z}| < d^{-\delta}$. We can achieve this with $\Gamma_{\delta}$ having cardinality, $|\Gamma_{\delta}| = C(|\Gamma|) d^{\delta}$. 

\begin{lemma}[Net argument] \label{lem:net_argument} Fix $T, M > 0$ and let $\delta > 0$. Suppose $\Gamma_{\delta}$ is a $d^{-\delta}$ mesh of $\Gamma$ with $|\Gamma_{\delta}| = C \cdot d^{\delta}$ and positive $C > 0$. Let the function $S(t,z) = S(W_{td}, z)$ or $S(\HSGD_t, z)$ satisfy
\begin{equation}
\label{eq:net_argument_gamma_delta}
\sup_{0 \le t \le (\hat{\tau}_M \wedge T)} \| S(t, \cdot) - S(0, \cdot) - \int_0^t \mathscr{F}(\cdot, S(s, \cdot) ) \, \dif s \|_{\Gamma_\delta} \le \varepsilon
\end{equation}
with $\hat{\tau}_M = \inf \{ t \ge 0 \, : \, \|S(t, \cdot)\|_{\Gamma} > M\}$. Then $S$ is a $(\varepsilon + C(M,T, \|K\|_{\sigma})d^{-\delta}, M, T)$-approximate solution to the integro-differential equation, that is, 
\[
\sup_{0 \le t \le (\hat{\tau}_M \wedge T)} \|S(t, \cdot) - S(0, \cdot) - \int_0^t \mathscr{F}(\cdot, S(s, \cdot) ) \, \dif s \|_{\Gamma} \le \varepsilon + C \cdot d^{-\delta},
\]
where $C = C(M,T, \|K\|_{\sigma}, \bar{\gamma}, L(I), L(h), |\mathcal{O}|)$ is a positive constant. 
\end{lemma}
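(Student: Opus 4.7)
The plan is to exploit the fact that both the statistic $z \mapsto S(t,z)$ and the vector field $z \mapsto \mathscr{F}(z, S(s,\cdot))$ are Lipschitz in $z$ on a uniform neighborhood of the contour $\Gamma$, with Lipschitz constants that can be controlled on the stopped interval $[0, \hat{\tau}_M \wedge T]$ by quantities depending only on $M, T, \|K\|_{\sigma}$ and the other allowed parameters. For $z \mapsto S(W, z) = \ip{W^{\otimes 2}, R(z;K)}_{\mathcal{A}^{\otimes 2}}$, the construction of $\Gamma$ guarantees $\mathrm{dist}(\Gamma, \mathrm{spec}(K)) \geq \tfrac12$, so the resolvent identity $R(z;K) - R(\bar z;K) = (z - \bar z) R(z;K) R(\bar z;K)$ yields $\|R(z;K) - R(\bar z;K)\|_{\sigma} \leq 4 |z - \bar z|$ and hence $|S(W,z) - S(W,\bar z)| \leq 4\|W\|^2 |z - \bar z|$. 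On $t \leq \hat{\tau}_M \wedge T$, Lemma \ref{lem:normequivalence} gives $\|W\|^2 \leq C(\|K\|_{\sigma}, |\mathcal{O}^+|) \cdot M$, producing a $z$-Lipschitz constant $L_S = L_S(M, \|K\|_{\sigma}, |\mathcal{O}^+|)$ for $S(t,\cdot)$, uniformly over the stopped interval and over the choice $W = W_{\lfloor td\rfloor}$ or $W = \HSGD_t$.

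Next I would bound the $z$-Lipschitz constant of $\mathscr{F}(\cdot, S(s,\cdot))$. Inspecting \eqref{eq:F}, the $z$-dependence enters only through the trace term $\tfrac{\gamma_s^2}{d}\tr(KR(z;K)) I(\mathscr{B}(s))$ and through the multiplicative term $\gamma_s\bigl(S(s,z)(2zH(\mathscr{B}(s)) + \delta D) + (2zH^T(\mathscr{B}(s)) + \delta D)S(s,z)\bigr)$; the remaining pieces depend on $z$ only via the contour integrals $\mathscr{I}(s)$ and $\mathscr{B}(s)$ and are therefore $z$-constant. For the trace term, $|\tr(K(R(z;K) - R(\bar z;K)))| \leq d\|K\|_{\sigma} \cdot 4 |z - \bar z|$, which after division by $d$ is uniformly controlled; boundedness of $\|I(\mathscr{B}(s))\|$ follows from Assumption \ref{assumption:E_loss_pseudo_Lip} together with $\|\mathscr{B}(s)\| \leq C(\|K\|_{\sigma}) M$. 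For the multiplicative term, the decomposition
\[
S(s,z)\cdot 2zH - S(s,\bar z)\cdot 2\bar z H = (S(s,z) - S(s,\bar z))\cdot 2zH + S(s,\bar z)\cdot 2(z - \bar z) H,
\]
combined with the $L_S$-Lipschitz bound on $S(s,\cdot)$, the boundedness $|z| \leq \max\{1, 2\|K\|_{\sigma}\}$ on $\Gamma$, the bound $\|S(s,\cdot)\|_{\Gamma} \leq M$, and the bound on $\|H(\mathscr{B}(s))\|$ from Assumption \ref{assumption:unbiased}, produces a Lipschitz constant $L_{\mathscr{F}} = L_{\mathscr{F}}(M, \|K\|_{\sigma}, \bar{\gamma}, L(h), L(I), |\mathcal{O}|)$ of the desired form; the $H^T$ and $\delta D$ contributions are handled identically.

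Combining, for any $z \in \Gamma$ I would pick $\bar z \in \Gamma_{\delta}$ with $|z - \bar z| < d^{-\delta}$ and apply the triangle inequality to obtain
\[
\big\|S(t,z) - S(0,z) - \textstyle\int_0^t \mathscr{F}(z, S(s,\cdot))\,\dif s\big\| \leq \varepsilon + (2L_S + T \cdot L_{\mathscr{F}}) \cdot d^{-\delta},
\]
where the $\varepsilon$ bounds the analogous quantity at $\bar z \in \Gamma_{\delta}$ by hypothesis \eqref{eq:net_argument_gamma_delta} and the remaining two terms come from the Lipschitz-in-$z$ bounds above. Taking suprema over $z \in \Gamma$ and $t \in [0, \hat{\tau}_M \wedge T]$ completes the proof. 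There is no real obstacle here; the only care required is verifying that each Lipschitz-in-$z$ estimate produces a constant polynomial in the allowed parameters and uniform in $d$, which is mechanical given the pseudo-Lipschitz hypotheses on $h$ and $I$ and essentially parallels the bookkeeping already carried out in the proof of Proposition \ref{prop:stability}.
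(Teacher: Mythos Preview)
Your proposal is correct and follows essentially the same approach as the paper: both arguments exploit the resolvent identity to get a Lipschitz-in-$z$ bound on $S(t,\cdot)$, observe that the $z$-dependence in $\mathscr{F}$ enters only through the trace term $\tfrac{1}{d}\tr(KR(z;K))$ and the multiplicative factor $zS(s,z)$ (the contour-integral pieces being $z$-constant), bound those using the stopping time together with the pseudo-Lipschitz assumptions on $h$ and $I$, and conclude by triangle inequality. The paper's organization differs only cosmetically, isolating the estimates \eqref{eq:S_3}, \eqref{eq:S_2}, \eqref{eq:trace_net} before assembling them in \eqref{eq: S_1}.
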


\begin{proof} We consider only $S(t,z) = S(\HSGD_t, z)$ as the same argument will also hold for SGD. We also will always work with the stopped process, that is, $S(t \wedge \hat{\tau}_M, z)$, where $\hat{\tau}_M = \inf \{ t \ge 0 \, : \, \|S(t,z)\|_{\Gamma} \ge M\}$. To simplify the notation, we suppress the $\hat{\tau}_M$ and use $S(t,z)$. First, we note for any contour $\tilde{\Gamma}$ containing the spectrum of $K$, 
\begin{equation}
\begin{aligned} \label{eq:B_W_identity}
B(t) = \frac{-1}{2\pi i} \oint_{\tilde{\Gamma}} zS(t,z) \, \dif z = \ip{\HSGD_t^{\otimes 2}, K}_{\mathcal{A}^{\otimes 2}} \quad \text{and} \quad \frac{-1}{2\pi i} \oint_{\tilde{\Gamma}} S(t,z) \, \dif z = \ip{\HSGD_t^{\otimes 2}, I_{\mathcal{A}}}_{\mathcal{A}^{\otimes 2}}. 
\end{aligned}
\end{equation}
In this regard, these two quantities do not dependent on the specific contour. 

Next we state some resolvent identities. One such resolvent identity gives
\begin{equation} \label{eq:resolvent_identity}
\|R(z; K) - R(\bar{z}; K)\|_{\sigma} \le |z-\bar{z}| \|R(z;K) R(\bar{z}; K)\|_{\sigma}, \quad \text{for any $z, \bar{z} \in \Gamma$.}
\end{equation}
Furthermore, by Neumann series, $(K-zI_{\mathcal{A}})^{-1} = -1/z (I_{\mathcal{A}} - 1/z K)^{-1} = -\tfrac{1}{z} \sum_{j=0}^\infty ( \tfrac{1}{z} K)^j$. So, using $|z| = \max \{ 1, 2 \|K\|_{\sigma}\}$, we immediately get 
\begin{equation} \label{eq:resolvent_bound_1}
\displaystyle \sup_{z \in \Gamma} \|R(\cdot;K)\|_{\sigma} \le 2.
\end{equation}
These bounds will be useful later in the proof. 

Next, with these bounds, we can get estimates on quantities involving $S(t, \cdot)$ where $t$ is fixed and $z$ varies. Fix $z \in \Gamma$ and let $\bar{z} \in \Gamma_{\delta}$ be such that $|z-\bar{z}| < d^{-\delta}$. Then, using the resolvent identity \eqref{eq:resolvent_identity} (and the stopping time $\hat{\tau}_M$)
\begin{equation}
\begin{aligned} \label{eq:S_3}
\|S(t,z) - S(t,\bar{z})\| 
&
\le 
|z-\bar{z}| \|\HSGD_t\|^2 \|R(z; K)\|_{\sigma} \|R(\bar{z};K)\|_{\sigma}\\
& 
\le 
C \cdot d^{-\delta} \cdot \left \| \frac{-1}{2 \pi i} \oint_{\Gamma} S(t,z) \, \dif z \right \| \\
&
\le
C  \cdot d^{-\delta}  \left (\oint_{\Gamma} \|S(t,z)\| \dif |z| \right )
\\
&
\le C(\|K\|_{\sigma}) \cdot d^{-\delta} \cdot M,
\end{aligned}
\end{equation}
where we used the identity in \eqref{eq:B_W_identity} and the boundedness of the contour $|\Gamma|$ in the last inequality. Similarly, using the same identity for $\|\HSGD_t\|$ \eqref{eq:B_W_identity} as well as \eqref{eq:resolvent_bound_1}, for any $z \in \Gamma$, 
\[
\|S(t,z)\| \le \|\HSGD_t\|^2 \|R(z; K)\|_{\sigma} \le C(\|K\|_{\sigma}) \cdot M.  
\]
Thus, since $z, \bar{z} \in \Gamma$ and the contour $\Gamma$ is bounded,
\begin{equation}
\label{eq:S_2}
\|zS(t,z) - \bar{z} S(t,\bar{z})\| \le C(\|K\|_{\sigma}) \cdot M \cdot d^{-\delta}. 
\end{equation}
Furthermore, we will need a bound on the $\tr(K R(z; K))$. Again for $z \in \Gamma$ with $|z-\bar{z}| \le d^{-\delta}$ and $\bar{z} \in \Gamma_{\delta}$, we have that 
\begin{equation} \label{eq:trace_net}
\frac{1}{d} | \tr(KR(z;K)) - \tr(KR(\bar{z}; K)) | \le \|K\|_{\sigma} \|R(z;K) - R(\bar{z};K)\|_{\sigma} \le \|K\|_{\sigma} \cdot d^{-\delta}
\end{equation}
where we applied \eqref{eq:resolvent_identity} and \eqref{eq:resolvent_bound_1}.

Now we are ready to prove the main result of the proposition. For a fixed $t \le \hat{\tau}_{M}$ and $z \in \Gamma$ with $\bar{z} \in \Gamma_{\delta}$ such that $|z-\bar{z}| \le d^{-\delta}$,
\begin{equation}
\begin{aligned} \label{eq: S_1}
\big \| S(t,z) - 
&
S(0,z) - \int_0^t \mathscr{F}(z, S(s, \dot)) \, \dif s  \big \| \\
&
\le
\| S(t,z)- S(t, \bar{z}) \| + \| S(0, z) - S(0, \bar{z}) \| + \int_0^t \| \mathscr{F}(z, S(s,\cdot)) - \mathcal{F}(\bar{z}, S(s, \cdot)) \| \, \dif s 
\\
& 
\quad + \big \| S(t,\bar{z}) - 
S(0,\bar{z}) - \int_0^t \mathscr{F}(\bar{z}, S(s, \dot)) \, \dif s  \big \| 
\\
&
\le C(\|K\|_{\sigma}) \cdot M^2 \cdot d^{-\delta} + \int_0^t \tfrac{\bar{\gamma}^2}{d} \| I(B(s)) \| \big | \tr(KR(z;K)) - \tr(K R(\bar{z}; K)) \big | \, \dif s
\\
& \quad + 4 \bar{\gamma} \int_0^t \big ( \|H(B(s))\| \|zS(s,z) - \bar{z}  S(s,\bar{z})\| + \delta \|D\| \| S(s,z)-S(s, \bar{z})\| \big ) \, \dif s + \varepsilon. 
\end{aligned}
\end{equation}
Here we used \eqref{eq:S_3} to bound the first two terms in the first inequality and $\varepsilon$ for the last term by the assumption \eqref{eq:net_argument_gamma_delta} in the statement. For the difference in $\mathscr{F}(z, S(s,\cdot))$, we see that many of the terms in \eqref{eq:ODE_resolvent_2} are independent of $z$, that is, they only depend on $t$ (or in this case $s$) (see e.g., $\big ( \tfrac{-1}{2 \pi i} \oint_{\Gamma} S(s,z) \, \dif s \big ) H(B(s))$). Since we have fixed $s$ to be the same and we are only varying $z$, these terms drop out. The only surviving terms, which depend on $z$ from the difference $\mathcal{F}(z, S(s,\cdot)) - \mathcal{F}(\bar{z}, S(s, \cdot) )$, are the ones shown in \eqref{eq: S_1}. 

As we have already shown that $\tr(KR(z;K))$, $zS(s,z)$, and $S(s,z)$ are Lipschitz in $z$, we only need to bound $\|I(B(s))\|$ and $\|H(B(s))\|$ as $\|D\| \le C(|\mathcal{O}|)$. We have already shown a uniform bound on $\|H(B(s))\|$ in the proof of Proposition~\ref{prop:stability}. Notably, we showed that for $s \le \hat{\tau}_M$, we have from \eqref{eq:H_bound} that $\|H(B(s))\| \le C(L(h),\|K\|_{\sigma}) \cdot M$. As for the boundedness of $I(B(s))$, we will do an abbreviated argument, since it is analogous to the one for $H(B(s))$. Since $I$ is $\alpha$-pseudo-Lipschitz (Assumption~\ref{assumption:E_loss_pseudo_Lip}), 
\begin{equation}
\begin{aligned}\label{eq:I_bound}
\|I(B(s))\| \le L(I) \|B(s)\| (1 + \|B(s)\|^{\alpha}). 
\end{aligned}
\end{equation}
Using the representation of $B(t)$ in \eqref{eq:B_W_identity} together with the boundedness of $\Gamma$ and $\hat{\tau}_M$, we have that 
\[
\|B(s)\| \le C \oint_{\Gamma} |z| \|S(s,z)\| \, \dif |z| \le C(\|K\|_{\sigma}) \cdot M. 
\]
As such, $\|I(B(s))\| \le C \cdot M$ where the constant $C$ depends on $\alpha$, the Lipschitz constant of $I$ ($L(I)$), and $\|K\|_{\sigma}$, but independent of $d$. 

First, by taking the supremum over $z \in \Gamma$ and then the supremum over $0 \le t \le (\hat{\tau}_M \wedge T)$ on the left-hand-side of \eqref{eq: S_1} and then using the bounds \eqref{eq:S_3} and \eqref{eq:S_2}, yields the result. 
\end{proof}

In what remains of this section, we will show that homogenized SGD and SGD are approximate solutions to \eqref{eq:ODE_resolvent_2}. To do so, it will be convenient to work directly with the stopped process $X_{t \wedge \hat{\tau}_M}$ on the iterates. Since $\hat{\tau}_M$ is a time based on $S$-values, it is often difficult to apply to iterates of SGD and homogenized SGD, so we introduce equivalent stopping times
\begin{equation}
\begin{aligned} \label{eq:stopping_time}
    \vartheta_{M} 
    &
    \defas \inf \{ t \ge 0 \, : \, \|W_{td} \|^2 > M \, \, \text{or} \, \, \ip{W_{td}^{\otimes 2},K}_{\mathcal{A}^{\otimes 2}} \not \in \mathcal{U} \} 
    \\
    \text{or} \quad  \vartheta_{M} 
    &
    \defas \inf \{t \ge 0 \, : \, 
    \|\HSGD_t\|^2  > M  \, \, \text{or} \, \, \ip{\HSGD_t^{\otimes 2}, K}_{\mathcal{A}^{\otimes 2}} \not \in \mathcal{U} \}.     
\end{aligned}
\end{equation}
We overload the notation $\vartheta_M$ to be either applied to SGD iterates, $W_{td}$ or homogenized SGD iterates, $\HSGD_t$, for which it will be made clear in the context which criterion is used. These stopping times are equivalent to $\hat{\tau}_M$ in that there exists constants $c, C > 0$ such that  $\vartheta_{c \cdot M} \le \hat{\tau}_M \le  \vartheta_{C \cdot M}$ (see Lemma~\ref{lem:S_derivative_bounds}). Moreover, we often drop the $M$ so that $\vartheta \defas \vartheta_M$. It will be convenient to work with the stopped processes, $W_{td }^{\vartheta} \defas W_{ td  \wedge \vartheta }$ and $\HSGD_t^{\vartheta} \defas \HSGD_{t \wedge \vartheta}$.

\subsection{Homogenized SGD under statistics} 
Our goal is a comparison of the dynamical behavior of SGD to another process, \textit{homogenized SGD} (HSGD) applied to the risk $\mathcal{R}_{\delta}(X)$. 
With this, we recall \textit{homogenized SGD} \eqref{eq:HSGD}
\begin{equation}
\dif \WHSGD_t = -\gamma(t) \nabla \mathcal{R}_{\delta} (\WHSGD_t) + \gamma(t) \ip{\sqrt{K / d}  \otimes \sqrt{ \EE_{a, \epsilon} [ \nabla f( \ip{\WHSGD_t \oplus X^{\star}, a}_{\mathcal{A}})^{\otimes 2} ] }, \dif B_t}_{\mathcal{A \otimes O}},
\end{equation}
where the initial conditions given by $\WHSGD_0 = X_0$ and $(B_t, t \ge 0)$ is a $\mathcal{A} \otimes \mathcal{O}$ standard Brownian motion. 

In an analogous definition for homogenized SGD, we introduce
\[
\HSGD_t \defas \WHSGD_t \oplus X^{\star} \quad \text{and} \quad 
\rho_t \defas \ip{\HSGD_t, a}_{\mathcal{A}}.
\]
Under this notation, as mentioned before, we will be interested in the behavior of homogenized SGD under one particular statistic, which we introduced earlier as
\[
W \in \mathcal{A} \otimes \mathcal{O}^+ \mapsto S(W,z) = \ip{W \otimes W, R(z; K)}_{\mathcal{A}^{\otimes 2}}, \quad \text{for $z \in \mathbb{C}$.}
\]
We will show that $S(\HSGD_t,z)$ is an approximate solution \eqref{def:integro_differential_equation} to the integro-differential equation \eqref{eq:ODE_resolvent_2} which we state below. 

\begin{proposition}[Homogenized SGD is an approximate solution] \label{prop:homogenized_SGD_approx_solution}
Fix a $T, M > 0$ and $0 < \delta < 1/2$ Then $S(\HSGD_t, z)$ is an $(d^{-\delta}, M, T)$-approximate solution w.o.p., that is, 
\begin{equation}
\sup_{0 \le t \le (T \wedge \tau_M)} \|S(\HSGD_t,z)- S(W_0,z) - \int_0^t \mathscr{F}(z, S(\HSGD_s, z)) \, \dif s \|_{\Gamma} \le d^{-\delta} \quad \text{w.o.p.}
\end{equation}
\end{proposition}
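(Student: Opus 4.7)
The plan is to apply It\^o's formula to $S(\HSGD_t,z)$ for fixed $z\in\Gamma$, identify the drift as precisely $\mathscr{F}(z,S(\HSGD_\cdot,z))$, and then control the resulting martingale by concentration of measure and a net argument on $\Gamma$.

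First, I will compute $dS(\HSGD_t,z)$ using It\^o's formula applied to the map $W\mapsto \langle W^{\otimes 2},R(z;K)\rangle_{\mathcal{A}^{\otimes 2}}$. Since $W_t=\WHSGD_t\oplus X^\star$ and only the $\WHSGD_t$ block evolves, the drift from $d\WHSGD_t = -\gamma_t\nabla\mathcal{R}_\delta(\WHSGD_t)\,dt + (\text{noise})$ will contribute, via Lemma \ref{lem:derivative_risk} and the chain rule, the three deterministic terms
\[
-2\gamma_t\Bigl(\tfrac{-1}{2\pi i}\oint_\Gamma \mathcal{S}(t,y)\,dy\Bigr)H(\mathrsfs{B}(t))\text{ (sym.)} \quad\text{and}\quad -\gamma_t\bigl(S(\HSGD_t,z)(2zH+\delta D)+\text{sym.}\bigr),
\]
after using the resolvent identity $\langle W^{\otimes 2},KR(z;K)\rangle = \langle W^{\otimes 2},I_{\mathcal{A}}\rangle + z\,S(W,z)$ to split the $\langle K,W\rangle_{\mathcal{A}}$ factors inside $\nabla\mathcal{R}$. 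The It\^o correction term, coming from $d\HSGD_t\otimes d\HSGD_t$ contracted with $R(z;K)$, yields $\tfrac{\gamma_t^2}{d}\mathrm{tr}(KR(z;K))\,I(B(\HSGD_t))$ placed in the $(\mathcal{O},\mathcal{O})$ block; using Assumptions \ref{assumption:unbiased} and \ref{assumption:E_loss_pseudo_Lip}, evaluating $h$ and $I$ at $B(\HSGD_t)$ versus at $\mathrsfs{B}(t)$ differs by a pseudo-Lipschitz error controlled by $\|S(\HSGD_s,z)-\mathcal{S}(s,z)\|$, which is \emph{not} what we need here; instead, I read off the drift at $\mathrsfs{B}(t)$ replaced by $B(\HSGD_t)=\tfrac{-1}{2\pi i}\oint z S(\HSGD_t,z)\,dz$, so this assembled drift is exactly $\mathscr{F}(z,S(\HSGD_\cdot,z))$ by definition.

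The remainder after subtracting the drift is a continuous martingale $M_t(z)$ with values in $(\mathcal{O}^+)^{\otimes 2}$, explicitly
\[
M_t(z)=\int_0^t \gamma_s\,\langle R(z;K),\ d\WHSGD_s\otimes \HSGD_s + \HSGD_s\otimes d\WHSGD_s\rangle_{\mathcal{A}^{\otimes 2},\ \text{noise part}}.
\]
Using the form of the diffusion coefficient $\sqrt{K/d}\otimes\sqrt{I(B(\HSGD_s))}$ and working under the stopping time $\vartheta\equiv\vartheta_M$ of \eqref{eq:stopping_time} (which is comparable to $\hat\tau_M$ up to constants), one computes that the quadratic variation $[M^\vartheta(z)]_t$ is bounded by a quantity of the form $\tfrac{C(M,T,\|K\|_\sigma,\bar\gamma)}{d}$ uniformly in $z\in\Gamma$ and $t\in[0,T]$. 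The key point is the explicit $1/d$ coming from $\sqrt{K/d}$ in the diffusion, and the bound $\|\HSGD_s\|^2\le M$, $\|I(B(\HSGD_s))\|\le C(M)$ up to time $\vartheta$ via Lemma \ref{lem:growth_grad_f} and Assumption \ref{assumption:E_loss_pseudo_Lip}.

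With this quadratic variation bound, an exponential martingale inequality (e.g.\ the time-changed Brownian representation combined with Gaussian tails, or a direct BDG+Markov argument in the spirit of subgaussian concentration) yields for each fixed $z\in\Gamma$ and any $\delta'\in(0,1/2)$,
\[
\Pr\Bigl(\sup_{0\le t\le T\wedge\vartheta}\|M_t(z)\|\ge d^{-\delta'}\Bigr)\le \exp(-c\,d^{1-2\delta'}),
\]
which is overwhelming. Choose a $d^{-\delta}$--mesh $\Gamma_\delta\subset\Gamma$ of cardinality $O(d^\delta)$; a union bound preserves overwhelming probability. On this event,
\[
\sup_{0\le t\le T\wedge\vartheta}\bigl\|S(\HSGD_t,z)-S(W_0,z)-\textstyle\int_0^t \mathscr{F}(z,S(\HSGD_s,\cdot))\,ds\bigr\|\le d^{-\delta'}\quad\text{for all }z\in\Gamma_\delta.
\]
Finally, Lemma \ref{lem:net_argument} upgrades this pointwise-on-$\Gamma_\delta$ bound to a uniform bound on $\Gamma$ at the cost of an additive $C(M,T,\|K\|_\sigma)\,d^{-\delta}$, and converting $\vartheta$ to $\hat\tau_M$ by equivalence of stopping times finishes the proof after choosing $\delta$ sufficiently small relative to $\delta'$.

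The main obstacle I anticipate is the bookkeeping in step one: verifying that the It\^o drift, once assembled in tensor notation, matches the integro-differential generator $\mathscr{F}$ of \eqref{eq:F} \emph{exactly} (not just up to lower-order Lipschitz errors), so that the remainder is a true martingale and we are not forced to close a Gronwall loop inside this proposition. The second concern is ensuring the quadratic-variation bound holds \emph{uniformly} in $z\in\Gamma$ using only resolvent bounds from \eqref{eq:resolvent_bound_1}, which is what allows the clean union bound over the mesh.
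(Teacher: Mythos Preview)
Your proposal is correct and follows essentially the same approach as the paper: apply It\^o's formula to show the drift of $S(\HSGD_t,z)$ is exactly $\mathscr{F}(z,S(\HSGD_\cdot,\cdot))$ (the paper carries out this tensor computation in detail, confirming your first anticipated obstacle resolves cleanly), bound the martingale's quadratic variation by $C/d$ on the stopped process and apply an exponential martingale inequality entrywise, then union-bound over a $d^{-\delta}$-mesh and invoke Lemma~\ref{lem:net_argument}. The paper's Proposition~\ref{prop:HSGD_Martingale_Bound} is precisely the quadratic-variation-plus-exponential-inequality step you outline.
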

The proof we defer to Section~\ref{sec:HSGD_approximate_solution}.

\subsubsection{Doob decomposition for homogenized SGD.} We begin by writing homogenized SGD under any quadratic test function $\varphi \, : \, \mathcal{A} \otimes \mathcal{O} \to \mathbb{R}$ using It\^{o} calculus. By quadratic, we assume that the function $\varphi$ is smooth (all derivatives exist) and $(\nabla^{(j)} \varphi)(X) \equiv 0$ for all $X \in \mathcal{A} \otimes \mathcal{O}$ and $j \ge 3$. Note that the entries of $S(W, z)$ are quadratic. 

By using It\^{o}'s lemma \cite[Thm. 33, Chapt. 2]{protter2005stochastic}, we deduce that
\begin{equation} \label{eq:HSGD_2}
    \begin{aligned}
        \dif \varphi(\WHSGD_t) 
        &
        = 
        \ip{\nabla \varphi(\WHSGD_t), \dif \WHSGD_t} + \frac{1}{2}\ip{\nabla^2 \varphi(\WHSGD_t), (\dif \WHSGD_t)^{\otimes 2} } 
        \\
        &
        =
        -\gamma(t) \ip{ \nabla \varphi(\WHSGD_t), \nabla \mathcal{R}_{\delta}(\WHSGD_t) } \, \dif t + \frac{\gamma^2(t)}{2d} \ip{ (\nabla^2 \varphi)(\WHSGD_t), \ip{\sqrt{K} \otimes \sqrt{\EE_{a, \epsilon} [\nabla_x f(\rho_t)^{\otimes 2} ]}, \dif B_t}_{\mathcal{A} \otimes \mathcal{O}}^{{\otimes 2}} } 
        \\
        &
        \qquad 
        + 
        \frac{\gamma(t)}{\sqrt{d}} \ip{ \nabla \varphi(\WHSGD_t), \ip{\sqrt{K} \otimes \sqrt{\EE_{a, \epsilon} [\nabla_x f( \rho_t )^{\otimes 2}} ], \dif B_t}_{\mathcal{A} \otimes \mathcal{O}} }.
    \end{aligned}
\end{equation}
We seek to simplify some of the terms in \eqref{eq:HSGD_2}. For this, we flatten the last term in sum:
\begin{equation} \label{eq:martingale_HSGD_1}
    \begin{aligned}
         \ip{ \nabla \varphi (\WHSGD_t), \ip{ \sqrt{K} \otimes \sqrt{ \EE_{a, \epsilon} [ \nabla_x f(\rho_t)^{\otimes 2} ]}, \dif B_t}_{\mathcal{A} \otimes \mathcal{O}} }
        &
        =
        \ip{ \sqrt{K}, \otimes \sqrt{ \EE_{a, \epsilon} [ \nabla_x f(\rho_t)^{\otimes 2} ] }, \dif B_t \otimes \nabla \varphi (\WHSGD_t) }\\
        \text{ (by symmetry) } \qquad 
        &
        =
        \ip{ \sqrt{K}, \otimes \sqrt{ \EE_{a, \epsilon} [ \nabla_x f(\rho_t)^{\otimes 2} ] }, \nabla \varphi(\WHSGD_t) \otimes \dif B_t }.
    \end{aligned}
\end{equation}
Next, we look at the second derivative term of $\varphi$, \eqref{eq:HSGD_2}. To help show this, we use Einstein notation and $(\dif B_t)_{xw} (\dif B_t)_{yz} = \delta_{xy} \delta_{wz} \dif (t \wedge \vartheta)$
\begin{equation} \label{eq:Hessian_HSGD_1}
    \begin{aligned}
        & \ip{\nabla^2 \varphi(\WHSGD_t), \ip{\sqrt{K} \otimes \sqrt{ \EE_{a, \epsilon} [ \nabla_x f(\rho_t)^{\otimes 2} ]}, \dif B_t}_{\mathcal{A} \otimes \mathcal{O}}^{\otimes 2} }
        \\
        &
        \quad 
        =
        \nabla^2 \varphi(\WHSGD_t)_{ijkl} \sqrt{K}_{xi} \sqrt{ \EE_{a, \epsilon} [ \nabla_x f(\rho_t)^{\otimes 2}] }_{wk} \sqrt{K}_{yj} \sqrt{ \EE_{a, \epsilon} [ \nabla_x f(\rho_t)^{\otimes 2}] }_{zl} (\dif B_t)_{xw} (\dif B_t)_{yz} 
        \\ 
        &
        \quad 
        = (\Dif^2 \varphi)(\WHSGD_t)_{ijkl} \sqrt{K}_{xi} \sqrt{ \EE_{a, \epsilon} [ \nabla_x f(\rho_t)^{\otimes 2}] }_{wk} \sqrt{K}_{xj} \sqrt{ \EE_{a, \epsilon} [ \nabla_x f(\rho_t)^{\otimes 2}] }_{wl} \, \dif t
        \\
        &
        \quad
        = \nabla^2 \varphi(\WHSGD_t)_{ijkl} K_{ij} \EE_{a, \epsilon} [ \nabla_x f(\rho_t)^{\otimes 2}]_{kl} \dif t
        \\
        &
        \quad 
        = 
        \ip{ \nabla^2 \varphi(\WHSGD_t), K \otimes \EE_{a, \epsilon}[\nabla_x f(\rho_t)^{\otimes 2}]  } \dif t ,
    \end{aligned}
\end{equation}
where we used symmetry of $\sqrt{K}$ and $\EE_{a, \epsilon}[ \nabla_x f(\rho_t)^{\otimes 2}]$ in the fourth line. 

With this, we can now identify the martingale increment for homogenized SGD, 
\begin{equation} \label{eq:HSGD_statistic}
    \begin{aligned}
        \dif \varphi(\WHSGD_t) 
        &
        = 
        - \gamma(t) \ip{ \nabla \varphi(\WHSGD_t), \nabla \mathcal{R}_{\delta}(\WHSGD_t)} \dif t \\
        & \qquad 
        + \frac{\gamma^2(t)}{2 d} \ip{ \nabla^2 \varphi(\WHSGD_t), K \otimes \EE_{a,\epsilon} [\nabla_x f(\rho_t)^{\otimes 2}] } \, \dif t + \dif \mathcal{M}_{t}^{\text{HSGD}}(\varphi),
        \\
       \text{where} \qquad & \dif \mathcal{M}_{t}^{\text{HSGD}}(\varphi) 
        \defas
        \frac{\gamma(t)}{\sqrt{d}} \ip{ \sqrt{K} \otimes \sqrt{ \EE_{a, \epsilon}[ \nabla_x f(\rho_t)^{\otimes 2} ] }, \nabla \varphi(\WHSGD_t) \otimes \dif B_t } .
    \end{aligned}
\end{equation}
By integrating, we derive the Doob decomposition for $\varphi(\WHSGD_t)$
\begin{equation}
\begin{aligned}
\varphi(\WHSGD_t) 
&
= \varphi(X_0) - \int_0^t \gamma(s) \ip{(\nabla \varphi)(\WHSGD_s), \nabla \mathcal{R}_{\delta}(\WHSGD_s)} \, \dif s
\\
&
\qquad +
\frac{1}{2 d} \int_0^t \gamma^2(s) \ip{ \nabla^2 \varphi(\WHSGD_s), K \otimes \EE_{a, \epsilon} [\nabla_x f(\rho_s)^{\otimes 2}] } \, \dif s + \int_0^t \dif \mathcal{M}_s^{\text{HSGD}}(\varphi).
\end{aligned}
\end{equation}

\subsubsection{$S(\HSGD_t, z)$ is an approximate solution, proof of Proposition~\ref{prop:homogenized_SGD_approx_solution}} \label{sec:HSGD_approximate_solution}

The goal in this section is to prove Proposition~\ref{prop:homogenized_SGD_approx_solution}, that is, show that 
\[
S(\HSGD_t,z) = \ip{\HSGD_t^{\otimes 2}, R(z;K)}_{\mathcal{A}^{\otimes 2}}
\]
is an approximate solution to the integro-differential equation \eqref{eq:ODE_resolvent_2}.



Letting $\HSGD_t = \WHSGD_t \oplus X^{\star}$,
it will be useful to decompose the statistic $S(\HSGD_t, z)$ and others in terms of their $\mathcal{O}$ and $\mathcal{T}$ components. The easiest and succinct way to do this is to consider a matrix structure
\begin{equation} \label{eq:matrix_form}
    (a \oplus b) \otimes (c \oplus d) \cong \left [ \begin{array}{c|c}
        a \otimes c & a \otimes d\\
        \hline
        b \otimes c & b \otimes d
    \end{array} \right ].
\end{equation}
In their matrix forms,
{\small \begin{gather*}
S(\HSGD_t,z) 
 \cong 
 \left[ \begin{array}{c|c} 
\WHSGD_t^T R(z;K) \WHSGD_t & \WHSGD_t^T R(z;K) X^{\star}\\
\hline
(X^{\star})^T R(z;K) \WHSGD_t & (X^{\star})^T R(z;K) X^{\star}
\end{array}
 \right ]
 \cong
 \left[ \begin{array}{c|c} 
    S_{11}(\HSGD_t,z) & S_{12}(\HSGD_t,z)
    \\
    \hline
    S_{21}(\HSGD_t,z) & S_{22}(\HSGD_t,z)
    \end{array} \right ] \in \left[ \begin{array}{c|c} 
    \mathcal{O} \otimes \mathcal{O} & \mathcal{O} \otimes \mathcal{T}
    \\
    \hline
    \mathcal{T} \otimes \mathcal{O} & \mathcal{T} \otimes \mathcal{T}
    \end{array} \right ],
    \\
    \mathcal{S}(t,z) 
 \cong
 \left[ \begin{array}{c|c} 
    \mathcal{S}_{11}(t,z) & \mathcal{S}_{12}(t,z)
    \\
    \hline
    \mathcal{S}_{21}(t,z) & \mathcal{S}_{22}(t,z)
    \end{array} \right ] \in \left[ \begin{array}{c|c} 
    \mathcal{O} \otimes \mathcal{O} & \mathcal{O} \otimes \mathcal{T}
    \\
    \hline
    \mathcal{T} \otimes \mathcal{O} & \mathcal{T} \otimes \mathcal{T}
    \end{array} \right ],
    \\
 \text{and} \, \, \nabla h 
\cong
\left[ \begin{array}{c|c} 
    \nabla h_{11} & \nabla h_{12}
    \\
    \hline
    \nabla h_{21} & \nabla h_{22}
    \end{array} \right ] \in \left[ \begin{array}{c|c} 
    \mathcal{O} \otimes \mathcal{O} & \mathcal{O} \otimes \mathcal{T}
    \\
    \hline
    \mathcal{T} \otimes \mathcal{O} & \mathcal{T} \otimes \mathcal{T}
    \end{array} \right ].
\end{gather*}
}
With this notation established, the first step to proving Proposition~\ref{prop:homogenized_SGD_approx_solution} is deriving a closed equation for $S(\HSGD_t, z)$ using It\^o calculus.  

\paragraph{It\^o calculus applied to $S(\HSGD_t, z)$.}
Recall the expected risk $\mathcal{R}$ which can be expressed as a composition, $\mathcal{R}(\WHSGD_t) = h \circ B(\HSGD_t)$, for some function $h \, : \, \mathcal{O}^+ \otimes \mathcal{O}^+ \to \mathbb{R}$ and 
\[
B(\HSGD_t) = \ip{\HSGD_t \otimes \HSGD_t, K}_{\mathcal{A}^{\otimes 2}}.
\]
A simple computation yields that
\[
\nabla \mathcal{R} = \ip{\nabla h, \ip{ ( \text{Id}_{\mathcal{A} \otimes \mathcal{T}} \oplus 0_{\mathcal{A} \otimes \mathcal{T}} ) \otimes \HSGD_t, K }_{\mathcal{A}^{\otimes 2}} }_{(\mathcal{O}^{+})^{\otimes 2}} 
+ 
\ip{\nabla h, \ip{\HSGD_t \otimes ( \text{Id}_{\mathcal{A} \otimes \mathcal{T}} \oplus 0_{\mathcal{A} \otimes \mathcal{T}} ), K }_{\mathcal{A}^{\otimes 2}} }_{(\mathcal{O}^{+})^{\otimes 2}}. 
\]
We observe that $\dif \HSGD_t = \dif \WHSGD_t \oplus 0_{\mathcal{A} \otimes \mathcal{T}}$ where $0$ is the zero tensor. Using the  product rule for It\^o derivatives,
\begin{equation} \label{eq:concentration_S_1}
    \begin{aligned}
        \dif S 
        &
        = 
        \ip{ \dif \HSGD_t \otimes \HSGD_t, R(z;K)}_{\mathcal{A}^{\otimes 2}} + \ip{\HSGD_t \otimes \dif \HSGD_t, R(z; K)}_{\mathcal{A}^{\otimes 2}} 
        + \ip{ \dif \HSGD_t \otimes \dif \HSGD_t, R(z;K)}_{\mathcal{A}^{\otimes 2}}
        \\
        &
        = 
        \ip{ (\dif \WHSGD_t \oplus 0_{\mathcal{A} \otimes \mathcal{T}}) \otimes \HSGD_t, R(z;K)}_{\mathcal{A}^{\otimes 2}} + \ip{\HSGD_t \otimes (\dif \WHSGD_t \oplus 0_{\mathcal{A} \otimes \mathcal{T}}), R(z; K)}_{\mathcal{A}^{\otimes 2}} 
        \\
        & 
        \quad + 
        \ip{ (\dif \WHSGD_t \oplus 0_{\mathcal{A} \otimes \mathcal{T}}) \otimes (\dif \WHSGD_t \oplus 0_{\mathcal{A} \otimes \mathcal{T}}), R(z;K)}_{\mathcal{A}^{\otimes 2}} 
        \\
        &
        =
        -\gamma_t \cdot \ip{ ( (\nabla \mathcal{R} + \delta \WHSGD_t) \oplus 0_{\mathcal{A} \otimes \mathcal{T}}) \otimes \HSGD_t, R(z;K)}_{\mathcal{A}^{\otimes 2}} \dif t
        \\
        &
        \quad
        - 
        \gamma_t \cdot \ip{\HSGD_t \otimes ( (\nabla \mathcal{R} + \delta \WHSGD_t)  \oplus 0_{\mathcal{A} \otimes \mathcal{T}}), R(z; K)}_{\mathcal{A}^{\otimes 2}} \dif t
        \\
        & 
        \quad + \tfrac{\gamma^2_t}{d}
        \ip{ K, R(z;K)}_{\mathcal{A}^{\otimes 2}} ( \EE_{a, \epsilon} [ \nabla_x f(\rho_t)^{\otimes 2}] \oplus 0^{\otimes 2}_{\mathcal{T}} ) \, \dif t
        + \dif \mathcal{M}_t^{\text{HSGD}}(S(\cdot, z)).
    \end{aligned}
\end{equation}

\begin{remark} \label{rmk:martingal_S}
    We are interested in the behavior of $S(W,z)$ which lives in $(\mathcal{O}^+)^{\otimes 2}$, but we have only defined the martingale increments for test functions mapping into $\mathbb{R}$. To reconcile the two spaces, we consider, by moving to coordinates, $\varphi(X) = S_{o_i o_j}(W,z)$, that is the $\varphi(X)$ is the $(o_i, o_j)$-th coordinate of $S(W,z)$. Consequently, we write 
\[
\dif \mathcal{M}^{\text{HSGD}}_t(S_{o_i o_j}(z, W)) \defas \frac{\gamma_t}{\sqrt{d}} \ip{ \ip{\sqrt{K} \otimes (\EE_{a, \epsilon} [ \nabla_x f(\rho_t)^{\otimes 2} ])^{1/2}, \nabla_X (S_{o_i o_j}(W,z)) }_{\mathcal{A} \otimes \mathcal{O}}, \dif B_{t}}
\]
and then define, $\dif \mathcal{M}_t^{\text{HSGD}}(S)$ entrywise by
\[
\big ( \dif \mathcal{M}_t^{\text{HSGD}}(S(W,z)) \big )_{o_i o_j} = \dif \mathcal{M}_t^{\text{HSGD}}(S(W,z)_{o_i o_j}).
\]
Analogously, we define
\[
\mathcal{M}_t^{\text{HSGD}}(S(\cdot, z)) = \int_0^t \dif \mathcal{M}_s^{\text{HSGD}}(S(\cdot,z)).
\]
\end{remark}

We consider the first term in the summation above, and after plugging in $\nabla \mathcal{R}$, we have
\begin{align*}
& \ip{ ( (\nabla \mathcal{R} + \delta \WHSGD_t) \oplus 0_{\mathcal{A} \otimes \mathcal{T}}) \otimes \HSGD_t, R(z;K)}_{\mathcal{A}^{\otimes 2}} \, \dif t \\
& 
\qquad 
= 
\ip{ (\ip{\nabla h, \ip{ ( \text{Id}_{\mathcal{A} \otimes \mathcal{T}} \oplus 0_{\mathcal{A} \otimes \mathcal{T}} ) \otimes \HSGD_t, K }_{\mathcal{A}^{\otimes 2}} }_{(\mathcal{O}^{+})^{\otimes 2}}  \oplus 0_{\mathcal{A} \otimes \mathcal{T}}) \otimes \HSGD_t, R(z;K)}_{\mathcal{A}^{\otimes 2}} \, \dif t 
\\
&
\qquad 
\quad 
+ 
\ip{ (\ip{\nabla h, \ip{\HSGD_t \otimes ( \text{Id}_{\mathcal{A} \otimes \mathcal{T}} \oplus 0_{\mathcal{A} \otimes \mathcal{T}} ), K }_{\mathcal{A}^{\otimes 2}} }_{(\mathcal{O}^{+})^{\otimes 2}} \oplus 0_{\mathcal{A} \otimes \mathcal{T}}) \otimes \HSGD_t, R(z;K)}_{\mathcal{A}^{\otimes 2}} \, \dif t
\\
&
\qquad
\quad 
+
\delta \ip{\WHSGD_t \oplus 0_{\mathcal{A} \oplus \mathcal{T}} \otimes \HSGD_t, R(z;K) }_{\mathcal{A}^{\otimes 2}} \, \dif t.
\end{align*}
Expanding the terms with $\HSGD_t = \WHSGD_t \oplus X^{\star}$ and using our matrix conventions, we get that 
\begin{align*}
    -\gamma_t \cdot & \ip{ ( (\nabla \mathcal{R} + \delta \WHSGD_t) \oplus 0_{\mathcal{A} \otimes \mathcal{T}}) \otimes \HSGD_t, R(z;K)}_{\mathcal{A}^{\otimes 2}} \dif t
    \cong
    \left [ \begin{array}{c|c}
        A_1 + \tilde{A}_1 & E + \tilde{E}\\
        \hline
        0 & 0
    \end{array} \right ], 
    \\
    \text{where} 
    \quad A_1
    &
    \cong 
    -\gamma_t \cdot \ip{ \ip{\nabla h, \ip{ ( \text{Id}_{\mathcal{A} \otimes \mathcal{T}} \oplus 0_{\mathcal{A} \otimes \mathcal{T}} ) \otimes \HSGD_t, K }_{\mathcal{A}^{\otimes 2}} }_{(\mathcal{O}^{+})^{\otimes 2}} \otimes \WHSGD_t, R(z;K)}_{\mathcal{A}^{\otimes 2}} \dif t,
    \\
    & \qquad -\gamma_t \cdot \ip{\ip{\nabla h, \ip{\HSGD_t \otimes ( \text{Id}_{\mathcal{A} \otimes \mathcal{T}} \oplus 0_{\mathcal{A} \otimes \mathcal{T}} ), K }_{\mathcal{A}^{\otimes 2}} }_{(\mathcal{O}^{+})^{\otimes 2}}  \otimes \WHSGD_t, R(z;K)}_{\mathcal{A}^{\otimes 2}} \dif t,
    \\
    \quad \tilde{A}_1 
    &
    \cong -\gamma_t \cdot \delta \cdot \ip{\WHSGD_t \otimes \WHSGD_t, R(z;K)}_{\mathcal{A}^{\otimes 2}} \, \dif t,
    \\
    \quad
    E
    & 
    \cong
    -\gamma_t \cdot \ip{ \ip{\nabla h, \ip{ ( \text{Id}_{\mathcal{A} \otimes \mathcal{T}} \oplus 0_{\mathcal{A} \otimes \mathcal{T}} ) \otimes \HSGD_t, K }_{\mathcal{A}^{\otimes 2}} }_{(\mathcal{O}^{+})^{\otimes 2}} ) \otimes X^{\star}, R(z;K)}_{\mathcal{A}^{\otimes 2}} \dif t,
    \\
    & 
    \qquad - \gamma_t \cdot 
    \ip{ \ip{\nabla h, \ip{\HSGD_t \otimes ( \text{Id}_{\mathcal{A} \otimes \mathcal{T}} \oplus 0_{\mathcal{A} \otimes \mathcal{T}} ), K }_{\mathcal{A}^{\otimes 2}} }_{(\mathcal{O}^{+})^{\otimes 2}} \otimes X^{\star}, R(z;K)}_{\mathcal{A}^{\otimes 2}} \dif t,
    \\
    \text{and} 
    \quad
    \tilde{E} 
    &
    \cong -\gamma_t \cdot \delta \cdot \ip{ \WHSGD_t \otimes X^{\star}, R(z;K)}_{\mathcal{A}^{\otimes 2}} \, \dif t.
\end{align*}
This is to say $\ip{ ( (\nabla \mathcal{R} + \delta \WHSGD_t)  \oplus 0_{\mathcal{A} \otimes \mathcal{T}}) \otimes \HSGD_t, R(z;K)}_{\mathcal{A}^{\otimes 2}}$ only effects $\dif S_{11}$ and $\dif S_{12}$

Similarly for the other ``symmetric" term in \eqref{eq:concentration_S_1}, 
\begin{align*}
    - \gamma_t \cdot
    &
    \ip{\HSGD_t \otimes ( (\nabla \mathcal{R} + \delta \WHSGD_t) \oplus 0_{\mathcal{A} \otimes \mathcal{T}}), R(z; K)}_{\mathcal{A}^{\otimes 2}} \dif t \cong \left [
    \begin{array}{c|c}
    A_2 + \tilde{A}_2 & 0\\
    \hline
    C + \tilde{C} & 0
    \end{array}
    \right ],
    \\
    \text{where} 
    \quad A_2
    & 
    \cong
    - \gamma_t \cdot \ip{\WHSGD_t \otimes \ip{\nabla h, \ip{ ( \text{Id}_{\mathcal{A} \otimes \mathcal{T}} \oplus 0_{\mathcal{A} \otimes \mathcal{T}} ) \otimes \HSGD_t, K }_{\mathcal{A}^{\otimes 2}} }_{(\mathcal{O}^{+})^{\otimes 2}}, R(z; K)}_{\mathcal{A}^{\otimes 2}} \dif t,
    \\
    & \qquad
    - \gamma_t \cdot \ip{\WHSGD_t \otimes \ip{\nabla h, \ip{\HSGD_t \otimes ( \text{Id}_{\mathcal{A} \otimes \mathcal{T}} \oplus 0_{\mathcal{A} \otimes \mathcal{T}} ), K }_{\mathcal{A}^{\otimes 2}} }_{(\mathcal{O}^{+})^{\otimes 2}}, R(z; K)}_{\mathcal{A}^{\otimes 2}} \dif t,
    \\
    \quad
    \tilde{A}_2 
    &
    \cong
    -\gamma_t \cdot \delta \cdot \ip{ \WHSGD_t \otimes \WHSGD_t, R(z;K)}_{\mathcal{A}^{\otimes 2}} \, \dif t,
    \\
    \quad
    C 
    & 
    \cong 
    - \gamma_t \cdot \ip{X^{\star} \otimes \ip{\nabla h, \ip{ ( \text{Id}_{\mathcal{A} \otimes \mathcal{T}} \oplus 0_{\mathcal{A} \otimes \mathcal{T}} ) \otimes \HSGD_t, K }_{\mathcal{A}^{\otimes 2}} }_{(\mathcal{O}^{+})^{\otimes 2}}, R(z; K)}_{\mathcal{A}^{\otimes 2}} \dif t,
    \\
    & \qquad
    - \gamma_t \cdot \ip{X^{\star} \otimes \ip{\nabla h, \ip{\HSGD_t \otimes ( \text{Id}_{\mathcal{A} \otimes \mathcal{T}} \oplus 0_{\mathcal{A} \otimes \mathcal{T}} ), K }_{\mathcal{A}^{\otimes 2}} }_{(\mathcal{O}^{+})^{\otimes 2}}, R(z; K)}_{\mathcal{A}^{\otimes 2}} \dif t,
    \\
    \text{and}
    \quad 
    \tilde{C} 
    &
    \cong
    -\gamma_t \cdot \delta \cdot \ip{X^{\star} \otimes \WHSGD_t, R(z;K)}_{\mathcal{A}^{\otimes 2}} \, \dif t.
\end{align*}
The last term in \eqref{eq:concentration_S_1} is quite simple
\begin{align*}
    \frac{\gamma^2_t}{d}
    &
        \ip{ K, R(z;K)}_{\mathcal{A}^{\otimes 2}} ( \EE_{a, \epsilon} [ \nabla_x f(\rho_t)^{\otimes 2}] \oplus 0^{\otimes 2}_{\mathcal{T}} ) \, \dif t 
        \cong
        \left [
        \begin{array}{c|c}
             A_3 & 0  \\
             \hline
             0 &  0
        \end{array}
        \right ],
        \\
        \text{where} 
        \quad A_3 
        &
        \cong \frac{\gamma^2_t}{d} \tr(KR(z;K) ) \EE_{a, \epsilon} [ \nabla_x f(\rho_t)^{\otimes 2}] \, \dif t.
\end{align*} 
It follows then that 
\[
(\dif S)(\HSGD_t,z) \cong 
\left [ 
\begin{array}{c|c}
\dif S_{11} & \dif S_{12} \\
\hline
\dif S_{21} & \dif S_{22}
\end{array}
\right ] = 
\left [ 
\begin{array}{c|c}
A_1 + \tilde{A}_1 + A_2 + \tilde{A}_2 + A_3 + \tilde{A}_3 & E + \tilde{E} \\
\hline
C + \tilde{C} & 0
\end{array}
\right ] + \dif \mathcal{M}_t^{\text{HSGD}}(S(\HSGD_t, z)).
\]
We further seek to simplify the terms $A_1, A_2, A_3, E,$ and $C$. For this, recall $\nabla h$ viewed in its matrix form as 
\[
\nabla h \cong \left [ \begin{array}{c|c} \nabla h_{11} & \nabla h_{12} \\ \hline \nabla h_{21} & \nabla h_{22} \end{array} \right ], 
\]
and consequently, after simple computations (and $\nabla h_{12} = \nabla h_{21}$), we derive 
\begin{equation} \label{eq:S_matrices}
\begin{aligned}
    A_1 
    &
    = 
    -2\gamma_t \cdot \ip{ \ip{\nabla h_{11}, \WHSGD_t}_{\mathcal{O}} \otimes \WHSGD_t, K R(z;K)}_{\mathcal{A}^{\otimes 2}}
    -2\gamma_t \ip{ \ip{\nabla h_{12}, X^{\star}}_{\mathcal{T}} \otimes \WHSGD_t, K R(z;K)}_{\mathcal{A}^{\otimes 2}} \dif t, \\
    A_2  
    &
    =
    -2\gamma_t \cdot \ip{ \WHSGD_t \otimes \ip{\nabla h_{11}, \WHSGD_t}_{\mathcal{O}}, K R(z;K)}_{\mathcal{A}^{\otimes 2}}
    -2\gamma_t \cdot \ip{ \WHSGD_t \otimes  \ip{\nabla h_{12}, X^{\star}}_{\mathcal{T}}, K R(z;K)}_{\mathcal{A}^{\otimes 2}}  \, \dif t, \\
    A_3
    &
    =
    \frac{\gamma^2_t}{d} \tr(K R(z;K) ) \EE_{a, \epsilon} [ \nabla_x f(\rho_t)^{\otimes 2}]  \, \dif t,
    \\
    E 
    &
    =
    -2\gamma_t \cdot \ip{ \ip{\nabla h_{11}, \WHSGD_t}_{\mathcal{O}} \otimes X^{\star}, K R(z;K)}_{\mathcal{A}^{\otimes 2}}
    -2\gamma_t \cdot \ip{\ip{\nabla h_{12}, X^{\star}}_{\mathcal{T}} \otimes X^{\star}, K R(z;K)}_{\mathcal{A}^{\otimes 2}}  \, \dif t, \\
    \text{and}
    \quad 
    C
    &
    =
    -2\gamma_t \cdot \ip{ X^{\star} \otimes \ip{\nabla h_{11}, \WHSGD_t}_{\mathcal{O}}, K R(z;K)}_{\mathcal{A}^{\otimes 2}}
    -2\gamma_t \cdot \ip{X^{\star} \otimes \ip{\nabla h_{12}, X^{\star}}_{\mathcal{T}}, K R(z;K)}_{\mathcal{A}^{\otimes 2}}  \, \dif t. \\    
\end{aligned}
\end{equation}
We observe that 
\[
KR(z;K) = K (K-zI_{\mathcal{A}})^{-1} = (K -zI_{\mathcal{A}} + zI_\mathcal{A}) (K-zI_{\mathcal{A}})^{-1} = I_{\mathcal{A}} + z R(z;K). 
\]
We can now see, using the above identity, that the quantities $A_1, A_2, E$, and $C$ \eqref{eq:S_matrices} and the quantities $\tilde{A}_1, \tilde{A}_2, \tilde{E}$, and $\tilde{C}$ can be expressed back in terms of $S(\HSGD_t, z) = \ip{\HSGD_t \otimes \HSGD_t, R(z;K)}_{\mathcal{A}^{\otimes 2}}$. The result is that
\begin{equation} 
\begin{aligned}
    \dif S(\HSGD_t,z) 
    &
    =
    - 2\gamma_t \cdot \big [ V_0(\HSGD_t) (H\circ B(\HSGD_t) ) + (H^T \circ B(\HSGD_t) ) V_0(\HSGD_t) \big ] \, \dif t
    \\
    &
    + \frac{\gamma^2_t}{d}\left [ \begin{array}{c|c} 
    \tr(K R(z;K)) \EE_{a, \epsilon} [ \nabla_x f(\rho_t)^{\otimes 2}] & 0\\ \hline 0 & 0
    \end{array} \right ] \, \dif t
    \\
    &
    - \gamma_t \cdot (S(\HSGD_t,z) (2z ( H \circ B(\HSGD_t) ) + \delta D) + ( 2 z (H^T \circ B(\HSGD_t)) + \delta D) S(\HSGD_t,z)) \, \dif t
    \\
    &
    + \dif \mathcal{M}_t^{\text{HSGD}}(S),
\end{aligned}
\end{equation}

\begin{gather*}
\text{where} \quad
V_0(W) = \ip{W \otimes W, I_{\mathcal{A}}}_{\mathcal{A}^{\otimes}}, 
\quad
B(W) = \ip{W \otimes W, K}_{\mathcal{A}^{\otimes 2}},
\quad
 H(B) = \left [ \begin{array}{c|c} 
 \nabla h_{11}(B) & 0\\
 \hline
 \nabla h_{21}(B) & 0
 \end{array} \right ], \\
  D = \left [ \begin{array}{c|c} 
 I_{\mathcal{O}} & 0\\
 \hline
 0 & 0
 \end{array} \right ], \quad 
\text{and initialized with} \quad S(0,z) = \ip{W_0 \otimes W_0, R(z; K)}_{\mathcal{A}^{\otimes 2}}. 
\end{gather*}
Using Cauchy integral formula identities related to the resolvent, we see 
\begin{equation}
\begin{aligned}
    V_0(W) = \frac{-1}{2 \pi i} \oint_{\Gamma} S(W, z) \, \dif z \quad \text{and} \quad B(W) = \frac{-1}{2 \pi i} \oint_{\Gamma} zS(W,z) \, \dif z,
\end{aligned}
\end{equation}
and moreover, by Assumption~\ref{assumption:E_loss_pseudo_Lip}, 
\[
\EE_{a,\epsilon} [ \nabla_x f(\rho_t)^{\otimes 2}] = I \circ B(\HSGD_t) = I \circ B \big (\tfrac{-1}{2 \pi i} \oint_{\Gamma} zS(\HSGD_t,z) \, \dif z \big ).
\]
Therefore, 
\begin{equation}\label{eq:HSGD_exact}
\dif S(\HSGD_t, \cdot) = \mathscr{F}(z, S(\HSGD_t, \cdot)) \, \dif t + \dif \mathcal{M}^{\text{HSGD}}_t(S(\HSGD_t, \cdot)), 
\end{equation}
with $S(\HSGD_0, \cdot) = \ip{W_0 \otimes W_0, R(\cdot;K)}_{\mathcal{A}^{\otimes 2}}$. We now are ready to prove Proposition~\ref{prop:homogenized_SGD_approx_solution}.

\begin{proof}[Proof of Proposition~\ref{prop:homogenized_SGD_approx_solution}]
By It\^o's Lemma, we have seen that 
\[
S(\HSGD_t, \cdot) = \ip{W_0 \otimes W_0, R(\cdot;K)}_{\mathcal{A}^{\otimes 2}} + \int_0^t \mathscr{F}(z, S(\HSGD_s, \cdot)) \, \dif s + \int_0^t \dif \mathcal{M}^{\text{HSGD}}_s(S(\HSGD_s, \cdot)).
\]
Thus to show that $S(\HSGD_t, \cdot)$ is an approximate solution of the integro-differential equation \eqref{eq:ODE_resolvent_2} it amounts to bounding the martingale term where $C$ is a positive constant independent of $d$. Let $\Gamma = \{ z \, : \, |z| = \max\{1, 2\|K\|_{\sigma}\}\}$. For all $z \in \Gamma$, we note that for some constants $C,c > 0$ such that $\vartheta_{c \cdot M} \le \hat{\tau}_{M} \le \vartheta_{C \cdot M}$ (see Lemma~\ref{lem:normequivalence}). Consequently, we can work with the stopped process $\HSGD_t^{\vartheta} = \HSGD_{t \wedge \vartheta}$ instead of using $\hat{\tau}_M$. We thus have that for all $z \in \Gamma$,
\[
\sup_{0 \le t \le T \wedge \hat{\tau}_{M} } \| S(\HSGD_t, z) - S(W_0,z) - \int_0^t \mathscr{F}(z, S(\HSGD_s, z )) \, \dif s \| \le \sup_{0 \le t \le T \wedge \vartheta_{C \cdot M} } \|\mathcal{M}_{t}^{\text{HSGD}}(S(\cdot, z)) \|.
\]
Fix a constant $\delta > 0$. Let $\Gamma_{\delta} \subset \Gamma$ such that there exists a $\bar{z} \in \Gamma_{\delta}$ such that $|z-\bar{z}| \le d^{-\delta}$ and the cardinality of $\Gamma_{\delta}$, $|\Gamma_{\delta}| = C d^{\delta}$ where $C > 0$ depending on $\|K\|_{\sigma}$. 

By the martingale error proposition,  Proposition~\ref{prop:HSGD_Martingale_Bound}, which we have deferred the proof to Section~\ref{sec:HSGD_martingale}, we have that for any $\hat{\delta} > 0$ 
\[
\sup_{0 \le t \le T} \| \mathcal{M}_{t \wedge \vartheta_{C \cdot M}}^{\text{HSGD}}(S(\cdot, z))\| \le C \cdot L(f) \cdot d^{\hat{\delta}/2 - 1/2}, \quad \text{w.o.p.}
\]
As the cardinality of $\Gamma_{\delta}$ is polynomial in $d$, we have that 
\[
\sup_{z \in \Gamma_{\delta}} \sup_{0 \le t \le T} \| \mathcal{M}_{t \wedge \vartheta_{C \cdot M}}^{\text{HSGD}}(S(\cdot, z))\| \le C \cdot L(f) \cdot d^{\hat{\delta}/2 - 1/2}, \quad \text{w.o.p.}
\]
Consequently, we deduce that 
\begin{align*}
\sup_{0 \le t \le T \wedge \hat{\tau}_{M} } \| S(\HSGD_t^{\vartheta}, z) - S(W_0,z) - \int_0^t \mathscr{F}(z, S(\HSGD_s^{\vartheta}, z )) \, \dif s \|_{\Gamma_{\delta}} 
&
\le
\sup_{0 \le t \le T} \| \mathcal{M}_{t \wedge \vartheta}^{\text{HSGD}}(S(\cdot, z))\|_{\Gamma_{\delta}}
\\
&
\le
C \cdot L(f) \cdot d^{\hat{\delta}/2 - 1/2} \quad \text{w.o.p}.
\end{align*}
An application of the net argument, Lemma~\ref{lem:net_argument}, finishes the proof after setting $\hat{\delta} = 1-2\delta$. 
\end{proof}

\subsection{SGD under the statistics} \label{sec:SGD_approx_solution}
In this section, we show that $S(W_{td},z)$ is an approximate solution \eqref{def:integro_differential_equation} to the integro-differential equation \eqref{eq:ODE_resolvent_2} which we state below. 

\begin{proposition}[SGD is an approximate solution] \label{prop:SGD_approx_solution}
Fix a $T, M > 0$ and $0 < \delta < 1/2$ Then $S(\HSGD_t, z)$ is an $(d^{-\delta}, M, T)$-approximate solution w.o.p., that is, 
\begin{equation}
\sup_{0 \le t \le (T \wedge \tau_M)} \|S(W_{td},z)- S(W_0,z) - \int_0^t \mathscr{F}(z, S(W_{sd}, z)) \, \dif s \|_{\Gamma} \le d^{-\delta} \quad \text{w.o.p.}
\end{equation}
\end{proposition}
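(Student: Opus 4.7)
The plan is to mirror the argument used for homogenized SGD in Proposition~\ref{prop:homogenized_SGD_approx_solution}, but with a discrete Doob decomposition in place of It\^o's formula. Because $S(W,z)$ is quadratic in $W$, the one-step increment admits an \emph{exact} expansion: writing $\Delta_k \defas W_{k+1}-W_k = -\tfrac{\gamma_k}{d}\bigl((a_{k+1}\otimes \nabla_x f(r_k)) \oplus 0_{\mathcal{A}\otimes\mathcal{T}} + \delta(X_k\oplus 0)\bigr)$, one has
\[
S(W_{k+1},z)-S(W_k,z) = \ip{\Delta_k\otimes W_k + W_k\otimes \Delta_k + \Delta_k\otimes \Delta_k, R(z;K)}_{\mathcal{A}^{\otimes 2}}.
\]
First I would decompose this as a predictable part $\mathbb{E}_k[\,\cdot\,]$ (conditional expectation given $\mathcal{F}_k=\sigma(a_1,\epsilon_1,\dots,a_k,\epsilon_k)$) plus a martingale increment $\dif \mathcal{M}_{k+1}^{\text{SGD}}(S(\cdot,z))$.

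For the predictable part, the linear-in-$\Delta_k$ terms contribute, after invoking Assumption~\ref{assumption:unbiased} (so that $\mathbb{E}_k[a_{k+1}\otimes \nabla_x f(r_k)]+\delta X_k = \nabla_X \mathcal{R}_\delta(X_k)$), precisely the first and third summands of $\mathscr{F}(z,S(W_k,\cdot))$ written out in \eqref{eq:F}, after applying the resolvent identity $KR(z;K)=\mathrm{Id}_{\mathcal{A}}+zR(z;K)$ exactly as in \eqref{eq:S_matrices}. The quadratic-in-$\Delta_k$ term contributes $\tfrac{\gamma_k^2}{d^2}\mathbb{E}_a[\ip{a, R(z;K)a}_{\mathcal{A}}\,\nabla_x f(\ip{W_k,a}_\mathcal{A})^{\otimes 2}]$ in the upper-left block. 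Here I would use Gaussian integration by parts (Stein's identity) to rewrite this as $\tfrac{\gamma_k^2}{d^2}\tr(KR(z;K))\cdot I(B(W_k)) + E_k(z)$, where the Stein correction $E_k(z)$ only involves second derivatives of $\nabla_x f$ contracted with bounded operators and is of size $O(1/d^2)$ in $\sigma$-norm (independent of $d$ after the trace factorization). Summing over $k \leq td$ turns $\tfrac{1}{d^2}\sum_k (\cdot)$ into $\tfrac{1}{d}\int_0^t(\cdot)\dif s$ and makes the Stein remainder vanish as $d^{-1}$; a standard Riemann-sum/discretization estimate produces the second summand of $\mathscr{F}$ up to $O(d^{-1})$ error on the stopped process.

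For the martingale part, I would state and prove a discrete analogue of Proposition~\ref{prop:HSGD_Martingale_Bound}: using that each $\Delta_k$ carries a factor $1/d$ and that $\|\nabla_x f(r_k)\|$ has subgaussian-type tails uniformly in $k$ by Lemma~\ref{lem:growth_grad_f} (all evaluated on the stopped process, where $\|W_k\|\le \sqrt{M}$), one gets $\|\dif\mathcal{M}_{k+1}^{\text{SGD}}(S(\cdot,z))\| \leq C(M,\|K\|_\sigma,L(f))/d$ with subgaussian tails. A Freedman/Azuma bound for vector martingales then yields, for any fixed $z\in\Gamma$ and any $\hat\delta>0$,
\[
\sup_{0\leq t\leq T\wedge \vartheta_{C\cdot M}} \|\mathcal{M}_{\lfloor td\rfloor}^{\text{SGD}}(S(\cdot,z))\| \leq C\cdot d^{\hat\delta/2-1/2}, \qquad \text{with overwhelming probability,}
\]
where the quadratic variation is controlled by the $1/d^2$ bound on increments summed over $d$ steps. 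Finally, applying this uniformly over the $d^{\delta}$-net $\Gamma_\delta$ introduced in Lemma~\ref{lem:net_argument} (the union bound costs only a polynomial factor, absorbed into ``w.o.p.'') and then invoking Lemma~\ref{lem:net_argument} to pass from $\Gamma_\delta$ to $\Gamma$ yields the claim with $\hat\delta$ chosen so that $\hat\delta/2 - 1/2 < -\delta$, for any $0<\delta<1/2$.

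The main obstacle I anticipate is isolating the quadratic-in-$\Delta_k$ contribution cleanly. Unlike in the HSGD case, where the diffusion coefficient $\sqrt{K/d}\otimes\sqrt{I(B)}$ was built in and It\^o's formula produced the $\tr(KR(z;K))\cdot I(B)$ term directly, here the non-Gaussian product $\mathbb{E}_a[\ip{a,R(z;K)a}\nabla_x f(\ip{W,a})^{\otimes 2}]$ does not factorize exactly and requires the Stein expansion to separate the leading trace term from remainder terms of order $\|\nabla_{xx} f\|$ (which must be controlled under the pseudo-Lipschitz assumption alone, using approximation arguments since $\nabla_{xx} f$ need not exist pointwise). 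A convenient workaround is to mollify $f$ at scale $d^{-1/2}$, run the argument for the mollified problem, and close the estimate using the pseudo-Lipschitz bound on $\nabla_x f$ itself together with the stopping time $\vartheta_M$ to ensure all moments are bounded independent of $d$.
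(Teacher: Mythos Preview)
Your overall scaffold is correct and matches the paper: exact quadratic expansion of $S(W_{k+1},z)-S(W_k,z)$, Doob decomposition into predictable part plus martingale, identification of the predictable part with $\tfrac{1}{d}\mathscr{F}(z,S(W_k,\cdot))$ up to lower-order errors, martingale bound via Azuma-type concentration on the stopped process, and finally the net argument over $\Gamma_\delta$.

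The one place where your route diverges from the paper is the treatment of the quadratic-in-$\Delta_k$ term. You propose Stein's lemma on $\mathbb{E}_a\bigl[\ip{a,R(z;K)a}_\mathcal{A}\,\nabla_x f(\ip{W_k,a})^{\otimes 2}\bigr]$, which indeed produces the leading $\tr(KR(z;K))\cdot I(B(W_k))$ term plus a low-dimensional remainder, but at the cost of second derivatives of $f$ --- which are not assumed to exist (Assumption~\ref{ass:pseudo_lipschitz} only gives $\alpha$-pseudo-Lipschitz $f$, so $\nabla_x f$ exists a.e.\ but $\nabla_{xx}f$ need not). Your proposed mollification workaround would patch this, but it is an avoidable detour. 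The paper instead conditions first on $\mathcal{G}_k = \sigma(\mathcal{F}_k, r_k)$: since $r_k = \ip{W_k,a_{k+1}}_\mathcal{A}$ is a $|\mathcal{O}^+|$-dimensional linear functional of $a_{k+1}$, the conditional law of $a_{k+1}$ given $r_k$ is Gaussian with covariance $\sqrt{K}(I-\Pi_k)\sqrt{K}$, where $\Pi_k$ is the rank-$|\mathcal{O}^+|$ projection onto the column span of $\sqrt{K}W_k$ (this is Lemma~\ref{lem:conditioning}). This yields
\[
\mathbb{E}\bigl[\ip{\nabla^2\varphi(X_k),\,a_{k+1}^{\otimes 2}\otimes \nabla_x f(r_k)^{\otimes 2}}\,\big|\,\mathcal{F}_k\bigr]
= \ip{\nabla^2\varphi(X_k),\,K\otimes \mathbb{E}[\nabla_x f(r_k)^{\otimes 2}\,|\,\mathcal{F}_k]} + \mathbb{E}[\mathcal{E}_k\,|\,\mathcal{F}_k],
\]
where the error $\mathcal{E}_k$ involves only $\sqrt{K}\Pi_k\sqrt{K}$ and $(\sqrt{K}\Pi_k v_k)^{\otimes 2}$ contracted against $\nabla_x f(r_k)^{\otimes 2}$. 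Because $\|\Pi_k\|_* = |\mathcal{O}^+|$ is dimension-free, these remainders are $O(1)$ before the $\gamma_k^2/d^2$ prefactor and hence sum to $O(d^{-1})$ (Proposition~\ref{prop:Hessian-error}). The key advantage is that this argument never differentiates $\nabla_x f$; it only needs the moment bound $\mathbb{E}[\|\nabla_x f(r_k)\|^2\,|\,\mathcal{F}_k]\leq C(1+\|W_k\|)^{\max\{1,2\alpha\}}$ from Lemma~\ref{lem:growth_grad_f}, so no mollification is required.

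For the martingale, the paper is a bit more careful than your sketch: the increments $\Delta\mathcal{M}_k^{\mathrm{Grad}}$ and $\Delta\mathcal{M}_k^{\mathrm{Hess}}$ are not almost-surely bounded (since $\|\nabla_x f(r_k)\|$ and $\ip{a_{k+1},R(z;K)a_{k+1}}$ have only subgaussian/Hanson--Wright tails), so plain Azuma does not apply directly. The paper introduces projected martingales $\mathcal{M}_k^{\cdot,\beta}$ with $\beta = d^\zeta$, applies Azuma to those, and then shows the difference $\mathcal{M}_k - \mathcal{M}_k^{\cdot,\beta}$ is exponentially small with overwhelming probability (Propositions~\ref{prop:gradmartingale} and~\ref{prop:hessmartingale}). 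Your ``Freedman/Azuma with subgaussian tails'' is the right intuition, but the paper's truncation is what makes it rigorous under only the pseudo-Lipschitz hypothesis.
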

The proof of this Procession is deferred to Section~\ref{sec:SGD_approximate_solution}.

\subsection{Doob decomposition for SGD}
We begin by writing SGD under any quadratic statistic $\varphi \, : \, \mathcal{A} \otimes \mathcal{O}$ satisfying Assumption~\ref{assumption:statistic} in terms of its Doob decomposition by 
identifying the predictable part of $\varphi(X_k)$. We later specialize to $S(W_{td}, z)$ in Section~\ref{sec:SGD_approximate_solution} when we show that $S(W_{td}, z)$ is an approximated solution as defined in \ref{def:integro_differential_equation}. 

By Taylor's expansion, setting $\Delta_k \defas a_{k+1} \otimes \nabla_x f(r_k) + \delta X_k$, 

\begin{equation} \label{eq:taylor_theorem}
    \begin{aligned}
    \varphi(X_{k+1}) 
    = 
    \varphi(X_k - \frac{\gamma_k}{d} \Delta_k \big ) 
    &
    =
    \varphi(X_{k}) - \frac{\gamma_k}{d} \ip{\nabla \varphi(X_k), \Delta_k}
    + 
    \frac{1}{2} \cdot \frac{\gamma_k^2}{d^2} \cdot \ip{\nabla^2\varphi(X_k), \Delta_k^{\otimes 2}}
    \end{aligned}
\end{equation}
To write the Doob decomposition, the idea is to condition on $r_k = \ip{a_{k+1}, W_k}_{\mathcal{A}}$ and $W_{k} = X_k \oplus X^{\star}$. For this, we will introduce some notation. Define the $\sigma$-algebras, 
\[\mathcal{G}_k \defas \sigma ( \{W_i\}_{i=0}^k, \{r_i\}_{i=0}^k) \quad \text{and} \quad \mathcal{F}_k \defas \sigma ( \{W_i\}_{i=0}^k ).
\]



\paragraph{Gradient term in Taylor expansion.}
 First, we consider the gradient term in \eqref{eq:taylor_theorem},
 \begin{equation} \label{eq:grad}
     \frac{\gamma_k}{d} \ip{\nabla \varphi(X_k), a_{k+1} \otimes \nabla_x f(r_k) + \delta X_k}.
 \end{equation} 
We now define a martingale increment associated with the gradient term in \eqref{eq:taylor_theorem} as 
\begin{equation}
    \begin{aligned}
      \Delta \mathcal{M}^{\text{Grad}}_k(\varphi) 
      &
      \defas 
      \frac{\gamma_k}{d} \ip{\nabla \varphi (X_k),  a_{k+1} \otimes \nabla_x f(r_k) }  
      - 
      \frac{\gamma_k}{d} \EE \big [ \ip{ \nabla \varphi (X_k), a_{k+1} \otimes \nabla_x f(r_k) } \, | \, \mathcal{F}_k \big ].
    \end{aligned}
\end{equation}
where $W_k = X_k \oplus X^{\star} \in \mathcal{A} \otimes \mathcal{O}^+$. Passing the derivative under that expectation, the Jacobian of the risk function, $\nabla \mathcal{R}(X) = \EE_{a, \epsilon}[ a \otimes \nabla_x f(\ip{X,a}_{\mathcal{A}}) ]$. It immediately follows that 
\[
\EE [ \ip{ \nabla \varphi(X_k), a_{k+1} \otimes \nabla_x f(r_k) } \, | \, \mathcal{F}_k] =  \ip{ \nabla \varphi (X_k), \nabla \mathcal{R}(X_k) }. 
\]
Consequently, we can express the gradient term in \eqref{eq:grad} as simply
\begin{equation} \label{eq:grad_term}
\begin{gathered}
    \frac{\gamma_k}{d} \ip{\nabla \varphi(X_k), a_{k+1} \otimes \nabla_x f(r_k) + \delta X_k}
    =  
    \frac{\gamma_k}{d} \ip{\nabla \varphi (X_k), \nabla \mathcal{R}(X_k) + \delta X_k}
    + \Delta \mathcal{M}_k^{\text{grad}}
    \\
    \text{where} \, \, \Delta \mathcal{M}_k^{\text{grad}}(\varphi) = \frac{\gamma_k}{d} \ip{\nabla \varphi (X_k),  a_{k+1} \otimes \nabla_x f(r_k) }  
      - 
      \frac{\gamma_k}{d} \EE \big [ \ip{ \nabla \varphi (X_k), a_{k+1} \otimes \nabla_x f(r_k) } \, | \, \mathcal{F}_k \big ].
\end{gathered}
\end{equation}

\paragraph{Hessian term in the Taylor expansion.}  Next, we turn to simplifying and estimating the conditional expectation of the term that arises due to the second derivative in the Taylor expansion \eqref{eq:taylor_theorem},
\begin{equation} \label{eq:Hessian_taylor_term}
\begin{aligned}
    \frac{\gamma_k^2}{2d^2} &\ip{ \nabla^2 \varphi(X_k), \big ( a_{k+1} \otimes \nabla_x f(r_k) + \delta X_k \big )^{\otimes 2} } 
    = 
    \frac{\gamma_k^2}{2d^2} \ip{ \nabla^2 \varphi (X_k),  a_{k+1}^{\otimes 2} \otimes \nabla_x f(r_k)^{\otimes 2} }
    \\
    &
    +\frac{\gamma_k^2}{2d^2} \ip{ \nabla^2 \varphi(X_k),  ( \delta X_k )^{\otimes 2} } 
    +
    \frac{\gamma_k^2}{d^2} \ip{ \nabla^2 \varphi(X_k), a_{k+1} \otimes \nabla_x f(r_k) \otimes \delta X_k  }. 
\end{aligned}
\end{equation}
Setting $\Delta_k = a_{k+1} \otimes \nabla_x f(r_k) + \delta X_k$, let us introduce the martingale increment associated with the Hessian,
\begin{equation} \label{eq:Hessian_Martingale}
    \begin{aligned}
        \Delta \mathcal{M}_k^{\text{Hess}}(\varphi) \defas \frac{\gamma_k^2}{2d^2} \bigg ( \ip{\nabla^2 \varphi(X_k), \Delta_k^{\otimes 2} } - \EE[ \ip{\nabla^2 \varphi(X_k), \Delta_k^{\otimes 2}} \, | \, \mathcal{F}_k] \bigg ) .
    \end{aligned}
\end{equation}

Now we seek to evaluate the conditional expectation of \eqref{eq:Hessian_taylor_term} on $\mathcal{F}_k$. To do so, we begin by first conditioning on $\mathcal{G}_k$ and utilizing the following Lemma~\ref{lem:conditioning} as a way to simplify and isolate the leading order term. 
\begin{lemma}[Conditioning] \label{lem:conditioning} Let $|\mathcal{O}| < d$. Suppose $v \in \mathcal{A}$ is distributed $N(0, I_d)$ and $U \in \mathcal{A} \otimes \mathcal{O}$ has orthonormal columns. Then 
\begin{equation}
 v \, | \, \ip{U, v}_{\mathcal{A}} \sim v - U (U^T v) + U U^T v, 
 \end{equation}
where $v - U (U^T v) \sim N(0, I_d - U U^T)$ and $U U^T v \sim N(0, U U^T)$ with $v - U (U^T v)$ independent of $U U^T v$. 
\end{lemma}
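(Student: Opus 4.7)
The plan is to use the standard orthogonal decomposition trick for Gaussian vectors, exploiting the fact that $U$ has orthonormal columns so that $UU^T$ is an orthogonal projection onto the column space of $U$ (of rank $|\mathcal{O}|$).

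First, I would write $v = (I_d - UU^T)v + UU^T v$, so that $v$ is decomposed into its projection onto the column space of $U$ and its projection onto the orthogonal complement. Because orthogonal projections of a standard Gaussian are themselves Gaussian with covariance equal to the projector, we immediately have $(I_d - UU^T)v \sim N(0, I_d - UU^T)$ and $UU^T v \sim N(0, UU^T)$ (using $(UU^T)^2 = UU^T$, which follows from $U^T U = I_{\mathcal{O}}$).

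Second, I would verify independence of these two pieces. Since both are linear images of the same Gaussian $v$, joint Gaussianity is automatic, and independence reduces to checking that the cross-covariance vanishes:
\[
\mathbb{E}\bigl[(I_d - UU^T)v \, ((UU^T)v)^T\bigr] = (I_d - UU^T)(UU^T) = UU^T - UU^T UU^T = 0,
\]
again using $U^T U = I_{\mathcal{O}}$.

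Third, I would note that $UU^Tv$ is a deterministic function of $\langle U, v\rangle_{\mathcal{A}} = U^T v$, namely $UU^Tv = U(U^T v)$, while $(I_d - UU^T)v$ is independent of $U^T v$ (by the same cross-covariance computation, since $U^T(I_d - UU^T) = 0$). Hence conditioning on $\langle U, v\rangle_{\mathcal{A}}$ fixes $UU^T v$ to its observed value $U(U^T v)$ while leaving $(I_d - UU^T)v$ untouched in distribution, yielding the claimed representation
\[
v \mid \langle U, v\rangle_{\mathcal{A}} \;\sim\; \bigl(v - U(U^T v)\bigr) + UU^T v
\]
with the two summands independent and having the stated covariances.

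I do not anticipate a real obstacle here; the only subtlety is being careful that $U^T U = I_{\mathcal{O}}$ (from the orthonormality hypothesis) is used in every idempotency and orthogonality check, so that $UU^T$ is genuinely a projection of rank $|\mathcal{O}| < d$, which is needed for the covariance $I_d - UU^T$ to be a well-defined nontrivial projector.
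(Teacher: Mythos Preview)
Your proposal is correct and is the standard argument; the paper in fact states this lemma without proof, treating it as a well-known fact about orthogonal decompositions of standard Gaussians. Your decomposition $v = (I_d - UU^T)v + UU^Tv$, the verification of covariances via idempotency of $UU^T$, and the independence via vanishing cross-covariance together with the observation that $UU^Tv$ is $\sigma(U^Tv)$-measurable are exactly what is needed.
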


A simple computation yields
\begin{equation} \label{eq:hessian_1}
    \begin{aligned}
        \EE[ \ip{ \nabla^2 \varphi(X_k), a_{k+1}^{\otimes 2} \otimes \nabla_x f(r_k)^{\otimes 2} } \, | \, \mathcal{G}_k ] 
        &
        =
        \EE [ \ip{ \nabla^2 \varphi(X_k), (a_{k+1}-\EE[a_{k+1} \, | \, \mathcal{G}_k])^{\otimes 2} \otimes \nabla_x f(r_k)^{\otimes 2} } \, | \, \mathcal{G}_k]
        \\
        &
        \quad +
        \ip{ \nabla^2 \varphi(X_k), \EE[a_{k+1} \, | \, \mathcal{G}_k]^{\otimes 2} \otimes \EE_{\epsilon_k}[\nabla_x f(r_k)^{\otimes 2}] }.
    \end{aligned}
\end{equation}

To compute the conditional mean $\EE[a_{k+1} \, | \, \mathcal{G}_k]$ and conditional covariance $(\EE[ a_{k+1} - \EE[a_{k+1} \, | \, \mathcal{G}_k ] ])^{\otimes 2}$, we use Lemma~\ref{lem:conditioning}. By Assumption~\ref{ass:data_normal}, we write $a_{k+1} = \sqrt{K} v_{k}$ where $v_{k} \sim N(0, I_d)$. Now we perform a QR-decomposition on $\ip{\sqrt{K}, W_k}_{\mathcal{A}}  \defas \ip{Q_k, R_k}_{\mathcal{O}^+}$ where $Q_k \in \mathcal{A} \otimes \mathcal{O}^+$ is orthogonal and $R_k \in (\mathcal{O}^+)^{\otimes 2}$ is upper triangular (and invertible). Set $\Pi_k \defas Q_k Q_k^T$. In distribution, 
\[ 
a_{k+1} \, | \,\ip{ a_{k+1}, W_k}_{\mathcal{A}} \overset{\text{d}}{=} \sqrt{K} v_k \, | \, R_k^T Q_k^T v_k. 
\]
As $R_k$ is invertible, by Lemma~\ref{lem:conditioning}, 
\begin{equation} \label{eq:blah_1}
	a_{k+1} \, | \, \ip{a_{k+1}, W_k}_{\mathcal{A}} \overset{\text{d}}{=} \sqrt{K} v_k \, | \,  Q_k^T v_k \overset{\text{d}}{=} \sqrt{K} \big ( v_k - \Pi_k v_k \big ) + \sqrt{K} \Pi_k v_k.
\end{equation}
We note that $(I_d - \Pi_k) v_k \sim N(0, I_{d}  - \Pi_k )$ and $\Pi_k v_k \sim N(0, \Pi_k)$ with $(I_d - \Pi_k)v_k$ independent of $\Pi_k v_k$. 
From this, we have that
\begin{equation} \label{eq:mean_a}
\EE[ a_{k+1} \, | \, \mathcal{G}_k ] = \sqrt{K} \Pi_k v_k, \quad \text{where $v_k \sim N(0, I_d)$.}
\end{equation}
 Moreover the conditional covariance of $a_{k+1}$ is precisely 
\begin{equation}
    \begin{gathered}
       (\EE[ a_{k+1} - \EE[a_{k+1} \, | \, \mathcal{G}_k ] ])^{\otimes 2} = \sqrt{K} (I_d - \Pi_k) \sqrt{K}, \quad \text{where $\Pi_k = Q_k Q_k^T$.} 
    \end{gathered}
\end{equation}


Next, we now expand \eqref{eq:hessian_1} to get the leading order behavior
\begin{equation} \label{eq:hessian_2}
\begin{aligned}
    \EE[ \ip{ \nabla^2 \varphi(X_k), a_{k+1}^{\otimes 2} \otimes \nabla_x f(r_k)^{\otimes 2} } \, | \, \mathcal{G}_k ] 
    &
    = 
    \ip{ \nabla^2 \varphi(X_k), K \otimes  \EE_{\epsilon_k}[\nabla_x f(r_k)^{\otimes 2} ] } \\
    &
    -
    \ip{\nabla^2 \varphi(X_k), \sqrt{K} \Pi_k \sqrt{K} \otimes \EE_{\epsilon_k} [\nabla_x f(r_k)^{\otimes 2}]} 
    \\
    & 
    + 
    \ip{ \nabla^2 \varphi (X_k), \big ( \sqrt{K}\Pi_k v_k \big )^{\otimes 2} \otimes \EE_{\epsilon_k}[\nabla_x f(r_k)^{\otimes 2}]}.
\end{aligned}
\end{equation}
We will later see, in Section~\ref{sec:lower_term_Hessian}, that the term, 
\begin{align*}
    \mathcal{E}_{k,1}^{\text{Hess}}(\varphi) 
    &
    \defas
    \ip{\nabla^2 \varphi(X_k), \sqrt{K} \Pi_k \sqrt{K}  \otimes \EE_{\epsilon_k} [\nabla_x f(r_k)^{\otimes 2}]} \\
    &
    \quad 
    +
   \ip{ \nabla^2 \varphi (X_k), \big ( \sqrt{K} \Pi_k v_k \big )^{\otimes 2} \otimes \EE_{\epsilon_k}[\nabla_x f(r_k)^{\otimes 2}]},
\end{align*}
is of lower order and will disappear as $d \to \infty$. So, we may write
\begin{equation}
    \begin{aligned}
        \frac{\gamma_k^2}{2d^2} \EE [ \ip{\nabla^2 \varphi (X_k), a_{k+1}^{\otimes 2} \otimes \nabla_x f(r_k)^{\otimes 2} }   \, | \, \mathcal{F}_k ] 
        &
        = 
        \frac{\gamma_k^2}{2d^2} \ip{\nabla^2 \varphi(X_k), K \otimes \EE[\nabla_x f(r_k)^{\otimes 2}  \, | \, \mathcal{F}_k ]} \\
        & \qquad + \EE[ \mathcal{E}_{k,1}^{\text{Hess}} \, | \, \mathcal{F}_k].
    \end{aligned}
\end{equation}

For the other terms in \eqref{eq:Hessian_taylor_term}, indeed, it is clear
\[
\frac{\gamma_k^2}{2d^2} \EE[ \ip{\nabla^2 \varphi(X_k), (\delta X_k )^{\otimes 2} } \, | \, \mathcal{F}_k ] = \frac{\gamma_k^2}{2d^2}  \ip{\nabla^2 \varphi(X_k), ( \delta X_k )^{\otimes 2} }. 
\]
Due to the factor of $\tfrac{1}{d^2}$, this term will be of lower order and disappear as $d \to \infty$ (see Section~\ref{sec:lower_term_Hessian}). As such, we define it as
\begin{equation} \label{eq:Hess_lower_term_2}
\mathcal{E}_{k,2}^{\text{Hess}}(\varphi) \defas \frac{\gamma_k^2}{2d^2} \ip{ (\Dif^2 \varphi)(X_k), (\delta X_k)^{\otimes 2}}. 
\end{equation}
Lastly, for the cross term in \eqref{eq:Hessian_taylor_term}, 
\[
\frac{\gamma_k^2}{d^2} \ip{\nabla^2 \varphi (X_k), a_{k+1} \otimes \nabla_x f(r_k) \otimes \delta X_k}. 
\]
As we saw in \eqref{eq:mean_a}, the conditional expectation is
\begin{equation}
\begin{aligned}
\EE[ &\ip{\nabla^2 \varphi(X_k), a_{k+1} \otimes \nabla_x f(r_k) \otimes \delta X_k} \, | \, \mathcal{F}_k ] \\
&
= 
\EE [ \ip{\nabla^2 \varphi(X_k), \sqrt{K} \Pi_k v_k \otimes \nabla_x f(r_k) \otimes \delta X_k} \, | \, \mathcal{F}_k],
\end{aligned}
\end{equation}
with $v_k \sim N(0,I_d)$. Also due to the $\tfrac{1}{d^2}$, this term will be of lower order and disappear as $d \to \infty$ (see Section~\ref{sec:lower_term_Hessian}), and thus, we define
\begin{equation}
    \mathcal{E}_{k,3}^{\text{Hess}}(\varphi) \defas \frac{\gamma_k^2}{d^2} \ip{\nabla^2 \varphi(X_k), \sqrt{K} \Pi_k v_k \otimes \nabla_x f(r_k) \otimes \delta X_k}. 
\end{equation}

Putting this all back together, we get the following for the Hessian term in \eqref{eq:taylor_theorem}
\begin{equation} \label{eq:Hessian_term}
\begin{aligned}
    \frac{\gamma_k^2}{2d^2} \ip{ \nabla^2 \varphi(X_k),  \Delta_k^{\otimes 2} } 
    &
    =
    \frac{\gamma_k^2}{2d^2} \ip{\nabla^2 \varphi(X_k), K \otimes \EE[ \nabla_x f(r_k)^{\otimes 2} \, | \, \mathcal{F}_k ]}
    + 
    \Delta \mathcal{M}_{k}^{\text{Hess}}(\varphi)
    + \EE[ \mathcal{E}_k^{\text{Hess}}(\varphi) \, | \, \mathcal{F}_k] 
    \\
    \text{where} \quad \Delta \mathcal{M}_k^{\text{Hess}}(\varphi)
    &
    = 
    \frac{\gamma_k^2}{2d^2} \bigg ( \ip{\nabla^2 \varphi(X_k), \Delta_k^{\otimes 2}} - \EE[ \ip{\nabla^2 \varphi(X_k), \Delta_k^{\otimes 2}} \, | \, \mathcal{F}_k] \bigg ),
    \\
    \text{and} \quad \mathcal{E}_k^{\text{Hess}}(\varphi)
    &
    = 
    \mathcal{E}_{k,1}^{\text{Hess}}(\varphi) + \mathcal{E}_{k,2}^{\text{Hess}}(\varphi) + \mathcal{E}_{k,3}^{\text{Hess}}(\varphi)
    \\
    &
    = -\frac{\gamma_k^2}{2d^2} \ip{\nabla^2 \varphi (X_k), \sqrt{K} \Pi_k \sqrt{K} \otimes \nabla_x f(r_k)^{\otimes 2}} 
    \\
    &
    + \frac{\gamma_k^2}{2d^2}
    \ip{ \nabla^2 \varphi(X_k), \big ( \sqrt{K} \Pi_k v_k \big )^{\otimes 2} \otimes \nabla_x f(r_k)^{\otimes 2}}
    \\
    &
    + \frac{\gamma_k^2}{2d^2} \ip{\nabla^2 \varphi(X_k), \delta X_k)^{\otimes 2}}
    \\
    &
    + \frac{\gamma_k^2}{d^2} \ip{ \nabla^2 \varphi(X_k), \sqrt{K} \Pi_k v_k \otimes \nabla_x f(r_k) \otimes \delta X_k }.
\end{aligned}
\end{equation}

We have successfully identified the martingale increments of a \textit{single} update of SGD, that is, by \eqref{eq:Hessian_term} and \eqref{eq:grad_term} in the Taylor expansion \eqref{eq:taylor_theorem},  
\begin{equation} \label{eq: taylor_expansion_result}
    \begin{aligned}
        \varphi(X_{k+1}) 
        & 
        =
        \varphi(X_k) - \frac{\gamma_k}{d} \ip{\nabla \varphi(X_k), \nabla \mathcal{R}_\delta(X_k)}
        + 
        \frac{\gamma_k^2}{2d^2} \ip{\nabla^2 \varphi(X_k), K \otimes \EE[ \nabla_x f(r_k)^{\otimes 2} \, | \, \mathcal{F}_k ]}
        \\ 
        & + \Delta \mathcal{M}_k^{\text{Grad}}(\varphi) + \Delta \mathcal{M}_k^{\text{Hess}}(\varphi) + \EE[ \mathcal{E}_k^{\text{Hess}}(\varphi) \, |\, \mathcal{F}_k] 
    \end{aligned}
\end{equation}
where the error terms look like
\begin{equation} \label{eq:error_terms}
    \begin{aligned}
    \Delta \mathcal{M}_k^{\text{grad}}(\varphi) 
    &
    = \frac{\gamma_k}{d} \ip{ \nabla \varphi (X_k), a_{k+1} \otimes \nabla_x f(r_k) }
    - \frac{\gamma_k}{d} \EE \big [ \ip{ \nabla \varphi(X_k), a_{k+1} \otimes \nabla_x f(r_k) } \, | \, \mathcal{F}_k \big ]
    \\
    \Delta \mathcal{M}_k^{\text{Hess}} (\varphi)
    &
    = 
    \frac{\gamma_k^2}{2d^2} \bigg ( \ip{\nabla^2 \varphi(X_k), \Delta_k^{\otimes 2} } - \EE[ \ip{\nabla^2 \varphi(X_k), \Delta_k^{\otimes 2}} \, | \, \mathcal{F}_k] \bigg ) 
    \\
        \mathcal{E}_k^{\text{Hess}} (\varphi)
        & = -\frac{\gamma_k^2}{2d^2} \ip{\nabla^2 \varphi(X_k), \sqrt{K} \Pi_k \sqrt{K} \otimes \nabla_x f(r_k)^{\otimes 2}} 
    \\
    &
    + \frac{\gamma_k^2}{2d^2}
    \ip{ \nabla^2 \varphi (X_k), \big ( \sqrt{K} \Pi_k v_k \big )^{\otimes 2} \otimes \nabla_x f(r_k)^{\otimes 2}}
    \\
    &
    + \frac{\gamma_k^2}{2d^2} \ip{\nabla^2 \varphi(X_k), (\delta X_k)^{\otimes 2}} 
    \\
    &
    + \frac{\gamma_k^2}{d^2} \ip{ \nabla^2 \varphi (X_k), \sqrt{K} \Pi_k v_k \otimes \nabla_x f(r_k) \otimes \delta X_k }
    \end{aligned}
\end{equation}
Here $W_k = X_k \oplus X^{\star} \in \mathcal{A} \otimes \mathcal{O}^+$, $v_k \sim N(0, I_d)$, $\Pi_k = Q_k Q_k^T$, $r_k = \ip{a_{k+1}, W_k}_{\mathcal{A}}$, $\Delta_k = a_{k+1} \otimes \nabla f(r_k) + \delta X_k$, and $K = \EE[a \otimes a]$. 

Indeed, we now utilize our continuous time to sum up (integrate). For this, we introduce the forward difference
\[
(\Delta \varphi)(X_{j}) \defas \varphi(X_{j+1}) - \varphi(X_{j}),
\]
and thus, 
\begin{equation}
\varphi(X_{td}) = \varphi(X_0) + \sum_{j=0}^{\lfloor td \rfloor - 1} (\Delta \varphi)(X_j). 
\end{equation}
Therefore, we have
\[
\varphi(X_{td}) = \varphi(X_0) + \sum_{j=0}^{\lfloor td \rfloor-1} (\Delta \varphi)(X_j)  \defas \varphi(X_0) + \int_0^{t} d \cdot (\Delta \varphi)(X_{sd}) \, \dif s + \xi_{td},
\]
where $ \displaystyle |\xi_{td}|= \bigg | \int_{(\lfloor td\rfloor-1)/d}^t d \cdot \Delta \varphi(X_{sd})  \,\dif s \bigg | \le \max_{0 \le j \le \lceil td \rceil} \{ | \Delta \varphi(X_j) | \}.$ Note 
an analogous definition for the martingale (and its increment) hold
\[
 \mathcal{M}_{td} = \sum_{j=0}^{\lfloor td \rfloor-1} \Delta \mathcal{M}_j.
\]
With this, we have our Doob decomposition for SGD
\begin{align}
\varphi(X_t) 
&
= \varphi(X_0) - \int_0^{t} \gamma(s) \ip{ \nabla \varphi(X_{sd}), \nabla \mathcal{R}_{\delta}(X_{sd})} \, \dif s \label{eq:integral_1} \\
&
        + 
        \frac{1}{2d} \int_0^{t} \gamma(s)^2 \ip{\nabla^2 \varphi(X_{sd}), K \otimes \EE[ \nabla_x f(r_{sd})^{\otimes 2} \, | \, \mathcal{F}_{sd} ]} \, \dif s \label{eq:integral_2}
        \\
        & +  \sum_{j=0}^{\lfloor td \rfloor-1} \Delta \mathcal{M}_{j}^{\text{Grad}}(\varphi) + \Delta \mathcal{M}_{j}^{\text{Hess}}(\varphi) + \EE [\mathcal{E}_{j}^{\text{Hess}}(\varphi) \, | \, \mathcal{F}_{j} ]
        + \xi_{td}(\varphi). \label{eq:error_terms_integrated}
\end{align}

In Section~\ref{sec:error_bounds}, we prove that the term \eqref{eq:error_terms_integrated} is negligible as $d \to \infty$. The other two terms \eqref{eq:integral_1} and \eqref{eq:integral_2} survive the limit. Next, we show that SGD on $S$ is an $(\varepsilon, M,T)$ approximated solution.
\subsubsection{$S(W_{td}, z)$ is an approximate solution, proof of Proposition \ref{prop:SGD_approx_solution}} 
\label{sec:SGD_approximate_solution}
The goal in this section is to prove Proposition~\ref{prop:homogenized_SGD_approx_solution}, that is, show that 
\[
S(W_{td},z) = \ip{(W_{td}^{\otimes 2}, R(z;K)}_{\mathcal{A}^{\otimes 2}}
\]
is an approximate solution to the integro-differential equation \eqref{eq:ODE_resolvent_2}. 

\begin{proof}[Proof of Proposition~\ref{prop:SGD_approx_solution}]
Appling Eq. \eqref{eq:integral_1}, Eq. \eqref{eq:integral_2}, and Eq. \eqref{eq:error_terms_integrated} for each matrix element, following the same computation as in section \ref{sec:HSGD_approximate_solution} replacing $\HSGD_t$ with $W_{td}$, and $\rho_t$ with $r_{td}$, 
\begin{align} 
S(W_{td},z)
&
=S(W_{0},z) + \int_0^{t} \mathscr{F}(z, S(W_{sd}, z)) \, \dif s 
\\
& +  \sum_{j=0}^{\lfloor td \rfloor-1} \Delta \mathcal{M}_{j}^{\text{Grad}}(S) + \Delta \mathcal{M}_{j}^{\text{Hess}}(S) + \EE [\mathcal{E}_{j}^{\text{Hess}}(S) \, | \, \mathcal{F}_{j} ] 
+ \xi_{td}(S). 
\end{align}
Thus to show that $S(W_{td}, \cdot)$ is an approximate solution of the integro-differential equation \eqref{eq:ODE_resolvent_2} it amounts to bounding the martingales and error terms where $C$ is a positive constant independent of $d$. Let $\Gamma = \{ z \, : \, |z| = \max\{1, 2\|K\|_{\sigma}\}\}$.  
We thus have that for all $z \in \Gamma$,
\begin{equation}
\begin{aligned}
&\sup_{0 \le t \le T \wedge \hat{\tau}_{M} } \| S(W_{td}, z) - S(W_0,z) - \int_0^t \mathscr{F}(z, S(W_{sd}, z )) \, \dif s \| \\ &\le 
\sup_{0 \le t \le T \wedge \hat{\tau}_{M} } \|\mathcal{M}_{td}^{\text{Grad}}(S(\cdot, z)) \|+\sup_{0 \le t \le T \wedge \hat{\tau}_{M} } \|\mathcal{M}_{td}^{\text{Hess}}(S(\cdot, z)) \|
\\&+\sup_{0 \le t \le T \wedge \hat{\tau}_{M} } \|\sum_{j=0}^{\lfloor td \rfloor-1}\EE [\mathcal{E}_{j}^{\text{Hess}}(S) \, | \, \mathcal{F}_{j} ]  \|+\sup_{0 \le t \le T \wedge \hat{\tau}_{M} } \|\xi_{td}(S) \|.
\end{aligned}
\end{equation}
Next, fix a constant $\delta > 0$. Let $\Gamma_{\delta} \subset \Gamma$ such that there exists a $\bar{z} \in \Gamma_{\delta}$ such that $|z-\bar{z}| \le d^{-\delta}$ and the cardinality of $\Gamma_{\delta}$, $|\Gamma_{\delta}| = C d^{\delta}$ where $C > 0$ depending on $\|K\|_{\sigma}$. 
For all $z \in \Gamma$, we note that for some constants $C,c > 0$ such that $\vartheta_{c \cdot M} \le \hat{\tau}_{M} \le \vartheta_{C \cdot M}$ (see Lemma~\ref{lem:normequivalence}). Consequently, we evaluate the error with the stopped process $W_{td}^{\vartheta} = W_{td \wedge \vartheta}$ instead of using $\hat{\tau}_M$.
By the martingale errors proposition,  Proposition~\ref{prop:gradmartingale}, and  Proposition~\ref{prop:hessmartingale} which we have deferred the proof to Section~\ref{subsec:SGD_martingales}, we have that for any $\hat{\delta} > 0$ 
\begin{equation}
\sup_{z \in \Gamma_\delta} \sup_{0 \leq t\leq T} \|\cM_{d(t \wedge \vartheta_{CM})}^{\grad}(S(\cdot, z))\|<d^{-\frac12+\hat{\delta}}\quad \text{w.o.p.},
\end{equation}
and,
\begin{equation}
\sup_{z \in \Gamma_\delta} \sup_{0 \leq t\leq T}\| \cM_{(t \wedge \vartheta_{CM})d}^{\hess} (S(\cdot, z))\|<d^{-1+\hat{\delta}} \quad \text{w.o.p.}
\end{equation}
In addition, for the Hessian error by proposition~\ref{prop:Hessian-error} which we have deferred the proof to Section~\ref{sec:lower_term_Hessian} together with Jensen's inequality,
\begin{equation}
   \sup_{z \in \Gamma_\delta} \sup_{0 \le t \le T} \sum_{j=0}^{\lfloor( t \wedge \vartheta_{CM}) d\rfloor -1} \| \EE[ \mathcal{E}_j^{\text{\rm Hess}}(S(\cdot, z)) \, | \, \mathcal{F}_{j}] \| \le C (L(f))^2  d^{-1+\hat{\delta}},\quad \text{w.o.p.}
\end{equation}
Last, 
\begin{equation}
\sup_{0 \le t \le T \wedge \hat{\tau}_{M} }   \|\xi_{td}(S) \| \le \sup_{0 \le t \le T\wedge \hat{\tau}_{M}  } \, \|\Delta S_{td} \| = \frac{1}{d}\sup_{0 \le t \le T\wedge \hat{\tau}_{M}  } \, \|\mathscr{F}(z, S(W_{td}, z)\| 
\end{equation}
where 
\begin{equation}
\begin{aligned}
\|\mathscr{F}(z, S(W_{td}, \cdot))\|& \le     
     \bar{\gamma}C(\|K\|_\sigma) \| H( {B}_{td})\| +  \frac{\bar{\gamma}^2}{d}
    \tr(K R(z;K)) \|I({B}_{td})\| 
    \\
    & +\bar{\gamma}\delta|\mathcal{O}|\|{S}(W_{td}, z)\|
    + \bar{\gamma} \|{S}(W_{td}, z)z\|\|H({B}_{td})\|
\end{aligned}   
\end{equation}
such that $B_{td} \defas  \frac{-1}{2\pi i} \oint_{\Gamma_{\delta }} z{S}(W_{td},z) \, \dif z $. Next, using Assumptions \ref{assumption:unbiased} and Assumption \ref{assumption:E_loss_pseudo_Lip} and plugging Eq. (\ref{eq:H_bound}), Eq. \eqref{eq:I_bound}, and $
\|S(W_{td},z)\| 
\le C(\|K\|_{\sigma}) \cdot M  
$, there is a positive constant positive $C = C(\|K\|_\sigma, \bar{\gamma}, |\mathcal{O}|,M, L(h),L(I))$, such that 
$\|\mathscr{F}(z, {S}(W_{td}, \cdot))\|\le C.$ Therefore, 
\begin{equation}
\sup_{0 \le t \le T \wedge \hat{\tau}_{M} }   \|\xi_{td}(S) \| \le Cd^{-1}.
\end{equation}
Consequently, combining all the errors, we deduce that for some $C>0$, which does not depend on $d$, or $n$
\begin{align*}
\sup_{0 \le t \le T \wedge \hat{\tau}_{M} } \| S(W_{td}, z) - S(W_0,z) - \int_0^t \mathscr{F}(z, S(W_{sd}, z )) \, \dif s \|_{\Gamma_{\delta}} 
\le
C d^{\hat{\delta}/2 - 1/2} \quad \text{w.o.p}.
\end{align*}
An application of the net argument, Lemma~\ref{lem:net_argument}, finishes the proof after setting $\hat{\delta} = 1-2\delta$ for $\delta\in (0, 1/2)$. 
\end{proof}

\subsection{Error bounds} \label{sec:error_bounds}
Recall, letting $W = X \oplus X^{\star}$, we are interested in the statistic 
\[ S(W,z) = \ip{W \otimes W, R(z;K)}_{\mathcal{A}^{\otimes 2}},\]
where $R(z;K) = (K-zI_{\mathcal{A}})^{-1}$ and throughout this section, the contour
\[
\Gamma = \{ z \, : \, |z| = \max\{1, 2 \|K\|_{\sigma} \} \}.
\]
This section is devoted to controlling the error terms that arise when comparing SGD and homogenized SGD under $S$ with $\mathscr{F}$ from the integro-differential equation \eqref{eq:ODE_resolvent_2}.

Before proceeding, we present some bounds on the derivatives of $S$. 

\begin{lemma} \label{lem:S_derivative_bounds}
There exists constants $c, C = C(|\mathcal{O}^+|) > 0$ such that
\[
c \|W\|^2 \le \|S(W,z)\|_{\Gamma} \le C \|W\|^2, \quad \|\nabla_X S(W,z)\|_{\Gamma} \le C \|W\|, \quad 
\text{and} \quad \|\nabla^2_X S(W,z)\|_{\Gamma} \le C. 
\]
Moreover, 
\[
\ip{W^{\otimes 2}, K}_{\mathcal{A}^{\otimes 2}} = \frac{-1}{2 \pi i} \oint_{\Gamma} z S(W,z) \, \dif z.
\]
\end{lemma}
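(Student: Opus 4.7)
The plan is to exploit the two elementary facts that drive everything: (i) on the contour $\Gamma = \{|z|=\max\{1, 2\|K\|_\sigma\}\}$ one has $\|R(z;K)\|_\sigma \le 2$, which follows from the Neumann series expansion already used in the excerpt to derive \eqref{eq:resolvent_bound_1}; and (ii) the residue calculus identities $\tfrac{-1}{2\pi i}\oint_\Gamma R(z;K)\dif z = I_\mathcal{A}$ and $\tfrac{-1}{2\pi i}\oint_\Gamma z R(z;K)\dif z = K$, both of which reduce, after spectral decomposition $K = \sum_i \lambda_i \omega_i^{\otimes 2}$, to summing the residues $\tfrac{-1}{2\pi i}\oint_\Gamma (\lambda_i - z)^{-1}\dif z = 1$ and $\tfrac{-1}{2\pi i}\oint_\Gamma z(\lambda_i - z)^{-1}\dif z = \lambda_i$ over the eigenvalues enclosed by $\Gamma$.

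The upper bound $\|S(W,z)\| \le \|W\|^2 \|R(z;K)\|_\sigma \le 2\|W\|^2$ is immediate from the definition of $S$, Cauchy–Schwarz on the partial contraction, and (i). For the derivative bounds I would write $W = X \oplus X^\star$ in the block form \eqref{eq:matrix_form_1}, so $S(W,z)$ is the $2\times 2$ block matrix with entries $X^T R(z;K) X$, $X^T R(z;K) X^\star$ and their transpose/conjugate companions; only the first three blocks depend on $X$. Differentiating once gives expressions of the form $R(z;K)X$ or $R(z;K)X^\star$ (tensored with identity-on-$\mathcal{O}$ blocks), each of Hilbert–Schmidt norm at most $\|R(z;K)\|_\sigma \|W\| \le 2\|W\|$; differentiating a second time produces tensors of the form $R(z;K)$ itself sandwiched with identity blocks, hence norm at most $2$ on $\Gamma$ up to a factor depending on $|\mathcal{O}^+|$. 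Taking suprema over $z\in\Gamma$ yields the two derivative bounds.

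For the lower bound I use (ii): contracting $\tfrac{-1}{2\pi i}\oint_\Gamma R(z;K)\dif z = I_\mathcal{A}$ against $W^{\otimes 2}$ in the $\mathcal{A}^{\otimes 2}$ slots gives
\[
\tfrac{-1}{2\pi i}\oint_\Gamma S(W,z)\dif z = \langle W^{\otimes 2}, I_\mathcal{A}\rangle_{\mathcal{A}^{\otimes 2}} = W^T W \in (\mathcal{O}^+)^{\otimes 2}.
\]
Since $W^T W$ is positive semidefinite on the fixed-dimensional space $\mathcal{O}^+$, Cauchy–Schwarz in the form $\tr(W^T W) \le \sqrt{|\mathcal{O}^+|}\,\|W^T W\|$ gives $\|W\|^2 = \tr(W^T W) \le \sqrt{|\mathcal{O}^+|}\,\|W^T W\|$. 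Estimating the latter by $|\Gamma|/(2\pi) \cdot \|S(W,\cdot)\|_\Gamma = \max\{1,2\|K\|_\sigma\}\|S(W,\cdot)\|_\Gamma$ yields a constant $c$ of the required dependence. The displayed integral identity for $\langle W^{\otimes 2}, K\rangle_{\mathcal{A}^{\otimes 2}}$ is the same computation applied to $zR(z;K)$ in place of $R(z;K)$.

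There is no serious obstacle here: the argument is a direct application of the resolvent estimate and residue calculus, with all constants manifestly independent of $d$ since every quantity is measured either in the operator norm of $R(z;K)$ on $\Gamma$ or in a comparison between Hilbert–Schmidt norm and trace on the fixed-dimensional space $(\mathcal{O}^+)^{\otimes 2}$. The only minor point to flag is that the stated constant $C$ implicitly also depends on $\|K\|_\sigma$ through the size of the contour, but this is harmless since $\|K\|_\sigma$ is bounded independently of $d$ by Assumption~\ref{ass:data_normal}.
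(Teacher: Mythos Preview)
Your proposal is correct and follows essentially the same approach as the paper's proof: the Neumann series bound $\|R(z;K)\|_\sigma \le 2$ on $\Gamma$, explicit block differentiation of $S$ for the derivative bounds, and the residue identity $\tfrac{-1}{2\pi i}\oint_\Gamma R(z;K)\,\dif z = I_\mathcal{A}$ for the lower bound and the final displayed formula. Your treatment of the lower bound is in fact slightly more explicit than the paper's (spelling out the trace--Hilbert--Schmidt comparison on the fixed-dimensional space $(\mathcal{O}^+)^{\otimes 2}$), and your remark about the implicit $\|K\|_\sigma$-dependence of the constants via the contour length is correct and harmless under Assumption~\ref{ass:data_normal}.
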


\begin{proof} First, by Neumann series, $(K-zI_{\mathcal{A}})^{-1} = -1/z (I_{\mathcal{A}} - 1/z K)^{-1} = -\tfrac{1}{z} \sum_{j=0}^\infty ( \tfrac{1}{z} K)^j$. Using $|z| = \max \{ 1, 2 \|K\|_{\sigma}\}$, we immediately get $ \displaystyle \sup_{z \in \Gamma} \|R(\cdot;K)\|_{\sigma} \le 2$. The upper bound for the first term immediately follows from $\displaystyle \|S(W,z)\|_{\Gamma} \le \|W\|^2 \sup_{z \in \Gamma} \|R(\cdot;K)\|_{\sigma}$. 

On the other hand, we have that for $\Gamma = \{z \, : \, |z| = \max\{1, 2\|K\|_{\sigma} \} \}$, we can express
\[
\|\ip{W^{\otimes 2}, I_{\mathcal{A}}}\| = \| \frac{-1}{2 \pi i} \oint_{\Gamma} S(W,z) \, \dif z  \|^2 \le c \|S(W,z)\|_{\Gamma}, \quad \text{for some constant $c > 0$.}
\]
This proves the first result. 

For the derivative, a simple computation shows that 
\[
\nabla_X S(W,z) \cong ( \text{Id}_{\mathcal{O}} \oplus 0_{\mathcal{T}} ) \otimes \ip{W, R(z;K)}_{\mathcal{A}^{\otimes 2}} + \ip{W, R(z;K)}_{\mathcal{A}^{\otimes 2}} \otimes ( \text{Id}_{\mathcal{O}} \oplus 0_{\mathcal{T}} ).
\]
Taking norms and using that $\displaystyle \sup_{z \in \Gamma} \|R(\cdot;K)\|_{\sigma} \le 2$, the second result follows. 

Finally, for the Hessian, we have
\[
\nabla^2_X S(W,z) \cong ( \text{Id}_{\mathcal{O}} \oplus 0_{\mathcal{T}} ) \otimes \ip{( \text{Id}_{\mathcal{O}} \oplus 0_{\mathcal{T}} ), R(z;K)}_{\mathcal{A}^{\otimes 2}} + \ip{( \text{Id}_{\mathcal{O}} \oplus 0_{\mathcal{T}} ), R(z;K)}_{\mathcal{A}^{\otimes 2}} \otimes ( \text{Id}_{\mathcal{O}} \oplus 0_{\mathcal{T}} ).
\]
It immediately follows the bound on the Hessian. 

The last statement follows from Cauchy's integral formula which relates the resolvent, $R(z;K)$, with analytic functions of $f(K)$. In particular, we use the identity that
\[
K = \frac{-1}{2 \pi i } \oint_{\Gamma} z R(z;K) \, \dif z. 
\]
\end{proof}

To control the errors, we will need to make an \textit{a priori} estimate that effectively shows that the iterates of homogenized SGD and SGD remain bounded. Thus, recall, our definition, for fixed $M > 0$, the stopping times
\begin{equation}
\begin{aligned} \label{eq:stopping_time_1}
    \vartheta_{M} 
    &
    \defas \inf \{ t \ge 0 \, : \, \|W_{td} \|^2 > M \, \, \text{or} \, \, \ip{W_{td}^{\otimes 2}, K}_{\mathcal{A}^{\otimes 2}} \not \in \mathcal{U}\}\\
    \text{or} \quad  \vartheta_{M} 
    &
    \defas \inf \{t \ge 0 \, : \, 
    \|\HSGD_t\|^2  > M \, \, \text{or} \, \, \ip{\HSGD_t^{\otimes 2}, K}_{\mathcal{A}^{\otimes 2}} \not \in \mathcal{U} \},     
\end{aligned}
\end{equation}
depending on whether we are working with SGD iterates or homogenized SGD iterates. We often drop the $M$ so that $\vartheta \defas \vartheta_M$. It will be convenient to work with the stopped processes, $W_{td }^{\vartheta} \defas W_{ t \wedge \vartheta d }$ and $\HSGD_t^{\vartheta} \defas \HSGD_{t \wedge \vartheta}$. 

\begin{remark} The stopping time $\hat{\tau}_M = \inf \{ t \ge 0 \, : \, \|S(\HSGD_t, z)\|_{\Gamma} >  M \, \, \text{or} \, \, \frac{-1}{2 \pi i} \oint_{\Gamma} z S(\HSGD_t, z) \, \dif z \not \in \mathcal{U} \} \quad \text{and} \quad \quad \hat{\tau}_M = \inf \{ t \ge 0 \, : \, \|S(W_{td}, z)\|_{\Gamma} > M, \, \, \text{or} \, \, \frac{-1}{2 \pi i} \oint_{\Gamma} z S(W_{td}, z) \, \dif z \not \in \mathcal{U} \}$ are related to $\vartheta_M$ by positive constants $c, C > 0$, $\vartheta_{c \cdot M} \le \hat{\tau}_{M} \le  \vartheta_{C \cdot M}$ (see Lemma~\ref{lem:S_derivative_bounds}).  
\end{remark}

In the remainder of this section, we prove a series of propositions, bounding the martingale terms that arise from homogenized SGD and SGD respectively.  Throughout these proofs, we use $C$ to denote a constant that may depend on various bounded quantities, namely $\gamma$, $T$, $\delta$, $|\mathcal{O}^+|$, $\alpha$, $\|K\|_\sigma$, and $M$, but does not depend on $d$.  The value of $C$ may change throughout these proofs and is not necessarily the same as $C$ in Lemma~\ref{lem:S_derivative_bounds}.

\subsubsection{Homogenized SGD Martingale Error} \label{sec:HSGD_martingale}
In this section, we control the martingale that arises in homogenized SGD, that is, for a test function $\varphi \, : \, \mathcal{A} \otimes \mathcal{O} \to \mathbb{R}$,
\begin{equation}
\begin{aligned}
    \mathcal{M}^{\text{HSGD}}_{t}(\varphi)
    &
    \defas \int_0^t \dif \mathcal{M}_s^{\text{HSGD}}(\varphi)  
    = 
    \frac{1}{\sqrt{d}} \int_0^t \gamma(s) \cdot \ip{ \ip{\sqrt{K} \otimes (\EE_{a, \epsilon} [ \nabla_x f(\rho_s)^{\otimes 2} ])^{1/2}, \nabla \varphi(\WHSGD_s)}_{\mathcal{A} \otimes \mathcal{O}}, \dif B_{s}}. 
\end{aligned}
\end{equation}
As introduced in Remark~\ref{rmk:martingal_S}, we are interested in controlling $\mathcal{M}_t^{\text{HSGD}}(S(\cdot, z))$.

To control the fluctuations of this martingale, we need to control its quadratic variation, defined as follows. Consider a partition of time for $[0,t]$, that is, $0 = t_0 < t_1 < \hdots < t_n = t$ such that the size of the partition $ \displaystyle \Delta t = \max_{i} \{ t_i - t_{i-1}\} \to 0$. We define for the continuous process $Y$,
\[ [Y_t(n)] = \sum_{k=1}^n (Y_{t_k} - Y_{t_{k-1}})^2.
\]
If, for every partition of time $[0,t]$ such that $\Delta t \to 0$, the process $[Y_t(n)]$ converges in probability to a process $[Y_t]$ as $n \to \infty$, we call $[Y_t]$ the \textit{quadratic variation} of $Y$ (see \cite[Chapter 1]{protter2005stochastic} for details). 
Using the quadratic variation of $\mathcal{M}_t^{\text{HSGD}}$, we will show that the martingale arising from homogenized SGD is small. 

\begin{proposition}[Homogenized SGD martingale small.] \label{prop:HSGD_Martingale_Bound} Suppose $f \, : \, \mathcal{O} \oplus \mathcal{T} \oplus \mathcal{T} \to \mathbb{R}$ is $\alpha$-pseudo-Lipschitz function with constant $L(f)$ (see Assumption~\ref{ass:pseudo_lipschitz}). Let the statistic $S \, : \, \mathcal{A} \otimes \mathcal{O} \to (\mathcal{O}^+)^{\otimes 2}$ be defined as in \eqref{eq:S_statistic}. For any $T > 0$,  $\zeta > 0$ and fix $z \in \Gamma$, there is some constant $C$ such that, with overwhelming probability, 
\begin{equation}
    \sup_{0 \le t \le T} \| \mathcal{M}^{\text{HSGD}}_{t \wedge \theta} (S(\cdot,z)) \| \le C L(f) \, \, d^{\zeta / 2 - 1/2}.
\end{equation}
\end{proposition}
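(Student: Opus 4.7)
The plan is to prove Proposition~\ref{prop:HSGD_Martingale_Bound} via a standard quadratic-variation / martingale concentration argument, exploiting the stopping time $\vartheta = \vartheta_M$ to deterministically bound the integrand. Since $S(\cdot, z)$ takes values in $(\mathcal{O}^+)^{\otimes 2}$ whose dimension is fixed independent of $d$, it suffices to bound each coordinate $\mathcal{M}^{\text{HSGD}}_{t \wedge \vartheta}(S_{ij}(\cdot, z))$ separately and then union-bound over the $O(1)$ many coordinates. Each such coordinate is a continuous $L^2$-martingale with respect to the Brownian filtration of \eqref{eq:HSGD}, so one can apply the Bernstein-type tail inequality for continuous martingales with bounded quadratic variation.

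The key calculation is the quadratic variation. Writing
\[
\dif \mathcal{M}_t^{\text{HSGD}}(S_{ij}(\cdot,z)) = \frac{\gamma(t)}{\sqrt d} \langle \sqrt K \otimes \sqrt{\mathbb{E}_{a,\epsilon}[\nabla_x f(\rho_t)^{\otimes 2}]}, \nabla_X S_{ij}(\WHSGD_t,z) \otimes \dif B_t\rangle,
\]
I obtain
\[
\dif [\mathcal{M}^{\text{HSGD}}(S_{ij}(\cdot,z))]_t \leq \frac{\bar\gamma^2}{d} \, \|K\|_\sigma \, \bigl\|\mathbb{E}_{a,\epsilon}[\nabla_x f(\rho_t)^{\otimes 2}]\bigr\|_\sigma \, \|\nabla_X S_{ij}(\WHSGD_t,z)\|^2 \, \dif t.
\]
Now Lemma~\ref{lem:S_derivative_bounds} yields $\|\nabla_X S_{ij}(\WHSGD_t,z)\|_\Gamma \leq C \|\WHSGD_t\|$ uniformly in $z \in \Gamma$, and Lemma~\ref{lem:growth_grad_f} (with $p=2$) gives $\|\mathbb{E}[\nabla_x f(\rho_t)^{\otimes 2}]\|_\sigma \leq C(L(f))^2 (1 + \|K\|_\sigma^{1/2} \|\HSGD_t\|)^{\max\{1,2\alpha\}}$. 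On the stopped interval $[0, T \wedge \vartheta]$ we have $\|\HSGD_t\|^2 \leq M$ deterministically, so the integrand is uniformly bounded by $\frac{C (L(f))^2}{d}$, giving
\[
[\mathcal{M}^{\text{HSGD}}(S_{ij}(\cdot,z))]_{T \wedge \vartheta} \leq \frac{C T (L(f))^2}{d},
\]
with $C$ depending on $\bar\gamma$, $\|K\|_\sigma$, $M$, $\alpha$, and $|\mathcal{O}^+|$, but not on $d$.

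With this deterministic quadratic-variation bound in hand, I apply the exponential martingale / Bernstein inequality for continuous martingales: if $N$ is a continuous martingale with $[N]_T \leq v$ almost surely, then $\Pr(\sup_{0 \leq t \leq T} |N_t| \geq x) \leq 2 e^{-x^2/(2v)}$. Plugging in $v = C T (L(f))^2/d$ and $x = C' L(f) d^{\zeta/2 - 1/2}$, the right-hand side becomes $2 \exp(-c \, d^\zeta)$ for $d$ large, which qualifies as overwhelming probability for any $\zeta > 0$. A union bound over the $|\mathcal{O}^+|^2 = O(1)$ matrix entries of $S$ then yields
\[
\sup_{0 \leq t \leq T} \|\mathcal{M}^{\text{HSGD}}_{t \wedge \vartheta}(S(\cdot, z))\| \leq C L(f) \, d^{\zeta/2 - 1/2} \quad \text{w.o.p.},
\]
as required.

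The main technical nuisance, rather than a genuine obstacle, is ensuring that the Frobenius-norm control on $\nabla_X S_{ij}(\WHSGD_t, z)$ and the operator-norm control on the conditional Fisher matrix combine correctly when contracted against $\dif B_t$; this is a routine Cauchy--Schwarz / It\^o isometry computation in the tensor notation of Section~\ref{sec:notation}, but must be executed carefully so that only bounded constants (and no hidden dimension factors) enter the prefactor $C$. Once that is done, the overall argument is a direct application of continuous-time martingale concentration made possible by the $1/\sqrt d$ prefactor inherent in the diffusion coefficient of homogenized SGD \eqref{eq:HSGD}.
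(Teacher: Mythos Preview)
Your proposal is correct and follows essentially the same approach as the paper: coordinate-wise reduction on $S_{ij}$, a deterministic bound on the quadratic variation via Lemma~\ref{lem:S_derivative_bounds} and Lemma~\ref{lem:growth_grad_f} on the stopped process, followed by the sub-Gaussian tail bound for continuous martingales with bounded quadratic variation and a union bound over the $|\mathcal{O}^+|^2$ entries. The paper's proof is organized in exactly this way, so there is nothing to add.
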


\begin{proof} Let $S_{ij} \defas S_{ij}(\cdot,z)$ be the $ij$-coordinate of $S$ for a fixed $z \in \Gamma$. First, we rewrite the martingale increment, $\dif \mathcal{M}_{t}^{\text{HSGD}}$, 
    \begin{equation}
        \dif \mathcal{M}_{t}^{\text{HSGD}}(S_{ij}) = \frac{\gamma_t}{\sqrt{d}} \ip{ \ip{\sqrt{K} \otimes (\EE_{a, \epsilon} [\nabla_x f(\rho_t)^{\otimes 2} ])^{1/2} , \nabla_X S_{ij} (\HSGD_t,z ) }_{\mathcal{A} \otimes \mathcal{O}}, \dif B_t }.
    \end{equation}
    The quadratic variation of $\mathcal{M}_t^{\text{HSGD}}$ is 
    \begin{equation} \label{eq:quad_variation_HSGD}
       [ \mathcal{M}_t^{\text{HSGD}}(S_{ij}) ] = \frac{1}{d} \int_0^t \gamma^2_s \| \ip{\sqrt{K} \otimes (\EE_{a, \epsilon} [\nabla_x f(\rho_s)^{\otimes 2}])^{1/2}, \nabla_X S_{ij}(\HSGD_s, z)}_{\mathcal{A} \otimes \mathcal{O}} \|^2 \, \dif s. 
    \end{equation}
    We need to compute $\displaystyle \sup_{0 \le t \le T} [\mathcal{M}_{t \wedge \vartheta}^{\text{HSGD}}(S_{ij})]$ and show that this quantity is small. In particular, we only need to show that the norm $\| \cdot \|^2$ inside the integral is small.  For this, we see that 
    \begin{equation}
        \begin{aligned}
            & 
            \| \ip{\sqrt{K} \otimes (\EE_{a, \epsilon} [\nabla_x f(\rho_s^{\vartheta})^{\otimes 2}])^{1/2}, \nabla_X S_{ij}(\HSGD_s^{\vartheta}, z)}_{\mathcal{A} \otimes \mathcal{O}} \|^2
            \\
            & \qquad \qquad \qquad \qquad 
            = 
            \ip{K \otimes \EE_{a, \epsilon} [\nabla_x f(\rho_s)^{\otimes 2}], \big (\nabla_X S_{ij} (\HSGD_s^{\vartheta},z) \big )^{\otimes 2} }
            \\
            & \qquad \qquad \qquad \qquad 
            = 
            \ip{K, \ip{\EE_{a, \epsilon} [\nabla_x f(\rho_s^{\vartheta})^{\otimes 2}], \big ( \nabla_X S_{ij} (\HSGD_s^{\vartheta},z) \big )^{\otimes 2} }_{\mathcal{O}^{\otimes 2}} } 
            \\
            & \qquad \qquad \qquad \qquad 
            \le 
            \|K\|_{\sigma} \| \|\ip{ \EE_{a, \epsilon}[\nabla_x f(\rho_s^{\vartheta})^{\otimes 2}], \big ( \nabla_X S_{ij}(\HSGD_s^{\vartheta}, z ) \big )^{\otimes 2}}\|
            \\
            & \qquad \qquad \qquad \qquad 
            \le 
            \|K\|_{\sigma} \EE_{a, \epsilon}[ \|\nabla_x f(\rho_s^{\vartheta}) \|^2 ] \|\nabla_X S_{ij} (\HSGD_s^{\vartheta}, z)\|^2.
        \end{aligned}
    \end{equation} 
    By Lemma~\ref{lem:S_derivative_bounds}, we have a bound on $\|\nabla_X S_{ij}(W,z)\| \le \|\nabla_X S(W,\cdot)\|_{\Gamma} \le C\|W\|$. From Lemma~\ref{lem:growth_grad_f}, the growth condition on $\EE_{a, \epsilon}[\|\nabla_x f(\rho)\|^2]$ yields
    \begin{equation} \label{eq:quad_variation_HSGD_1}
        \begin{aligned}
            \| &\ip{\sqrt{K} \otimes (\EE_{a,\epsilon}[\nabla_x f(\rho_s^{\vartheta})^{\otimes 2}])^{1/2}, \nabla_X S_{ij} (\HSGD_s^{\vartheta}, z)}_{\mathcal{A} \otimes \mathcal{O}} \|^2
            \le
             \|K\|_{\sigma} \EE_{a,\epsilon}[ \|\nabla f(\rho_s^{\vartheta}) \|^2 ] \|\nabla_X S_{ij} (\HSGD_s^{\vartheta},z)\|^2
            \\
            & \qquad \qquad
            \le
            C \cdot (L(f))^2 \|\HSGD_t^{\vartheta}\|^2 (1 + \|K\|_{\sigma}^{1/2} \|\HSGD_t^{\vartheta}\| )^{\max\{1, 2\alpha\}}
            \\
            &\qquad \qquad
            \le
            C \cdot (L(f))^2 M (1 + \sqrt{M} )^{\max\{1, 2 \alpha\}}.
        \end{aligned}
    \end{equation}
    Thus, \eqref{eq:quad_variation_HSGD} and \eqref{eq:quad_variation_HSGD_1}, together
    \begin{equation}
        \sup_{0 \le t \le T} [\mathcal{M}_{t \wedge \vartheta}^{\text{HSGD}}(S_{ij})] \le C (L(f))^2 \cdot \bar{\gamma}^2 \cdot d^{- 1}.
    \end{equation}
    Using the fact, if $\displaystyle \sup_{0 \le t \le T}  [ \mathcal{M}_{t \wedge \vartheta}^{\text{HSGD}} (S_{ij})] \le b$ a.s, then $\Pr ( \displaystyle \sup_{0 \le t \le T} | \mathcal{M}_{t \wedge \vartheta}^{\text{HSGD}}(S_{ij}) | > p ) \le \exp(-p^2 / 2b)$. By letting $p \defas \sqrt{C} L(f) d^{\zeta/2 - 1/2}$ for any $\zeta > 0$,
    \[
    \Pr( \sup_{0 \le t \le T} | \mathcal{M}_{t \wedge \vartheta}^{\text{HSGD}}(S_{ij}) | > p ) \le C \exp( - d^{\zeta} ).
    \]
    The result immediately follows after noting that the number of $ij$ coordinates is $|\mathcal{O}^+|^2$ which is independent of $d$. 
\end{proof}

\subsubsection{Bounds on the martingales $\cM_k^{\grad}$ and $\cM_k^{\hess}$ \label{subsec:SGD_martingales}}

In this section, we work with martingale increments coming from SGD applied to test functions $\varphi$. Recall, the expressions for the martingale increments for any quadratic statistics $\varphi$
\begin{align*}
\Delta \mathcal{M}_k^{\grad}(\varphi) 
   & = \frac{\gamma}{d} \ip{ \nabla \varphi(X_k), a_{k+1} \otimes \nabla_x f(r_k,\epsilon_{k+1}) }
    - \frac{\gamma}{d} \EE \big [ \ip{ \nabla \varphi(X_k), a_{k+1} \otimes \nabla_x f(r_k,\epsilon_{k+1}) }
     \, | \, \mathcal{F}_k \big ]
     \\
     \Delta \mathcal{M}_k^{\text{Hess}} (\varphi)
    &
    = 
    \frac{\gamma^2}{2d^2} \bigg ( \ip{\nabla^2 \varphi(X_k), \Delta_k^{\otimes 2} } - \EE[ \ip{\nabla^2 \varphi(X_k), \Delta_k^{\otimes 2}} \, | \, \mathcal{F}_k] \bigg )
\end{align*}
with
\[
\mathcal{M}_k(\varphi) = \sum_{j=1}^{k-1} \Delta \mathcal{M}_j(\varphi).
\]

\begin{proposition}[Gradient martingale]\label{prop:gradmartingale}
Suppose $f \, : \, \mathcal{O} \oplus \mathcal{T} \oplus \mathcal{T} \to \mathbb{R}$ is $\alpha$-pseudo-Lipschitz function with constant $L(f)$ (see Assumption~\ref{ass:pseudo_lipschitz}). Let the statistic $S \, : \, \mathcal{A} \otimes \mathcal{O} \to (\mathcal{O}^+)^{\otimes 2}$ be defined as in \eqref{eq:S_statistic}. Then, for any $\zeta>0$ and $T > 0$, and with overwhelming probability,
\beq
\sup_{0 \leq t\leq T} \|\cM_{d(t \wedge \vartheta)}^{\grad}(S(\cdot, z))\|<d^{-\frac12+\zeta}.
\eeq
\end{proposition}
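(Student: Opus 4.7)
The plan is to compute the predictable quadratic variation of $\cM^{\grad}(S(\cdot,z))$ and then apply a martingale moment inequality to extract the desired deviation probability. Because $S(\cdot,z)$ takes values in the finite-dimensional space $(\mathcal{O}^+)^{\otimes 2}$ whose dimension is bounded independently of $d$, it suffices to prove the bound coordinatewise for each entry $S_{ij}(\cdot,z)$ and union bound at the end. Throughout, I work with the stopped process $W_j^{\vartheta}\defas W_{j\wedge d\vartheta}$, so that $\|W_j\|\leq \sqrt{M}$ uniformly.

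\textbf{Step 1: Variance of a single increment.} Writing the contraction as a bilinear form $a_{j+1}^{T}M_{ij}c_{j+1}$, with $M_{ij}\defas \nabla_X S_{ij}(W_j,z)$ and $c_{j+1}\defas \nabla_x f(r_j;\epsilon_{j+1})$, I would bound
\[
\EE\bigl[|\Delta\cM_j^{\grad}(S_{ij}(\cdot,z))|^2 \,\big|\, \cF_j\bigr] \leq \frac{\bar\gamma^2}{d^2}\,\mathrm{Var}\bigl(a_{j+1}^{T}M_{ij}c_{j+1}\,\big|\,\cF_j\bigr).
\]
Lemma~\ref{lem:S_derivative_bounds} gives $\|M_{ij}\|_\sigma\leq C\|W_j\|$ uniformly in $z\in\Gamma$. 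The essential (and a priori nontrivial) point is that although $\|a_{j+1}\|^2$ is of order $d$, the contractions of $a_{j+1}$ with $\cF_j$-measurable vectors in $\mathcal{A}$ are Gaussian with variance at most $\|K\|_\sigma$ times the squared norm of that vector, giving an $O(1)$ rather than $O(d)$ contribution. The complication is that $c_{j+1}$ also depends on $a_{j+1}$ through $r_j$. I handle this by conditioning on $r_j$ via Lemma~\ref{lem:conditioning}: given $r_j$, $a_{j+1}$ decomposes into its projection onto the (low-dimensional) span of $\sqrt{K}W_j$ plus an independent orthogonal part of conditional covariance $\sqrt{K}(I-\Pi_j)\sqrt{K}\preceq K$. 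The orthogonal-part variance is bounded by $\|K\|_\sigma\|M_{ij}\|_\sigma^2\EE[\|c_{j+1}\|^2|\cF_j]$, while the projection part is a quadratic in $r_j$ whose conditional second moment is $O(1)$ by direct Gaussian computation. Combined with Lemma~\ref{lem:growth_grad_f} to bound $\EE[\|c_{j+1}\|^p|\cF_j]$ polynomially in $\|W_j\|$, and the stopping time to bound $\|W_j\|$, this yields $\EE[|\Delta\cM_j^{\grad}(S_{ij}(\cdot,z))|^2 | \cF_j]\leq C/d^2$ uniformly in $z\in\Gamma$ and $j\leq d\vartheta$.

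\textbf{Step 2: Moment method and Doob.} Summing the conditional variances, the predictable bracket satisfies $\langle\cM^{\grad}(S_{ij}(\cdot,z))\rangle_{d(T\wedge\vartheta)}\leq CT/d$. Applying the Burkholder--Davis--Gundy inequality and then Doob's $L^p$ maximal inequality, for any even integer $p$,
\[
\EE\Bigl[\sup_{0\leq t\leq T}\bigl|\cM_{d(t\wedge\vartheta)}^{\grad}(S_{ij}(\cdot,z))\bigr|^{p}\Bigr]\leq \left(\tfrac{p}{p-1}\right)^{p}\bigl(C_{\mathrm{BDG}}\,p\,CT/d\bigr)^{p/2}.
\]
(An appeal to BDG is licit because the stopped increments admit all moments under the Gaussian laws of $a_{j+1},\epsilon_{j+1}$.) Markov's inequality at threshold $d^{-1/2+\zeta}$ bounds the tail by $(C_{\mathrm{BDG}}\,p\,CT\,d^{-2\zeta})^{p/2}$; choosing $p=\lfloor d^{\zeta/2}\rfloor$ (or any $p$ growing faster than $\log d$ and slower than $d^{2\zeta}/C_{\mathrm{BDG}}CT$) gives a tail of $e^{-\omega(\log d)}$, i.e.\ overwhelming probability. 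A union bound over the $|\mathcal{O}^+|^2$ coordinates of $(\mathcal{O}^+)^{\otimes 2}$ concludes.

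\textbf{Main obstacle.} The crux is Step 1: the naive bound $|a_{j+1}^{T}Mc_{j+1}|\leq \|M\|_\sigma\|a_{j+1}\|\|c_{j+1}\|$ loses a factor of $\sqrt{d}$ from $\|a_{j+1}\|$ and would make $\langle\cM^{\grad}\rangle_{Td}$ of order $T$, yielding only an $O(1)$ bound and missing the target by a factor of $d^{1/2}$. Exploiting the Gaussian structure of $a_{j+1}$ correctly (conditioning on $r_j$, and using that Gaussian contractions against $\cF_j$-measurable tensors are $O(1)$ in variance) is what delivers the sharp $C/d^2$ conditional variance. The pseudo-Lipschitz rather than Lipschitz nature of $\nabla_x f$ is harmless here because the stopping time localizes $\|W_j\|$ and Lemma~\ref{lem:growth_grad_f} controls all moments polynomially in $\|W_j\|$.
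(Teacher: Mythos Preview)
Your approach is correct in outline and takes a genuinely different route from the paper. The paper proceeds by truncation plus Azuma: it introduces a projected martingale $\cM_k^{\grad,\beta}$ obtained by replacing $(r_k,\epsilon_{k+1})$ and $\ip{\nabla\varphi(X_k),a_{k+1}}_\cA$ with their projections onto balls of radius $\beta=d^\zeta$, observes that both quantities are $O(1)$-subgaussian on the stopped process, applies Azuma to the truncated martingale whose increments are almost surely $O(\beta^{2+\alpha}/d)$, and finally shows the truncation error is exponentially small. Your moment-method route avoids truncation but requires tracking how the martingale-inequality constants grow with $p$; the paper's route trades that bookkeeping for an explicit approximating martingale. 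Your Step~1 is also more elaborate than necessary: a direct Cauchy--Schwarz $\EE[Y_j^2\mid\cF_j]\leq\bigl(\EE[\|\ip{M_{ij},a_{j+1}}_\cA\|^4\mid\cF_j]\cdot\EE[\|c_{j+1}\|^4\mid\cF_j]\bigr)^{1/2}$ already gives the $O(1)$ bound, since $\ip{M_{ij},a_{j+1}}_\cA$ is an $|\mathcal O|$-dimensional Gaussian with $\cF_j$-measurable covariance of trace at most $\|K\|_\sigma\|M_{ij}\|^2=O(1)$; the conditioning on $r_j$ is not needed.

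One technical gap in Step~2: you invoke BDG with the \emph{predictable} bracket $\langle\cM\rangle$, but for discrete martingales BDG is stated with the quadratic variation $[\cM]=\sum_k(\Delta\cM_k)^2$, and without bounded increments the a.s.\ bound $\langle\cM\rangle_{dT}\leq CT/d$ alone does not yield your displayed $p$-th moment bound. The standard fix is a Burkholder--Rosenthal inequality $\|\cM_n\|_p^p\leq C_p\,\EE[\langle\cM\rangle_n^{p/2}]+C_p\sum_k\EE[|\Delta\cM_k|^p]$, and the second term is easily handled here: $\EE[|\Delta\cM_k|^p\mid\cF_k]\leq(C\sqrt p)^{(1+\alpha)p}/d^p$ by the same subgaussian estimates, so $\sum_k\EE[|\Delta\cM_k|^p]\leq Td\cdot(C\sqrt p)^{(1+\alpha)p}/d^p$, which is negligible against $d^{(-1/2+\zeta)p}$ for $p$ a small enough power of $d$. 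With that patch your argument goes through.
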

\begin{proof}
Let $\varphi(X) \defas S_{ij}(W,z)$ be the $ij$-coordinate of $S$. Throughout the proof of this proposition, we will be working on the stopped version of the martingale, $\cM^{\grad}_{(t\wedge \vartheta)d}$.  However, to lighten the notation, we will suppress the $\vartheta$ dependence in the subscript as well as the $\varphi$ and simply write $\cM_{td}^{\grad} \defas \cM_{(t \wedge \vartheta) d}^{\grad}(\varphi)$.  We have the martingale increments
\beq\begin{split}
\Delta \mathcal{M}_{k}^{\grad}
   & = \frac{\gamma_k}{d} \ip{ \nabla \varphi(X_k), a_{k+1} \otimes \nabla_x f(r_k,\epsilon_{k+1}) }
    - \frac{\gamma_k}{d} \EE \big [ \ip{ \nabla \varphi(X_k), a_{k+1} \otimes \nabla_x f(r_k,\epsilon_{k+1}) }
     \, | \, \mathcal{F}_k \big ]\\
     &=\frac{\gamma_k}{d} \ip{\ip{ \nabla \varphi(X_k), a_{k+1}}_\cA, \nabla_x f(r_k,\epsilon_{k+1}) }
    - \frac{\gamma_k}{d} \EE \big [  \ip{\ip{ \nabla \varphi(X_k), a_{k+1}}_\cA, \nabla_x f(r_k,\epsilon_{k+1}) }
     \, | \, \mathcal{F}_k \big ]
\end{split}\eeq
We define $\cM_k^{\grad,\beta}$ to be a new martingale with increments
\beq\begin{split}
\Delta\cM_k^{\grad,\beta}
=&\frac{\gamma_k}{d} \ip{\textstyle\proj_\beta\ip{ \nabla \varphi(X_k), a_{k+1}}_\cA, \nabla_x f\circ\textstyle\proj_\beta(r_k,\epsilon_{k+1}) }\\
   & - \frac{\gamma_k}{d} \EE \big [  \ip{\textstyle\proj_\beta\ip{ \nabla \varphi(X_k), a_{k+1}}_\cA, \nabla_x f\circ\textstyle\proj_\beta(r_k,\epsilon_{k+1}) }
     \, | \, \mathcal{F}_k \big ],
\end{split}\eeq
 where we note that there are two projections and the projection of $(r_k,\epsilon_{k+1})$ is in all coordinates of $\mathcal{O} \oplus \mathcal{T} \oplus \mathcal{T}$, even though the gradient $\nabla_x f$ is only with respect to the $x$ coordinates (i.e. the coordinates in $\mathcal{O})$.  We take the projection radius to be $\beta = d^\zeta$ for some $\zeta>0$ to be determined later.
We will bound $\cM_k^{\grad,\beta}$ first, and then bound the difference between $\cM_k^{\grad}$ and $\cM_k^{\grad,\beta}$.  

We begin by computing subgaussian bounds on the quantities that are going to be projected, namely $(r_k,\epsilon_{k+1})$ and $\ip{ \nabla \varphi(X_k), a_{k+1}}_\cA$.  
For the purposes of this section, when we refer to a vector as ``subgaussian,'' we mean that its entries individually satisfy the stated subgaussian concentration bound.
We can rewrite the quantities $r_k$ and $\ip{ \nabla \varphi(X_k), a_{k+1}}_\cA$ as
\beq\begin{split}
&r_k=\ip{W_k,a_{k+1}}_\cA
=\ip{W_k,\sqrt{K}v_{k+1}}_\cA
=\ip{\ip{W_k,\sqrt{K}}_\cA,v_{k+1}}_\cA\\
&\ip{ \nabla \varphi(X_k),a_{k+1}}_\cA
=\ip{\nabla \varphi(X_k),\sqrt{K}v_{k+1}}_\cA
=\ip{\ip{\nabla \varphi(X_k),\sqrt{K}}_\cA,v_{k+1}}_\cA.
\end{split}\eeq
so $r_k$ is $\| W_k\| _\sigma\| \sqrt{K}\| _\sigma$-subgaussian and $\ip{\nabla \varphi (X_k),a_{k+1}}_\cA$ is $\| \nabla \varphi(X_k)\| _\sigma\| \sqrt{K}\| _\sigma$-subgaussian where
$
\| \nabla \varphi(X_k)\|_\sigma = \sup_{z \in \Gamma} \|S_{ij}(W_k,z)\|_{\sigma} \le \|S(W_k,z)\|_{\Gamma} \leq C \| W_k\|
$
 by Lemma~\ref{lem:S_derivative_bounds}.  Furthermore, $\epsilon_{k+1}$ is 1-subgaussian by assumption.  Thus, since we are working on the stopped processes,
 \beq\label{eq:Mgrad_subgaussianbounds}
 \|r_{k},\epsilon_{k+1}\|_{\psi_2}=C,\qquad
 \|\ip{\nabla \varphi(X_{k}),a_{k+1}}_\cA\|_{\psi_2}=C \eeq
   These subgaussian bounds will be used to bound the difference between $\cM_{k}^{\grad}$ and $\cM_{k}^{\grad,\beta}$.
 
  Furthermore,  from the projections and the growth bound on $\nabla_x f$ in Lemma \ref{lem:growth_grad_f}, we get the norm bounds 
  \begin{align}
  \| \nabla _xf\circ\textstyle\proj_\beta(r_{k},\epsilon_{k+1})\| &\leq L(f)C\beta ^{\max\{1,\alpha \}}, \\
  \| \textstyle\proj_\beta \ip{\nabla\varphi(X_{k}),a_{k+1}}_\cA\| &\leq \beta .
  \end{align}
This gives us the bound
\beq
|\ip{\textstyle\proj_\beta \ip{ (\nabla \varphi)(X_{k}), a_{k+1}}_\cA, \nabla_x f\circ\textstyle\proj_\beta (r_{k},\epsilon_{k+1}) }|
\leq 
L(f)C\beta ^{2+\alpha }
\eeq
and, since this is an almost sure bound, it holds for the expectation as well, and we get 
\beq
|\Delta\cM_{k}^{\grad,\beta }|\leq\frac{2\gamma}{d}L(f)C\beta ^{2+\alpha }.
\eeq
Applying Azuma's inequality with the assumption $n=O(d)$, we obtain
\beq
\sup_{1\leq k\leq n}\Pr(|\cM_{k}^{\grad,\beta }|>t)<2\exp\left( \frac{-t^2}{2n\cdot (Cd^{-1}\beta ^{2+\alpha })^2} \right)
\leq 2\exp\left( \frac{-t^2}{C'd^{-1}\beta ^{2(2+\alpha )}} \right).
\eeq
Thus, with overwhelming probability,
\beq
\sup_{1\leq k\leq n}|\cM_{k}^{\grad,\beta }|<d^{-\frac12}\beta ^{3+\alpha }
\eeq
Finally, we bound the difference between $\{\cM_{k}^{\grad}\}_{k=1}^n$ and $\{\cM_{k}^{\grad,\beta }\}_{k=1}^n$.  For ease of notation, we write
\beq\begin{split}
G_k:=&\frac{\gamma}{d} \ip{\ip{ \nabla \varphi(X_{k}), a_{k+1}}_\cA, \nabla_x f(r_{k},\epsilon_{k+1}) },\\
G_{k,\beta }:=&\frac{\gamma}{d} \ip{\textstyle\proj_\beta \ip{ \nabla \varphi(X_{k}), a_{k+1}}_\cA, \nabla_x f\circ\textstyle\proj_\beta (r_{k},\epsilon_{k+1}) }.
\end{split}\eeq
The quantity we are trying to bound is
\beq
 |(G_k-\EE G_k)-(G_{k,\beta }-\EE G_{k,\beta })|\leq |G_k-G_{k,\beta }|+|\EE (G_k- G_{k,\beta })|
 \eeq
First, we will show that $G_k-G_{k,\beta }=0$ with overwhelming probability. Using the subgaussian bounds on $(r_{k},\epsilon_{k+1})$ and $\ip{\nabla\varphi(X_{k}),a_{k+1}}_\cA$, we have
\beq\begin{split}
\Pr(G_k\ne G_{k,\beta })&\leq\Pr(\| r_{k},\epsilon_{k+1}\| >\beta )\;+\;
\Pr(\|\ip{\nabla\varphi(X_{k}),a_{k+1}}_\cA\|>\beta )\\
&<4\exp\left(-\frac{\beta ^2}{2C}\right).
\end{split}\eeq
Since $\beta=d^\zeta$ for some $\zeta>0$, the probability bounds above imply that $G_k-G_{k,\beta }=0$ with overwhelming probability, and it remains to bound the difference in their expectations.  
For this, we have
\begin{equation}
\begin{aligned}
|\EE [G_k- G_{k,\beta }]|
&
=
\left|\EE [ (G_k- G_{k,\beta })\cdot 1\{G_k\ne G_{k,\beta }\} ]\right|
\\
&
\leq
|\EE [ G_k\cdot 1\{G_k\ne G_{k,\beta }\}]|+|\EE [G_{k,\beta }\cdot 1\{G_k\ne G_{k,\beta } \}]|
\end{aligned}
\end{equation}
For $\EE [ G_{k,\beta }\cdot 1 \{G_k\ne G_{k,\beta }\}]$, we have
\begin{equation}
\begin{aligned}
|\EE [G_{k,\beta} \cdot 1\{G_k\ne G_{k,\beta } \}] | 
&
\leq
\max |G_{k,\beta }|\;\Pr(G_k\ne G_{k,\beta })
\\
&\leq
d^{-1}L(f)C\beta ^{2+\alpha }\cdot4\exp(-\beta ^2/(2C)).
\end{aligned}
\end{equation}
For $\EE [ G_{k}\cdot 1\{G_k\ne G_{k,\beta}\} ]$, we have
\beq\begin{split}
|\EE [G_k\cdot 1\{G_k\ne G_{k,\beta }\}]|\leq&
\EE [ |G_k \cdot 1\{E_1\}| ]+\EE[ |G_k \cdot 1\{E_2\}|]+\EE[|G_k \cdot 1\{E_3\}|],\\
\text{where }\;E_1\defas & \{\| r_{k}\| \leq\beta \}\cap\{\| \ip{\nabla\varphi(X_{k}),a_{k+1}}_\cA\|>\beta \},\\
E_2\defas &\{\| r_{k}\| >\beta \}\cap\{\| \ip{\nabla\varphi(X_{k}),a_{k+1}}_\cA\|\leq\beta \},\\
E_3\defas &\{\| r_{k}\| >\beta \}\cap\{\| \ip{\nabla\varphi(X_{k}),a_{k+1}}_\cA\|>\beta \}.\\
\end{split}\eeq
The term $\EE|G_k 1\{E_1\}|$ can be bounded as
\beq
\EE|G_k \cdot 1\{E_1\}|\leq L(f)C\beta ^{\max\{1,\alpha \}}
\cdot\EE\left(\| \ip{\nabla\varphi(X_{k}),a_{k+1}}_\cA\| \cdot 1\{\| \ip{\nabla\varphi(X_{k}),a_{k+1}}_\cA\|>\beta \}\right),
\eeq
where the expectation on the right-hand side is exponentially small due to being a tail of a sub-Gaussian first moment (where $\beta ^2$ is larger than the sub-Gaussian variance and grows with $d$).  By similar reasoning, $\EE|G_k 1\{E_2\}|$ is also exponentially small (using the growth bound on $\nabla_x f$).  For $\EE|G_k 1\{E_3\}|$, we have
\beq\begin{split}
\EE&[|G_k \cdot 1\{E_3\}|]\\
&\leq\EE[ \|\nabla_x f(r_{k},\epsilon_{k+1}) \cdot 1\{\|r_{k},\epsilon_{k+1}\|>\beta \}\| \cdot \| \ip{\nabla\varphi(X_{k}),a_{k+1}}_\cA \cdot 1\{\| \ip{\nabla\varphi(X_{k}),a_{k+1}}_\cA\|>\beta \}\| ]\\
&\leq \EE [ \|\nabla_x f(r_{k},\epsilon_{k+1}) \cdot 1\{\|r_{k},\epsilon_{k+1}\|>\beta \}\|^2\cdot \EE\| \ip{\nabla\varphi(X_{k}),a_{k+1}}_\cA \cdot 1\{\| \ip{\nabla\varphi(X_{k}),a_{k+1}}_\cA\|>\beta \}\|^2.
\end{split}\eeq
This is a product of tails of Gaussian moments, which is again exponentially small.  Thus, we conclude that, with overwhelming probability, $\sup_{1\leq k\leq n}|\Delta\cM_{k}^{\grad,\beta }-\Delta\cM_{k}^{\grad}|$ is exponentially small and thus, taking $\beta =d^{\zeta}$, we conclude that,
with overwhelming probability,
\beq
\sup_{1\leq k\leq n}|\cM_{k}^{\grad}|<d^{-\frac12+\zeta(3+\alpha )}.
\eeq
Adjusting the value of $\zeta$, and recalling that all of this has been proved on the stopped process, we obtain the Proposition.
\end{proof}

\begin{proposition}[Hessian martingale]\label{prop:hessmartingale}
Suppose $f \, : \, \mathcal{O} \oplus \mathcal{T} \oplus \mathcal{T} \to \mathbb{R}$ is $\alpha$-pseudo-Lipschitz function with constant $L(f)$ (see Assumption~\ref{ass:pseudo_lipschitz}). Let the statistic $S \, : \, \mathcal{A} \otimes \mathcal{O} \to (\mathcal{O}^+)^{\otimes 2}$ be defined as in \eqref{eq:S_statistic}. Then, for any $\zeta>0$, and with overwhelming probability,
\beq
 \sup_{0 \leq t\leq T}\| \cM_{(t \wedge \vartheta)d}^{\hess} (S(\cdot, z))\|<d^{-1+\zeta}.
\eeq
\end{proposition}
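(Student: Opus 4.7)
The plan is to mirror the truncation-plus-Azuma template of Proposition~\ref{prop:gradmartingale}, working coordinate-by-coordinate in $S_{o_1 o_2}(W,z)$, with one additional ingredient required to exploit the extra factor of $1/d$ in the Hessian prefactor $\gamma_k^2/(2d^2)$: Hanson--Wright concentration for the Gaussian quadratic form $\eta_k \defas a_{k+1}^T R(z;K) a_{k+1}$.

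First I would fix indices $(o_1,o_2)$ and $z\in\Gamma$, write $\varphi(X) \defas S_{o_1 o_2}(W,z)$, and suppress the stopping time $\vartheta$ in the subscript throughout. The explicit form of the Hessian from Lemma~\ref{lem:S_derivative_bounds} (together with the fact that $S_{o_1 o_2}$ is quadratic in $X$ only when $o_1,o_2 \in \cO$ and is linear or constant otherwise) shows that $\nabla_X^2 S_{o_1 o_2}(X,z)$ is nonzero only in the $\cO\otimes\cO$ block and has the schematic form $R(z;K)\otimes(e_{o_1}\otimes e_{o_2} + e_{o_2}\otimes e_{o_1})$, with injective norm bounded uniformly in $d$ and $X$. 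Expanding $\ip{\nabla^2 \varphi(X_k), \Delta_k^{\otimes 2}}$ for $\Delta_k = a_{k+1}\otimes \nabla_x f(r_k,\epsilon_{k+1}) + \delta X_k$ gives
\begin{equation*}
\ip{\nabla^2 \varphi(X_k), \Delta_k^{\otimes 2}} = 2(\nabla_x f)_{o_1}(\nabla_x f)_{o_2}\,\eta_k + 2\delta\cdot\mathrm{cross}_k + 2\delta^2\cdot\mathrm{pure}_k,
\end{equation*}
where the cross term is bilinear in $a_{k+1}$ and $X_k$ via $R(z;K)$ and $\mathrm{pure}_k$ is $\cF_k$-measurable (hence killed by the conditional-mean subtraction).

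Second I would define a truncated martingale $\cM^{\hess,\beta}$ by applying $\proj_\beta$ to both $(r_k,\epsilon_{k+1})$ and the scalar contraction $\ip{\nabla_X S_{o_1 o_2}(X_k), a_{k+1}}_\cA$ at radius $\beta = d^{\zeta'}$ exactly as in Proposition~\ref{prop:gradmartingale}, and additionally truncating the centred quadratic form $\eta_k - \tr(KR(z;K))$ to $[-\sqrt{d}\,\beta,\sqrt{d}\,\beta]$ before re-adding $\tr(KR(z;K))$. The new concentration input is Hanson--Wright (equivalently Gaussian Lipschitz-concentration applied to $v\mapsto v^T\sqrt{K}R(z;K)\sqrt{K}v$), which gives $\|\eta_k-\tr(KR(z;K))\|_{\psi_2}\le C\sqrt{d}$ on the stopped process, so the extra projection is inert with overwhelming probability. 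After subtracting the conditional mean, the leading deterministic $\tr(KR(z;K))\cdot I_{o_1 o_2}(B(W_k))$ contribution cancels and the truncated increment satisfies the almost-sure bound
\begin{equation*}
|\Delta\cM^{\hess,\beta}_k| \le \frac{C\gamma^2}{d^2}\cdot(\sqrt{d}\,\beta)\cdot\beta^{2\alpha} = Cd^{-3/2}\beta^{1+2\alpha},
\end{equation*}
using Lemma~\ref{lem:growth_grad_f} for $\|\nabla_x f(r_k,\epsilon_{k+1})\|$ and the boundedness of $\|\nabla^2 S\|_\sigma$, with the cross and pure-$X$ contributions smaller by at least a factor of $\sqrt{d}$. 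Summing $n = Td$ squared bounds and applying Azuma--Hoeffding then gives
\begin{equation*}
\Pr\!\Bigl(\sup_{k\le n}|\cM^{\hess,\beta}_k|>t\Bigr) \le 2\exp\!\Bigl(-\frac{t^2 d^2}{CT\beta^{2+4\alpha}}\Bigr),
\end{equation*}
so setting $t = d^{-1+\zeta}$ and choosing $\zeta'$ small enough that $\zeta'(1+2\alpha)<\zeta$ gives overwhelming probability.

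Third, the gap $\cM^{\hess}-\cM^{\hess,\beta}$ closes in the same manner as in Proposition~\ref{prop:gradmartingale}: on the intersection of the three truncation events, the two increments coincide, and each event fails with probability exponentially small in $\beta^2 = d^{2\zeta'}$ by the subgaussian bounds in \eqref{eq:Mgrad_subgaussianbounds} together with Hanson--Wright for $\eta_k$; moreover $\EE[Y_k\cdot\mathbf{1}_{\mathrm{bad}}]$ is exponentially small by tails of (sub)Gaussian polynomial moments. A union bound over the finitely many pairs $(o_1,o_2)$, followed by the $\Gamma$-net argument of Lemma~\ref{lem:net_argument} to upgrade from fixed $z$ to the full contour, completes the argument.

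The main obstacle, relative to Proposition~\ref{prop:gradmartingale}, is precisely the centring step in part two. A naive a.s.\ bound $|\ip{\nabla^2 S,\Delta_k^{\otimes 2}}|=O(d)$ (coming from $\|a_{k+1}\|^2\sim d$) combined with Azuma only delivers $d^{-1/2+\zeta}$, one factor of $\sqrt{d}$ short of the target. Isolating the deterministic mean piece $\tr(KR(z;K))$ -- which is exactly what the conditional-mean subtraction annihilates -- from the $O(\sqrt{d})$-scale fluctuation of $\eta_k$ via Hanson--Wright, and then truncating only that fluctuation, is the single new technical ingredient; everything else is routine adaptation of the gradient template.
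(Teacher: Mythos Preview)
Your plan coincides with the paper's: split the Hessian increment into the piece quadratic in $a_{k+1}$ and the $\delta X_k$ cross term, truncate, control the Gaussian quadratic form by Hanson--Wright, apply Azuma, and then show the truncated and untruncated martingales agree with overwhelming probability. The only cosmetic difference is that the paper projects the full tensor $\ip{\nabla^2\varphi,a_{k+1}^{\otimes2}}_{\cA^{\otimes2}}$ at radius $d^{1/2}\beta$, whereas you center $\eta_k$ first and project only the fluctuation.

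There is, however, a real gap in your almost-sure bound $|\Delta\cM^{\hess,\beta}_k|\le Cd^{-3/2}\beta^{1+2\alpha}$. With $F_k\defas(\nabla_xf)_{o_1}(\nabla_xf)_{o_2}$ and $\eta_k=\tr(KR(z;K))+\tilde\eta_k$, the centered quadratic contribution is
\[
\eta_kF_k-\EE[\eta_kF_k\mid\cF_k]
=\tr\!\bigl(KR(z;K)\bigr)\,\bigl(F_k-\EE[F_k\mid\cF_k]\bigr)
+\bigl(\tilde\eta_kF_k-\EE[\tilde\eta_kF_k\mid\cF_k]\bigr).
\]
Your $(\sqrt d\,\beta)\cdot\beta^{2\alpha}$ estimate is correct for the second bracket, but the first bracket is \emph{not} annihilated by the conditional mean: $F_k$ depends on $(r_k,\epsilon_{k+1})\notin\cF_k$, so $F_k-I_{o_1o_2}(B(W_k))$ is a genuine $O(\beta^{2\alpha})$ random fluctuation, while $\tr(KR(z;K))=\sum_i\lambda_i/(\lambda_i-z)$ is of order $d$ on $\Gamma$ (e.g.\ $K=\Id$, $z=2$ gives $-d$). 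Hence the truncated increment is only $O(d^{-1}\beta^{2\alpha})$, and Azuma over $Td$ steps delivers $d^{-1/2+\zeta}$, not $d^{-1+\zeta}$. The paper's written proof has the same slip in a different guise: it invokes Hanson--Wright for the \emph{uncentered} quadratic form and concludes $\lVert\ip{\nabla^2\varphi,a_{k+1}^{\otimes2}}_{\cA^{\otimes2}}\rVert<d^{1/2+\zeta}$, overlooking that the mean $\tr M=\tr(KR)$ is itself $\Theta(d)$, so its projection at radius $d^{1/2}\beta$ is not inert. Either route yields only $d^{-1/2+\zeta}$; fortunately this weaker bound is all that Proposition~\ref{prop:SGD_approx_solution} requires, since the gradient martingale already dominates at that scale.
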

\begin{proof}
As in the proof of the previous proposition, we will work on the stopped version of the martingale but will suppress the $\vartheta$ dependence in the subscript in order to lighten the notation. We also, as before, set $\varphi(X) = S_{ij}(W,z)$ to be the $ij$-th entry of the matrix $S(W,z)$. We have the martingale increments
\beq
\Delta\cM_k^{\hess}=\Delta\cM_k^{H1}+\Delta\cM_k^{H2}
\eeq
where
\beq\begin{split}
\Delta\cM_k^{H1}=&\frac{\gamma^2}{2d^2}\ip{\nabla^2\varphi (X_k),a_{k+1}^{\otimes2}\otimes\nabla_x f(r_k,\epsilon_{k+1})^{\otimes2}}\\
&-\frac{\gamma^2}{2d^2}\EE\left[\ip{\nabla^2\varphi (X_k),a_{k+1}^{\otimes2}\otimes\nabla_x f(r_k,\epsilon_{k+1})^{\otimes2}} |\cF_k\right],\\
\Delta\cM_k^{H2}=&\frac{\gamma^2}{d^2}\ip{\nabla^2\varphi (X_k), \delta X_k\otimes a_{k+1}\otimes\nabla_x f(r_k,\epsilon_{k+1})}\\
&-\frac{\gamma^2}{d^2}\EE\left[ \ip{\nabla^2\varphi (X_k),\delta X_k\otimes a_{k+1}\otimes\nabla_x f(r_k,\epsilon_{k+1})}|\cF_k\right].\\
\end{split}\eeq
We begin by bounding $\cM_k^{H2}$.  Since this increment is linear in $a_{k+1}$, the procedure is almost identical to what we did for $\cM_k^{\grad}$.  We rewrite the increment as
\beq\begin{split}
\Delta\cM_k^{H2}=&\frac{\gamma^2}{d^2}\ip{\ip{\ip{\nabla^2\varphi(X_k), \delta X_k}_{\cA\otimes\cO}, a_{k+1}}_{\cA},\nabla_x f(r_k,\epsilon_{k+1})}_{\cO}\\
&-\frac{\gamma^2}{d^2}\EE\left[\ip{\ip{\ip{\nabla^2\varphi (X_k),\delta X_k}_{\cA\otimes\cO}, a_{k+1}}_{\cA},\nabla_x f(r_k,\epsilon_{k+1})}_{\cO}|\cF_k \right]
\end{split}\eeq
and we introduce another martingale $\cM_k^{H2,\beta}$ with increments
\beq\begin{split}
\Delta\cM_k^{H2,\beta}=&\frac{\gamma^2}{d^2}\ip{\textstyle\proj_\beta\ip{\ip{\nabla^2\varphi (X_k),\delta X_k}_{\cA\otimes\cO}, a_{k+1}}_{\cA},\nabla_x f\circ\textstyle\proj_\beta(r_k,\epsilon_{k+1})}_{\cO}\\
&-\frac{\gamma^2}{d^2}\EE\left[\ip{\textstyle\proj_\beta\ip{\ip{\nabla^2\varphi (X_k),\delta X_k}_{\cA\otimes\cO}, a_{k+1}}_{\cA},\nabla_x f\circ\textstyle\proj_\beta(r_k,\epsilon_{k+1})}_{\cO}|\cF_k\right].
\end{split}\eeq
Using Lemma~\ref{lem:S_derivative_bounds} and similar reasoning as in \eqref{eq:Mgrad_subgaussianbounds},
\beq
 \|r_k,\epsilon_{k+1}\|_{\psi_2}=C,\qquad
 \|\ip{\ip{\nabla^2\varphi (X_{k}),\delta X_k}_{\cA\otimes\cO},a_{k+1}}_\cA\|_{\psi_2}=C.
 \eeq
 Following the steps from the proof of Proposition \ref{prop:gradmartingale}, we get
 \beq
 |\Delta\cM_k^{H2,\beta}|\leq\frac{2\gamma^2}{d^2}L(f)C\beta^{2+\alpha},
 \quad\text{and thus }
 \sup_{1\leq k\leq n}|\cM_k^{H2,\beta}|<d^{-3/2}\beta^{3+\alpha}.
 \eeq
 This is smaller than what was obtained for $\cM_k^{\grad,\beta}$ due to the extra factor of $d^{-1}$ in the martingale.   Finally, we can show that $|\cM_k^{H2,\beta}-\cM_k^{H2}|$ is exponentially small with overwhelming probability, using the same procedure as in the proof of Proposition \ref{prop:gradmartingale}.
 
It remains to bound $\cM_k^{H1}$, the portion of the martingale that is quadratic in $a_{k+1}$.  The increments are
\beq\begin{split}
\Delta\cM_k^{H1}
=&\frac{\gamma^2}{2d^2}\ip{\ip{ \nabla^2\varphi (X_k),a_{k+1}^{\otimes2}}_{\cA^{\otimes2}},\nabla_x f(r_k,\epsilon_{k+1})^{\otimes2} }_{\cO^{\otimes2}}\\
&-\frac{\gamma^2}{2d^2}\EE\left[\ip{\ip{ \nabla^2\varphi (X_k),a_{k+1}^{\otimes2}}_{\cA^{\otimes2}},\nabla_x f(r_k,\epsilon_{k+1})^{\otimes2} }_{\cO^{\otimes2}}|\cF_k\right],
\end{split}\eeq
and we define $\cM_k^{H1,\beta}$ to be a new martingale with increments
\beq\begin{split}
\Delta\cM_k^{H1,\beta}=&\frac{\gamma^2}{2d^2}\ip{\textstyle\proj_{d^{\frac12}\beta}\ip{ \nabla^2\varphi (X_k),a_{k+1}^{\otimes2}}_{\cA^{\otimes2}},\nabla_x f\circ\textstyle\proj_\beta (r_k,\epsilon_{k+1})^{\otimes2} }_{\cO^{\otimes2}}\\
&-\frac{\gamma^2}{2d^2}\EE\left[\ip{\textstyle\proj_{d^{\frac12}\beta}\ip{ \nabla^2\varphi (X_k),a_{k+1}^{\otimes2}}_{\cA^{\otimes2}},\nabla_x f\circ\textstyle\proj_{\beta}(r_k,\epsilon_{k+1})^{\otimes2} }_{\cO^{\otimes2}}|\cF_k\right].
\end{split}\eeq
The approach here is similar to the procedure for bounding $\cM_k^{\grad}$ and $\cM_k^{H2}$, although we note that the projection radii for $\ip{ \nabla^2\varphi (X_k),a_{k+1}^{\otimes2}}_{\cA^{\otimes2}}$ and $(r_k,\epsilon_{k+1})$ are different because, while both quantities exhibit concentration of measure, their fluctuations are on different scales.  As we saw in the proof of the previous Proposition, $(r_k,\epsilon_{k+1})$ is $\| W_k\| _\sigma\| \sqrt{K}\| _\sigma$-subgaussian in each entry.  To obtain a concentration bound for $\ip{ \nabla^2\varphi (X_k),a_{k+1}^{\otimes2}}_{\cA^{\otimes2}}$, we rewrite it as

\beq
\ip{ \nabla^2\varphi (X_k),a_{k+1}^{\otimes2}}_{\cA^{\otimes2}}
=\ip{ \nabla^2\varphi (X_k),(\sqrt{K}v_{k+1})^{\otimes2}}_{\cA^{\otimes2}}
=\ipa{\ip{\nabla^2\varphi (X_k),\sqrt{K}^{\otimes2}}_{\cA^{\otimes2}},v_{k+1}^{\otimes2}}_{\cA^{\otimes2}}.
\eeq
Since $\nabla^2\varphi (X_k)\in(\cA\otimes\cO^+)^{\otimes2}$ and $\sqrt{K}^{\otimes2}\in(\cA^{\otimes2})^{\otimes2}$, we get $\ip{\nabla^2\varphi(X_k),\sqrt{K}^{\otimes2}}_{\cA^{\otimes2}}\in(\cO^+)^{\otimes2}\otimes\cA^{\otimes2}$.  Using this ordering of coordinates, in Einstein notation, we write
\beq
\ipa{\ip{\nabla^2\varphi (X_k),\sqrt{K}^{\otimes2}}_{\cA^{\otimes2}},v_{k+1}^{\otimes2}}_{\cA^{\otimes2}}
=\left(\ip{\nabla^2\varphi (X_k),\sqrt{K}^{\otimes2}}_{\cA^{\otimes2}}\right)_{ijk\ell}(v_{k+1})_k(v_{k+1})_\ell.
\eeq
Thus, for each pair $i,j$, the contraction with $v_{k+1}^{\otimes2}$ produces a quadratic form that we can bound using the Hanson-Wright inequality.  More specifically, for each pair $i,j$,
\beq
\Pr\left(\left( \ip{\ip{\nabla^2\varphi (X_k),\sqrt{K}^{\otimes2}}_{\cA^{\otimes2}},v_{k+1}^{\otimes2}}_{\cA^{\otimes2}}\right)_{ij}>t\right)
<2\exp\left(-C\min\left\{\frac{t^2}{\|M(i,j)\|^2},\frac{t}{\|M(i,j)\|_{\text{op}}}\right\}\right)
\eeq
where $M(i,j)$ denotes the $d\times d$ matrix obtained by fixing the $\cO^{\otimes2}$ coordinates of the tensor $\ip{\nabla^2\varphi (X_k),\sqrt{K}^{\otimes2}}_{\cA^{\otimes2}}$ as $i,j$.
For the operator norm, we have
\beq
\|M(i,j)\|_{\text{op}}\leq \|\ip{\nabla^2\varphi (X_k),\sqrt{K}^{\otimes2}}_{\cA^{\otimes2}}\|_\sigma
\leq C
\eeq
where the constant bound comes from the norm bound on $\nabla^2\varphi (X)$ in Lemma~\ref{lem:S_derivative_bounds}.  Using this and the fact that $\|M(i,j)\|^2\leq d\|M(i,j)\|_{\text{op}}^2$, we conclude that
\beq
\Pr\left(\left( \ip{\ip{\nabla^2\varphi (X_k),\sqrt{K}^{\otimes2}}_{\cA^{\otimes2}},v_{k+1}^{\otimes2}}_{\cA^{\otimes2}}\right)_{ij}>t\right)<2\exp\left(-\frac{\min\left\{t^2d^{-1},t\right\}}{C}\right)
\eeq
and this holds uniformly in $i,j$, so
\beq
\Pr\left(\left\| \ip{\ip{\nabla^2\varphi (X_k),\sqrt{K}^{\otimes2}}_{\cA^{\otimes2}},v_{k+1}^{\otimes2}}_{\cA^{\otimes2}}\right\|>t\right)<2|\mathcal{O}|^2\exp\left(-\frac{\min\left\{t^2d^{-1},t\right\}}{C}\right).
\eeq
In particular, this tells us that, for any $\zeta>0$,
\beq
\| \ip{\ip{\nabla^2\varphi (X_k),\sqrt{K}^{\otimes2}}_{\cA^{\otimes2}},v_{k+1}^{\otimes2}}_{\cA^{\otimes2}}\|<d^{\frac12+\zeta}
\eeq
with overwhelming probability.

Having obtained concentration bounds for $(r_k,\epsilon_{k+1})$ and $\ip{\nabla^2\varphi (X_k),a_{k+1}^{\otimes2}}_{\cA^{\otimes2}}$, we proceed to bound $\cM_k^{H1,\beta}$ and show that it is close to $\cM_k^{H1}$.  From the projections and the growth bound on $\nabla_x f$ in Lemma \ref{lem:growth_grad_f}, we get the norm bounds 
  \beq
  \| (\nabla_x f\circ\textstyle\proj_\beta(r_k,\epsilon_{k+1}))^{\otimes2}\| 
  \leq (L(f)C\beta ^{\max\{1,\alpha \}})^2, \qquad
  \| \textstyle\proj_{d^{\frac12}\beta} \ip{\nabla^2\varphi (X_{k}),a_{k+1}^{\otimes2}}_{\cA^{\otimes2}}\| 
  \leq d^{\frac12}\beta,
  \eeq
and thus
\beq
\left|\ipa{\textstyle\proj_{d^{\frac12}\beta} \ip{\nabla^2\varphi (X_{k}),a_{k+1}^{\otimes2}}_{\cA^{\otimes2}},
 (\nabla_x f\circ\textstyle\proj_\beta(r_k,\epsilon_{k+1}))^{\otimes2}
}\right|\leq(L(f)C)^2d^{\frac12}\beta^{3+2\alpha}.
\eeq
 Since this is an almost sure bound, it holds for the expectation as well and we get
\beq
|\Delta\cM_k^{H1,\beta}|\leq\gamma^2(L(f)C)^2d^{-\frac32}\beta^{3+2\alpha}.
\eeq
Applying Azuma's inequality with $n=O(d)$, we obtain
\beq
\sup_{1\leq k\leq n}\Pr(|\cM_k^{H1,\beta}|>t)<2\exp
\left(\frac{-t^2}{2n(Cd^{-\frac32}\beta^{3+2\alpha})^2}\right)
\leq2\exp\left(\frac{-t^2}{2n(C'd^{-2}\beta^{2(3+2\alpha)}}\right)
\eeq
so, with overwhelming probability,
\beq\label{eq:Mhessbetabound}
\sup_{1\leq k\leq n}|\cM_k^{H1,\beta}|<d^{-1}\beta^{4+2\alpha}.
\eeq
It remains only to bound the difference between 
 $\{\cM_{k}^{H1}\}_{k=1}^n$ and $\{\cM_{k}^{H1,\beta }\}_{k=1}^n$.  This follows a very similar argument to what was in the proof of Proposition \ref{prop:gradmartingale}, we write
\beq\begin{split}
G^{H1}_k:=&\frac{\gamma^2}{2d^2} \ip{\ip{ (\nabla^2 \varphi)(X_{k}), a_{k+1}^{\otimes2}}_\cA, \nabla_x f(r_k,\epsilon_{k+1})^{\otimes2} },\\
G^{H1}_{k,\beta }:=&\frac{\gamma^2}{2d^2} \ip{\textstyle\proj_\beta \ip{ (\nabla^2 \varphi)(X_{k}), a_{k+1}^{\otimes2}}_\cA, (\nabla_x f\circ\textstyle\proj_\beta (r_k,\epsilon_{k+1}))^{\otimes2} }.
\end{split}\eeq
The quantity we are trying to bound is
\beq
 |(G^{H1}_k-\EE [G^{H1}_k] )-(G^{H1}_{k,\beta }-\EE [ G^{H1}_{k,\beta } ])|\leq |G^{H1}_k-G^{H1}_{k,\beta }|+|\EE [(G^{H1}_k- G^{H1}_{k,\beta })]|.
 \eeq
As in the proof of Proposition \ref{prop:gradmartingale}, the first of the terms on the right-hand side is 0 with overwhelming probability, while the second is exponentially small.  Computing the bound for $|\EE [(G^{H1}_k- G^{H1}_{k,\beta })]|$ is similar to what was done in the previous proof and is not repeated here.  To see that $|G^{H1}_k-G^{H1}_{k,\beta }|=0$ with overwhelming probability, we write
\beq\begin{split}
\Pr(G^{H1}_k\ne G^{H1}_{k,\beta })&\leq\Pr(\| r_k,\epsilon_{k+1}\| >\beta )\;+\;
\Pr(\|\ip{\nabla^2\varphi(X_{k}),a_{k+1}^{\otimes2}}_{\cA^{\otimes2}}\|>d^{\frac12}\beta )\\
&<2\exp\left(-\frac{\beta ^2}{2C}\right)
+2|\mathcal{O}^+|^2\exp\left(-\frac{\min\{\beta^2,d^{\frac12}\beta\}}{2C}\right).
\end{split}\eeq
Thus, $|\cM_k^{H1,\beta}-\cM_k^{H1}|$ is exponentially small with overwhelming probability.  Using \eqref{eq:Mhessbetabound} along with the bound on $\cM_k^{H2}$ and setting $\beta$ to be an arbitrarily small power of $d$, we obtain the proposition.

\end{proof}

\subsubsection{Bounds on the lower order terms in the Hessian, $\mathcal{E}_t^{\text{Hess}}$}\label{sec:lower_term_Hessian}
We now bound the error term, $\displaystyle \sup_{0 \le t \le T} \sum_{k=0}^{(t \wedge \vartheta)d-1} \| \EE[\mathcal{E}_{k} ^{\text{Hess}} \, | \, \mathcal{F}_k] \|$, in \eqref{eq:error_terms}. For this, we utilize the $\sigma$-norm bound and its dual norm, the nuclear norm. 

\begin{proposition}[Hessian error term]\label{prop:Hessian-error} Suppose $f \, : \, \mathcal{O} \oplus \mathcal{T} \oplus \mathcal{T} \to \mathbb{R}$ is $\alpha$-pseudo-Lipschitz function with constant $L(f)$ (see Assumption~\ref{ass:pseudo_lipschitz}). Let the statistic $S \, : \, \mathcal{A} \otimes \mathcal{O} \to (\mathcal{O}^+)^{\otimes 2}$ be defined as in \eqref{eq:S_statistic}. Then, for any $T > 0$,
\begin{equation}
   \sup_{z \in \Gamma} \sup_{0 \le t \le T} \sum_{k=0}^{(t \wedge \vartheta) d-1} \| \EE[ \mathcal{E}_k^{\text{\rm Hess}}(S(\cdot, z)) \, | \, \mathcal{F}_{k}] \| \le C (L(f))^2  d^{-1}.
\end{equation}
\end{proposition}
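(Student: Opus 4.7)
The plan is to bound each of the four constituent pieces of $\mathcal{E}_k^{\text{Hess}}(S(\cdot,z))$ separately and show that each yields $O((L(f))^2 d^{-2})$ in conditional expectation on the stopped process. Summing over $k=0,\dots,\lfloor (t\wedge\vartheta)d\rfloor-1$ (at most $Td$ terms) then gives the claimed $O((L(f))^2 d^{-1})$ bound, uniformly in $t\in[0,T]$ and $z\in\Gamma$. Two ingredients do all the work: first, Lemma~\ref{lem:S_derivative_bounds} provides the uniform bound $\|\nabla^2_X S(W,z)\|_{\Gamma}\le C$, and on the stopped process one has $\|X_k^\vartheta\|^2\le M$; second, the projection $\Pi_k=Q_k Q_k^T$ has rank exactly $|\mathcal{O}^+|$, which is independent of $d$, so any expression built from $\sqrt{K}\Pi_k$ carries only a finite effective dimension's worth of ``Gaussian mass.'' Throughout, the elementary estimate $|\langle A,B\rangle|\le\|A\|\,\|B\|$ in Hilbert--Schmidt norm, combined with Jensen's inequality $\|\EE[\cdot\,|\,\mathcal{F}_k]\|\le\EE[\|\cdot\|\,|\,\mathcal{F}_k]$, reduces everything to bounding moments of the non-Hessian factors.

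For the deterministic term $\mathcal{E}^{\text{Hess}}_{k,2}$ involving $(\delta X_k)^{\otimes 2}$, the bound is immediate: it is at most $\tfrac{\gamma_k^2\delta^2}{2d^2}\cdot C\cdot M$. For the low-rank trace term $\mathcal{E}^{\text{Hess}}_{k,1}$ (the $-\sqrt{K}\Pi_k\sqrt{K}$ piece), rank-$|\mathcal{O}^+|$ implies $\|\sqrt{K}\Pi_k\sqrt{K}\|\le\sqrt{|\mathcal{O}^+|}\,\|\sqrt{K}\Pi_k\sqrt{K}\|_\sigma\le\sqrt{|\mathcal{O}^+|}\,\|K\|_\sigma$, and Cauchy--Schwarz together with the moment bound $\EE[\|\nabla_x f(r_k)\|^2\,|\,\mathcal{F}_k]\le C(L(f))^2$ from Lemma~\ref{lem:growth_grad_f} (applied on the stopped process where $\|W_k^\vartheta\|$ is bounded) gives the desired $O((L(f))^2 d^{-2})$ control. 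For the quadratic-in-$v_k$ term $\mathcal{E}^{\text{Hess}}_{k,1}$ involving $(\sqrt{K}\Pi_k v_k)^{\otimes 2}$, the key identity is $\EE[\|\sqrt{K}\Pi_k v_k\|^2\,|\,\mathcal{F}_k]=\tr(\sqrt{K}\Pi_k\sqrt{K})\le|\mathcal{O}^+|\,\|K\|_\sigma$, so that Gaussianity of $\Pi_k v_k$ on the $|\mathcal{O}^+|$-dimensional range of $\Pi_k$ gives uniformly bounded fourth moments, and Cauchy--Schwarz (or direct Hölder) then yields
\[
\EE\big[\|\sqrt{K}\Pi_k v_k\|^2\,\|\nabla_x f(r_k)\|^2\,\big|\,\mathcal{F}_k\big]\le C(L(f))^2.
\]
The cross term $\mathcal{E}^{\text{Hess}}_{k,3}$ is handled by the same Cauchy--Schwarz step applied to $\|\sqrt{K}\Pi_k v_k\|\,\|\nabla_x f(r_k)\|\,\|X_k^\vartheta\|$.

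The main obstacle is the dependence entanglement in the $\Pi_k v_k$ terms: since $r_k$ is a function of $\Pi_k v_k$ (through the QR-decomposition identity $r_k=R_k^T Q_k^T v_k$), one cannot split the conditional expectation by independence. Circumventing this requires using only Cauchy--Schwarz in conditional expectation together with Gaussian moment bounds on $\Pi_k v_k$ restricted to its finite-dimensional range, which is exactly where the rank-$|\mathcal{O}^+|$ structure becomes essential. Once each of the four pieces has been bounded by $O((L(f))^2 d^{-2})$ in the Hilbert--Schmidt norm of the conditional expectation, summing over the $\lfloor(t\wedge\vartheta)d\rfloor\le Td$ indices gives the uniform-in-$t$ bound $C(L(f))^2 d^{-1}$, and uniformity in $z\in\Gamma$ follows immediately from the uniform-in-$\Gamma$ Hessian bound since the remaining factors in each term do not depend on $z$.
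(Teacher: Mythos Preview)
Your plan is the right one and matches the paper's proof in outline: treat the four pieces separately, exploit the rank-$|\mathcal{O}^+|$ structure of $\Pi_k$, invoke Lemma~\ref{lem:growth_grad_f} for the gradient moments on the stopped process, and sum. Your observation that $r_k$ and $\Pi_k v_k$ are \emph{not} independent (since $r_k = R_k^T Q_k^T v_k$) and must therefore be decoupled by Cauchy--Schwarz on higher moments is in fact more careful than the paper, which asserts independence somewhat loosely.

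There is, however, one genuine gap. You pair the two factors via the Hilbert--Schmidt Cauchy--Schwarz inequality $|\langle A,B\rangle|\le\|A\|\,\|B\|$ and invoke Lemma~\ref{lem:S_derivative_bounds} for $\|\nabla^2_X S(W,z)\|_{\Gamma}\le C$. But the Hilbert--Schmidt norm of $\nabla^2_X S_{ij}(W,z)$ is \emph{not} bounded independently of $d$: for instance when $i,j\in\mathcal{O}$ one has $S_{ij}(W,z)=x_i^T R(z;K) x_j$, and a direct computation gives $\|\nabla^2_X S_{ij}\|\asymp\|R(z;K)\|$, which on $\Gamma$ is of order $\sqrt{d}$ (each of the $d$ eigenvalues of $R$ is at least $1/2$ in modulus). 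With HS--HS pairing your per-step bound is therefore $O((L(f))^2 d^{-3/2})$, and after summing $Td$ terms you obtain only $O((L(f))^2 d^{-1/2})$, not the claimed $d^{-1}$.

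The fix---and this is exactly what the paper does---is to use the injective/nuclear duality $|\langle A,B\rangle|\le\|A\|_\sigma\|B\|_*$ instead. The injective norm $\|\nabla^2_X S_{ij}(W,z)\|_\sigma\le 2\|R(z;K)\|_\sigma\le 4$ \emph{is} uniformly bounded on $\Gamma$, and on the other side the low-rank structure gives nuclear-norm control: $\|\sqrt{K}\Pi_k\sqrt{K}\|_*=\tr(\sqrt{K}\Pi_k\sqrt{K})\le|\mathcal{O}^+|\,\|K\|_\sigma$, and for the rank-one piece $\|(\sqrt{K}\Pi_k v_k)^{\otimes 2}\|_*=\|\sqrt{K}\Pi_k v_k\|^2$. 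With this pairing each of your four terms is $O((L(f))^2 d^{-2})$ as you intended, and the rest of your argument (including the Cauchy--Schwarz handling of the $\Pi_k v_k$--$r_k$ dependence) goes through unchanged.
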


\begin{proof}
We do this entry-wise on the statistic $S(\cdot, z)$, that is, we let $\varphi(X) = S_{ij}(W, z)$ where $S_{ij}$ is the $ij$-th entry of the matrix $S(W,z)$. Define $\Pi_k \defas Q_{k} Q_{k}^T$ and note that $\|\Pi_k\|^2 = \text{rank}(\Pi_k) = |\mathcal{O}^+|$. First, we consider the following term
\begin{equation}
\begin{aligned}
        |\ip{\nabla^2 \varphi(X_k), \sqrt{K} \Pi_k \sqrt{K}  \otimes \nabla_x f(r_k)^{\otimes 2}}| 
        &
        = 
        | \ip{\ip{\nabla^2 \varphi (X_k), \nabla_x f(r_k)^{\otimes 2}}_{\mathcal{O}^{\otimes 2}}, \sqrt{K} \Pi_k \sqrt{K} } |\\
        &
        \le 
        \|\sqrt{K} \Pi_k \sqrt{K}\|_* \|\ip{\nabla^2 \varphi(X_k), \nabla_x f(r_k)^{\otimes 2}}_{\mathcal{O}^{\otimes 2}}\|_{\sigma}
        \\
        &
        \le 
         \|\sqrt{K} \Pi_k \sqrt{K}\|_* \|\nabla^2 \varphi(X_k)\|_{\sigma} \|\nabla_x f(r_k)\|^2
        \\
        &
        \le 
        \|K\|_{\sigma} \|\Pi_k\|_* \|\nabla^2 \varphi (X_k)\|_{\sigma} \|\nabla_xf(r_k)\|^2.
\end{aligned}
\end{equation}
From Lemma~\ref{lem:growth_grad_f}, we have $\EE[ \|\nabla_x f(r_k)\|^2 \, | \, \mathcal{F}_k] \le L(f)^2 ( 1 + \|K\|_{\sigma}^{1/2} \|W_k\| )^{\max \{1, 2\alpha\} }$. Moreover, we also, by Lemma~\ref{lem:S_derivative_bounds}, have $\|\nabla^2 \varphi(X_k)\|_{\sigma} \le \|\nabla^2_X S(W, z)\|_{\Gamma} \le C$. Noting that $k \le (t \wedge \theta) d$, 
\begin{equation} \label{eq:Hessian_error_1}
\begin{aligned}
    \EE[| 
    & 
    \ip{\nabla^2 \varphi(X_k), \sqrt{K} \Pi_k \sqrt{K}  \otimes \nabla_x f(r_k)^{\otimes 2}}| \, | \, \mathcal{F}_k] \\
    &
    \le 
    C L^2(f) ( 1 + \|K\|_{\sigma}^{1/2}\|W_k\|)^{\max\{1, 2 \alpha \}}.
\end{aligned}
\end{equation}

Similarly we get that 
\begin{equation}
    \begin{aligned}
        |\ip{\nabla^2 \varphi(X_k), (\sqrt{K} \Pi_k v_k)^{\otimes 2} \otimes \nabla_x f(r_k)^{\otimes 2}} |
        &
        \le 
        \|\nabla^2 \varphi(X_k)\|_{\sigma} \|\nabla_xf(r_k) \|^2 \|\sqrt{K} \Pi_k v_k\|^2
        \\
        & 
        \le 
        \|\nabla^2 \varphi(X_k) \|_{\sigma} \|\nabla_x f(r_k)\|^2 \|K\|_{\sigma} \|\Pi_k v_k\|^2.
    \end{aligned}
\end{equation}
Upon taking expectations, with $v_k \sim N(0, I_d)$ independent of $r_k$, we have that $\EE[\|\nabla_x f(r_k)\|^2 \, | \, \mathcal{F}_k] \le C L(f)^2 (1+\|K\|_{\sigma}^{1/2}\|W_k\|)^{\max\{1, 2 \alpha\}}$ (Lemma~\ref{lem:growth_grad_f}) and $\EE[ \|\Pi_k v_k\|^2 \, | \, \mathcal{F}_k] = \|\Pi_k\|^2 = \text{rank}(\Pi_k) = |\mathcal{O}^+|$ as $\Pi_k$ is a projection. Using Lemma~\ref{lem:S_derivative_bounds} on the growth of $\varphi$, 
\begin{equation} \label{eq:Hessian_error_2}
    \EE[ \ip{\nabla^2 \varphi(X_k), (\sqrt{K} \Pi_k v_k)^{\otimes 2} \otimes \nabla_x f(r_k)^{\otimes 2}} \, | \, \mathcal{F}_k] \le C L(f)^2  ( 1 + \|K\|_{\sigma}^{1/2}\|W_k\|)^{\max\{1, 2 \alpha \}}. 
\end{equation}

Let us now consider the next term, 
\begin{equation} \label{eq:Hessian_error_3}
\begin{aligned}
    | \ip{\nabla^2 \varphi(X_k), ( \delta X_k )^{\otimes 2} } | 
    &
    \le \delta^2
    \|\nabla^2 \varphi(X_k)\|_{\sigma} \|X_k\|^2
    \le C \|W_k\|^2.
\end{aligned}
\end{equation}
 Note the result also holds in expectation conditioned on $\mathcal{F}_k$. 

Lastly, we consider the term 
\begin{equation} 
\begin{aligned}
    | 
    &
    \ip{\nabla^2 \varphi(X_k), \sqrt{K} \Pi_k v_k \otimes \nabla_x f(r_k) \otimes \delta X_k } |  \\
    &
    \le 
    \delta^2 \|\nabla^2 \varphi(X_k)\| \| \sqrt{K}\|_{\sigma} \|\Pi_k v_k\| \|\nabla_x f(r_k)\| \|X_k\|
\end{aligned}
\end{equation}
As in \eqref{eq:Hessian_error_2}, upon taking expectations, we have that $\EE[ \| \nabla_x f(r_k) \| \, |\, \mathcal{F}_k]  \le C L(f) ( 1 + \|K\|_{\sigma}^{1/2} \|W_k\|)^{\max\{1, \alpha\} }$ (Lemma~\ref{lem:growth_grad_f}) and $\EE[ \|\Pi_k v_k\| \, | \, \mathcal{F}_k] = \|\Pi_k\| = |\mathcal{O}^+|$. Using Lemma~\ref{lem:S_derivative_bounds}, we have 
\begin{equation}
    \label{eq:Hessian_error_4}
    \begin{aligned}
        \EE[ & |\ip{\nabla^2 \varphi(X_k), \sqrt{K} \Pi_k v_k \otimes \nabla_x f(r_k) \otimes \delta X_k } | \, | \, \mathcal{F}_k ]\\
        &
        \le 
        C  L(f)  ( 1 + \|K\|_{\sigma}^{1/2} \|W_k\|)^{\max\{ 1, 2 \alpha\}}.
    \end{aligned}
\end{equation}

As $k \le (t \wedge \vartheta )d$, then $\|W_k\| \le M$. The result then immediately follows by combining \eqref{eq:Hessian_error_1}, \eqref{eq:Hessian_error_2}, \eqref{eq:Hessian_error_3}, and \eqref{eq:Hessian_error_4} and summing up with the extra factor $\gamma^2 / d^2$.
\end{proof}

\section{Optimization}\label{sec:optimization}
In this section, we provide criteria for showing distance to optimality descent and convergence for several examples (i.e., bounds on the learning rates) under various assumptions on the outer function $f$. In particular, in this section, we provide proofs of Proposition~\ref{prop:nonexplosiveness}, Proposition~\ref{prop:descent_SGD_main}, Corollary~\ref{cor:convexbounded}, Proposition~\ref{prop:RSI}, and Proposition~\ref{prop:Courtneyrate_main}.

We will do this analysis using the coupled ODEs $(\mathrsfs{B}_i(t) : 1 \leq i \leq d)$, which will also give probability-1 statements.  
All these conclusions will be drawn by considering 
the evolution of various quadratic functionals.
For example, in the case $\mathcal{O}=\mathcal{T},$
we will consider
the deterministic counterpart for $\|X-X^{\star}\|^2$.
When evolving according to solution to the \eqref{eq:coupledODElimit} or the integro-differential equation \eqref{eq:ODE_resolvent_2} $\mathcal{S}(t, z)$, 
\begin{equation} \label{eq:deterministic_distance}
\begin{aligned}
    \mathrsfs{D}^2(t) 
    &=
    \frac{1}{d}\sum_{i=1}^d 
    \tr\biggl(
    \mathrsfs{B}_{11,i}(t) 
    - 2\mathrsfs{B}_{12,i}(t) 
    +\mathrsfs{B}_{22,i}(t) 
    \biggr)
    \\
    &\defas \tr \bigg ( \frac{-1}{2 \pi i} \oint_{\Gamma} \mathcal{S}_{11}(t,z) - \mathcal{S}_{12}(t,z) - \mathcal{S}_{21}(t,z) + \mathcal{S}_{22}(t,z) \, \dif z \bigg ),
\end{aligned}
\end{equation}
where we have identified $\mathcal{S}(t,z)$ as a block $2 \times 2$ matrix such that
\[
\mathcal{S}(t,z) = \begin{pmatrix}
\mathcal{S}_{11}(t,z) & \mathcal{S}_{12}(t,z)\\
\mathcal{S}_{21}(t,z) & \mathcal{S}_{22}(t,z)
\end{pmatrix} \in \begin{bmatrix}
\mathcal{O}^{\otimes 2} & \mathcal{O} \otimes \mathcal{T}\\
\mathcal{T} \otimes \mathcal{O} & \mathcal{T}^{\otimes 2}
\end{bmatrix}.
\]
It will turn out that this statistic has a simple evolution which is amenable to analysis.
To motivate this, we consider applying It\^o's lemma to the statistic $\varphi(X) \defas \|X-X^\star\|^2$ applied to homogenized SGD, which produces
\begin{equation}\label{eq:phiIto}
    \dif \varphi(\WHSGD_t) = - \gamma_t \ip{\WHSGD_t - {X}^\star, \nabla \mathcal{R}(\WHSGD_t) } \dif t + \frac{\gamma^2_t}{2d} \tr(K)  \EE_{a,\epsilon} [ \|\nabla_x f(\rho_t)\|^2] \, \dif t + \dif \mathcal{M}_{t}^{\text{HSGD}} (\varphi),
\end{equation}
where we recall $\rho_t = \ip{\WHSGD_t,a}_{\mathcal{A}}$ and 
where $\mathcal{M}_{t}^{\text{HSGD}} (\phi)$ is a martingale.
The function  $\EE_{a,\epsilon} [ \|\nabla_x f(\rho_t)\|^2]$ has a representation as $I({B}(\WHSGD_t))$.  
We also observe that
\begin{equation}\label{eq:A}
    \ip{\WHSGD_t - {X}^\star, \nabla \mathcal{R}(\WHSGD_t) } = \Exp_{a,\epsilon} [ \ip { \ip{\WHSGD_t - {X}^\star,a}, \nabla_x f(\rho_t)} ] \defas A( B(\WHSGD_t)) ,
\end{equation}
as it is again a Gaussian expectation.  Hence, we have
\[
    \dif \varphi(\WHSGD_t) = - \gamma_t A( B(\WHSGD_t)) \dif t + \frac{\gamma^2_t}{2d} \tr(K) I( B(\WHSGD_t))\, \dif t + \dif \mathcal{M}_{t}^{\text{HSGD}} (\varphi).
\]
Moreover, it turns out that this evolution precisely carries over to $\mathrsfs{D}^2$, without a martingale error.
\begin{lemma}[It\^o correction for $\mathrsfs{D}^2$]\label{lem:D2}
$\mathrsfs{D}^2$ solves the differential equation
\[
\frac{\dif}{\dif t} \mathrsfs{D}^2(t) 
=
- \gamma_t A( \mathrsfs{B}(t)) + \frac{\gamma^2_t}{2d} \tr(K) I(\mathrsfs{B}(t)) ,
\]
where
\[
\left.
\begin{aligned}
&A( \mathrsfs{B}) = \Exp_{a,\epsilon} [\ip { x-x^\star, \nabla_x f(x\oplus x^\star)}], \\
&I( \mathrsfs{B}) = \Exp_{a,\epsilon} [\|\nabla_x f(x\oplus x^\star)\|^2],
\end{aligned}
\right\}
\quad\text{where}\quad (x \oplus x^\star) \sim N(0, \mathrsfs{B}).
\]
\end{lemma}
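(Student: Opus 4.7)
The plan is to derive the claimed ODE for $\mathrsfs{D}^2(t)$ by differentiating the definition \eqref{eq:deterministic_distance_main} and substituting the coupled ODEs \eqref{eq:coupledODElimit}, specialized to the setting of this subsection ($\delta = 0$ and $\mathcal{O} = \mathcal{T}$). Since $\dot{\mathrsfs{B}}_{22,i}(t) = 0$ under the flow, the starting identity is
\[
\frac{d}{dt}\mathrsfs{D}^2(t)
\;=\;
\frac{1}{d}\sum_{i=1}^d\tr\bigl(\dot{\mathrsfs{B}}_{11,i}(t)\bigr)
\;-\;
\frac{2}{d}\sum_{i=1}^d\tr\bigl(\dot{\mathrsfs{B}}_{12,i}(t)\bigr).
\]
Plugging in \eqref{eq:coupledODElimit} and applying cyclicity of trace together with symmetry of $\mathrsfs{B}_{11,i}$, $\mathrsfs{B}_{22,i}$, and $H_{1,t}$, the right-hand side splits into an $O(\gamma_t)$ drift plus an $O(\gamma_t^2)$ diffusion term that can be collected separately via the averaging identity $\tfrac{1}{d}\sum_i \lambda_i\mathrsfs{B}_{\cdot,i} = \mathrsfs{B}_\cdot$.

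The $\gamma_t^2$ contribution is the easier piece: it comes solely from the term $\lambda_i\gamma_t^2 I_t$ in $\dot{\mathrsfs{B}}_{11,i}$. After summing, the weight $\tfrac{1}{d}\sum_i\lambda_i = \tfrac{1}{d}\tr(K)$ factors out, and the residual is $\tr(I_t) = \tr\Exp_{a,\epsilon}[\nabla_x f(x\oplus x^\star)^{\otimes 2}] = \Exp_{a,\epsilon}[\|\nabla_x f\|^2] = I(\mathrsfs{B}(t))$ in the scalar sense of the lemma statement, matching the claimed $\tfrac{\gamma_t^2}{2d}\tr(K)\,I(\mathrsfs{B})$ once the factor of $\tfrac{1}{2}$ from the symmetrization convention on $\nabla h$ is accounted for.

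For the $\gamma_t$ contribution, I would identify the resulting bilinear combination of $H_1, H_2$ and the blocks of $\mathrsfs{B}$ with the scalar $A(\mathrsfs{B}) = \Exp[\langle x - x^\star, \nabla_x f\rangle]$. This identification relies on Gaussian integration by parts (Stein's lemma): for $(x, x^\star) \sim N(0, \mathrsfs{B})$ one has
\[
\Exp[\langle x - x^\star, \nabla_x f\rangle]
\;=\;
\tr\bigl((\mathrsfs{B}_{11} - \mathrsfs{B}_{12}^T)\,\Exp[\nabla_x^2 f]\bigr)
+
\tr\bigl((\mathrsfs{B}_{12} - \mathrsfs{B}_{22})\,\Exp[\nabla_{x^\star}\nabla_x f]\bigr),
\]
together with the standard Gaussian-covariance derivative identity $\nabla_B h(B) = \tfrac{1}{2}\Exp[\mathrm{Hess}_Z f(Z)]$ for $Z \sim N(0,B)$, applied in the symmetric convention on $(\mathcal{O}^+)^{\otimes 2}$ that the paper uses (cf.\ the phase-retrieval footnote in Section \ref{sec:examples}). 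This relates $H_1$ to $\Exp[\nabla_x^2 f]$ and $H_2$ to $\Exp[\nabla_x\nabla_{x^\star}f]$, converting the Stein expression above into precisely the $\gamma_t$ piece of $\tfrac{d}{dt}\mathrsfs{D}^2$ that emerged from differentiating the ODE system.

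The main technical obstacle is the tensor/block-matrix bookkeeping in this last identification. The symmetric convention on $\nabla h$ introduces a factor of $\tfrac{1}{2}$ in the relation between $H_2$ (resp.\ $H_1$) and the expected off-diagonal (resp.\ diagonal) Hessian of $f$, and this factor must be threaded consistently through both the Stein expansion and the ODE substitution to land on the stated $-\gamma_t$ and $\tfrac{\gamma_t^2}{2d}$ coefficients. Once this is set up correctly, the remainder is pure algebra with no further probabilistic input, since all expectations have been eliminated in favor of the deterministic blocks of $\mathrsfs{B}(t)$.
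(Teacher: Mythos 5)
Your proposal is a genuinely different route from the paper's. You differentiate the definition of $\mathrsfs{D}^2$ directly through the coupled ODEs \eqref{eq:coupledODElimit} and then identify the resulting bilinear form in $H_{1,t},H_{2,t}$ and the blocks of $\mathrsfs{B}$ with the Gaussian expectation $A(\mathrsfs{B})$ by Stein's lemma together with the identity $\nabla h(B)=\tfrac12\Exp[\operatorname{Hess} f(Z)]$, $Z\sim N(0,B)$. The paper instead never performs this identification explicitly: it applies It\^o's formula to $\varphi(X)=\|X-X^\star\|^2$ along homogenized SGD to read off the drift $-\gamma_t A(B(\WHSGD_t))+\tfrac{\gamma_t^2}{2d}\tr(K)I(B(\WHSGD_t))$ directly as a Gaussian expectation, then observes that the same statistic is a \emph{linear} functional of $S(\HSGD_t,\cdot)$, whose drift by \eqref{eq:HSGD_exact} is the corresponding linear functional of $\mathscr{F}$. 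Uniqueness of the semimartingale decomposition forces the two drifts to coincide, and since $\mathrsfs{D}^2$ evolves by that same linear functional of $\mathscr{F}(\cdot,\mathcal{S}(t,\cdot))$ by definition, the lemma follows. What the paper's argument buys is precisely the avoidance of the step you flag as "the main technical obstacle": the equality between the $\mathscr{F}$-drift and $-\gamma A+\tfrac{\gamma^2}{2d}\tr(K)I$ is obtained for free from computing the drift of one It\^o process in two ways, with all symmetrization factors automatically consistent. What your route buys is self-containedness — no SDE or stochastic calculus is needed, only the deterministic ODEs and classical Gaussian identities (Stein/Price).

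Two caveats. First, the crux of your argument — carrying the Stein expansion and the relation $H_1=\tfrac12\Exp[\nabla_x^2 f]$, $H_2=\tfrac12\Exp[\nabla_x\nabla_{x^\star}f]$ through the trace algebra of \eqref{eq:coupledODElimit} so that the $O(\gamma_t)$ terms collapse exactly to $-\gamma_t A(\mathrsfs{B})$ — is only sketched, and it is exactly where the transposes and factors of $2$ from the symmetric-$h$ convention must be tracked; as written the proposal asserts rather than verifies this cancellation. Second, your attribution of the $\tfrac12$ in the $\tfrac{\gamma_t^2}{2d}\tr(K)I$ term to "the symmetrization convention on $\nabla h$" is off: the $\lambda_i\gamma_t^2 I_t$ term in \eqref{eq:coupledODElimit} does not involve $h$ at all, so that factor must be accounted for elsewhere (it originates in the It\^o correction $\tfrac12\ip{\nabla^2\varphi,(\dif\WHSGD_t)^{\otimes2}}$ in the paper's derivation). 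Neither issue is fatal to the approach, but both would need to be resolved explicitly for the proof to be complete.
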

\begin{proof}
The semi-martingale decomposition of an It\^o process is unique.
On the one-hand, It\^o's lemma gives \eqref{eq:phiIto}.  On the other hand, we can give a second decomposition using the representation
\[
\varphi(\WHSGD_t)
=
\tr \bigg ( \frac{-1}{2 \pi i} \oint_{\Gamma} {S}_{11}(\HSGD_t,z) - {S}_{12}(\HSGD_t,z) - {S}_{21}(\HSGD_t,z) + {S}_{22}(\HSGD_t,z) \, \dif z \bigg ).
\]
Applying \eqref{eq:HSGD_exact}, for some local martingale $\mathcal{M}$,
\[
\dif \varphi(\WHSGD_t) = 
\tr \bigg ( \frac{-1}{2 \pi i} \oint_{\Gamma} {\mathscr{F}}_{11}(z, S(\HSGD_t, \cdot)) - {\mathscr{F}}_{12}(z, S(\HSGD_t, \cdot)) - {\mathscr{F}}_{21}(z, S(\HSGD_t, \cdot)) + {\mathscr{F}}_{22}(z, S(\HSGD_t, \cdot)) \, \dif z \bigg )
+\dif \mathcal{M}_t.
\]
Hence we have equality between the finite variation terms.  But from the definition of the integro-differential equation, this finite variation terms is precisely the derivative of $\mathrsfs{D}^2(t),$ i.e.
\[
\frac{\dif}{\dif t} \mathrsfs{D}^2(t) = 
\tr \bigg ( \frac{-1}{2 \pi i} \oint_{\Gamma} {\mathscr{F}}_{11}(z, \mathcal{S}(t, \cdot)) - {\mathscr{F}}_{12}(z, \mathcal{S}(t, \cdot)) - {\mathscr{F}}_{21}(z, \mathcal{S}(t, \cdot)) + {\mathscr{F}}_{22}(z, \mathcal{S}(t, \cdot)) \, \dif z \bigg ),
\]
and hence the claim follows.
\end{proof}
\begin{remark}
    We note that the key to this lemma was that, first, the statistic we consider is linear in $\mathcal{S}$ and second, the finite variation portions of the evolution of $S(\mathscr{X}_t,\cdot)$ are exactly the same as those for $\mathcal{S}$.  Hence, in particular, the same conclusion holds for any other linear functional of $\mathcal{S}$.
\end{remark}
We mention a second important example which also holds regardless of whether or not $\mathcal{O} = \mathcal{T}$:
\begin{corollary}\label{cor:NODE}
    The analogue $\mathrsfs{N}(t)$ of $\|\mathscr{X}_t\|^2 + \|X^*\|^2$,
    given by
    $\mathrsfs{N}(t) = \frac{-1}{2 \pi i} \oint_{\Gamma} \tr(\mathcal{S}(t,z)) \, \dif z$
    evolves by
\[
\frac{\dif}{\dif t} \mathrsfs{N}(t) 
=
- \gamma_t A_0( \mathrsfs{B}(t)) + \frac{\gamma^2_t}{2d} \tr(K) I(\mathrsfs{B}(t)) ,
\]
where
\[
\left.
\begin{aligned}
&A_0( \mathrsfs{B}) = \Exp_{a,\epsilon} [\ip { x, \nabla_x f(x\oplus x^\star)}], \\
&I( \mathrsfs{B}) = \Exp_{a,\epsilon} [\|\nabla_x f(x\oplus x^\star)\|^2],
\end{aligned}
\right\}
\quad\text{where}\quad (x \oplus x^\star) \sim N(0, \mathrsfs{B}).
\]
\end{corollary}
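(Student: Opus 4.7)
The argument mimics the proof of Lemma~\ref{lem:D2} with the test function $\varphi(X) = \|X\|^2$ in place of $\|X - X^\star\|^2$. The key observation, highlighted in the remark after Lemma~\ref{lem:D2}, is that $\mathrsfs{N}(t)$ is a linear functional of $\mathcal{S}(t,z)$, so It\^o's lemma and the integro-differential equation \eqref{eq:HSGD_exact} yield two semi-martingale decompositions of the same process, whose finite-variation parts must agree. The hypothesis $\mathcal{O} = \mathcal{T}$ used in Lemma~\ref{lem:D2} is no longer required because $\|\HSGD_t\|^2 = \|\WHSGD_t\|^2 + \|X^\star\|^2$ decomposes across the orthogonal direct summands $\mathcal{O}$ and $\mathcal{T}$.

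First I would record the identity $\|\HSGD_t\|^2 = -\frac{1}{2\pi i}\oint_\Gamma \tr(S(\HSGD_t, z))\dif z$, which is an application of Cauchy's integral formula to $\tr\langle \HSGD_t^{\otimes 2}, R(z;K)\rangle_{\mathcal{A}^{\otimes 2}}$ on the contour $\Gamma$ surrounding the spectrum of $K$. Since $X^\star$ is constant in time, $\dif \|\HSGD_t\|^2 = \dif \|\WHSGD_t\|^2$, and applying It\^o's formula \eqref{eq:HSGD_statistic} with $\nabla\varphi(X) = 2X$ and $\nabla^2\varphi(X) = 2\operatorname{Id}_{\mathcal{A}\otimes \mathcal{O}}$ gives a finite-variation part whose drift is $-\gamma_t \langle \WHSGD_t, \nabla \mathcal{R}_\delta(\WHSGD_t)\rangle\,\dif t$ and whose diffusion-trace term reduces by separation of the trace over $\mathcal{A}$ and $\mathcal{O}$ to $\frac{\gamma_t^2}{2d}\tr(K)\cdot I(B(\WHSGD_t))\,\dif t$. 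Passing the gradient through the expectation defining $\mathcal{R}$ identifies $\langle \WHSGD_t, \nabla\mathcal{R}(\WHSGD_t)\rangle = \EE_{a,\epsilon}[\langle x, \nabla_x f(x\oplus x^\star)\rangle]$ with $(x\oplus x^\star)\sim N(0, B(\WHSGD_t))$, which is precisely $A_0(B(\WHSGD_t))$.

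Next, differentiating the same process $\|\HSGD_t\|^2$ through its contour-integral representation via \eqref{eq:HSGD_exact} produces a second semi-martingale decomposition of the same process, with finite-variation part obtained by applying the linear functional $T \mapsto -\frac{1}{2\pi i}\oint_\Gamma \tr(T(z))\,\dif z$ to $\mathscr{F}(z, S(\HSGD_t, \cdot))$. By uniqueness of the semi-martingale decomposition, this must equal the It\^o drift computed above. Since $\mathcal{S}(t,z)$ satisfies the same integro-differential equation \eqref{eq:ODE_resolvent_2} without a martingale term, applying the same linear functional to $\mathscr{F}(z, \mathcal{S}(t,\cdot))$ yields $\dif \mathrsfs{N}(t)/\dif t$, which, by the comparison just described, equals the It\^o drift with $B(\WHSGD_t)$ replaced by $\mathrsfs{B}(t) = -\frac{1}{2\pi i}\oint_\Gamma z\mathcal{S}(t,z)\,\dif z$.

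I anticipate no substantial obstacle: every manipulation either transcribes directly from the proof of Lemma~\ref{lem:D2} or is a routine trace identity. The only mildly tedious verification is checking that the trace contractions of the $H$ and $D$ block matrices appearing in the definition \eqref{eq:F} of $\mathscr{F}$ rearrange into $A_0$, which follows from the vanishing second block column of $H$ and the fact that $D$ acts only on the $\mathcal{O}$-block, so the off-diagonal ($\mathcal{O}$-$\mathcal{T}$) contributions cancel against one another in the trace.
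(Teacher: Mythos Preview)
Your proposal is correct and follows essentially the same approach as the paper. The paper does not give a separate proof of this corollary; it simply appeals to the remark after Lemma~\ref{lem:D2} (which you quote) that the argument works for any linear functional of $\mathcal{S}$, and your proposal spells out exactly that transfer with $\varphi(X)=\|X\|^2$ in place of $\|X-X^\star\|^2$.
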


Before continuing, we record for convenience
that the curves $\mathrsfs{N}(t)$ and $\mathrsfs{D}^2(t)$ are naturally related, as one would expect from the norms to which they correspond. Namely,
\begin{equation}\label{eq:DNnorm}
\mathrsfs{N}(t) \le 2 \mathrsfs{D}^2(t) + 3 \|X^*\|^2.
\end{equation}
For this, we need to use that $\mathrsfs{B}_i(t)$ for $i = 1, 2, \hdots, d$ (see \eqref{eq:Bi}) are positive semi-definite. Define $\mathrsfs{P}(t) \defas \tfrac{1}{d} \sum_{i=1}^d \mathrsfs{B}_i(t) = \frac{-1}{2 \pi i} \oint_{\Gamma} \mathcal{S}(t,z) \, \dif z$ which is positive semi-definite, and $\mathrsfs{P}_{ij}(t) = \frac{-1}{2 \pi i} \oint_{\Gamma} \mathcal{S}_{ij}(t,z) \, \dif z$. 

Writing in terms of $\mathrsfs{P}$, \eqref{eq:DNnorm} is equivalent to, 
\begin{equation}
    \tr( \mathrsfs{P}_{11}(t) + \mathrsfs{P}_{22}(t) ) \le 2 \tr( \mathrsfs{P}_{11}(t) + \mathrsfs{P}_{22}(t) - \mathrsfs{P}_{12}(t) - \mathrsfs{P}_{21}(t) ) + 3 \tr ( \mathrsfs{P}_{22}(t)).
\end{equation}
This is equivalent to
\[
0 \le \tr ( \mathrsfs{P}_{11}(t) + \mathrsfs{P}_{22}(t) - 2\mathrsfs{P}_{12}(t) - 2\mathrsfs{P}_{21}(t) ) + 3 \tr (\mathrsfs{P}_{22}(t)) = \tr \left ( \mathrsfs{P}(t) \begin{bmatrix} I & -2 I\\ -2I & 4I \end{bmatrix} \right ).
\]
This inequality is immediate after noting that $\mathrsfs{P}(t) \succeq 0$ and $\begin{bmatrix} I & -2 I\\ -2I & 4I \end{bmatrix} \succeq 0$ so the trace of a product of symmetric positive semi-definite matrix is non-negative.

\subsection{Non-explosiveness}
\label{sec:nonexplosiveness}

We have formulated our main theorems as a comparison between processes up to the first time that one of the processes explodes or exits the domain of definition $\mathcal{U}$.
In this section, we give a simple criterion under which one can show that \emph{a priori}, the deterministic ODEs exist for all time. We restate and prove the Proposition~\ref{prop:nonexplosiveness} below. 

\begin{proposition}[Non-explosiveness]
Suppose that 
Assumptions \ref{ass:pseudo_lipschitz},
\ref{assumption:scaling}, \ref{ass:data_normal} and \ref{assum:learning_rate} hold.
Suppose further that the objective function $f$ is $\alpha$-pseudo-Lipschitz with $\alpha=1$.  
Then there is a constant $C$ depending on $\|K\|_{\sigma}$, $\bar{\gamma}$, $\|X_0\|$, $\|X^\star\|$, $L(f)$ so that
\[
\mathrsfs{N}(t) \leq (1+ \mathrsfs{N}(0))e^{C t}
\]
for all time $t$ such that $\mathrsfs{B}(t)$ is in $\mathcal{U}.$
\end{proposition}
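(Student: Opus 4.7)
The plan is to close an autonomous differential inequality for $\mathrsfs{N}(t)$ whose right-hand side is linear in $\mathrsfs{N}(t)$, and then invoke Gronwall. By Corollary~\ref{cor:NODE},
\[
\frac{\dif}{\dif t}\mathrsfs{N}(t) = -\gamma_t A_0(\mathrsfs{B}(t)) + \frac{\gamma_t^2}{2d}\tr(K)\,I(\mathrsfs{B}(t)),
\]
so it suffices to bound $|A_0(\mathrsfs{B}(t))|$ and $I(\mathrsfs{B}(t))$ by expressions of the form $C(1+\mathrsfs{N}(t))$ with $C$ depending only on the constants listed in the statement.

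First, each $\mathrsfs{B}_i(t)$ is positive semidefinite (as used in the proof of Lemma~\ref{lem:normequivalence}), so $\|\mathrsfs{B}_i(t)\|\le\tr(\mathrsfs{B}_i(t))$, and therefore
\[
\tr\mathrsfs{B}(t)=\tfrac{1}{d}\sum_i\lambda_i\tr(\mathrsfs{B}_i(t))\le\|K\|_\sigma\,\mathrsfs{N}(t).
\]
Since $(x\oplus x^\star)\sim N(0,\mathrsfs{B}(t))$ satisfies $\EE\|x\oplus x^\star\|^2=\tr\mathrsfs{B}(t)$, the case $\alpha=1$, $p=2$ of Lemma~\ref{lem:growth_grad_f} gives
\[
I(\mathrsfs{B}(t))=\EE\|\nabla_x f(x\oplus x^\star;\epsilon)\|^2 \le C(L(f))^2\bigl(1+\EE\|x\oplus x^\star\|^2+\EE\|\epsilon\|^2\bigr) \le C'\bigl(1+\mathrsfs{N}(t)\bigr),
\]
with $C'$ depending on $L(f)$, $|\mathcal{T}|$, and $\|K\|_\sigma$. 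For $A_0$, Cauchy--Schwarz followed by the AM--GM inequality gives
\[
|A_0(\mathrsfs{B}(t))|\le\sqrt{\EE\|x\|^2}\,\sqrt{I(\mathrsfs{B}(t))}\le\sqrt{\tr\mathrsfs{B}(t)\cdot I(\mathrsfs{B}(t))}\le C''\bigl(1+\mathrsfs{N}(t)\bigr).
\]

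Inserting these bounds into the ODE and using $\tfrac{1}{d}\tr(K)\le\|K\|_\sigma$ and $\gamma_t\le\bar\gamma$, we obtain $\tfrac{\dif}{\dif t}\mathrsfs{N}(t)\le C\bigl(1+\mathrsfs{N}(t)\bigr)$ for a constant $C$ depending only on $\|K\|_\sigma$, $\bar\gamma$, $L(f)$, and $|\mathcal{T}|$ (with $\|X_0\|$, $\|X^\star\|$ entering through $\mathrsfs{N}(0)=\|W_0\|^2$). Integrating the Gronwall inequality for $1+\mathrsfs{N}(t)$ yields $1+\mathrsfs{N}(t)\le(1+\mathrsfs{N}(0))e^{Ct}$ on any interval where $\mathrsfs{B}(t)\in\mathcal{U}$, which is the asserted estimate. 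The only place the hypothesis $\alpha=1$ is essential is in the step $I(\mathrsfs{B}(t))\lesssim 1+\mathrsfs{N}(t)$: for $\alpha>1$ one would only get $I\lesssim(1+\mathrsfs{N})^\alpha$ and Gronwall would no longer preclude finite-time blowup; this is the main (mild) conceptual obstacle, and is why the statement restricts to the linear-growth regime.
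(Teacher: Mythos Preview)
Your proof is correct and follows essentially the same approach as the paper: derive the ODE for $\mathrsfs{N}(t)$ from Corollary~\ref{cor:NODE}, use the pointwise bound $\|\nabla_x f(r;\epsilon)\|\le L(f)(1+\|r\|+\|\epsilon\|)$ coming from $\alpha=1$, deduce $|A_0|,\,I\le C(1+\mathrsfs{N})$, and close with Gronwall. You make explicit the intermediate inequality $\tr\mathrsfs{B}(t)\le\|K\|_\sigma\,\mathrsfs{N}(t)$, which the paper leaves implicit but is indeed needed to pass from moment bounds in terms of $\tr\mathrsfs{B}$ to bounds in terms of $\mathrsfs{N}$.
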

\begin{proof}
    From Corollary \ref{cor:NODE},
    \[
    \frac{\dif}{\dif t} \mathrsfs{N}(t) 
    =
    - \gamma_t A_0( \mathrsfs{B}(t)) + \frac{\gamma^2_t}{2d} \tr(K) I(\mathrsfs{B}(t)).
    \]
    From the assumption that $f$ is $1$-pseudo-Lipschitz, we conclude that
    \[
    \|\nabla x f\| \leq L(f)(1 + \|r\| + \|\epsilon\|).
    \]
    It follows by Cauchy-Schwarz that for some constant $C>0$ depending on $L(f)$
    \[
    |A_0( \mathrsfs{B}(t))|, I(\mathrsfs{B}(t))
    \leq C(1+\mathrsfs{N}(t)).
    \]
    Hence for some other constant depending on $\|K\|_{\sigma}$, $L(f)$ and $\bar{\gamma}$,
    \[
    \frac{\dif}{\dif t} \mathrsfs{N}(t) 
    \leq C(1+\mathrsfs{N}(t)).
    \]
    Hence by Gronwall's inequality, $(1 + \mathrsfs{N}(t)) \leq (1 + \mathrsfs{N}(0))e^{Ct}$, which completes the proof.
\end{proof}

\subsection{Distance to optimality descent} \label{sec:distance_optimality}
 We will show that for standard outer function assumptions and some upper bound on the learning rate $\gamma_t < \bar{\gamma}$ that \textit{the function $\mathrsfs{D}^2(t)$ is decreasing in $t$}. Since $\|X-X^{\star}\|^2$ is a statistic that satisfies Assumption~\ref{assumption:statistic}, fixing a $T > 0$, we have by Corollary~\ref{cor:statistic_bounded} for some $\varepsilon > 0$,
\[
\sup_{0 \le t \le T} | \|X_{\lfloor td \rfloor}-X^{\star}\|^2 - \mathrsfs{D}^2(t) | \le d^{-\varepsilon} \quad \text{w.o.p.} 
\]
In this way, $\mathrsfs{D}^2(t) \approx \|X_{\lfloor td \rfloor}-X^{\star}\|^2$ and since $\mathrsfs{D}^2(t)$ is decreasing, so is the distance to optimality of SGD. Consequently, we say SGD is \textit{descending} if $\mathrsfs{D}^2(t)$ is decreasing. Surprisingly, for this to happen, we will see that the upper bound on the learning rate $\bar{\gamma}$ depends on the average eigenvalue of $K$, $\tfrac{1}{d} \tr(K)$, instead of on the largest eigenvalue, $\lambda_{\max}(K)$. As $\tfrac{1}{d} \tr(K) \ll \lambda_{\max}(K)$ for typical datasets, our result shows a larger learning rate can be used in practice and one will still observe decrease. In this section, we will not provide a rate of convergence; we only show learning rates which guarantee decrease of the function $\mathrsfs{D}^2(t)$. 

We will work in a simplified setting. 
First, throughout the rest of this section, we will assume that there is no regularization 
\[\delta = 0.\]

We now recall Proposition~\ref{prop:descent_SGD_main} below and prove the result. 


\begin{proposition}[Descent of SGD] \label{prop:descent_SGD} Fix a constant $T > 0$ and $\eta > 0$. Consider an outer function $f \, : \mathcal{O} \otimes \mathcal{T} \otimes \mathcal{T} \to \mathbb{R}$. Suppose the Assumptions of Theorem~\ref{thm:main_concentration_S} hold  and suppose that $\sup_{0 \le t \le T} \sup_{V \in \mathcal{U}^c} \| \mathrsfs{B}(t)- V\| > \eta$. Moreover, suppose the following inequality holds for some constant $q > 0$,
\begin{equation} \label{eq:stability_2}
    q \cdot \EE_{a, \epsilon} \big [ \| \nabla_x f(\ip{W,a}_{\mathcal{A}})\|^2 \big ] \le \ip{X - X^{\star}, (\nabla \mathcal{R})(X)}, \quad \text{for all $X \in \mathcal{A} \otimes \mathcal{O}$.}
\end{equation}
If the learning rate $\displaystyle \gamma_t < \bar{\gamma}$ for all $t \ge 0$, where 
\begin{equation} 
\bar{\gamma} = \frac{2 q}{\tfrac{1}{d} \tr (K)},
\end{equation}
then, the function $\mathrsfs{D}^2(t)$ defined in \eqref{eq:deterministic_distance} is decreasing for all $t \ge 0$. Moreover, for some $\varepsilon > 0$, the iterates of SGD $\{X_k\}$ satisfy
\begin{equation}
\label{eq:descent_101}
\sup_{0 \le t \le T} | \|X_{\lfloor td \rfloor }-X^{\star}\|^2 - \mathrsfs{D}^2(t) | \le d^{-\varepsilon}, \quad \text{w.o.p.}
\end{equation}
\end{proposition}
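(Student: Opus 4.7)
The plan is to combine the closed-form evolution of $\mathrsfs{D}^2$ from Lemma~\ref{lem:D2} with the stability inequality \eqref{eq:stability_2}, and then use Theorem~\ref{Thm:SGD_HSGD_convergence} (or Corollary~\ref{cor:statistic_bounded}) applied to the statistic $\varphi(X)=\|X-X^\star\|^2$.

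First, invoke Lemma~\ref{lem:D2}, which yields
\[
\frac{\dif}{\dif t}\mathrsfs{D}^2(t) \;=\; -\gamma_t\,A(\mathrsfs{B}(t)) + \frac{\gamma_t^2}{2d}\tr(K)\,I(\mathrsfs{B}(t)),
\]
where $A(\mathrsfs{B})=\Exp_{a,\epsilon}[\ip{x-x^\star,\nabla_x f(x\oplus x^\star)}]$ and $I(\mathrsfs{B})=\Exp_{a,\epsilon}[\|\nabla_x f(x\oplus x^\star)\|^2]$ are interpreted with $(x\oplus x^\star)\sim N(0,\mathrsfs{B})$. Because these are expectations over the same Gaussian that defines $\mathcal{R}$ at any $X$ with $B(W)=\mathrsfs{B}$, the expressions $A$ and $I$ coincide with the quantities appearing on either side of the hypothesis \eqref{eq:stability_2}, which thus reads $q\,I(\mathrsfs{B})\le A(\mathrsfs{B})$. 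Substituting this inequality gives
\[
\frac{\dif}{\dif t}\mathrsfs{D}^2(t) \;\le\; -\gamma_t\, I(\mathrsfs{B}(t))\Bigl(q-\tfrac{\gamma_t}{2d}\tr(K)\Bigr).
\]
Since $I(\mathrsfs{B})\ge 0$ (as a second moment) and $\gamma_t<\bar\gamma=2q/\bigl(\tfrac{1}{d}\tr(K)\bigr)$ makes the parenthesis strictly positive, the right-hand side is $\le 0$, establishing that $\mathrsfs{D}^2(t)$ is non-increasing on $[0,\infty)$.

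Next, I would pass to the iterates of SGD. Monotonicity of $\mathrsfs{D}^2$ gives the uniform bound $\mathrsfs{D}^2(t)\le \mathrsfs{D}^2(0)$, and then the elementary inequality \eqref{eq:DNnorm} controls $\mathrsfs{N}(t)\le 2\mathrsfs{D}^2(0)+3\|X^\star\|^2$ for all $t$; by Assumption~\ref{assumption:scaling} this bound $M$ is independent of $d$. Combined with the hypothesis that $\sup_{0\le t\le T}\sup_{V\in\mathcal{U}^c}\|\mathrsfs{B}(t)-V\|>\eta$, both conditions in \eqref{eq:concentration_condition} of Corollary~\ref{cor:bounded_iterates}/Corollary~\ref{cor:statistic_bounded} are satisfied. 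Applying that corollary with the statistic $\varphi(X)=\|X-X^\star\|^2$, which has the composite form required by Assumption~\ref{assumption:statistic} (take $q(z)=1$ and $g$ the trace of a linear combination of blocks), produces
\[
\sup_{0\le t\le T}\bigl|\,\|X_{\lfloor td\rfloor}-X^\star\|^2-\mathrsfs{D}^2(t)\,\bigr|\le d^{-\varepsilon}\quad\text{w.o.p.}
\]

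The only genuinely delicate point is verifying that $\mathrsfs{D}^2$ really is the deterministic equivalent of $\varphi$ under the framework of Corollary~\ref{cor:statistic_bounded}; this is where one must check that $\varphi(X)=\|X-X^\star\|^2$ fits Assumption~\ref{assumption:statistic} with $q(K)=\mathrm{Id}_{\mathcal{A}}$ and that the corresponding $\phi(t)$ coincides with $\mathrsfs{D}^2(t)$ from \eqref{eq:deterministic_distance} — a matter of unwinding the contour integral representation, yielding $\phi(t)=\tfrac{1}{d}\sum_i\tr(\mathrsfs{B}_{11,i}-2\mathrsfs{B}_{12,i}+\mathrsfs{B}_{22,i})=\mathrsfs{D}^2(t)$. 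The rest is bookkeeping; no new analytic obstacle arises, since the stability hypothesis has been crafted to make the ODE argument a one-line consequence of Lemma~\ref{lem:D2}.
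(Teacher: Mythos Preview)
Your proposal is correct and essentially identical to the paper's own proof: both invoke Lemma~\ref{lem:D2} to obtain the ODE for $\mathrsfs{D}^2$, plug in the hypothesis $q\,I(\mathrsfs{B})\le A(\mathrsfs{B})$ to conclude $\tfrac{\dif}{\dif t}\mathrsfs{D}^2(t)\le \gamma_t\bigl[\tfrac{\gamma_t}{2d}\tr(K)-q\bigr]I(\mathrsfs{B}(t))<0$, then use \eqref{eq:DNnorm} to bound $\mathrsfs{N}(t)$ and apply Corollary~\ref{cor:statistic_bounded} to the statistic $\varphi(X)=\|X-X^\star\|^2$. Your added remark checking that $\phi(t)=\mathrsfs{D}^2(t)$ is a helpful clarification the paper leaves implicit.
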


\begin{proof} First, we show that $\mathrsfs{D}(t)$ is a decreasing function. For this, we see by \eqref{eq:stability_2} and Lemma~\ref{lem:D2} that
\begin{equation*}
\begin{aligned}
\dif \mathrsfs{D}^2(t) 
&
= - \gamma_t A(\mathrsfs{B}(t)) \, \dif t + \frac{\gamma_t^2}{2d} \tr(K) I(\mathrsfs{B}(t))\\
&
= -\gamma_t \EE_{a, \epsilon} [ \ip{x-x^{\star}, \nabla_x f(x \oplus x^{\star}}] + \frac{\gamma_t^2}{2d} \tr(K) \EE_{a, \epsilon} [ \|\nabla_x f(x \oplus x^{\star})\|^2 ], \quad \text{where $(x \oplus x^{\star}) \sim N(0, \mathrsfs{B})$}
\\
&
= 
-\gamma_t \EE_{a, \epsilon} [ \ip{X-X^{\star}, a \otimes \nabla_x f(x \oplus x)}] + \frac{\gamma_t^2}{2d} \tr(K) \EE_{a, \epsilon} [ \|\nabla_x f(x \oplus x^{\star})\|^2 ]
\\
&
\le 
\gamma_t \big [ \frac{\gamma_t}{2} \cdot \frac{1}{d} \tr(K) - q \big ] \big [ \EE_{a, \epsilon} [ \|\nabla_x f(x \oplus x^{\star})\|^2 ] \big ] < 0.
\end{aligned}
\end{equation*}
Thus, the function $\mathrsfs{D}(t)$ is decreasing. 

Now as $\mathrsfs{D}^2(t)$ is non-increasing,
then using \eqref{eq:DNnorm}, we have that 
\[
\sup_{0 \le t \le T} \mathrsfs{N}(t) \le 2 \mathrsfs{D}^2(0) + 3 \|X^{\star}\|^2 \le C. 
\]
Hence the assumptions of Corollary~\ref{cor:statistic_bounded} are satisfied and the conclusions of Corollary~\ref{cor:statistic_bounded} give the result \eqref{eq:descent_101}.  
\end{proof}

Next, we will need to assume a result about our outer function $f$, that is, it attains a \textit{global minimizer} at the same point as the global minimizer of the risk $\mathcal{R}$, that is, Assumption~\ref{assumption:risk_loss_minimizer_1_main} holds. Moreover, we give a value for $q$ in \eqref{eq:stability_2} when the outer function $f$ (and \textit{not} the objective function $\mathcal{R}$) is \textit{$\hat{L}$-smooth}. We again restate Corollary~\ref{cor:convexbounded} and provide a proof. 



\begin{corollary}[Descent of convex, $\hat{L}(f)$-smooth outer function] Fix a constant $T > 0$. Suppose the Assumptions of Theorem~\ref{thm:main_concentration_S} hold and suppose that $\sup_{0 \le t \le T} \sup_{V \in \mathcal{U}^c} \| \mathrsfs{B}(t)- V\| > \eta$. In addition, let the outer function $f \, : \, \mathcal{O} \otimes \mathcal{T} \otimes \mathcal{T} \to \mathbb{R}$ be a convex and $\hat{L}(f)$-smooth function with respect to $x\in \mathcal{O}$. Suppose $X^{\star} \in argmin_{X} \{\mathcal{R}(X)\}$ exists bounded, independent of $d$ and Assumption~\ref{assumption:risk_loss_minimizer_1_main} holds. Then the inequality \eqref{eq:stability_2} holds with $q = \tfrac{1}{2\hat{L}(f)}$. Moreover, if $\displaystyle  \gamma_t \le \bar{\gamma}$ for all $t$ where
\[
\bar{\gamma} = \frac{1}{\hat{L}(f)\tfrac{1}{d} \tr(K)}, 
\]
then, the function $\mathrsfs{D}^2(t)$ defined in \eqref{eq:deterministic_distance} is decreasing for all $t \ge 0$. Moreover, for some $\varepsilon > 0$, the iterates of SGD $\{X_k\}$ satisfy
\[
\sup_{0 \le t \le T} | \|X_{\lfloor td \rfloor }-X^{\star}\|^2 - \mathrsfs{D}^2(t) | \le d^{-\varepsilon}, \quad \text{w.o.p.}
\]

\label{cor:convexbounded_1}
\end{corollary}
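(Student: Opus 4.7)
The plan is to reduce the corollary to Proposition~\ref{prop:descent_SGD} by establishing the pointwise inequality \eqref{eq:stability_2} with $q = \tfrac{1}{2\hat{L}(f)}$, since once that holds the stopping-time hypothesis guarantees the hypotheses of that proposition and the bound on $\bar{\gamma}$ becomes a direct substitution.

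First I would recall the standard ``cocoercivity'' consequence of convexity and $\hat L$-smoothness: for any differentiable convex function $\psi : \mathcal{O} \to \R$ that is $\hat L$-smooth and attains its minimum at $x^\star$, one has the chain
\begin{equation*}
  \frac{1}{2\hat L} \|\nabla \psi(x)\|^2 \;\le\; \psi(x) - \psi(x^\star) \;\le\; \langle \nabla \psi(x), x - x^\star\rangle,
\end{equation*}
where the left inequality comes from applying the descent lemma to the point $x - \tfrac{1}{\hat L}\nabla \psi(x)$ and bounding the result below by $\psi(x^\star)$, and the right inequality is the convexity subgradient inequality at $x^\star$ (noting $\nabla\psi(x^\star) = 0$ since the minimizer is interior). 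I will apply this pointwise to $\psi(x) \defas f(x \oplus \langle X^\star, a\rangle_{\mathcal{A}}; \epsilon)$, which is convex and $\hat L(f)$-smooth in $x$ by hypothesis. By Assumption~\ref{assumption:risk_loss_minimizer_1_main}, for almost every $a\sim N(0,K)$ (and every $\epsilon$, since the argmin does not depend on $\epsilon$ in the student--teacher formulations we care about), the vector $\langle X^\star, a\rangle_{\mathcal{A}}$ minimizes $\psi$, so the above inequality yields, with $x = \langle X, a\rangle_{\mathcal{A}}$ and $x^\star = \langle X^\star, a\rangle_{\mathcal{A}}$,
\begin{equation*}
  \frac{1}{2\hat L(f)} \|\nabla_x f(\langle W, a\rangle_{\mathcal{A}})\|^2 \;\le\; \big\langle \nabla_x f(\langle W, a\rangle_{\mathcal{A}}),\,\langle X - X^\star, a\rangle_{\mathcal{A}}\big\rangle.
\end{equation*}

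Next I would take $\Exp_{a,\epsilon}$ on both sides. The left-hand side becomes $\tfrac{1}{2\hat L(f)} \Exp_{a,\epsilon}[\|\nabla_x f(\langle W,a\rangle_{\mathcal{A}})\|^2]$. For the right-hand side, I use the partial-contraction identity and Lemma~\ref{lem:derivative_loss} to rewrite the inner product as $\langle X - X^\star,\, a\otimes \nabla_x f(\langle W,a\rangle_{\mathcal{A}})\rangle$; taking expectation and commuting it with $\nabla_X$ via Assumption~\ref{assumption:unbiased} identifies the result as $\langle X - X^\star, \nabla\mathcal{R}(X)\rangle$. This is exactly \eqref{eq:stability_2} with $q = \tfrac{1}{2\hat L(f)}$.

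Finally I would plug $q = \tfrac{1}{2\hat L(f)}$ into the learning-rate bound of Proposition~\ref{prop:descent_SGD}: $\bar\gamma = 2q/(\tfrac1d \tr K) = 1/(\hat L(f) \tfrac1d \tr K)$, which matches the statement. The conclusion that $\mathrsfs{D}^2(t)$ is decreasing, and the approximation \eqref{eq:descent_101_main} of $\|X_{\lfloor td\rfloor} - X^\star\|^2$ by $\mathrsfs{D}^2(t)$ with overwhelming probability, then follow verbatim from Proposition~\ref{prop:descent_SGD}, using the hypothesis $\sup_{0\le t\le T}\sup_{V\in\mathcal{U}^c}\|\mathrsfs{B}(t)-V\|>\eta$ to ensure we stay in the domain of $h$. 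I do not anticipate a genuine obstacle; the only care-point is making sure the almost-sure minimization property in Assumption~\ref{assumption:risk_loss_minimizer_1_main} lets us apply the cocoercivity inequality for $\mathcal{D}$-almost every $(a,\epsilon)$ before integrating, so that the expected inequality is legitimate.
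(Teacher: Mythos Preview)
Your proposal is correct and matches the paper's proof in substance: both verify \eqref{eq:stability_2} with $q=\tfrac{1}{2\hat L(f)}$ by combining the smoothness bound $\tfrac{1}{2\hat L(f)}\|\nabla_x f\|^2 \le f(x)-f(x^\star)$ (via Assumption~\ref{assumption:risk_loss_minimizer_1_main}) with the convexity subgradient inequality, and then invoke Proposition~\ref{prop:descent_SGD}. The only cosmetic difference is that the paper applies convexity at the level of $\mathcal{R}$ (so $\langle X-X^\star,\nabla\mathcal{R}(X)\rangle \ge \mathcal{R}(X)-\mathcal{R}(X^\star)$) and the smoothness bound pointwise in $x$ before taking expectation, whereas you chain both inequalities pointwise in $x$ and take a single expectation at the end; the two orderings are equivalent.
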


\begin{proof} By convexity of $f$, we have that $f(\ip{X,a}_{\mathcal{A}})$ is convex in $X$ and thus, $\mathcal{R}(X) = \EE_{a, \epsilon} [ f(\ip{X,a}_{\mathcal{A}})]$ is convex. Therfore, we deduce that 
\begin{align} \label{eq:stability_convex_1}
\ip{X-X^{\star}, (\nabla \mathcal{R})(X)} \ge \mathcal{R}(X) - \mathcal{R}(X^{\star}), \quad \text{for all $X \in \mathcal{A} \otimes \mathcal{O}$.}
\end{align}
In addition, Assumption~\ref{assumption:risk_loss_minimizer_1_main} together with $\hat{L}(f)$-smoothness of $f$ \eqref{eq:L_smoothness_bound_main} implies
\begin{align*}
    \frac{1}{2 \hat{L}(f)} \|\nabla_x f(\ip{X,a}_{\mathcal{A}})\|^2 \le f(\ip{X,a}_{\mathcal{A}}) - \inf_x f(x) = f(\ip{X,a}_{\mathcal{A}}) - f(\ip{ X^{\star}, a}_{\mathcal{A}}),
\end{align*}
for almost surely any $a \sim N(0,K)$. Taking expectation, we have that 
\begin{align} \label{eq:stability_convex_2}
    \frac{1}{2\hat{L}(f)} \EE_{a,\epsilon} \big [ \|\nabla_x f(\ip{X,a}_{\mathcal{A}})\|^2 \big ] \le \EE_{a,\epsilon} [ f(\ip{X,a}_{\mathcal{A}})] - \EE_{a,\epsilon}[f(\ip{X^{\star},a}_{\mathcal{A}}) ] = \mathcal{R}(X) - \mathcal{R}(X^{\star}).
\end{align}
The inequality \eqref{eq:stability_2} immediately follows from \eqref{eq:stability_convex_1} and \eqref{eq:stability_convex_2} with $q = \frac{1}{2 \hat{L}(f)}$. The result, then follows, by applying Proposition~\ref{prop:descent_SGD}. 
\end{proof}

Under the RSI assumption on the outer function $f$, we can show that the inequality \eqref{eq:stability_2} holds in Proposition~\ref{prop:descent_SGD}.  
\begin{corollary}[Descent of $\hat{L}(f)$-smooth, RSI with $\hat{\mu}(f)$ outer function] \label{cor: stability_RSI_smooth} Fix $T > 0$. Suppose the Assumptions of Theorem~\ref{thm:main_concentration_S} hold and suppose that $\sup_{0 \le t \le T} \sup_{V \in \mathcal{U}^c} \| \mathrsfs{B}(t)- V\| > \eta$ w.o.p. In addition, let the outer function $f \, : \, \mathcal{O} \otimes \mathcal{T} \otimes \mathcal{T} \to \mathbb{R}$ be a $\hat{L}(f)$-smooth and
$\hat{\mu}(f)$--RSI with respect to $x \in \mathcal{O}$. 
Suppose $X^{\star} \in \argmin_X \{ \mathcal{R}(X)\}$ is bounded, independent of, $d$ and Assumption~\ref{assumption:risk_loss_minimizer_1_main} holds. 
Then provided $\gamma_t \le \bar{\gamma}$ for all $t \ge 0$ where 
\[
\bar{\gamma} = \frac{2 \hat{\mu}(f) }{(\hat{L}(f))^2 \tfrac{1}{d} \tr(K)},
\]
then, the function $\mathrsfs{D}^2(t)$ defined in \eqref{eq:deterministic_distance} is decreasing for all $t \ge 0$. Moreover, for some $\varepsilon > 0$, the iterates of SGD $\{X_k\}$ satisfy
\[
\sup_{0 \le t \le T} | \|X_{\lfloor td \rfloor }-X^{\star}\|^2 - \mathrsfs{D}^2(t) | \le d^{-\varepsilon}, \quad \text{w.o.p.}
\]
\label{cor:descent_RSI}
\end{corollary}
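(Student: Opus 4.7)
My plan is to derive this as a direct application of Proposition~\ref{prop:descent_SGD} (equivalently Proposition~\ref{prop:descent_SGD_main}), with the role of the outer function assumptions being to identify the constant $q$ in the hypothesis \eqref{eq:stability_2}. Matching the threshold $\bar\gamma = 2\hat\mu(f)/\bigl(\hat L(f)^2 \tfrac{1}{d}\tr(K)\bigr)$ given in the statement against the threshold $\bar\gamma = 2q/(\tfrac{1}{d}\tr K)$ appearing in Proposition~\ref{prop:descent_SGD} suggests aiming for $q = \hat\mu(f)/\hat L(f)^2$, so the core task is to verify the pointwise inequality
\[
\frac{\hat\mu(f)}{\hat L(f)^2}\,\EE_{a,\epsilon}\bigl[\|\nabla_x f(\langle W,a\rangle_{\mathcal{A}})\|^2\bigr]
\;\leq\;
\ip{X-X^\star, \nabla\mathcal{R}(X)}
\]
for all $X \in \mathcal{A}\otimes\mathcal{O}$.

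The verification proceeds pointwise in $(a,\epsilon)$. Write $x = \ip{X,a}_{\mathcal{A}}$ and $x^\star = \ip{X^\star,a}_{\mathcal{A}}$. Assumption~\ref{assumption:risk_loss_minimizer_1_main} tells us that for a.e. $a \sim N(0,K)$, $x^\star$ is a global minimizer of $x \mapsto f(x \oplus x^\star;\epsilon)$, so in particular $\nabla_x f(x^\star \oplus x^\star;\epsilon)=0$. Two inequalities then apply to $f(\,\cdot\,\oplus x^\star;\epsilon)$:
\begin{itemize}
\item[(i)] by $\hat{L}(f)$-smoothness and the vanishing gradient at $x^\star$, $\|\nabla_x f(x\oplus x^\star;\epsilon)\| \leq \hat L(f)\,\|x-x^\star\|$, hence
\[
\|\nabla_x f(x\oplus x^\star;\epsilon)\|^2 \leq \hat L(f)^2\,\|x-x^\star\|^2;
\]
\item[(ii)] by the $\hat{\mu}(f)$-RSI, $\ip{x-x^\star,\nabla_x f(x\oplus x^\star;\epsilon)}\geq \hat\mu(f)\,\|x-x^\star\|^2$.
\end{itemize}
Chaining (i) into (ii) gives the pointwise estimate
\[
\frac{\hat\mu(f)}{\hat L(f)^2}\,\|\nabla_x f(x\oplus x^\star;\epsilon)\|^2 \leq \hat\mu(f)\|x-x^\star\|^2 \leq \ip{x-x^\star,\nabla_x f(x\oplus x^\star;\epsilon)}.
\]

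Taking expectations over $(a,\epsilon)\sim\mathcal{D}$ and using the representation of $\nabla\mathcal{R}$ as $\EE_{a,\epsilon}[a\otimes \nabla_x f(\cdot)]$ (as in the identity \eqref{eq:A} used in the derivation of Lemma~\ref{lem:D2}), the right-hand side is precisely $\ip{X-X^\star,\nabla\mathcal{R}(X)}$, so \eqref{eq:stability_2} holds with $q = \hat\mu(f)/\hat L(f)^2$. Invoking Proposition~\ref{prop:descent_SGD} with this value of $q$ yields both the monotone decay of $\mathrsfs{D}^2(t)$ on all $t\geq 0$ under $\gamma_t\leq\bar\gamma$, and the uniform comparison to $\|X_{\lfloor td\rfloor}-X^\star\|^2$ with overwhelming probability.

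There is no serious obstacle, the whole argument is a short verification; the only potentially delicate point is ensuring that the $\mathcal{T}$--valued and noise arguments of $f$ are carried through correctly so that the RSI/smoothness statements apply to the single-variable function $x \mapsto f(x\oplus x^\star;\epsilon)$ at the a.s.\ correct minimizer $x^\star$. This is precisely what Assumption~\ref{assumption:risk_loss_minimizer_1_main} secures, so the derivation goes through.
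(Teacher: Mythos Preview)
Your proposal is correct and follows essentially the same route as the paper: verify \eqref{eq:stability_2} with $q=\hat\mu(f)/\hat L(f)^2$ by combining the RSI lower bound $\ip{x-x^\star,\nabla_x f}\geq \hat\mu(f)\|x-x^\star\|^2$ with the smoothness bound $\|\nabla_x f(x)\|^2\leq \hat L(f)^2\|x-x^\star\|^2$ (the paper obtains the latter via \eqref{eq:L_smoothness_bound_main}), then invoke Proposition~\ref{prop:descent_SGD}.
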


\begin{proof} By the RSI (with constant $\hat{\mu}(f)$) condition on $f$, we have that
\begin{equation}
\begin{aligned} \label{eq:RSI_1}
    \ip{X - X^{\star}, \nabla_X \mathcal{R}(X)} 
    &
    = 
    \ip{X-X^{\star}, \EE_{a,\epsilon} [a \otimes \nabla_x f(\ip{X,a}_{\mathcal{A}})] }
    \\
    &
    =
    \EE_{a, \epsilon} \big [ \ip{x-x^{\star}, \nabla_x f(x) } \big ]
    \\
    &
    \ge 
    \hat{\mu}(f) \EE_{a, \epsilon} [ \|x-x^{\star}\|^2],
\end{aligned}
\end{equation}
where $x= \ip{ X,a}_{\mathcal{A}}$ and $x^{\star} = \ip{X^{\star},a}_{\mathcal{A}}$.

By $\hat{L}(f)$-smoothness, 
\[
\frac{1}{2\hat{L}(f)} \| \nabla_x f(x)\|^2 \le \frac{\hat{L}(f)}{2} \|x-x^{\star}\|^2.
\]
This implies that 
\begin{equation} \label{eq:RSI_2}
\frac{1}{(\hat{L}(f))^2} \EE_{a,\epsilon} \big [ \|\nabla_x f(\ip{X,a}_{\mathcal{A}})\|^2 \big ] \le \EE_{a, \epsilon} \big [ \|x-x^{\star}\|^2\big ] .
\end{equation}
Thus by \eqref{eq:RSI_1} and $\eqref{eq:RSI_2}$, we have that the inequality \eqref{eq:stability_2} holds with $q = \frac{\hat{\mu}(f)}{(\hat{L}(f))^2}$. The result then follows by applying Proposition~\ref{prop:descent_SGD}. 
\end{proof}

\subsection{Convergence analysis} \label{sec:convergence}

We provide a simple complexity analysis under various scenarios. The first result is, for strongly convex risks, a linear rate that only depends on the \textit{average condition number}, $\frac{\tr(K) /d }{\lambda_{\min}(K)}$, where $\lambda_{\min}(K)$ is the smallest eigenvalue of $K$. Typical convergence rates usually depend on $\frac{\|K\|_{\sigma}}{\lambda_{\min}(K)}$ which for many datasets, especially those in machine learning, the average eigenvalue is much smaller than the maximum eigenvalue of $K$. We restate below Proposition~\ref{prop:RSI} and \ref{prop:Courtneyrate_main} and provide proofs. 

\begin{proposition}[Global convergence rate for fixed stepsize, $\hat{\mu}(f)$-RSI, $\hat{L}(f)$-smooth function, with covariance $K \succ 0$] Fix a constant $T > 0$. Suppose the Assumptions of Theorem~\ref{thm:main_concentration_S} hold and suppose that $\sup_{0 \le t \le T} \sup_{V \in \mathcal{U}^c} \| \mathrsfs{B}(t)- V\| > \eta$. Let the outer function $f \, : \, \mathcal{O} \otimes \mathcal{T} \otimes \mathcal{T} \to \mathbb{R}$ be a $\hat{L}(f)$-smooth function satisfying the RSI condition with $\hat{\mu}(f)$ with respect to $x \in \mathcal{O}$. Suppose $X^{\star} \in \argmin_X \{ \mathcal{R}(X)\}$ is bounded, independent of, $d$ and Assumption~\ref{assumption:risk_loss_minimizer_1_main} holds. Let the covariance matrix $K$ have a smallest eigenvalue bounded away from $0$, that is $\lambda_{\min}(K) > 0$. If the learning rate satisfies
\[
\gamma_t = \gamma = \frac{2 \hat{\mu}(f) }{(\hat{L}(f))^2 \tfrac{1}{d} \tr(K)} \zeta,
\]
for some $0 < \zeta < 1$, then for all $t \ge 0$
\[
\mathrsfs{D}^2(t) \le e^{-a t} \mathrsfs{D}^2(0), 
\]
where $a = \gamma (1-\zeta) \hat{\mu}(f) \lambda_{\min}(K)$. Moreover, for some $\varepsilon> 0$, the iterates of SGD $\{X_k\}$ satisfy
\begin{equation} \label{eq:convergence_10}
\sup_{0 \le t \le T} | \|X_{\lfloor td \rfloor }-X^{\star}\|^2 - \mathrsfs{D}^2(t) | \le d^{-\varepsilon}, \quad \text{w.o.p.}
\end{equation}
\label{prop:Courtneyrate}
\end{proposition}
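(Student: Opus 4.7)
The plan is to derive a Gronwall-type differential inequality for $\mathrsfs{D}^2(t)$, using the explicit evolution equation from Lemma~\ref{lem:D2} together with the RSI and smoothness of $f$ to control the drift and noise terms, and then to invoke the spectral gap of $K$ to close the recursion. First I would apply Lemma~\ref{lem:D2} to write
\begin{equation*}
\tfrac{d}{dt}\mathrsfs{D}^2(t) = -\gamma A(\mathrsfs{B}(t)) + \tfrac{\gamma^2}{2d}\tr(K) I(\mathrsfs{B}(t)),
\end{equation*}
and let $E(t) \defas \EE[\|x - x^\star\|^2]$ with $(x,x^\star) \sim N(0,\mathrsfs{B}(t))$. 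Exactly as in the proof of Corollary~\ref{cor:descent_RSI}, the $\hat{\mu}(f)$-RSI gives $A(\mathrsfs{B}(t)) \ge \hat{\mu}(f)E(t)$; and $\hat{L}(f)$-smoothness together with Assumption~\ref{assumption:risk_loss_minimizer_1_main} gives $\|\nabla_x f(x\oplus x^\star)\|^2 \le (\hat{L}(f))^2\|x-x^\star\|^2$ a.s., hence $I(\mathrsfs{B}(t)) \le (\hat{L}(f))^2 E(t)$. Substituting the prescribed $\gamma = \tfrac{2\hat{\mu}(f)}{(\hat{L}(f))^2\tr(K)/d}\zeta$ collapses the coefficient to yield
\begin{equation*}
\tfrac{d}{dt}\mathrsfs{D}^2(t) \le -\gamma\hat{\mu}(f)(1-\zeta)\, E(t).
\end{equation*}

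Second I would pass from $E(t)$ back to $\mathrsfs{D}^2(t)$ using $\lambda_{\min}(K)>0$. With the spectral representation of $\mathrsfs{B}$, one has $E(t) = \tfrac{1}{d}\sum_{i=1}^d \lambda_i \tr(\mathrsfs{B}_{11,i}(t) - 2\mathrsfs{B}_{12,i}(t) + \mathrsfs{B}_{22,i}(t))$ while $\mathrsfs{D}^2(t) = \tfrac{1}{d}\sum_{i=1}^d \tr(\mathrsfs{B}_{11,i}(t) - 2\mathrsfs{B}_{12,i}(t) + \mathrsfs{B}_{22,i}(t))$. Each summand is $\tr(\mathrsfs{B}_i(t)\,(e_\mathcal{O}\oplus -e_\mathcal{T})^{\otimes 2}) \ge 0$ because $\mathrsfs{B}_i(t)$ remains positive semidefinite along~\eqref{eq:coupledODElimit}. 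I would establish this PSD preservation either by identifying $\mathrsfs{B}_i(t)$ as the limit of $d\,\langle \HSGD_t^{\otimes 2},\omega_i^{\otimes 2}\rangle$ via Proposition~\ref{prop:HSGD_median dynamics} (which is manifestly rank-one PSD at each time), or by a direct Lyapunov verification that the drift in~\eqref{eq:coupledODElimit} preserves the PSD cone starting from the rank-one initial condition. It then follows that $E(t) \ge \lambda_{\min}(K)\,\mathrsfs{D}^2(t)$, so
\begin{equation*}
\tfrac{d}{dt}\mathrsfs{D}^2(t) \le -a\,\mathrsfs{D}^2(t), \qquad a = \gamma(1-\zeta)\hat{\mu}(f)\lambda_{\min}(K),
\end{equation*}
and Gronwall's inequality gives $\mathrsfs{D}^2(t) \le e^{-at}\mathrsfs{D}^2(0)$ for all $t\ge 0$.

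Third, the concentration statement~\eqref{eq:convergence_10} is obtained by invoking Corollary~\ref{cor:statistic_bounded} with the statistic $\varphi(X) = \|X-X^\star\|^2$, which fits Assumption~\ref{assumption:statistic} (quadratic $g$ and $q\equiv 1$). The monotone decay just proved forces $\sup_{0\le t\le T}\mathrsfs{D}^2(t) \le \mathrsfs{D}^2(0)$, and combining with the bound~\eqref{eq:DNnorm} yields $\sup_{0\le t\le T}\mathrsfs{N}(t) \le 2\mathrsfs{D}^2(0) + 3\|X^\star\|^2 \le M$ with $M$ independent of $d$. Together with the hypothesis $\sup_{0\le t\le T}\sup_{V\in\mathcal{U}^c}\|\mathrsfs{B}(t)-V\| > \eta$, the conditions of Corollary~\ref{cor:statistic_bounded} are met and produce the asserted $d^{-\varepsilon}$ bound with overwhelming probability.

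The main obstacle will be the PSD-preservation step, since without it the transfer from $E(t)$ to $\lambda_{\min}(K)\mathrsfs{D}^2(t)$ breaks down. Everything else is bookkeeping: the RSI/smoothness bounds are pointwise and integrate trivially against the Gaussian measure induced by $\mathrsfs{B}(t)$, the Gronwall step is standard, and the passage to SGD iterates only requires the already-developed concentration framework together with the \emph{a priori} control on $\mathrsfs{N}(t)$ provided by the Lyapunov argument itself.
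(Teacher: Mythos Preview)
Your proposal is correct and follows essentially the same route as the paper: apply Lemma~\ref{lem:D2}, use RSI and $\hat L(f)$-smoothness to bound $A$ below and $I$ above by multiples of $E(t)=\Exp\|x-x^\star\|^2$, substitute the prescribed $\gamma$ to extract the $(1-\zeta)$ factor, invoke positive semidefiniteness of the $\mathrsfs{B}_i(t)$ to get $E(t)\ge\lambda_{\min}(K)\,\mathrsfs{D}^2(t)$, and close with Gronwall; the concentration claim then comes from Corollary~\ref{cor:statistic_bounded} via the $\mathrsfs{N}(t)$ bound in~\eqref{eq:DNnorm}. The only cosmetic difference is that the paper first combines RSI and smoothness into the single inequality $q\,I(\mathrsfs{B})\le A(\mathrsfs{B})$ with $q=\hat\mu(f)/\hat L(f)^2$ (as in Corollary~\ref{cor:descent_RSI}) before re-applying RSI to pass to $E(t)$, whereas you bound $A$ and $I$ separately; the algebra is identical.
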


\begin{proof} The assumptions and choice of $\gamma_t$ ensure that the Assumptions of Corollary~\ref{cor:descent_RSI} hold. Thus, it immediately follows that \eqref{eq:convergence_10} holds. It remains to show the linear rate of decrease of $\mathrsfs{D}^2(t)$. 

By \eqref{eq:RSI_1} and \eqref{eq:RSI_2},
\[
\frac{\hat{\mu(f)}}{(\hat{L}(f))^2} \EE_{a,\epsilon}[ \|\nabla_x f( \ip{X,a}_{\mathcal{A}})\|^2 ] \le \ip{X - X^{\star}, (\nabla \mathcal{R})(X)}, \quad \text{for any $X \in \mathcal{A} \otimes \mathcal{O}$}
\]
Setting $q = \frac{\hat{\mu(f)}}{(\hat{L}(f))^2}$, we have that
\begin{equation} \label{eq:RSI_convergence_2}
\begin{aligned}
-\gamma \EE_{a, \epsilon} \big [ \ip{x-x^{\star}, \nabla_x f(x \oplus x^{\star}) } \big ] &+ \frac{\gamma^2}{2 d} \tr( K ) \EE_{a,\epsilon} [ \|\nabla_x f(x \oplus x^{\star})\|^2] 
\\
&
= 
-\gamma \ip{X- X^{\star}, (\nabla \mathcal{R})(X)} + \frac{\gamma^2}{2 d} \tr( K ) \EE_{a,\epsilon} [ \|\nabla_x f(\ip{X,a}_{\mathcal{A}})\|^2] 
\\
& 
\le 
\gamma \big [ \frac{\gamma}{2q} \cdot \frac{1}{d} \tr(K) - 1 \big ] \big [ \ip{X-X^{\star}, (\nabla \mathcal{R})(X)} \big ]
\\
&
=
- \gamma (1-\zeta)  \big [ \ip{X-X^{\star}, (\nabla \mathcal{R})(X)} \big ]
\\
&
=- \gamma (1-\zeta)\ip{X-X^{\star}, \EE_{a,\epsilon} [a \otimes \nabla_x f(\ip{X,a}_{\mathcal{A}})] }
    \\
    &
    =
    - \gamma (1-\zeta) \EE_{a, \epsilon} \big [ \ip{\ip{ X,a}_{\mathcal{A}}-\ip{X^{\star},a}_{\mathcal{A}}, \nabla_x f(\ip{X,a}_{\mathcal{A}}) } \big ]
    \\
    &
    =
    - \gamma (1-\zeta) \EE_{a, \epsilon} \big [ \ip{x-x^{\star}, \nabla_x f(x \oplus x^{\star}) } \big ].
\end{aligned}
\end{equation}
Here $(x \oplus x^{\star}) \sim N(0, \mathrsfs{B})$. By the RSI (with constant $\hat{\mu}(f)$) assumption, \begin{equation}
\begin{aligned} \label{eq:RSI_convergence_3}
    \EE_{a, \epsilon} \big [ \ip{x-x^{\star}, \nabla_x f(x \oplus x^{\star}) } \big ]
    &
    \ge \hat{\mu}(f) \EE_{a, \epsilon} [ \|x - x^{\star}\|^2 ]
    \\
    &
    =
    \hat{\mu}(f) \tr( \mathrsfs{B}_{11}(t) - \mathrsfs{B}_{12}(t) - \mathrsfs{B}_{21}(t) + \mathrsfs{B}_{22}(t) )
    \\
    &
    \ge \hat{\mu}(f) \lambda_{\min}(K) \tr \bigg ( \frac{1}{d} \sum_{i=1}^d \big ( \mathrsfs{B}_{11, i}(t) - \mathrsfs{B}_{12,i}(t) - \mathrsfs{B}_{21,i} + \mathrsfs{B}_{22,i}(t) \big ) \bigg )
    \\
    &
    = \hat{\mu}(f) \lambda_{\min}(K) \mathrsfs{D}^2(t),
\end{aligned}
\end{equation}
where $\lambda_{\min}(K)$ is the smallest eigenvalue of $K$ and $\frac{-1}{2 \pi i} \oint_{\Gamma} \mathcal{S}_{k\ell}(t,z) \, \dif z = \frac{1}{d} \sum_{i=1}^d \mathrsfs{B}_{k\ell, i} $. 

Now by Lemma~\ref{lem:D2}, with $(x \oplus x^{\star}) \sim N(0, \mathrsfs{B})$,
\begin{align*}
\frac{\dif}{\dif t} \mathrsfs{D}^2(t) 
&
=
- \gamma A( \mathrsfs{B}(t)) + \frac{\gamma^2}{2d} \tr(K) I(\mathrsfs{B}(t))
\\
&
=
- \gamma \Exp_{a,\epsilon} [\ip { x-x^\star, \nabla_x f(x\oplus x^\star)}] + \frac{\gamma^2}{2d} \tr(K) \EE_{a,\epsilon} [\|\nabla_x f(x \oplus x^{\star})\|^2]
\\
&
\le 
-\gamma (1-\zeta) \EE_{a, \epsilon} \big [ \ip{x-x^{\star}, \nabla_x f(x \oplus x^{\star}) } \big ]
\\
&
\le 
-\gamma (1-\zeta) \hat{\mu}(f) \lambda_{\min}(K) \mathrsfs{D}^2(t)
\end{align*}
By Gronwall's inequality,
\[
\mathrsfs{D}^2(t) \le e^{-a t} \mathrsfs{D}^2(0).
\]
where $a = \gamma (1-\zeta) \hat{\mu}(f) \lambda_{\min}(K)$.

\end{proof}

We now provide a local convergence rate statement. This will mainly be applied to the multi-class logistic regression problem which is (strictly) convex, but locally strongly convex. 

\begin{proposition}[Local convergence rate for fixed stepsize, $(\hat{\mu}(f),\hat{\theta}(f))$-RSI, $\hat{L}(f)$-smooth function, with covariance $K \succ 0$] 
Fix a constant $T > 0$. Suppose the Assumptions of Theorem~\ref{thm:main_concentration_S} hold and suppose that $\sup_{0 \le t \le T} \sup_{V \in \mathcal{U}^c} \| \mathrsfs{B}(t)- V\| > \eta$. Let the outer function $f \, : \, \mathcal{O} \otimes \mathcal{T} \otimes \mathcal{T} \to \mathbb{R}$ be a $\hat{L}(f)$-smooth function satisfying $(\hat{\mu}(f),\hat{\theta}(f))$--RSI with respect to $x \in \mathcal{O}$. Suppose $X^{\star} \in \argmin_X \{ \mathcal{R}(X)\}$ is bounded, independent of, $d$ and Assumption~\ref{assumption:risk_loss_minimizer_1_main} holds. Let the covariance matrix $K$ have a smallest eigenvalue bounded away from $0$, that is $\lambda_{\min}(K) > 0$.

Suppose the initialization $X_0$ satisfies that for some $\zeta_0 \in (0,1)$
\[
10 \exp \biggl( -\frac{\hat{\theta}(f)}{8 \|K\|_\sigma^2
  \max\{\|X_0-X^{\star}\|^2, \|X^{\star}\|^2\}
} \biggr) < \zeta_0,
\]
Suppose that $0 < \zeta < 1-\zeta_0$ and that
\[
\gamma_t = \gamma = \frac{2 \hat{\mu}(f) }{(\hat{L}(f))^2 \tfrac{1}{d} \tr(K)} \zeta,
\]
Then with $a = \gamma (1-\zeta_0 - \zeta) \hat{\mu}(f) \lambda_{\min}(K)$,
we have for all $t \ge 0$
\[
\mathrsfs{D}^2(t) \le 2 e^{-at} \|X_0-X^{\star}\|^2
\]
Moreover, for some $\varepsilon> 0$, the iterates of SGD $\{X_k\}$ satisfy
\begin{equation} \label{eq:convergence_101}
\sup_{0 \le t \le T} | \|X_{\lfloor td \rfloor }-X^{\star}\|^2 - \mathrsfs{D}^2(t) | \le d^{-\varepsilon}, \quad \text{w.o.p.}
\end{equation}
\end{proposition}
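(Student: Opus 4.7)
The plan is to run the convergence argument of Proposition \ref{prop:Courtneyrate} inside a bootstrap, so that the RSI condition, which only holds on a good event, is never actually violated along the deterministic trajectory. As in the global case, the starting point is Lemma \ref{lem:D2}, giving
\[
\frac{\dif}{\dif t}\mathrsfs{D}^{2}(t)
= -\gamma A(\mathrsfs{B}(t)) + \frac{\gamma^{2}}{2d}\tr(K)\,I(\mathrsfs{B}(t)),
\qquad (x\oplus x^\star)\sim N(0,\mathrsfs{B}(t)).
\]
With $\gamma=2\hat\mu\zeta/(\hat L^{2}\tfrac{1}{d}\tr K)$, the quadratic coefficient simplifies to $\gamma^{2}\tr(K)/(2d)=\gamma\zeta\hat\mu/\hat L^{2}$, so it suffices to show $-A+\zeta\hat\mu/\hat L^{2}\,I \le -(1-\zeta-\zeta_{0})\hat\mu\lambda_{\min}(K)\mathrsfs{D}^{2}$. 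Split the Gaussian expectation through the indicator of $E(t)\defas\{\|x-x^\star\|^{2}\le\hat\theta,\ \|x^\star\|^{2}\le\hat\theta\}$: on $E$ the local RSI combined with $\hat L$-smoothness (and $\nabla f(x^\star)=0$ from Assumption \ref{assumption:risk_loss_minimizer_1_main}, so $\|\nabla_x f(x)\|\le\hat L\|x-x^\star\|$) yields both $\ip{x-x^\star,\nabla_x f}\ge \hat\mu\|x-x^\star\|^{2}$ and $\ip{x-x^\star,\nabla_x f}\ge(\hat\mu/\hat L^{2})\|\nabla_x f\|^{2}$, while on $E^{c}$ RSI still gives nonnegativity. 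Thus $A\ge A_{E}\ge\hat\mu\,\EE[\|x-x^\star\|^{2}\mathbf{1}_{E}]$, and together with the pointwise smoothness bound $I\le\hat L^{2}\EE[\|x-x^\star\|^{2}]$, a routine computation gives
\[
-A+\zeta\tfrac{\hat\mu}{\hat L^{2}}I
\;\le\;-\hat\mu\,\EE[\|x-x^\star\|^{2}]
+\hat\mu\,\EE[\|x-x^\star\|^{2}\mathbf{1}_{E^{c}}]
+\zeta\hat\mu\,\EE[\|x-x^\star\|^{2}].
\]

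The remaining task is to bound the tail contribution. Under $(x,x^\star)\sim N(0,\mathrsfs{B}(t))$ the covariance of $x-x^\star$ is $\ip{(X-X^\star)^{\otimes 2},K}$, whose operator norm is at most $\|K\|_{\sigma}\|X-X^\star\|^{2}$, and analogously for $x^\star$ it is bounded by $\|K\|_{\sigma}\|X^\star\|^{2}$. Standard Gaussian tail and fourth-moment estimates therefore give
\[
\EE\!\left[\|x-x^\star\|^{2}\mathbf{1}_{E^{c}}\right]
\;\le\;
C\,\EE[\|x-x^\star\|^{2}]\cdot
\Bigl(\exp\!\bigl(-c\hat\theta/(\|K\|_{\sigma}\mathrsfs{D}^{2}(t))\bigr)
+\exp\!\bigl(-c\hat\theta/(\|K\|_{\sigma}\|X^\star\|^{2})\bigr)\Bigr)^{1/2},
\]
and the hypothesis on the initialization is exactly calibrated so that, whenever $\mathrsfs{D}^{2}(t)\le 2\|X_0-X^\star\|^{2}$, this tail factor is $\le\zeta_{0}$. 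Plugging this back in, $\frac{\dif}{\dif t}\mathrsfs{D}^{2}\le-\gamma(1-\zeta-\zeta_{0})\hat\mu\lambda_{\min}(K)\mathrsfs{D}^{2}=-a\mathrsfs{D}^{2}$ on the set where the bootstrap holds.

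I would then close the bootstrap via a continuity argument: let $t_\star=\sup\{t\ge 0:\mathrsfs{D}^{2}(s)\le 2\|X_0-X^\star\|^{2}\text{ for all }s\le t\}$. At $t=0$ we have equality $\mathrsfs{D}^{2}(0)=\|X_0-X^\star\|^{2}$ (as computed from the initial data for \eqref{eq:coupledODElimit}), so $t_\star>0$ by continuity. On $[0,t_\star]$ the inequality above yields $\mathrsfs{D}^{2}(t)\le e^{-at}\|X_0-X^\star\|^{2}\le\|X_0-X^\star\|^{2}<2\|X_0-X^\star\|^{2}$, which by continuity forces $t_\star=\infty$. This produces the stated bound $\mathrsfs{D}^{2}(t)\le 2e^{-at}\|X_0-X^\star\|^{2}$ (the constant $2$ is the slack used to keep the bootstrap set open).

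For the SGD comparison \eqref{eq:convergence_101_main}, the exponential decay of $\mathrsfs{D}^{2}$ combined with \eqref{eq:DNnorm} gives $\mathrsfs{N}(t)$ uniformly bounded on $[0,T]$ independent of $d$, so the hypotheses of Corollary \ref{cor:statistic_bounded} are met for the statistic $\varphi(X)=\|X-X^\star\|^{2}$, which immediately yields the concentration statement. The main obstacle I anticipate is the bookkeeping in the Gaussian tail/fourth-moment estimate: one needs the $\EE[\cdot\,\mathbf{1}_{E^{c}}]$ bound to absorb into the $\zeta_{0}$ budget while simultaneously leaving room for the $\zeta$ from the learning-rate choice, which is why the hypothesis is phrased as $10\exp(-\cdot)<\zeta_{0}$ rather than a tighter constant.
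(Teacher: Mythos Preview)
Your proposal is correct and follows essentially the same approach as the paper: split $A(\mathrsfs{B}(t))$ over the good RSI event, control the tail $\EE[\|x-x^\star\|^2\mathbf{1}_{E^c}]$ by Gaussian moment/tail estimates (the paper packages this as Lemma~\ref{lem:GVN} and a Cauchy--Schwarz for the cross term with $\{\|x^\star\|^2>\hat\theta\}$), and close via a bootstrap on $\mathrsfs{D}^2(t)\le 2\|X_0-X^\star\|^2$ (the paper phrases this equivalently as a stopping time $\vartheta$ and then invokes Proposition~\ref{prop:Courtneyrate} with effective RSI constant $\hat\mu(1-\zeta_0)$). The only cosmetic slip is that once you pass to the ODE there is no $X$; the covariance of $x-x^\star$ under $N(0,\mathrsfs{B}(t))$ is $b_t=\tr(\mathrsfs{B}_{11}-2\mathrsfs{B}_{12}+\mathrsfs{B}_{22})$, and the link back to $\mathrsfs{D}^2(t)$ is the bound $b_t\le\|K\|_\sigma\,\mathrsfs{D}^2(t)$, which is exactly how the $\|K\|_\sigma$ factors in the initialization hypothesis arise.
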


\begin{proof}
By hypothesis on $f$,
\begin{equation}
\begin{aligned} \label{eeq:RSI_1}
    \ip{X - X^{\star}, (\nabla \mathcal{R})(X)} 
    &
    = 
    \ip{X-X^{\star}, \EE_{a,\epsilon} [a \otimes \nabla_x f(\ip{X,a}_{\mathcal{A}})] }
    \\
    &
    =
    \EE_{a, \epsilon} \big [ \ip{x-x^{\star}, \nabla_x f(x) } \big ]
    \\
    &
    \ge 
    \hat{\mu}(f) \EE_{a, \epsilon} [ \|x-x^{\star}\|^2 \mathbf{}{1}\{ \|x-x^{\star}\|^2 \leq \hat\theta(f) \, \text{and} \, \|x^{\star}\|^2 \le \hat{\theta}(f)\} ],
\end{aligned}
\end{equation}
where $x = \ip{X,a}_{\mathcal{A}}$ and $x^{\star} = \ip{X^{\star},a}_{\mathcal{A}}$.
Using Lemma \ref{lem:GVN}, with $V = \Exp \|x-x^{\star}\|^2$,
\begin{equation}\label{eeq:normlb}
\EE_{a, \epsilon} 
[ \|x-x^{\star}\|^2
\mathbf{}{1}\{ \|x-x^{\star}\|^2  \geq \hat\theta(f)\}
] \leq 5V\exp(-\hat{\theta}(f)/4V).
\end{equation}
We need to do the same estimate for the contribution from large $x^{\star}$,
but correlations complicate the analysis.
So by Cauchy Schwarz
\[
\EE_{a, \epsilon} 
(
\|x-x^{\star}\|^2
1\{x^{\star} \geq \hat{\theta}(f)\}
)
\leq 
\sqrt{ \EE \|x-x^{\star}\|^4 \times \Pr (\|x^{\star}\|^2 \geq \hat{\theta}(f))}.
\]
From Wick's formula, 
the $4$-th moment can be bounded by $3(\EE \|x-x^{\star}\|^2)^2$.
Using Lemma \ref{lem:GVN} we can also bound the tail of $\|x^{\star}\|^2.$

So suppose for some $\zeta_0 \in (0,1)$ that we work up to the stopping time $\vartheta$, defined as the first time,
\[
5\exp(-\hat{\theta}(f)/8P))
+ 5\exp(-\hat{\theta}(f)/4b_t)) < \zeta_0,
\quad
\left\{
\begin{aligned}
&P = \tr\langle (X^{\star})^{\otimes 2},K \rangle \\
&b_t = \tr( \mathrsfs{B}_{11} - \mathrsfs{B}_{12} - \mathrsfs{B}_{21} + \mathrsfs{B}_{22} )
\end{aligned}
\right.
\]
Then the stopped process (system of ODEs satisfies the conditions of $\vartheta$) $\mathrsfs{B}(t \wedge \vartheta)$ satisfies the conclusions of Proposition \ref{prop:Courtneyrate} with effective RSI constant $\hat \mu (1-\zeta_0)$.  

It remains to show that we can remove the stopping time.  For this purpose, we need to ensure the process $b_t$ remains in control.  
In particular, provided $\gamma \leq 
\frac{2\hat \mu}
{
(\hat{L}(f))^2 \tfrac{1}{d} \tr(K)
}
\zeta$
for $\zeta < 1-\zeta_0$,
then with overwhelming probability
\[
b_t \leq \|K\|_\sigma^2 \mathrsfs{D}^2(t) \le 2\|K\|_\sigma^2
\max\{\|X_0-X^{\star}\|^2, \|X^{\star}\|^2\}
\defas I.
\]
So provided that 
\[
10\exp(-\hat{\theta}(f)/(4I))) < \zeta_0,
\]
the stopping time $\vartheta = \infty$, i.e., never occurs.
\end{proof}

We need the following Gaussian lemma.
\begin{lemma}\label{lem:GVN}
  If $Z \sim N(0, \Id)$, $A$ is a $d\times d$ matrix, and $X = \|AZ\|^2$.
  Then with $V = \Exp X$ and for any $u \geq 0$
  \[
    \Exp( X \cdot 1\{ X \geq u\})
    \leq 5Ve^{-u/(4V)}
    \quad
    \text{and}
    \quad
    \Pr( X \geq u)
    \leq 2e^{-u/(4V)}.
  \]
\end{lemma}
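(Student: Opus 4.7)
The plan is to reduce the problem to a chi-squared--type sum by diagonalization, bound the moment generating function, and then extract both conclusions from a single MGF inequality. First I would let $M = A^T A$, diagonalize $M = U^T \operatorname{diag}(\lambda_1,\dots,\lambda_d) U$ with eigenvalues $\lambda_i \geq 0$, and use orthogonal invariance of the standard Gaussian to write $X = \sum_{i=1}^d \lambda_i Z_i^2$ for i.i.d.\ $Z_i \sim N(0,1)$. Note $V = \sum_i \lambda_i$ and each $\lambda_i \leq V$. The MGF is then, for $t < 1/(2\max_i \lambda_i)$,
\[
\Exp[e^{tX}] = \prod_{i=1}^d (1-2t\lambda_i)^{-1/2}.
\]

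The next step is the MGF estimate. For $t \leq 1/(4V)$ we have $2t\lambda_i \leq 1/2$, so I can use the elementary inequality $-\log(1-x) \leq 2(\log 2)\, x$ for $x \in [0,1/2]$ to get
\[
\log \Exp[e^{tX}] = -\tfrac12 \sum_i \log(1-2t\lambda_i) \leq (\log 2)\, t \sum_i 2\lambda_i = 2(\log 2)\, tV.
\]
Applying Markov's inequality at $t = 1/(4V)$ yields
\[
\Pr(X \geq u) \leq e^{-tu}\Exp[e^{tX}] \leq e^{(\log 2)/2} e^{-u/(4V)} = \sqrt{2}\, e^{-u/(4V)} \leq 2 e^{-u/(4V)},
\]
which is the tail bound.

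For the truncated mean, I would apply the Chernoff-style bound
\[
\Exp[X \cdot 1\{X \geq u\}] \leq e^{-tu}\, \Exp[X e^{tX}],
\]
and compute, by differentiating the MGF in $t$,
\[
\Exp[X e^{tX}] = \Exp[e^{tX}] \cdot \sum_i \frac{\lambda_i}{1-2t\lambda_i}.
\]
Again for $t \leq 1/(4V)$, $(1-2t\lambda_i)^{-1} \leq 2$, so $\sum_i \lambda_i/(1-2t\lambda_i) \leq 2V$. Combined with the MGF bound above, at $t=1/(4V)$ this gives
\[
\Exp[X \cdot 1\{X \geq u\}] \leq 2V \cdot \sqrt{2}\cdot e^{-u/(4V)} = 2\sqrt{2}\, V\, e^{-u/(4V)} \leq 5V e^{-u/(4V)},
\]
as required.

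The only real subtlety is choosing an MGF control valid uniformly over all $t \in [0, 1/(4V)]$, where the bound $\lambda_i \leq V$ lets me replace the usual sub-exponential parameters $\max_i \lambda_i$ and $\sum_i \lambda_i^2$ by $V$ alone; this keeps the decay rate $e^{-u/(4V)}$ matched across both statements. The rest is book-keeping of constants to hit $2$ and $5$ exactly.
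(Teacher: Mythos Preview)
Your proof is correct. The reduction to $X=\sum_i \lambda_i Z_i^2$ and the MGF analysis match the paper; the paper uses $(1-x)^{-1/2}\le e^{x}$ on $[0,1/2]$ to obtain $\Exp e^{\lambda X}\le e^{1/2}$ at $\lambda=1/(4V)$, whereas your secant inequality $-\log(1-x)\le 2(\log 2)x$ gives the slightly sharper $\sqrt{2}$, and both feed into the same Chernoff tail bound with constant $2$. The genuine difference is in the truncated mean: the paper applies the layer-cake identity $\Exp[X\,1\{X\ge u\}]=u\Pr(X\ge u)+\int_u^\infty \Pr(X\ge x)\,dx$ together with the tail bound, which produces $(u+4V)e^{1/2}e^{-u/(4V)}$ with a $u$-dependent prefactor; you instead use the pointwise bound $X\,1\{X\ge u\}\le X e^{t(X-u)}$ and evaluate $\Exp[Xe^{tX}]$ by differentiating the MGF. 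Your route lands the constant $2\sqrt{2}$ directly, with no $u$ in the prefactor, so the target $5$ is met with room to spare; the paper's layer-cake bound would still need an extra step (e.g.\ splitting the exponent) to absorb the $u$ term into the stated form.
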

\begin{proof}
  By rotation invariance of the Gaussian, we may assume $A = \operatorname{diag}( a_j : 1 \leq j \leq d).$
  Then provided $\lambda < 1/a_j^2$ for all $j$,
  \[
    \Exp e^{\lambda X} =
    \prod_{j=1}^d \tfrac{1}{\sqrt{1-2\lambda a_j^2}}.
  \]
  Taking $\lambda = 1/(4\sum a_j^2)$ and using that for $x \leq \tfrac12$, we have 
  \(
    \tfrac{1}{\sqrt{1-x}} \leq e^{x},
  \)
  and 
  we conclude
  \[
    \Exp e^{\lambda X} \leq e^{1/2}.
  \]
  Thus we have $\Pr(X \geq t) \leq e^{-\lambda t+ 1/2}$ for all $t \geq 0.$
  Hence from integration by parts
  \[
  \Exp( X \cdot 1\{ X \geq u\})
  \leq
  u e^{-\lambda u +1/2} + \int_u^\infty e^{-\lambda x + 1/2} \dif x
  \leq (u+\tfrac{1}{\lambda})e^{-\lambda u + 1/2}.
  \]

\end{proof}

\appendix
\section{Integro-Differential Equation Analysis} \label{sec:Volterra_equation}
In this section, we provide some alternative characterization for the solution to the integro-differential equation \eqref{eq:ODE_resolvent_2}. We recall below the formula. 

\begin{mdframed}[style=exampledefault]
\textbf{Integro-Differential Equation for $\mathcal{S}(t, z)$.} For any contour $\Gamma \subset \mathbb{C}$ enclosing the eigenvalues of $K$, we have an expression for the derivative of $\mathcal{S}$:
\begin{equation}
    \dif \mathcal{S}(t,\cdot) 
    = \mathscr{F}(z, \mathcal{S}(t, \cdot)) \, \dif t
\end{equation}
\begin{align}
    \text{where} \, \, 
    \mathscr{F}(z, \mathcal{S}(t, \cdot))
    &
    \defas
    - 2\gamma_t \bigg ( \bigg ( \frac{-1}{2\pi i} \oint_{\Gamma} \mathcal{S}(t,z) \, \dif z \bigg ) H( \mathrsfs{B}(t)) + H^T(\mathrsfs{B}(t)) \bigg ( \frac{-1}{2\pi i} 
 \oint_{\Gamma} \mathcal{S}(t,z) \, \dif z \bigg ) \bigg ) \,  \nonumber
 \\
    & \qquad
    + \frac{\gamma_t^2}{d}\left [ \begin{array}{c|c} 
    \tr(K R(z;K)) I(\mathrsfs{B}(t)) & 0\\ \hline 0 & 0
    \end{array} \right ] \\
    & \qquad 
    - \gamma_t (\mathcal{S}(t,z) (2z H(\mathrsfs{B}(t)) + \delta D) + ( 2 z H^T( \mathrsfs{B}(t) ) + \delta D) \mathcal{S}(t,z)). \nonumber
\end{align}
\begin{gather*}
\text{Here} \, \, \mathrsfs{B}(t) = \frac{-1}{2\pi i} \oint_{\Gamma} z \mathcal{S}(t,z) \, \dif z, 
 \quad 
 H(\mathrsfs{B}) = \left [ \begin{array}{c|c} 
\nabla h_{11}(\mathrsfs{B}) & 0\\
 \hline
 \nabla h_{21}(\mathrsfs{B}) & 0
 \end{array} \right ],  \quad \text{and} \quad D = \left [ \begin{array}{c|c} 
 I_{\mathcal{O}} & 0\\
 \hline
 0 & 0
 \end{array} \right ], \\
 \text{and initialization} \quad \mathcal{S}(0,z) = \ip{W_0 \otimes W_0, R(z; K)}_{\mathcal{A}^{\otimes 2}}. 
\end{gather*}
\end{mdframed}

We can derive a Volterra equation for $\mathcal{S}$. Now we let $\Phi$ be the fundamental matrix for the ODE:
\[
\dot{\Phi} = \gamma_t ( 2 z H(\mathrsfs{B}(t)) + \delta D) \Phi, \quad \Phi(0) = I_{\mathcal{O}^+}.
\]
Then it follows that $\dot{\Phi}^{-T} = -\gamma_t (2 z H^T(\mathrsfs{B}(t)) + \delta D) \Phi^{-T}$. Defining, 
\begin{align*}
U_0(t) 
&
\defas
- 2\gamma_t \bigg ( \bigg ( \frac{-1}{2\pi i} \oint_{\Gamma} \mathcal{S}(t,z) \, \dif z \bigg ) H( \mathrsfs{B}(t)) + H^T(\mathrsfs{B}(t)) \bigg ( \frac{-1}{2\pi i} 
 \oint_{\Gamma} \mathcal{S}(t,z) \, \dif z \bigg ) \bigg ) \,  \nonumber
 \\
    & \qquad
    + \frac{\gamma_t^2}{d}\left [ \begin{array}{c|c} 
    \tr(K R(z;K)) I(\mathrsfs{B}(t)) & 0\\ \hline 0 & 0
    \end{array} \right ] 
\end{align*}
then we observe that the ODE in \eqref{eq:ODE_resolvent_2} becomes
\begin{align*} 
    \dot{(\Phi^T \mathcal{S} \Phi)} 
    &
    = 
    \dot{\Phi^T} \mathcal{S} \Phi + \Phi^T \dot{\mathcal{S}} \Phi + \Phi^T \mathcal{S} \dot{\Phi} 
    \\
    &
    =
    \gamma_t \Phi^T (2z H^T + \delta D) \mathcal{S} \Phi + \Phi^T [ U_0 - \gamma_t (\mathcal{S} (2zH + \delta D) + (2z H^T + \delta D) \mathcal{S}) ] \Phi
    \\
    & \quad + \gamma_t \Phi^T \mathcal{S} (2z H + \delta D) \Phi 
    \\
    &
    = 
    \Phi^T U_0 \Phi. 
\end{align*}
This ODE is, of course, solvable, and thus we get that $\mathcal{S}$ satisfies the equation below.

\begin{mdframed}[style=exampledefault]
\textbf{Resolvent formula.}
\begin{equation}
\label{eq:resolvent_form}
\mathcal{S}(t,z) = \Phi^{-T}(t,z) \mathcal{S}(0,z) \Phi^{-1}(t,z) + \int_0^t \Phi^{-T}(t,z) \Phi^T(s,z) U_0(s, z) \Phi(s, z) \Phi^{-1}(t,z) \, \dif s
\end{equation}

\begin{gather*}
\text{where} \quad
 \mathcal{S}(0,z) = \ip{W_0^{\otimes 2}, R(z;K)}_{\mathcal{A}^{\otimes 2}},  \quad 
 H(B) = \left [ \begin{array}{c|c} 
 \nabla h_{11}(B) & 0\\
 \hline
 \nabla h_{21}(B) & 0
 \end{array} \right ], \quad D = \left [ \begin{array}{c|c} 
 I & 0\\
 \hline
 0 & 0
 \end{array} \right ]
 \\
 \text{$\Phi(t,z)$ is the solution to $\dot{\Phi} = \gamma_t ( 2 z H(\mathrsfs{B}(t)) + \delta D) \Phi$ \, \ with \, \, $\Phi(0, z) = I_{\mathcal{O}^+}$,} \\
 \text{and} \quad 
 U_0(t,z) 
 = - 2\gamma_t \bigg ( \bigg ( \frac{-1}{2\pi i} \oint_{\Gamma} \mathcal{S}(t,z) \, \dif z \bigg ) H( \mathrsfs{B}(t)) + H^T(\mathrsfs{B}(t)) \bigg ( \frac{-1}{2\pi i} 
 \oint_{\Gamma} \mathcal{S}(t,z) \, \dif z \bigg ) \bigg ) \,  \nonumber
 \\
    \qquad
    + \frac{\gamma_t^2}{d}\left [ \begin{array}{c|c} 
    \tr(K R(z;K)) I(\mathrsfs{B}(t)) & 0\\ \hline 0 & 0
    \end{array} \right ].
 \end{gather*}
\end{mdframed}

As one can see, this requires that one be able to solve the ODE, $\dot \Phi = \gamma_t (2 z H(\mathrsfs{B}(t)) + \delta D)\Phi$. This, in general, has no closed form solution when $H$ is not a $2 \times 2$ matrix (i.e., scalar setting where $\ip{X,a}_{\mathcal{A}}, \ip{X^{\star},a}_{\mathcal{A}} \in \mathbb{R}$). In some cases, there is a general solution to $\Phi$ especially when $H$ is a constant matrix, as in least squares. In the next section, we focus on the scalar setting. 

\paragraph{Scalar setting.} We restrict to the setting where $\ip{X,a}_{\mathcal{A}} \in \mathbb{R}$ and $\ip{X^{\star},a}_{\mathcal{A}} \in \mathbb{R}$, that is, where $X$ and $X^{\star}$ are vectors. To derive the deterministic dynamics of the risk function $\mathcal{R}(X)$, we introduce
\[
\mathrsfs{R}(t) = h \circ \mathrsfs{B}(t), \quad \text{where $\mathrsfs{B}(t) = \frac{-1}{2 \pi i} \oint_{\Gamma} z \mathcal{S}(t,z) \, \dif z$.}
\]

In the scalar setting, we will simplify the equations for the resolvent formula and show that $\mathrsfs{B}(t)$ solves Volterra equation. By solving this Volterra equation, one can derive the deterministic dynamics of the risk function $\mathrsfs{R}$ by applying the function $h$. We can do this because in the scalar setting, the ODE for $\Phi$ decouples into 2 first-order linear ODEs. First-order linear ODEs have an explicit formula via the integrating factor.   

\begin{mdframed}[style=exampledefault]
\textbf{Evolution of $\mathrsfs{B}(t) = \frac{-1}{2 \pi i} \oint_{\Gamma} z \mathcal{S}(t,z) \, \dif z$.}
In the scalar setting, we will be able to give a more explicit formula for the function $\mathrsfs{B}(t)$, that is, we will show 
\begin{equation} \label{eq:C_scalar}
    \begin{aligned}
        \mathrsfs{B}(t) 
        &
        = \begin{bmatrix}
            \mathrsfs{B}_{11}(t) & \mathrsfs{B}_{12}(t)
            \\
            \mathrsfs{B}_{21}(t) & \mathrsfs{B}_{22}(t)
        \end{bmatrix},
        \\
        \text{where} \quad \mathrsfs{B}_{11}(t) 
        &
        = 
        X_0^T \tfrac{K}{\Phi_{11}^{2}(t,K)}X_0 - 2 X_0^T \tfrac{K\Phi_{21}(t, K)}{\Phi_{11}^{2}(t, K)}  X^{\star} + (X^{\star})^T \tfrac{K \Phi_{21}^2(t, K)}{\Phi_{11}^{2}(t,K)}  X^{\star}  
        \\
        &
        + \frac{1}{d} \int_0^t \gamma_s^2  I(\mathrsfs{B}(s)) \tr \big ( K^2 \tfrac{\Phi_{11}^2 (s, K)}{\Phi_{11}^2(t, K)} \big ) \, \dif s,
        \\
        \mathrsfs{B}_{12}(t)
        &
        =
        X_0^T \tfrac{K}{\Phi_{11}(t,K)} X^{\star} - (X^{\star})^T \tfrac{K}{\Phi_{11}(t,K)} X^{\star},
        \\
        \mathrsfs{B}_{21}(t) 
        &
        =
        \mathrsfs{B}_{12}^T(t), \quad \text{and} \quad
        \mathrsfs{B}_{22}(t) = (X^{\star})^T K X^{\star}. 
    \end{aligned}
\end{equation}
The function $\Phi_{11}(t, z)$ and $\Phi_{21}(t,z)$, by solving a differential equation, are given by 
\begin{equation}
\begin{aligned}
     \Phi_{11}(t,z) 
     &
     = \exp \left ( \int_0^t \gamma_s ( 2z \nabla h_{11}(\mathrsfs{B}(s)) + \delta) \, \dif s \right ) \, \, 
     \\
     \text{and} \, \,
    \Phi_{21}(t, z) 
    &
    = \int_0^t 2 \gamma_s z \nabla h_{21}(\mathrsfs{B}(s)) \Phi_{11}(s, z) \, \dif s. 
    \end{aligned}
\end{equation}
\end{mdframed}

To this end, we need to solve the expression for $\mathcal{S}(t, z)$, in \eqref{eq:ODE_resolvent_2}. The most challenging part, of course, is solving the linear ODE that arises in the computation of $\Phi$, that is, 
\begin{equation} \label{eq:phi_linear_scalar_ode}
\dot{\Phi} = \gamma(t) \begin{bmatrix}
    2z\nabla h_{11}(\mathrsfs{B}(t)) + \delta & 0\\
    2z\nabla h_{21}(\mathrsfs{B}(t)) & 0
\end{bmatrix} \Phi, \qquad \Phi(0) = I. 
\end{equation}
In the scalar case, we can do so since each term of $\Phi$ reduces down to a system of first-order linear ODE:  
\begin{equation} \label{eq:ODE_system_1}
    \begin{aligned}
        \dot{\Phi}_{11} &= \gamma_t (2z \nabla h_{11}(\mathrsfs{B}(t)) + \delta) \Phi_{11}, \quad \Phi_{11}(0) = 1\\
        \dot{\Phi}_{21} &= 2 \gamma_t z \nabla h_{21}(\mathrsfs{B}(t)) \Phi_{11}, \quad \Phi_{21}(0) = 0 \\
    \end{aligned}
\end{equation}
Note that the differential equation for $\Phi_{12}$ ($\Phi_{22}$) is the same as $\Phi_{11}$ ($\Phi_{21}$) but with different initial condition, $\Phi_{12}(0) = 0 $ ($\Phi_{22}(0) = 1$), respectively. 

This system decouples so that $\Phi_{11}$ is a scalar 1st-order linear ODE; therefore we can use an integrating factor to get give an explicit solution. The system of ODEs \eqref{eq:ODE_system_1} becomes
\begin{equation}
\begin{gathered}
    \Phi_{11}(t,z) = \exp \left ( \int_0^t \gamma_s (2z \nabla h_{11}(\mathrsfs{B}(s)) + \delta) \, \dif s \right ), \qquad 
    \Phi_{21}(t, z) = \int_0^t 2 \gamma_s z \nabla h_{21}(\mathrsfs{B}(s)) \Phi_{11}(s, z) \, \dif s,
    \\
    \Phi_{22}(t, z)  = 1, \quad \text{and} \quad \Phi_{12}(t , z) = 0.
\end{gathered}
\end{equation}
As $\Phi$ is a $2 \times 2$ matrix, we can give an explicit representation for its inverse
\begin{equation}
    \Phi^{-1}(t, z) = \frac{1}{\Phi_{11}(t, z)} \begin{bmatrix}
        1 & 0 \\
        -\Phi_{21}(t, z) & \Phi_{11}(t, z)
    \end{bmatrix}. 
\end{equation}
Now it is a matter of computing the quantities in \eqref{eq:resolvent_form} using the solution of $\Phi$, e.g., 
\begin{equation}
    \begin{aligned}
        &\Phi^{-T}(t,z) S(0, z) \Phi^{-1}(t,z) \\
        &
        =
        \frac{1}{\Phi_{11}^2(t,z)} \begin{bmatrix}
            \substack{X_0^T R(z; K) X_0\\ - 2 \Phi_{21}(t,z) X_0^T R(z;K) X^{\star} \\+ \Phi_{21}^2(t,z) (X^{\star})^T R(z; K) X^{\star}}
            & \substack{\Phi_{11}(t,z) X_0^T R(z;K) X^{\star}\\ - \Phi_{11}(t,z) \Phi_{21}(t,z) (X^{\star})^T R(z; K) X^{\star}   }
            \\
            \star & \Phi_{11}^2(t,z) (X^{\star})^T R(z; K) X^{\star}
        \end{bmatrix}.
    \end{aligned}
\end{equation}
Furthermore, we also have (via a simple computation), 
\begin{equation}
    \begin{aligned}
        \Phi(s) \Phi^{-1}(t) 
        &
        = 
        \begin{bmatrix}
            \frac{\Phi_{11}(s)}{\Phi_{11}(t)} & 0\\0 & 1
        \end{bmatrix},
    \end{aligned}
\end{equation}
and thus, we get that
\begin{equation*}
    \begin{aligned}
        \Phi^{-T}(t) \Phi^T(s) 
        &
        \left [ \begin{array}{c|c} 
    \frac{\gamma(t)^2}{d} \tr(K R(z;K)) I(\mathrsfs{B}(t)) & 0\\ \hline 0 & 0
    \end{array} \right ] \Phi(s) \Phi^{-1}(t) 
        \\
        &
        =
        \frac{\gamma(t)^2}{d} I(\mathrsfs{B}(t)) \begin{bmatrix}
        \frac{\Phi_{11}^2(s)}{\Phi_{11}^2(t)} \tr(K R(z;K)) & 0 \\
            0 & 0
        \end{bmatrix}.
    \end{aligned}
\end{equation*}
By setting $V_0(t) = \frac{-1}{2\pi i} \oint_{\Gamma} \mathcal{S}(t,z) \, \dif z$, it follows that 
\begin{equation} \label{eq:deep_appendix}
    \begin{aligned}
        \mathcal{S}(t,z) 
        &
        = 
        \frac{1}{\Phi_{11}^2(t,z)} \begin{bmatrix}
            \substack{X_0^T R(z; K) X_0\\ - 2 \Phi_{21}(t,z) X_0^T R(z;K) X^{\star} \\+ \Phi_{21}^2(t,z) (X^{\star})^T R(z; K) X^{\star}}
            & \substack{\Phi_{11}(t,z) X_0^T R(z;K) X^{\star}\\ - \Phi_{11}(t,z) \Phi_{21}(t,z) (X^{\star})^T R(z; K) X^{\star}   }
            \\
            \star & \Phi_{11}^2(t,z) (X^{\star})^T R(z; K) X^{\star}
        \end{bmatrix}
        \\
        & 
        \qquad - 2\gamma_t (V_0 H(\mathrsfs{B}(t)) + H^T(\mathrsfs{B}(t)) V_0)
        +
        \frac{1}{d} \int_0^t \gamma_s^2 I(\mathrsfs{B}(s)) \begin{bmatrix}
            \frac{\Phi_{11}^2(s)}{\Phi_{11}^2(t)} \tr(K R(z;K)) & 0 \\
            0 & 0
        \end{bmatrix} \, \dif s. 
    \end{aligned}
\end{equation}
We now apply Cauchy's integral formula to $z \mathcal{S}(t,z)$, in that,  $\mathrsfs{B}(t) = - \tfrac{1}{2\pi i} \oint z\mathcal{S}(t,z) \, \dif z$. We see that the term $- 2\gamma_t  z(V_0 H(\mathrsfs{B}(t)) + H^T(\mathrsfs{B}(t)) V_0)$ is analytic in $z$ ($V_0$ and $H$ do not depend on $z$). Therefore, this term, when Cauchy's integral formula is applied to it, is $0$. The result \eqref{eq:C_scalar} immediately follows. 

Piggybacking on the solution of $\mathrsfs{B}$ via the Volterra equation expression, we can derive the dynamics of any statistic satisfying Assumption~\ref{assumption:statistic}, we simply need to derive an expression for the following quantity 
\[
\mathcal{Q}(t) \defas \frac{-1}{2 \pi i} \oint_{\Gamma} q(z) \mathcal{S}(t,z) \, \dif z, 
\]
as one can recover the deterministic statistics dynamics of SGD/HSGD by
\[
\phi(t) = g \circ \mathcal{Q}(t). 
\]

Having derived an equation for $\mathcal{S}$ in \eqref{eq:deep_appendix}, we can get $\mathcal{Q}(t)$ by Cauchy's integral formula. The result is below. 

\begin{mdframed}[style=exampledefault]
\textbf{Evolution of $\mathcal{Q}(t) = \frac{-1}{2 \pi i} \oint_{\Gamma} q(z) \mathcal{S}(t,z) \, \dif z$.}
In the scalar setting, piggybacking off of the Volterra equation for $\mathrsfs{B}$, we will be able to give a more explicit formula for the function $\mathcal{Q}(t)$, that is, we show 
\begin{equation} 
    \begin{aligned}
        \mathcal{Q}(t) 
        &
        = \begin{bmatrix}
            \mathcal{Q}_{11}(t) & \mathcal{Q}_{12}(t)
            \\
            \mathcal{Q}_{21}(t) & \mathcal{Q}_{22}(t)
        \end{bmatrix},
        \\
        \text{where} \quad \mathcal{Q}_{11}(t) 
        &
        = 
        X_0^T \tfrac{q(K)}{\Phi_{11}^{2}(t,K)}X_0 - 2 X_0^T \tfrac{q(K) \Phi_{21}(t, K)}{\Phi_{11}^{2}(t, K)}  X^{\star} + (X^{\star})^T \tfrac{q(K) \Phi_{21}^2(t, K)}{\Phi_{11}^{2}(t,K)}  X^{\star}  
        \\
        &
        + \frac{1}{d} \int_0^t  \gamma_s^2 I(\mathrsfs{B}(s)) \tr \big ( K^2 \tfrac{\Phi_{11}^2 (s, K)}{\Phi_{11}^2(t, K)} \big ) \, \dif s,
        \\
        \mathcal{Q}_{12}(t)
        &
        =
        X_0^T \tfrac{q(K)}{\Phi_{11}(t,K)} X^{\star} - (X^{\star})^T \tfrac{q(K)}{\Phi_{11}(t,K)} X^{\star},
        \\
        \mathcal{Q}_{21}(t) 
        &
        =
        \mathcal{Q}_{12}^T(t), \quad \text{and} \quad
        \mathcal{Q}_{22}(t) = (X^{\star})^T K X^{\star}. 
    \end{aligned}
\end{equation}
The function $\Phi_{11}(t, z)$ and $\Phi_{21}(t,z)$, by solving a differential equation, are given by 
\begin{equation}
\begin{aligned}
     \Phi_{11}(t,z) 
     &
     = \exp \left ( \int_0^t \gamma_s ( 2z \nabla h_{11}(\mathrsfs{B}(s)) + \delta) \, \dif s \right ) \, \, 
     \\
     \text{and} \, \,
    \Phi_{21}(t, z) 
    &
    = \int_0^t 2 \gamma_s z \nabla h_{21}(\mathrsfs{B}(s)) \Phi_{11}(s, z) \, \dif s. 
    \end{aligned}
\end{equation}
\end{mdframed}

\section{Analysis of Examples} \label{sec:analysis_examples}

In this section, we derive the function $h$ (and its derivative), $f$ (and its derivative, as well as $\EE_{a, \epsilon}[\nabla_x f(\ip{W,a}_{\mathcal{A}})^{\otimes 2}]$. We do so in the case when the learning rate is constant $\gamma$. These quantities are exactly what you need to solve the Volterra equation for $\mathrsfs{B}$. From $\mathrsfs{B}$, one can derive other statistics, particularly important are the statistics corresponding to the norm $\ip{X^{\otimes 2}, K}_{\mathcal{A}^{\otimes 2}}$ and cross term $\ip{X, \ip{K, X^{\star}}_{\mathcal{A}} }_{\mathcal{A}}$.

Throughout this section, we use the notation
\[
\mathrsfs{B}(t) = \begin{bmatrix}
\mathrsfs{B}_{11}(t) & \mathrsfs{B}_{12}(t)\\
\mathrsfs{B}_{21}(t) &
\mathrsfs{B}_{22}(t) 
\end{bmatrix} = \frac{-1}{2 \pi i} \oint_{\Gamma} z \mathcal{S}(t,z) \, \dif z. 
\]
The correspondence of $\mathrsfs{B}$ with iterates is given by
\[
\mathrsfs{B}(t) \approx \ip{W^{\otimes 2}, K}_{\mathcal{A}^{\otimes 2}}. 
\]

\subsection{Example 1: Least squares (matrix outputs)} \label{sec:lsq_analysis}
We consider the dynamics of the least squares (with matrix outputs) in which we are interested in minimizing $X \in \mathcal{A} \otimes \mathcal{O}$ over the risk function,
\begin{equation}
\begin{aligned}
\mathcal{R}(X) 
&
\defas 
\tfrac{1}{2} \EE_{a, \epsilon} [\|\ip{X,a}_{\mathcal{A}} - (\ip{X^{\star}, a}_{\mathcal{A}} + \varepsilon) \|^2]
\\
&= 
\tfrac{1}{2} \EE[ \ip{ \ip{ X-X^{\star}, a}_{\mathcal{A}}, \ip{ X-X^{\star}, a}_{\mathcal{A}} } ] + \tfrac{1}{2} \EE[ \|\epsilon\|^2 ] \\ 
&
= 
\tfrac{1}{2} \tr \big ( \ip{K, (X-X^{\star}) \otimes (X-X^{\star}) }_{\mathcal{A} \otimes \mathcal{A} } \big ) + \tfrac{1}{2} \EE[ \|\epsilon\|^2 ]\\
&
=
\tfrac{1}{2} \tr \big ( \ip{X \otimes X, K}_{\mathcal{A}} \big ) - \tfrac{1}{2} \tr \big ( \ip{X \otimes X^{\star}, K}_{\mathcal{A}} \big ) - \tfrac{1}{2} \tr \big ( \ip{ X^{\star} \otimes X, K}_{\mathcal{A}} \big )
\\
& \quad + \tfrac{1}{2} \tr \big ( \ip{K, X^{\star} \otimes X^{\star}}_{\mathcal{A}}  \big ) + \tfrac{1}{2} \EE[\|\epsilon\|^2]
\end{aligned}
\end{equation}
Here we assume that the targets $y = \ip{ X^{\star}, a}_{\mathcal{A}} + \epsilon$ where $\varepsilon$ is independent of $a$ and the expectation is taken over both the label noise $\epsilon$ and the data $a$. 

The function $h \, : \, \mathcal{O}^+ \otimes \mathcal{O}^+ \to \mathbb{R}$ must satisfy $h ( \ip{K, W \otimes W}_{\mathcal{A}} ) = \mathcal{R}(X)$. For this we make the identification, 
\begin{equation*}
\begin{gathered}
z_{11} = X^T K X, \quad z_{12} = X^T K X^{\star},\\
z_{21} = (X^{\star})^T K X, \quad \text{and} \quad z_{22} = (X^{\star})^T K X^{\star}. 
\end{gathered}
\end{equation*}
Under this identification, 
\[
h \left ( \begin{bmatrix} 
z_{11} & z_{12}\\
z_{21} & z_{22}
\end{bmatrix} \right )
= \tfrac{1}{2} \tr(z_{11}) - \tfrac{1}{2} \tr(z_{12}) - \tfrac{1}{2}\tr(z_{21}) + \tfrac{1}{2} \tr(z_{22}) + \EE[\|\varepsilon\|^2]. 
\]
As $(\nabla \tr)(x) = I$ (here $\mathcal{T} = \mathcal{O}$), we get that 
\begin{equation} \label{eq:Dh_linear}
\nabla h(B) = \left [ \begin{array}{c|c} 
\nabla h_{11}(B) & \nabla h_{12}(B)\\
\hline
\nabla h_{21}(B) & \nabla h_{22}(B)
\end{array} \right ] 
=
\left [
\begin{array}{cc}
\tfrac{1}{2} I_{\mathcal{O}} & -\tfrac{1}{2} I_{\mathcal{O}}\\
-\tfrac{1}{2} I_{\mathcal{T}} & \tfrac{1}{2} I_{\mathcal{T}}
\end{array}
\right ]. 
\end{equation}
Hence, we conclude that 
\begin{align*}
H(\mathrsfs{B}(t)) = 
\begin{bmatrix}
\tfrac{1}{2} I & 0\\
-\tfrac{1}{2} I & 0
\end{bmatrix}.
\end{align*}

Moreover, we also need to identify the function $f$, which in this case is simply $r \mapsto \tfrac{1}{2} \| r -  (\ip{X^{\star},a}_{\mathcal{A}} + \epsilon)\|^2$. The derivative, 
\[
\nabla_x f(x) = x - (\ip{X^{\star}, a}_{\mathcal{A}} + \epsilon), 
\]
satisfies evaluated at $r= \ip{X,a}_{\mathcal{A}}$
\[
\EE_{a, \epsilon} [ \nabla_x f(\ip{X,a}_{\mathcal{A}})^{\otimes 2} ] = \ip{K, X-X^{\star} \otimes X-X^{\star}}_{\mathcal{A}} +  \EE[ \epsilon^{\otimes 2} ].
\]
Thus, it follows that 
\begin{align*}
I(\mathrsfs{B}(t)) = \mathrsfs{B}_{11}(t) - \mathrsfs{B}_{12}(t) - \mathrsfs{B}_{21}(t) + \mathrsfs{B}_{22}(t) + \EE[\epsilon^{\otimes 2}]. 
\end{align*}

We now have all the components to find $\mathcal{S}(t,z)$ in \eqref{eq:resolvent_form}. One of the most challenging components to get an explicit formula is being able to solve the ODE 
\[
\dot{\Phi}(t,z) = 2\gamma z H(\mathrsfs{B}) \Phi, \quad \text{where} \quad \Phi(0,z) = I \quad \text{and} \quad H(\mathrsfs{B}) = \left [ \begin{array}{c|c} 
\nabla h_{11}(\mathrsfs{B}(t)) & 0\\
\nabla h_{21}(\mathrsfs{B}(t)) & 0
\end{array} \right ].
\]
In this case, because $\nabla h$ is quite simple, that is composed of identities (see \eqref{eq:Dh_linear}), we can solve the constant coefficient system of ODEs:
\begin{equation*}
\begin{gathered}
    \dot{\Phi} = 2 \gamma z \begin{bmatrix}
        \tfrac{1}{2} I & 0\\
        -\tfrac{1}{2} I & 0
    \end{bmatrix} \Phi,  \qquad \Phi(0) = I,
\end{gathered}
\end{equation*}
where the matrix $H$ diagonalized by
\[
   \begin{bmatrix}
        I & 0\\
        -I & 0
    \end{bmatrix} = \begin{bmatrix} 
    0 & -I\\
    I & I
    \end{bmatrix}
    \begin{bmatrix}
        0 & 0\\
        0 & I
    \end{bmatrix}    
    \begin{bmatrix}
        I & I\\
        -I & 0
    \end{bmatrix}. 
\]
The solution $\Phi(t,z)$ is simply given by taking the exponential and thus, 
\[
\Phi(t,z) = \begin{bmatrix}
    e^{\gamma z t} I & 0\\
    (1-e^{\gamma z t}) I & I
\end{bmatrix} \quad \text{and} \quad \Phi^{-1}(t,z) = \begin{bmatrix}
    e^{-\gamma zt} I & 0\\
    (1-e^{-\gamma zt})I & I
\end{bmatrix}. 
\]
A simple computation yields that 
\begin{align*}
    [\Phi^{-T}(t,z) \mathcal{S}(0,z) \Phi^{-1}(t,z)]_{11} 
    &
    = 
    e^{-2 \gamma z t}(X_0^T R(z;K) X_0) + e^{-\gamma z t}  (1-e^{-\gamma z t}) X_0^T R(z;K) X^{\star}
    \\
    &
    + (1-e^{-\gamma z t})e^{-\gamma z t} (X^{\star})^T R(z; K) X_0 + (1-e^{-\gamma z t})^2 (X^{\star})^T R(z; K) X^{\star}\\
    [\Phi^{-T}(t,z) \mathcal{S}(0,z) \Phi^{-1}(t,z)]_{12}
    &
    =
    e^{-\gamma z t} X_0^T R(z;K) X^{\star} + (1-e^{-\gamma z t}) (X^{\star})^T R(z;K) X^{\star}
    \\
    [\Phi^{-T}(t,z) \mathcal{S}(0,z) \Phi^{-1}(t,z)]_{21} 
    &
    =
    [\Phi^{-T}(t,z) \mathcal{S}(0,z) \Phi^{-1}(t,z)]_{12}^T\\
    [\Phi^{-T}(t,z) \mathcal{S}(0,z) \Phi^{-1}(t,z)]_{22}
    &
    = (X^{\star})^T R(z;K) X^{\star}. 
\end{align*}
and, we have that
\[
\Phi(s) \Phi^{-1}(t) = \begin{bmatrix}
    e^{-\gamma z (t-s)} & 0\\
    (1-e^{\gamma z s})e^{-\gamma z t} + (1-e^{-\gamma z t}) & 1
\end{bmatrix}.
\]
Using this term, we get that 
\begin{equation*}
\begin{aligned}
\frac{\gamma^2}{d} 
& 
\Phi^{-T}(t,z) \Phi^T(s,z) \begin{bmatrix} 
\tr(K R(z; K)) I(\mathrsfs{B}(t)) & 0\\
0 & 0
\end{bmatrix} \Phi(s,z) \Phi^{-1}(t,z) 
\\
&
=
\frac{\gamma^2}{d} \tr(K R(z;K)) e^{-2\gamma z(t-s)} \begin{bmatrix}
    \mathrsfs{B}_{11}(t) - \mathrsfs{B}_{12}(t) - \mathrsfs{B}_{21}(t) + \mathrsfs{B}_{22}(t) + \EE[\epsilon^{\otimes 2}] & 0\\
    0 & 0
\end{bmatrix}.
\end{aligned}
\end{equation*}
We can recover the $\mathrsfs{B}(t)$ and hence the risk $\mathcal{R}(X)$ by Cauchy's integral formula, that is, $-\frac{1}{2\pi i} \oint z \mathcal{S}(t,z) \, dz = \mathrsfs{B}(t)$. Note that the term $-2\gamma \big ( (\tfrac{-1}{2 \pi i} \oint_{\Gamma} \mathcal{S}(t,z) \, \dif z)  H(\mathrsfs{B}) + H^T(\mathrsfs{B}) (\tfrac{-1}{2 \pi i} \oint_{\Gamma} \mathcal{S}(t,z) \, \dif z) \big ) $ is analytic in $z$ and thus will integrate $0$ when performing the contour integral. Doing this contour integral, we get that 
\begin{align*}
\mathrsfs{B}_{11}(t)
&
=
 X_0^T e^{-2 \gamma K t} K X_0 +  X_0^T e^{-\gamma K t} (1-e^{-\gamma K t}) K X^{\star} \\
&
+  (X^{\star})^T K(1-e^{-\gamma K t})e^{-\gamma K t} X_0 +  (X^{\star})^T (1-e^{-\gamma K t})^2 K X^{\star}\\
&
+ \frac{\gamma^2}{d} \int_0^t \tr( K^2 e^{-2\gamma K(t-s)} ) \big ( \mathrsfs{B}_{11}(t) - \mathrsfs{B}_{12}(t) - \mathrsfs{B}_{21}(t) + \mathrsfs{B}_{22}(t) + \EE[\epsilon^{\otimes 2}] \big ) \, \dif s
\\
\mathrsfs{B}_{12}(t)
&
=
X_0^T K e^{-\gamma K t} X^{\star} + (X^{\star})^T K (1-e^{-\gamma K t}) X^{\star}
\\
\mathrsfs{B}_{21}(t)
& 
= 
\mathrsfs{B}_{12}(t).
\end{align*}
We note that 
\[
2\mathrsfs{R}(t) = 
\mathrsfs{B}_{11}(t) - \mathrsfs{B}_{12}(t) - \mathrsfs{B}_{21}(t) + \mathrsfs{B}_{22}(t) + \EE[\epsilon^{\otimes 2}]. 
\]
Then we can get a formula for the deterministic dynamics of the risk $\mathrsfs{R}$:
\begin{equation}
\begin{aligned}
\mathcal{R}(W_{td}) \to \mathrsfs{R}(t)
& 
= 
\tfrac{1}{2} \tr( \ip{(X_0-X^{\star}) \otimes (X_0-X^{\star}), K e^{-2K \gamma t} }_{\mathcal{A}^{\otimes 2}} )  + \tfrac{1}{2} \EE[ \| \epsilon \|^2 ] \\
&
\qquad + \frac{\gamma^2}{d} \int_0^t \tr(K^2 e^{-2 \gamma K(t-s)} \mathrsfs{R}(s) \, \dif s.
\end{aligned}
\end{equation}

\subsection{Example 2: (Real) Phase Retrieval}

In the (real) phase retrieval problem, we are trying to find an unknown signal $X^{\star}$ from linear observations of the modulus of the signal, that is, the target is $y = \|\ip{ X^{\star}, a}_{\mathcal{A}}\|^2$. For this setting, we will consider $\ip{X^{\star}, a}_{\mathcal{A}} \in \mathbb{R}$, the scalar setting. The (noiseless) phase retrieval problem can be formulated as 

\[
\min_{X} \EE_{a} [ \big ( (\ip{X,a}_{\mathcal{A}})^2 - (\ip{X^{\star}, a}_{\mathcal{A}})^2 \big )^2 ].
\]

To apply our result, we need to identify the functions $h$ and $f$. Let's first compute the function $h$. For this, we need to use Wick's formula:
\begin{equation}
\begin{aligned}
    \EE_a[ (\ip{ X,a}_{\mathcal{A}}^2  - \ip{ X^{\star},a}_{\mathcal{A}}^2 )^2 ]
    &
    =
    3 \ip{X \otimes X, K}_{\mathcal{A}^{\otimes 2}}^2 - 2 \ip{X \otimes X, K}_{\mathcal{A}^{\otimes 2}} \ip{X^{\star} \otimes X^{\star}, K}_{\mathcal{A}^{\otimes 2}}
    \\
    &
    \quad 
    -4 \ip{X \otimes X^{\star}, K}_{\mathcal{A}^{\otimes 2}} \ip{X^{\star} \otimes X, K}_{\mathcal{A}^{\otimes 2}} + 3 \ip{X^{\star} \otimes X^{\star}, K}_{\mathcal{A}^{\otimes 2}}^2.
\end{aligned}
\end{equation}
We can express the risk in terms of $\mathrsfs{B}$
\[
\mathrsfs{R}(t) = 3 \mathrsfs{B}_{11}^2 - 2 \mathrsfs{B}_{11} \mathrsfs{B}_{22} - 4 \mathrsfs{B}_{12} \mathrsfs{B}_{21} + 3 \mathrsfs{B}_{22}^2. 
\]
Therefore the function $h$ is 
\begin{equation*}
\begin{gathered}
h \left ( \begin{bmatrix} B_{11} & B_{12} \\ B_{21} & B_{22} \end{bmatrix} \right ) = 3 B_{11}^2 - 2 B_{11} B_{22} - 4 B_{12} B_{21} + 3 B_{22}^2
\\
\text{and} \quad (\nabla h) (\mathrsfs{B}(t)) = \begin{bmatrix} 
6 \mathrsfs{B}_{11}(t) -2 \mathrsfs{B}_{22}(t) & - 4 \mathrsfs{B}_{21}(t)\\
-4 \mathrsfs{B}_{12}(t) & 6 \mathrsfs{B}_{22}(t) - 2 \mathrsfs{B}_{11}(t)
\end{bmatrix}.
\end{gathered}
\end{equation*}
The $\Phi(t)$ from the ODE is thus
\begin{equation*}
    \begin{aligned}
        \Phi_{11}(t,z) 
        &
        = 
        \exp \left (\int_0^t 2\gamma z [ 6 \mathrsfs{B}_{11}(s) - 2 \mathrsfs{B}_{22}(s) ]\, \dif s \right )\\
        \Phi_{21}(t,z) 
        &
        =
        -8\gamma z \int_0^t \exp \left (\int_0^s 2\gamma z [ 6 \mathrsfs{B}_{11}(s') - 2 \mathrsfs{B}_{22}(s') ]\, \dif s' \right ) \mathrsfs{B}_{12}(s) \, \dif s
    \end{aligned}
\end{equation*}

We also need to find the function $f$. For this, we see that 
\begin{equation*}
    \begin{aligned}
        f(x) 
        = 
        (x^2 - \ip{ X^{\star}, a}_{\mathcal{A}}^2 )^2 
        \quad \text{and} \quad 
        \nabla_x f(x) 
        =
        4 x (x^2 - \ip{ X^{\star}, a}_{\mathcal{A}}^2 ).
    \end{aligned}
\end{equation*}
It follows by another application of Wick's formula:
\begin{align*}
    \EE_a[ \nabla_x f( \ip{W,a}_{\mathcal{A}} )^{\otimes 2}] = I(\mathrsfs{B}(t)) &  = 16 \big ( 15 (\mathrsfs{B}_{11}(t))^3 - 6 (\mathrsfs{B}_{11}(t))^2 \mathrsfs{B}_{22}(t) - 24 \mathrsfs{B}_{11}(t) (\mathrsfs{B}_{12}(t))^2\\
    & \qquad + 3 \mathrsfs{B}_{11}(t) (\mathrsfs{B}_{22}(t))^2 + 12 \mathrsfs{B}_{22}(t) (\mathrsfs{B}_{12}(t))^2 \big ).
\end{align*}
Plugging this into the \eqref{eq:C_scalar} gives you an implicit formula for the dynamics of $\mathrsfs{B}(t)$. 

\subsection{Example 3: (Real) Phase Retrieval, Lipschitz version}
As in the previous example, we are trying to recover an unknown signal $X^{\star}$ from linear observations of the modulus of the signal. The target function, which we assume is noiseless, follows $y = | \ip{X^{\star}, a}_{\mathcal{A}}|$ where $y \in \mathbb{R}$. Another popular formulation for the (noiseless) phase retrieval problem is the non-smooth, Lipschitz version 

\begin{equation} \label{eq:phase_retrieval_nonsmooth_loss_1}
\mathcal{R}(X) \defas  \tfrac{1}{2} \EE_a[ \big ( |\ip{ X, a}_{\mathcal{A}}| - |\ip{X^{\star},a}_{\mathcal{A}}| \big )^2 ]. 
\end{equation}

As before, to apply our result, we need to identify the function $h$ and $f$. Let's first compute the function $h$ in terms of the tensor
\[
B = \ip{W \otimes W, K}_{\mathcal{A}^{\otimes 2}} = \begin{pmatrix} X^T K X & X^T K X^{\star} \\
(X^{\star})^T K X & (X^{\star})^T K X^{\star}
\end{pmatrix} = \begin{pmatrix} B_{11} & B_{12}\\
B_{21} & B_{22} \end{pmatrix}.
\]
For this, we expand the population risk \eqref{eq:phase_retrieval_nonsmooth_loss}
\begin{align*}
\mathcal{R}(X) 
&
= 
\tfrac{1}{2} \EE_a[ \ip{X,a}^2_{\mathcal{A}} ] + \tfrac{1}{2} \EE_a [ \ip{ X^{\star},a}_{\mathcal{A}}^2 ] - \EE_a [ |\ip{X,a}_{\mathcal{A}}| |\ip{X^{\star},a}_{\mathcal{A}}| ]
\\
&
=
\tfrac{1}{2} B_{11} + \tfrac{1}{2} B_{22} - \tfrac{1}{2} \EE_a [ |\ip{X,a}_{\mathcal{A}}| |\ip{X^{\star},a}_{\mathcal{A}} | ] - \tfrac{1}{2} \EE_a [ |\ip{X,a}_{\mathcal{A}}| |\ip{X^{\star},a}_{\mathcal{A}} | ] .
\end{align*}
To compute the last term, we use a result from \cite[Table 1]{louart2018random}, 
\begin{align*}
\EE_a [ |\ip{X,a}_{\mathcal{A}}| | \ip{ X^{\star}, a}_{\mathcal{A}}| ] = 
\tfrac{2}{\pi} \sqrt{B_{11}} \sqrt{B_{22}} \left ( \tfrac{B_{12}}{\sqrt{B_{11}} \sqrt{B_{22}} } \arcsin \left (  \tfrac{B_{12}}{\sqrt{B_{11}} \sqrt{B_{22}} } \right ) + \sqrt{ 1 - \left ( \tfrac{B_{12}}{\sqrt{B_{11}} \sqrt{B_{22}} } \right )^2} \right ).
\end{align*}
Therefore, we have
\begin{align*}
    \mathcal{R}(X) = h \left ( \begin{pmatrix} B_{11} & B_{12}\\
    B_{21} & B_{22}
    \end{pmatrix} \right ) 
    &
    = 
    \tfrac{1}{2} B_{11} + \tfrac{1}{2} B_{22} 
    \\
    &
    -
    \tfrac{1}{\pi} \sqrt{B_{11}} \sqrt{B_{22}} \left ( \tfrac{B_{12}}{\sqrt{B_{11}} \sqrt{B_{22}} } \arcsin \left (  \tfrac{B_{12}}{\sqrt{B_{11}} \sqrt{B_{22}} } \right ) + \sqrt{ 1 - \left ( \tfrac{B_{12}}{\sqrt{B_{11}} \sqrt{B_{22}} } \right )^2} \right )
    \\
    &
    -
    \tfrac{1}{\pi} \sqrt{B_{11}} \sqrt{B_{22}} \left ( \tfrac{B_{21}}{\sqrt{B_{11}} \sqrt{B_{22}} } \arcsin \left (  \tfrac{B_{21}}{\sqrt{B_{11}} \sqrt{B_{22}} } \right ) + \sqrt{ 1 - \left ( \tfrac{B_{21}}{\sqrt{B_{11}} \sqrt{B_{22}} } \right )^2} \right ).
\end{align*}
Taking the derivative, we get that
\begin{align*}
(\nabla h)(B) = \begin{bmatrix} 
\frac{1}{2} - \frac{1}{\pi} \sqrt{\frac{B_{22}}{B_{11}} - \frac{B_{12}^2}{ B_{11}^2 } } 
& 
-\frac{1}{\pi} \arcsin \left ( \frac{B_{21}}{ \sqrt{B_{11}} \sqrt{B_{22}} } \right )
\\
-\frac{1}{\pi} \arcsin \left ( \frac{B_{12}}{ \sqrt{B_{11}} \sqrt{B_{22}} } \right ) & *
\end{bmatrix}.
\end{align*}

Next, we consider the function $f$ and its gradient $\nabla f$. It is clear from \eqref{eq:phase_retrieval_nonsmooth_loss_1} that 
\begin{align*}
f(x) 
= 
\tfrac{1}{2} (|x| - |\ip{X^{\star}, a}_{\mathcal{A}}| )^2
\quad \text{and} \quad
\nabla_x f(x) 
= 
x - \text{sign}(r) |\ip{X^{\star}, a}_{\mathcal{A}}|,
\end{align*}
where $\text{sign} \, : \, \mathbb{R} \to \mathbb{R}$ is the sign function. In particular, we need to compute $\EE_a [ \nabla_xf( \ip{X,a}_{\mathcal{A}} )^{\otimes 2} ]$. A simple computation shows that 
\[
\EE_a [\nabla_x f(\ip{X,a}_{\mathcal{A}})^{\otimes 2} ] = 2\mathcal{R}(X). 
\]

\subsubsection{Vector field computations}
In this section, we work with identity covariance, and we are interested in understanding the dynamics of the norm and cross term, that is,
\[
B_{11} = X^TX \quad \text{and} \quad B_{12} = X^TX^{\star}.
\]
First, let us define the following variables consistent with the notation for the Volterra equation
\begin{align*}
B_{11} \defas X^T X, \quad 
B_{12} \defas X^T X^{\star}, \quad B_{21} \defas (X^{\star})^T X, \quad \text{and} \quad B_{22} \defas (X^{\star})^T (X^{\star}).
\end{align*}
Note in the scalar case $B_{12} = B_{21}$, but for purposes of making a unifying theory with the matrix case, we think of these two as independent variables. We can express $\mathcal{R}(X) = h( B_{11}, B_{12}, B_{21}, B_{22})$ where $h$ is some function of the variables $B_{11}, B_{12}, B_{21}, B_{22}$ and, in particular,
\begin{align*}
\mathcal{R}(X) 
&
= 
\frac{1}{2} B_{11} + \frac{1}{2} B_{22} - \frac{2}{\pi} \left [ B_{12} \sin^{-1} \left ( \frac{B_{12}}{\sqrt{B_{11} B_{22}}} \right ) + \sqrt{B_{11} B_{22}} \sqrt{ 1 - \frac{B_{12}^2}{B_{11} B_{22}} } \right ].
\end{align*}
Using chain rule, we have that 
\begin{align*}
\nabla \mathcal{R}(X) 
&
= 2X (\partial_{B_{11}} h) + 2X^{\star} (\partial_{B_{12}} h), \\
\text{where} \quad \nabla h 
& = \begin{bmatrix}
\frac{\partial h}{\partial B_{11}} & \frac{\partial h}{\partial B_{12}}\\
\frac{\partial h}{\partial B_{21}} & \frac{\partial h}{\partial B_{22}}
\end{bmatrix}
= 
\begin{bmatrix}
\frac{1}{2}-\frac{1}{\pi} \sqrt{\frac{B_{22}}{B_{11}}} \sqrt{1 - \frac{B_{12}^2}{B_{11} B_{22}} } & -\frac{1}{\pi} \sin^{-1} \left ( \frac{B_{12}}{\sqrt{B_{11} B_{22}}} \right )\\
-\frac{1}{\pi} \sin^{-1} \left ( \frac{B_{21}}{\sqrt{B_{11} B_{22}}} \right ) & \star
\end{bmatrix}.
\end{align*}
Therefore, the gradient of $\mathcal{R}$ is 
\begin{align*}
\nabla \mathcal{R}(X) = 2 X \left ( \frac{1}{2}-\frac{1}{\pi} \sqrt{\frac{B_{22}}{B_{11}}} \sqrt{1 - \frac{B_{12}^2}{B_{11} B_{22}} }\right ) - 2 X^{\star} \left ( \frac{1}{\pi} \sin^{-1} \left ( \frac{B_{12}}{\sqrt{B_{11} B_{22}}} \right ) \right ).
\end{align*}
Now we compute via Ito's the derivative of the norm
\begin{align*}
\dif B_{11} 
& 
=
2 \ip{X_t, \dif X_t} + \ip{\dif X_t, \dif X_t} 
=
-2 \gamma \ip{X_t, \nabla \mathcal{R}(X_t)} \dif t + 2 \gamma^2 \mathcal{R}(X_t) \, \dif t
\\
&
=
-4 \gamma B_{11} \left ( \frac{1}{2}-\frac{1}{\pi} \sqrt{\frac{B_{22}}{B_{11}}} \sqrt{1 - \frac{B_{12}^2}{B_{11} B_{22}} }\right ) + 4 \gamma B_{12}  \left ( \frac{1}{\pi} \sin^{-1} \left ( \frac{B_{12}}{\sqrt{B_{11} B_{22}}} \right ) \right )
\\
&
\quad 
+ \gamma^2 \left ( B_{11} + B_{22} - \frac{4}{\pi} \left [ B_{12} \sin^{-1} \left ( \frac{B_{12}}{\sqrt{B_{11} B_{22}}} \right ) + \sqrt{B_{11} B_{22}} \sqrt{ 1 - \frac{B_{12}^2}{B_{11} B_{22}} } \right ] \right ).
\end{align*}
A similar Ito computation gives the overlap term
\begin{align*}
\dif B_{12} 
&
= 
\ip{X^{\star}, \dif X_t} = -\gamma \ip{X^{\star}, \nabla \mathcal{R}(X_t)} \, \dif t\\
&
= 
-2 \gamma B_{12} \left ( \frac{1}{2}-\frac{1}{\pi} \sqrt{\frac{B_{22}}{B_{11}}} \sqrt{1 - \frac{B_{12}^2}{B_{11} B_{22}} }\right ) + 2 \gamma B_{22}  \left ( \frac{1}{\pi} \sin^{-1} \left ( \frac{B_{12}}{\sqrt{B_{11} B_{22}}} \right ) \right ).
\end{align*}

\subsection{Example 4: Binary logistic regression.}
In this setting, we consider a binary logistic regression problem where we are trying to classify two classes. We will follow a Student-Teacher model: let $X^{\star} = X^{\star} \oplus 0$ and generated targets $y$ by 
\begin{equation} \label{eq:logistic_student_teacher}
    y = \frac{\exp( \ip{X^{\star} \oplus 0, a}_{\mathcal{A}} )}{\tr( \exp (\ip{X^{\star} \oplus 0, a \otimes 1}_{\mathcal{A}} ) ) } = \frac{\exp( \ip{X^{\star}, a}_{\mathcal{A}}) \oplus 1}{\exp(\ip{X^{\star}, a}_{\mathcal{A}}) + 1}.
\end{equation}
The classification problem for $X= X \oplus 0$ is
\begin{equation} \label{eq:logistic_bineary_expected_loss}
    \min_{X} \EE_a \bigg [ - \ip{X,a}_{\mathcal{A}} \cdot \frac{\exp( \ip{X^{\star}, a}_{\mathcal{A}})}{\exp(\ip{X^{\star}, a}_{\mathcal{A}}) + 1} + \log \left ( \exp( \ip{X,a}_{\mathcal{A}} ) + 1 \right ) \bigg ]. 
\end{equation}
We begin by computing the function $h$, which is defined by via the risk as $\mathcal{R}(X) 
= h(\ip{W \otimes W, K}_{\mathcal{A}^{\otimes 2}})$. In this case, the function $\mathcal{R}(X)$, \eqref{eq:logistic_bineary_expected_loss}, consists of two terms. Following the notation in Section~\ref{sec:Volterra_equation}, we will think of $h$ as a function of $B$ where 
\begin{equation} \label{eq:C_log}
B = \begin{bmatrix} B_{11} & B_{12} \\
B_{21} & B_{22} 
\end{bmatrix}  \cong \begin{bmatrix} 
\ip{X \otimes X, K}_{\mathcal{A}^{\otimes 2}} & \ip{X \otimes X^{\star}, K}_{\mathcal{A}^{\otimes 2}}\\
\ip{X^{\star} \otimes X, K}_{\mathcal{A}^{\otimes 2}} & \ip{X^{\star} \otimes X^{\star}, K}_{\mathcal{A}^{\otimes 2}}
\end{bmatrix}.
\end{equation}
We will start, with the slightly easier term manage:
$h_2(B) \defas \EE_{a}[ \log ( \exp( \ip{X,a}_{\mathcal{A}} ) + 1)$. To isolate $h_2$,  by letting $z = \ip{X,a}_{\mathcal{A}} \sim N(0, \ip{X \otimes X, K}_{\mathcal{A}^{\otimes 2} })$, we see that
\begin{equation}
\begin{aligned}
    h_2( B) 
    &
    = 
    \EE_a \big [ \log ( \exp(\ip{X,a}_{\mathcal{A}} ) + 1)  \big ]
    = 
    \EE_z \big [ \log (\exp(z)  + 1 ) \big ]
    \\
    &
    =
    \EE_w \big [ \log (\exp (\sqrt{X^T K X} w) + 1) \big ]
\end{aligned}
\end{equation}
where $w$ is standard normal $N(0, 1)$. From this, the function 
\begin{equation} \label{eq:h_2}
h_2(B) = \EE_w \big [\log (\exp(w\sqrt{B_{11}})  + 1 ) \big ], \quad w \sim N(0,1).
\end{equation}

Now let us consider the other term in \eqref{eq:logistic_bineary_expected_loss}, that is, the function, $h_1(B) \defas \EE_{a} [ - \ip{X,a}_{\mathcal{A}} \cdot \frac{\exp(\ip{X^{\star},a}_{\mathcal{A}}}{\exp(\ip{X^{\star}, a}_{\mathcal{A}}) + 1} ]$ and let us identify the inputs of $B$. First, we observe that $r = \ip{X,a}_{\mathcal{A}}$ and $r^{\star} = \ip{X^{\star},a}_{\mathcal{A}}$ are jointly Gaussian with $r^{\star} \sim N(0, \ip{X^{\star} \otimes X^{\star}, K}_{\mathcal{A}^{\otimes 2}})$ and $r \sim N(0, \ip{X \otimes X, K}_{\mathcal{A}^{\otimes 2}})$. Under this identification, we can express $h_1(B)$ as 
\[
h_1(B) = \EE_a \bigg [ - \ip{X,a}_{\mathcal{A}} \cdot \frac{\exp(\ip{X^{\star},a}_{\mathcal{A}}) }{\exp(\ip{X^{\star}, a}_{\mathcal{A}}) + 1} \bigg ] = \EE_{(r,r^{\star})} \bigg [ -r \cdot  \frac{\exp(r^{\star})}{\exp(r^{\star}) + 1}\bigg ].
\]
We can express $r^{\star} = \lambda r + U$ where $U$ is normally distributed (mean $0$) and independent of $r$ and the constant $\lambda$ is chosen so that $\EE[ r^{\star} \cdot r] = \lambda \EE[r^2]$. In particular, by noting that $\EE[r^{\star} \cdot r] = \EE[ X^{\star} a a^T X] = \ip{X \otimes X^{\star}, K}_{\mathcal{A}^{\otimes 2}} = B_{21}$ and $\EE[r^2] = \EE[ X^T a a^T X] = B_{11}$, it follows that the constant $\lambda = \frac{B_{21}}{B_{11}}$. Using this identity, we have that 
\begin{align*}
    \EE_{(r,r^{\star})} \bigg [ -r \cdot \frac{\exp(r^{\star})}{\exp(r^{\star}) + 1} \bigg] 
    &
    =
    \EE_{(r,U)} \bigg [ - r \cdot \frac{\exp(\lambda r + U)}{\exp(\lambda r + U) + 1} \bigg ]
    \\
    &
    =
    -\ip{X \otimes X, K}_{\mathcal{A}^{\otimes 2}} \EE_{(r,U)} \bigg [ \partial_r \left( \frac{\exp(\lambda r + U)}{\exp(\lambda r + U)+1 } \right ) \bigg ]
    \\
    &
    = - \lambda \cdot B_{11} \cdot
    \EE_{(r,U)} \bigg [ \frac{\exp(\lambda r + U)}{(1 + \exp(\lambda r + U))^2} \bigg ]
    \\
    &
    = - \lambda \cdot B_{11} \cdot
    \EE_{r^{\star}} \bigg [ \frac{\exp(r^{\star})}{(1 + \exp(r^{\star}))^2} \bigg ].
\end{align*}
Here the 2nd equality is a direct result of Stein's Lemma. Using that $\lambda = \frac{B_{21}}{B_{11}}$, and by letting $r^{\star} = \sqrt{\ip{X^{\star} \otimes X^{\star}, K}_{\mathcal{A}^{\otimes 2}}} \cdot z = \sqrt{B_{22}} \cdot z$ where $z \sim N(0,1)$, we have
\begin{align*}
    h_1(B) 
    &
    = 
    \EE_a \bigg [ - \ip{X,a}_{\mathcal{A}} \cdot \frac{\exp(\ip{X^{\star},a}_{\mathcal{A}}) }{\exp(\ip{X^{\star}, a}_{\mathcal{A}}) + 1} \bigg ] 
    = -\lambda \cdot B_{11} \cdot \EE_{r^{\star}} \bigg [ \frac{\exp(r^{\star})}{(1 + \exp(r^{\star}))^2} \bigg ]
    \\
    &
    = - B_{21} \cdot \EE_z \bigg [\frac{\exp(\sqrt{B_{22}} \cdot z)}{(1 + \exp(\sqrt{B_{22}} \cdot z) )^2} \bigg ], \quad \text{where $z \sim N(0,1)$.}
\end{align*}
Putting this together, we have that 
\begin{equation}
    \begin{aligned}
    h(B) 
    &
    =
    \EE_a \bigg [ - \ip{X,a}_{\mathcal{A}} \cdot \frac{\exp( \ip{X^{\star}, a}_{\mathcal{A}})}{\exp(\ip{X^{\star}, a}_{\mathcal{A}}) + 1} + \log \left ( \exp( \ip{X,a}_{\mathcal{A}} ) + 1 \right ) \bigg ]
    \\
    &
    = 
    h_1(B) + h_2(B)
    \\
    &
    = 
    -B_{21}  \EE_z \bigg [\frac{\exp(\sqrt{B_{22}} \cdot z)}{(1 + \exp(\sqrt{B_{22}} \cdot z) )^2} \bigg ] + \EE_w \big [\log (\exp(w\sqrt{B_{11}})  + 1 ) \big ],
    \end{aligned}
\end{equation}
where $z,w \sim N(0,1)$.

Furthermore, to use our expression in \eqref{eq:resolvent_form}, we need to compute the derivative of $h$, $\nabla h$, with respect to $B$. This is a little tricky because we are needed to use the ``symmetric" version of this derivative, that is, it must respect $\tfrac{\partial h}{\partial B_{12}} = \tfrac{\partial h}{\partial B_{21}}$. We will need a different representation for the function $h_1$ in order to do this. First, we begin with the easier of the two derivatives, that is, $\nabla h_2(B)$:
\begin{equation}
    \nabla h_2(B) = \begin{bmatrix} 
    \frac{1}{2 \sqrt{B_{11}}} \EE_{w} \bigg [ \frac{w \exp(\sqrt{B_{11}} w)}{ 1 + \exp( \sqrt{B_{11}} w)} \bigg ] & 0 \\
    0 & 0
    \end{bmatrix}, \quad \text{where $w \sim N(0,1)$.}
\end{equation}

For $h_1(B)$, we use a different representation, that is, using a multi-variate normal distribution, we have that 
\begin{equation}
\begin{aligned}
    h_1(B)
    &
    = 
    \EE_a \bigg [ - \ip{X,a}_{\mathcal{A}} \cdot \frac{\exp(\ip{X^{\star},a}_{\mathcal{A}}) }{\exp(\ip{X^{\star}, a}_{\mathcal{A}}) + 1} \bigg ] 
    \\
    &
    = 
   \frac{ 1 }{2 \pi \sqrt{ \text{det}(B)}}  \int_{-\infty}^\infty \int_{\infty}^\infty -x  \cdot \frac{\exp(y)}{1 + \exp(y)} \exp \bigg ( -\frac{1}{2} \begin{pmatrix} x \\ y \end{pmatrix}^T B^{-1} \begin{pmatrix} x \\ y \end{pmatrix} \bigg ) \, \dif x \dif y,
\end{aligned}
\end{equation}
where the matrix $B$ is defined as in \eqref{eq:C_log}. With this expression in hand, we can take the derivative with respect to $B_{11}$ and $B_{21}$. A simple computation shows
\begin{equation}
    \begin{aligned}
        & \frac{\partial}{\partial B_{11}} \bigg ( \frac{1}{\sqrt{\text{det}(B)}} \exp \bigg ( -\frac{1}{2} \begin{pmatrix} x \\ y \end{pmatrix}^T B^{-1} \begin{pmatrix} x \\ y \end{pmatrix} \bigg ) \bigg ) 
        \\
        &
        =
       - \frac{1}{2}  \cdot \frac{1}{\sqrt{\text{det}(B)}} \exp \bigg ( -\frac{1}{2} \begin{pmatrix} x \\ y \end{pmatrix}^T B^{-1} \begin{pmatrix} x \\ y \end{pmatrix} \bigg ) \bigg ( 
       \frac{y^2}{\text{det}(B)} - \frac{B_{22}}{\text{det}(B)} \begin{pmatrix} x \\ y \end{pmatrix}^T B^{-1} \begin{pmatrix} x \\ y \end{pmatrix} + \frac{B_{22}}{\text{det}(B)}
       \bigg ),
    \end{aligned}
\end{equation}
and, for the other derivative, 
\begin{equation}
    \begin{aligned}
        & \frac{\partial}{\partial B_{21}} \bigg ( \frac{1}{\sqrt{\text{det}(B)}} \exp \bigg ( -\frac{1}{2} \begin{pmatrix} x \\ y \end{pmatrix}^T B^{-1} \begin{pmatrix} x \\ y \end{pmatrix} \bigg ) \bigg ) 
        \\
        &
        =
       - \frac{1}{2}  \cdot \frac{1}{\sqrt{\text{det}(B)}} \exp \bigg ( -\frac{1}{2} \begin{pmatrix} x \\ y \end{pmatrix}^T B^{-1} \begin{pmatrix} x \\ y \end{pmatrix} \bigg ) \bigg ( 
       -\frac{xy}{\text{det}(B)} + \frac{B_{12}}{\text{det}(B)} \begin{pmatrix} x \\ y \end{pmatrix}^T B^{-1} \begin{pmatrix} x \\ y \end{pmatrix} + \frac{B_{12}}{\text{det}(B)}
       \bigg ).
    \end{aligned}
\end{equation}
Using the Cholesky decomposition on $B$, we now express the $\Dif h(B)$ 
\begin{equation}
    \begin{aligned}
        &\frac{\partial (h_1 + h_2)}{\partial B_{11}}  
        =  \frac{1}{2 \sqrt{B_{11}}} \EE_{w} \bigg [ \frac{w \exp(\sqrt{B_{11}} w)}{ 1 + \exp( \sqrt{B_{11}} w)} \bigg ]
        \\
        &
        +\frac{1}{2\pi} \int_{\mathbb{R}^2} x \cdot \frac{\exp(y)}{1+ \exp(y)} \cdot \exp \bigg ( - \begin{pmatrix} u \\ v \end{pmatrix}^T \begin{pmatrix} u \\ v \end{pmatrix} \bigg ) \bigg ( 
       \frac{y^2}{\text{det}(B)} - \frac{2B_{22}}{\text{det}(B)} \begin{pmatrix} u \\ v \end{pmatrix}^T \begin{pmatrix} u \\ v \end{pmatrix} + \frac{B_{22}}{\text{det}(B)}
       \bigg ) \dif u \dif v,
    \end{aligned}
\end{equation}
and, for the other term,
\begin{equation}
    \begin{aligned}
        &\frac{\partial (h_1 + h_2)}{\partial B_{21}} \\
        &
        = 
        \frac{1}{2\pi} \int_{\mathbb{R}^2} x \cdot \frac{\exp(y)}{1+ \exp(y)} \cdot \exp \bigg ( - \begin{pmatrix} u \\ v \end{pmatrix}^T \begin{pmatrix} u \\ v \end{pmatrix} \bigg ) \bigg ( 
       \frac{-xy}{\text{det}(B)} - \frac{2B_{12}}{\text{det}(B)} \begin{pmatrix} u \\ v \end{pmatrix}^T \begin{pmatrix} u \\ v \end{pmatrix} + \frac{B_{12}}{\text{det}(B)}
       \bigg ) \dif u \dif v,
    \end{aligned}
\end{equation}
where we have 
\begin{equation}
    \begin{pmatrix} x \\ y \end{pmatrix} = \sqrt{2} L \begin{pmatrix} u \\ v \end{pmatrix} \quad \text{and} \quad B = L L^T. 
\end{equation}

Lastly, the function $f \, : \, \mathcal{O} \to \mathbb{R}$ is 
\[
f(x) = -x \cdot \frac{\exp( \ip{X^{\star}, a}_{\mathcal{A}})}{\exp(\ip{X^{\star}, a}_{\mathcal{A}}) + 1} + \log( \exp(x) + 1 ). 
\]
The derivative of $f$ is
\begin{equation}
    \begin{aligned}
        \nabla_x f(\ip{X,a}_{\mathcal{A}}) = - \frac{\exp( \ip{X^{\star},a}_{\mathcal{A}})}{\exp(\ip{X^{\star}, a}_{\mathcal{A}}) + 1} + \frac{\exp(\ip{X,a}_{\mathcal{A}})}{\exp(\ip{X,a}_{\mathcal{A}}) + 1}.
    \end{aligned}
\end{equation}
Therefore, we deduce with $g(x) \defas \frac{\exp(x)}{1 + \exp(y)}$
\begin{equation}
    \begin{aligned}
        \EE_a[ \nabla f(\ip{X,a}_{\mathcal{A}})^{\otimes 2}]
        &
        = 
        \frac{1}{2 \pi \sqrt{\text{det}(B)} }
        \int_{\mathbb{R}^2} (g(x)-g(y))^2 \exp \bigg ( -\frac{1}{2} \begin{pmatrix} x \\ y \end{pmatrix}^T B^{-1} \begin{pmatrix} x \\ y \end{pmatrix} \bigg )  \, \dif x \dif y.
    \end{aligned}
\end{equation}
This can also be reduced by doing a Cholesky decomposition on $B = LL^T$ and then using a transformation $\begin{pmatrix} x \\ y \end{pmatrix} = \sqrt{2} L \begin{pmatrix} u \\ v \end{pmatrix}$.

\subsubsection{SGD dynamics on the landscape of logistic regression} 
We focus on binary logistic regression, particularly the behavior near the optimum. In this section, we examine the dynamics of SGD as it evolves. We focus on the trajectories of the cross term, $X^T K X^{\star}$, and the norm $X^T K X$, as it changes from updates of SGD. First, under the student-teacher setup described in \eqref{eq:logistic_student_teacher}, we have a unique solution to the loss \eqref{eq:logistic_bineary_expected_loss}. 

\begin{proposition}[Unique minimizer of logistic loss] Suppose we consider the student-teacher set-up for binary logistic regression described in \eqref{eq:logistic_student_teacher} for the loss \eqref{eq:logistic_bineary_expected_loss}. Let $K = \mathbb{E}_a[a a^T]$ be positive-definite, i.e. non-degenerate covariance. Then there exists a unique minimizer of \eqref{eq:logistic_bineary_expected_loss}, $\tilde{X} \in \mathcal{A} \otimes \mathcal{O}$ such that $\tilde{X} = X^{\star}$.
\end{proposition}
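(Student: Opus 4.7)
The plan is to establish strict convexity of $\mathcal{R}$ on all of $\mathcal{A} \otimes \mathcal{O}$ and then exhibit $X^\star$ as a critical point; uniqueness of the minimizer follows immediately. Throughout, let $\sigma(x) \defas e^x/(1+e^x)$ denote the sigmoid, so that $\sigma'(x) = \sigma(x)(1-\sigma(x)) \in (0,1/4]$ for every $x \in \R$.

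First I would compute the gradient by differentiating under the expectation (which is justified by dominated convergence, since $|\sigma| \le 1$ and the linear term has linear growth against the Gaussian $a$):
\begin{equation*}
\nabla \mathcal{R}(X) \;=\; \EE_a\bigl[\,a\,\bigl(\sigma(\ip{X,a}_{\mathcal{A}}) - \sigma(\ip{X^\star,a}_{\mathcal{A}})\bigr)\bigr].
\end{equation*}
Substituting $X = X^\star$ makes the integrand identically zero, so $X^\star$ is a critical point.

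Next I would compute the Hessian, again by differentiating under the expectation,
\begin{equation*}
\nabla^2 \mathcal{R}(X) \;=\; \EE_a\bigl[\,\sigma'(\ip{X,a}_{\mathcal{A}})\, a \otimes a\,\bigr],
\end{equation*}
and show this operator is strictly positive definite for every $X$. For any nonzero $v \in \mathcal{A}$,
\begin{equation*}
\ip{v,\nabla^2 \mathcal{R}(X)\,v}_{\mathcal{A}} \;=\; \EE_a\bigl[\,\sigma'(\ip{X,a}_{\mathcal{A}})\,\ip{v,a}_{\mathcal{A}}^2\,\bigr].
\end{equation*}
The integrand is nonnegative, and since $K \succ 0$ (Assumption \ref{ass:data_normal} plus the non-degeneracy hypothesis of the proposition), $\ip{v,a}_{\mathcal{A}} \sim N(0, v^T K v)$ with $v^T K v > 0$, so $\{\ip{v,a}_{\mathcal{A}} \ne 0\}$ has full Gaussian measure. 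As $\sigma'$ is everywhere strictly positive, the integrand is strictly positive on a set of positive measure, hence the expectation is strictly positive. Thus $\nabla^2 \mathcal{R}(X) \succ 0$ and $\mathcal{R}$ is strictly convex.

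Strict convexity together with the critical point identity above yields that $X^\star$ is the unique minimizer: if $\tilde X \in \argmin \mathcal{R}$, then $\nabla \mathcal{R}(\tilde X) = 0$, and strict convexity of a $C^2$ function forces a unique zero of the gradient (any two critical points $X_1 \ne X_2$ would contradict strict convexity along the segment joining them). Therefore $\tilde X = X^\star$. I do not anticipate a serious obstacle: the only point needing care is the exchange of derivative and expectation, which is routine given the boundedness of $\sigma$ and $\sigma'$ and the Gaussian tails of $a$.
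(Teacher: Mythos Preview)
Your proof is correct, but the route differs from the paper's. You establish strict convexity globally via the Hessian $\nabla^2 \mathcal{R}(X) = \EE_a[\sigma'(\ip{X,a}_{\mathcal{A}})\,a\otimes a]$ and then invoke the general fact that a strictly convex $C^2$ function has at most one critical point. The paper instead works directly at the level of the gradient: after applying Stein's lemma to obtain $\nabla \mathcal{R}(X) = K\,\EE_z\bigl[g'(\ip{\sqrt{K}X,z})\,X - g'(\ip{\sqrt{K}X^\star,z})\,X^\star\bigr]$, it rules out any other critical point by a case split on whether $X$ is parallel to $X^\star$, using positivity of $g'$ in the non-parallel and negative-multiple cases and a monotonicity argument in $c$ for $X = cX^\star$. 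Your argument is shorter and more structural, and avoids the case analysis entirely; the paper's argument, while more hands-on, makes the dependence on the direction of $X^\star$ explicit and does not require computing the Hessian. Either way the conclusion is the same, and your justification of differentiation under the integral is adequate given the boundedness of $\sigma,\sigma'$ and the Gaussian tails.
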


\begin{proof} Using the definition of the logistic regression risk, we have that
\begin{equation}
\nabla \mathcal{R}(X) = \mathbb{E}_a \left [ - \frac{\exp(\ip{X^{\star},a }_{\mathcal{A}})}{1+ \exp(\ip{X^{\star}, a}_{\mathcal{A}} )} \cdot a + \frac{\exp(\ip{X,a }_{\mathcal{A}})}{1+ \exp(\ip{X, a}_{\mathcal{A}} )} \cdot a
\right ]. 
\end{equation}
Let $g(r) = \frac{\exp(r)}{1 + \exp(r)}$. Since $a \sim N(0,K)$, by setting $a = \sqrt{K} z$ for $z \sim N(0, I_d)$, we get that 
\begin{equation}
    \nabla \mathcal{R}(X) = \sqrt{K} \EE_z \left [ \bigg (g(\ip{\sqrt{K} X, z}_{\mathcal{A}} ) - g(\ip{\sqrt{K} X^{\star}, z}_{\mathcal{A}}) \bigg ) z \right ].
\end{equation}
By applying Stein's lemma, we then deduce that
\begin{align*}
    \nabla \mathcal{R}(X) 
    &
    = 
    \sqrt{K} \EE_z \bigg [ \bigg ( g'(\ip{\sqrt{K}X, z}_{\mathcal{A}}) \cdot \sqrt{K} X -g'(\ip{\sqrt{K}X^{\star}, z}_{\mathcal{A}}) \cdot \sqrt{K} X^{\star} \bigg ) \bigg ]
    \\
    &
    =
    K \EE_z \bigg [ g'(\ip{\sqrt{K}X, z}_{\mathcal{A}}) \cdot X - g'(\ip{\sqrt{K}X^{\star}, z}_{\mathcal{A}}) \cdot X^{\star} \bigg ].
\end{align*}
It is clear that when $X = X^{\star}$, $\nabla \mathcal{R}(X) = 0$ and thus $X^{\star}$ is a global minimizer of $\mathcal{R}$ (logistic regression is convex). Now we consider cases. \\

\noindent \textit{Case 1}: Suppose $X$ is not parallel to $X^{\star}$, i.e., $X \neq c X^{\star}$ for any $c \in \mathbb{R}$. Then we see that $(\Dif \mathcal{R})(X) = 0$ if and only if
\begin{equation} \label{eq:logistic_minimizer_1}
0 = \EE_z [ g'(\ip{\sqrt{K}X, z}_{\mathcal{A}}) ] = \EE_z [ g'(\ip{\sqrt{K} X^{\star},z }_{\mathcal{A}})]. 
\end{equation}
Note we used explicitly that the covariance $K$ is non-degenerate. A simple computation shows that $g'(r) > 0$ and thus \eqref{eq:logistic_minimizer_1} can never occur. 

Next, we consider when $X^{\star} = 0$. By Case 1, we know that $X^{\star} = X$. Therefore we can exclude this case so for the following cases $X^{\star} \neq 0$.\\

\noindent \textit{Case 2}: Suppose $X = -c X^{\star}$ where $c \ge 0$ and $X^{\star} \neq 0$. Then we have that 
 \[
 \nabla \mathcal{R}(X) = - K X^{\star} \cdot \EE_z \big [ c g'(-\ip{\sqrt{K}X^{\star},z}_{\mathcal{A}}) + g'( \ip{\sqrt{K} X^{\star}, z}_{\mathcal{A}} )\big ].
 \]
 Since $g'(r) > 0$, then $\EE_z \big [ c g'(-\ip{\sqrt{K}X^{\star},z}_{\mathcal{A}}) + g'( \ip{\sqrt{K} X^{\star}, z}_{\mathcal{A}} )\big ] > 0$ and hence $(\Dif \mathcal{R})(X) \neq 0$. \\

\noindent \textit{Case 3}: Suppose $X = c X^{\star}$ where $c > 0$, $c \neq 1$, and $X^{\star} \neq 0$. We have $\nabla \mathcal{R}(X) = 0$ implied that 
$\EE_z [ cg'(c\ip{\sqrt{K} X^{\star}, z}_{\mathcal{A}})] =  \EE_z [ g'(\ip{\sqrt{K} X^{\star}, z}_{\mathcal{A}})]$. Let $y = \ip{z, \sqrt{K}X^{\star}}_{\mathcal{A}}$. Then $y \sim N(0, \sigma^2)$ for some $\sigma > 0$ and, thus, we can write $y = \sigma w$ for $w \sim N(0, 1)$. Consequently, $\nabla \mathcal{R}(X) = 0$ implies that $\EE_w [cg'(\sigma c w)] = \EE_w[g'(\sigma w)]$. 

By Stein's Lemma, 
\begin{align*}
    \EE_w [ c g'(c \sigma w) ] 
    &
    = 
    \tfrac{1}{\sigma} \EE_w [ g( \sigma c w) w]
    \\
    &
    =
    \frac{1}{\sigma \sqrt{2 \pi}} \int_0^{\infty} \frac{\exp(\sigma c w)}{1 + \exp(\sigma c w)} w e^{-w^2/2} \dif w- \frac{1}{\sigma \sqrt{2\pi}} \int_0^{\infty} \frac{\exp(-\sigma cw)}{1 + \exp(-\sigma cw) } w e^{-w^2/2} \, \dif w.
\end{align*}
Note that $c \mapsto \exp(\sigma c w) / (1 + \exp(\sigma c w))$ is strictly increasing and $c \mapsto \exp(-\sigma c w) / (1 + \exp(-\sigma c w))$ is strictly decreasing in $c$ when $\sigma w > 0$. Consequently, $\EE_w[c g'(c\sigma w)] = \tfrac{1}{\sigma} \EE_w \big [ \frac{\exp(c \sigma w)}{1 + \exp( c \sigma w)} \big ]$ is a strictly increasing function of $c$. 

Since at $c = 1$, $\EE_w [c g'(c \sigma w)] = \EE_w[g'(\sigma w)]$, and $c \mapsto \EE_w [c g'(c \sigma w)]$ is strictly increasing, we have that $\EE_w[c g'(c \sigma w)] \neq \EE_w[g'(\sigma w)]$ for any $c \neq 1$. The result then immediately follows. 
\end{proof} 

\subsection{Example 5: Simple, 2-layer Neural Networks with Activation Functions}

In this setting, we consider a simple 2-layer neural network whose output layer is a single node and the loss is the mean-squared error
\begin{equation}
\mathcal{R}(X) \defas \tfrac{1}{2} \EE_{(a,y)} [ \big ( \sigma( \ip{a, X}_{\mathcal{A}} ) - y \big )^2 ] = \tfrac{1}{2} \EE_{a} [ \big ( \sigma(\ip{a, X}_{\mathcal{A}}) - \sigma(\ip{a, X^{\star}}_{\mathcal{A}} ) \big )^2 ],
\end{equation}
where the Lipschitz continuous function $\sigma \, : \, \mathbb{R} \to \mathbb{R}$ is an activation function which is applied entry-wise on the vector $\ip{a,X}_{\mathcal{A}}$ and then the entries are added before squaring. 

For this case, the function $f$ and its gradient are
\[
f\, : \, x \mapsto \tfrac{1}{2} \big ( \sigma(x) - \sigma(\ip{ X^{\star},a }_{\mathcal{A}} )  \big )^2 \quad \text{and} \quad \nabla_x f \, : \, x \mapsto \sigma'(x) \big ( \sigma(x) - \sigma( \ip{ X^{\star}, a}_{\mathcal{A}} ) \big ).
\]
In this way, we see that 
\[
\EE_{a} \big [ \nabla f( \ip{X,a}_{\mathcal{A}} )^{\otimes 2}] = \EE_a [2 ( \sigma'(\ip{X,a}_{\mathcal{A}}) )^2 f(\ip{X,a}_{\mathcal{A}}) \big ].
\]
The function $h$, in general, can be quite complicated owing to the activation function $\sigma$. In Table~\ref{table:activation_functions} (see \cite[Table 1]{liao2018dynamics}), we provide some examples of various activation functions written in terms of the matrix $B = \ip{W \otimes W, K}_{\mathcal{A}^{\otimes 2}}$. 

\ctable[notespar,
caption = {{\bfseries $h$ function and its derivatives for different activation functions.} Summary of different activation functions and the corresponding $h$ in terms of $\ip{W \otimes W, K}_{\mathcal{A}^{\otimes 2}} = \begin{pmatrix} B_{11} & B_{12} \\ B_{21} & B_{22} \end{pmatrix}$. Results were taken from Table 1 in \cite{liao2020Random}.},
label = {table:activation_functions},
captionskip=2ex,
pos =!t
]{l l l}{}{
\toprule
$\sigma(r)$ & $h (B)$ \\
\midrule
$r$ & $\tfrac{1}{2} B_{11} + \tfrac{1}{2} B_{22} - \tfrac{1}{2} B_{12} - \tfrac{1}{2} B_{21}$ &\\
\midrule
ReLU, $\max\{r,0\}$ & \begin{minipage}{0.8 \textwidth} $\tfrac{B_{11}}{4} + \tfrac{B_{22}}{4} - \tfrac{1}{4 \pi} \sqrt{B_{11} B_{22}} \left ( \frac{B_{12}}{\sqrt{B_{11} B_{22}}} \cos^{-1} \left ( -\frac{B_{12}}{\sqrt{B_{11} B_{22} }} \right ) + \sqrt{ 1 - \left ( \frac{B_{12}}{\sqrt{B_{11} B_{22}} } \right )^2 }\right )$
\\
$- \tfrac{1}{4 \pi} \sqrt{B_{11} B_{22}} \left ( \frac{B_{21}}{\sqrt{B_{11} B_{22}}} \cos^{-1} \left ( -\frac{B_{21}}{\sqrt{B_{11} B_{22} }} \right ) + \sqrt{ 1 - \left ( \frac{B_{21}}{\sqrt{B_{11} B_{22}} } \right )^2 }\right )$
\end{minipage}
\\
\midrule
$\text{erf}(r)$ & \begin{minipage}{0.8\textwidth} $\frac{1}{\pi} \sin^{-1} \left ( \frac{2B_{11}}{ (1 + 2 B_{11})} \right ) + \frac{1}{\pi} \sin^{-1} \left ( \frac{2 B_{22}}{(1+2 B_{22})} \right ) - \frac{1}{\pi} \sin^{-1} \left ( \frac{2 B_{12}}{\sqrt{(1+2 B_{11})(1+2 B_{22}) }}  \right ) \\
- \frac{1}{\pi} \sin^{-1} \left ( \frac{2 B_{21}}{\sqrt{(1+2 B_{11})(1+2 B_{22}) }}  \right )
$
\end{minipage}
\\
\midrule
$\text{sign}(r)$ & $1 - \frac{1}{\pi} \sin^{-1} \left ( \frac{B_{12}}{\sqrt{B_{11} B_{22} }} \right )- \frac{1}{\pi} \sin^{-1} \left ( \frac{B_{21}}{\sqrt{B_{11} B_{22} }} \right )$\\
\midrule
$\cos(r)$ & \begin{minipage}{0.8\textwidth} $\frac{1}{2} \big [ \exp(-B_{11}) \cosh(B_{11}) + \exp(-B_{22}) \cosh(B_{22}) - \exp( -\frac{1}{2} (B_{11} + B_{22}) )\cosh(B_{12})$\\
$-\exp( -\frac{1}{2} (B_{11} + B_{22}) )\cosh(B_{21}) \big ]$ \end{minipage} \\
\midrule
$\sin(r)$ & \begin{minipage}{0.8\textwidth} 
$\frac{1}{2} \big [ \exp(-B_{11}) \sinh(B_{11}) + \exp( -B_{22}) \sinh(B_{22}) - \exp( -\frac{1}{2} (B_{11} + B_{22}) ) \sinh(B_{12})   $\\
$ - \exp( -\frac{1}{2} (B_{11} + B_{22}) ) \sinh(B_{21})  \big ]$
\end{minipage}\\
 \bottomrule
}

\subsection{Phase chase problem} 
In this problem, we consider a $X = (X_1, X_2) \in \mathcal{A} \otimes \mathbb{R}^2$ where $X_1, X_2 \in \mathcal{A \otimes \mathbb{R}}$, that is $\mathcal{O} = \mathbb{R}^2$ and we consider the no target setting (i.e., $X^{\star} = 0$). Like the phase retrieval, the phases of $\ip{a,X_1}_{\mathcal{A}}$ and $\ip{a,X_2}_{\mathcal{A}}$ are lost, and we are trying to recover a $X_1$ close to $X_2$. We can formulate this as the optimization problem
\begin{equation} \label{eq:chasing_problem_1}
    \min_{X_1, X_2 \in \mathcal{A} \otimes \mathbb{R}} 
    \bigg \{ \mathcal{R}(X) = \EE_{a}\big [ \big ( (\ip{a,X_1}_{\mathcal{A}})^2 - (\ip{a, X_2}_{\mathcal{A}})^2 \big )^2 \big ] \bigg \}.
\end{equation}

There are many solutions to this problem, all of which satisfy $X_1 = X_2$ or $X_1 = -X_2$, provided $K$ is non-degenerate (in the case of degenerate $K$, you get equality outside the kernel of $K$). Therefore, the dynamics of this problem are such that $X_1$ is \textit{chasing} $X_2$. 

\subsubsection{Dynamics of the $\mathcal{S}$ matrix for phase chase, non-symmetric} To understand these dynamics better and, in particular, the role of SGD noise, we invoke our homogenized SGD theorem. For this, we need the expressions for $h, \nabla h, \nabla_x f,$ and $\EE_a[ \nabla f(r)^{\otimes 2}]$. First, we note the target $X^{\star} = 0$ and thus, $B_{12} = \ip{X \otimes X^{\star}, K}_{\mathcal{A} \otimes 2}$ and $B_{22} = \ip{X^{\star} \otimes X^{\star}, K}_{\mathcal{A}^{\otimes 2}}$ are both identically $0$. This leaves the $B_{11} = \ip{X \otimes X, K}_{\mathcal{A}^{\otimes 2}}$ which is itself a $2 \times 2$ matrix and can be viewed as a norm and cross term with $x_1$ and $x_2$.  

With this in mind, we introduce notation to represent the norm and cross term between $X_1$ and $X_2$, as represented by a symmetric matrix, 
\begin{equation}
\begin{aligned}
B_{11} \defas Q = \begin{pmatrix} Q_{11} & Q_{12}\\
Q_{12} & Q_{22}
\end{pmatrix} = \ip{(X_1 \oplus X_2) \otimes (X_1 \oplus X_2), K}_{\mathcal{A}^{\otimes 2}} = \begin{pmatrix} \|X_1\|_K^2 & X_1^T K X_2\\
X_1^T K X_2 & \|X_2\|_K^2
\end{pmatrix},
\end{aligned}
\end{equation}
where we use the $K$-norm, $\|\cdot\|_K = \ip{\cdot \otimes \cdot, K}_{\mathcal{A}^{\otimes 2}}$. 

Under this notation, we represent the function $h$ and $\Dif h$:
\begin{align*}
    h(Q, B_{12}, B_{22}) 
    &
    = 
    3( Q_{11}^2 + Q_{22}^2 ) - 2 ( Q_{11} Q_{22} ) - 4 Q_{12}^2
    \\
    \nabla h(Q, B_{12}, B_{22}) 
    &
    =
    \begin{pmatrix}
        6 Q_{11} - 2 Q_{22} & -4 Q_{12}\\
        - 4 Q_{21} & 6 Q_{22} - 2Q_{11}
    \end{pmatrix}.
\end{align*}

The expression for the function $f$ is simply
\[
f(x_1, x_2) = ( x_1^2 - x_2^2)^2 \quad \text{and} \quad \nabla_x f(x) = 4 (x_1^2 - x_2^2) \begin{bmatrix} x_1 \\
-x_2 
\end{bmatrix},
\]
where $x_1 = \ip{X_1,a}_{\mathcal{A}}$ and $x_2 = \ip{ X_2,a}_{\mathcal{A}}$. An application of Wick's formula yields that 
\begin{equation}
\begin{gathered}
\EE_a [ \nabla f( \ip{ X,a}_{\mathcal{A}})^{\otimes 2} ] 
= 
16 \begin{bmatrix}
G_{11} & G_{12}\\
G_{12} & G_{22}
\end{bmatrix} \\
\text{where} 
\qquad 
G_{11} = 15 Q_{11}^3 - 6 Q_{11}^2 Q_{22} - 24 Q_{11} Q_{12}^2 + 3 Q_{11} Q_{22}^2 + 12 Q_{12}^2 Q_{22}
\\
G_{12} = - ( 15 Q_{12} Q_{22}^2 + 15 Q_{12} Q_{11}^2 - 18 Q_{11} Q_{12} Q_{22} - 12 Q_{12}^3 )
\\
G_{22} = 15 Q_{22}^3 - 6 Q_{22}^2 Q_{11} - 24 Q_{22} Q_{12}^2 + 3 Q_{22} Q_{11}^2 + 12 Q_{12}^2 Q_{11}.
\end{gathered}
\end{equation}
It is through these quantities that we can derive an expression for $\mathcal{S}$ when applied to homogenized SGD. 

Note an important \textit{symmetry} between $Q_{11} = \|X_1\|_K^2$ and $Q_{22} = \|X_2\|_{K}^2$. Provided that at initialization $X_1$ and $X_2$ have the same norm value, the evolution of $Q_{11}$ will be the same as $Q_{22}$. In essence, we can simplify and look at the dynamics of only two quantities $Q_{11}$ and $Q_{12}$ \textit{and} replace $Q_{22}$ with $Q_{11}$ in the expressions.

We will see from homogenized SGD that the evolution of $Q$ has interesting properties. In particular, for SGD, the cross term $Q_{12}$ evolves depending on the stepsize, and thus, the learning rate affects the solution that SGD converges to. This does not occur for gradient flow, and hence gradient descent-- all learning rates go to the same optimum. 

\subsubsection{Dynamics when $K = \Id$} 
When the covariance is identity, the expressions for the dynamics of $Q$ simplify to a system of ODEs
\begin{equation}
\begin{aligned}
\dot{Q_{11}} 
&
=
-16 \gamma ( Q_{11}^2 - Q_{12}^2 ) + 192 \gamma^2 (Q_{11}^2 - Q_{12}^2) Q_{11}
\\
\dot{Q_{12}} 
&
=
- 192 \gamma^2 (Q_{11}^2 -Q_{12}^2) Q_{12}.
\end{aligned}
\end{equation}
In comparison to gradient flow, we have that
\begin{equation}
\begin{aligned}
\dot{Q_{11}} 
&
= 
- 16 \gamma (Q_{11}^2 - Q_{12}^2)
\\
\dot{Q_{12}}
&
=
0. 
\end{aligned}
\end{equation}
In particular, we see that the rate at which $Q_{11}(t)-Q_{12}(t) \to 0$ is slowed down
\[
\dot{(Q_{11}-Q_{12})} = -16 \gamma (Q_{11}^2 - Q_{12}^2) + 192 \gamma^2 (Q_{11}^2 - Q_{12}^2) (Q_{11} + Q_{12}).
\]

We expect for both SGD and gradient flow that $Q_{11} = Q_{12}$ at the optimum, but they go about it differently. As we see, for gradient flow (and hence gradient descent scaled by stepsize), the cross term $Q_{12}$ remains constant. The norm, $Q_{11}$, and the risk $\mathcal{R}$, do change, reflecting that \textit{for all stepsizes} gradient descent finds the optimum for which $Q_{11}(t) = Q_{12}(0)$. 

On the other hand, SGD noise, as illustrated through the $\gamma^2$ terms, does three things:
\begin{enumerate}
    \item SGD noise slows down the rate at which $Q_{11}(t)-Q_{12}(t) \to 0$
    \item The movement in the cross-term, $Q_{12}$, is solely due to the noise in SGD
    \item Since both the cross term and norm move, SGD finds an optimum where the first time $Q_{11}(t) = Q_{12}(t)$. Moreover, because of this, larger learning rates lead to slower movement in $Q_{11} \to Q_{12}$ and faster movement in $Q_{12}$. The result is an optimum, $x^*$, with lower $K$-norm values, that is, $\|X_1^*\|_K$ and $\|X_2^*\|_K$ have smaller values as learning rate $\gamma$ increases. In this sense, SGD is doing some form of implicit $\ell^2$-regularization. 
\end{enumerate}

\bibliographystyle{plainnat}
\bibliography{references}

\begin{thebibliography}{53}
\providecommand{\natexlab}[1]{#1}
\providecommand{\url}[1]{\texttt{#1}}
\expandafter\ifx\csname urlstyle\endcsname\relax
  \providecommand{\doi}[1]{doi: #1}\else
  \providecommand{\doi}{doi: \begingroup \urlstyle{rm}\Url}\fi

\bibitem[Agarwala et~al.(2022)Agarwala, Pedregosa, and
  Pennington]{agarwala2022second}
Atish Agarwala, Fabian Pedregosa, and Jeffrey Pennington.
\newblock Second-order regression models exhibit progressive sharpening to the
  edge of stability.
\newblock \emph{arXiv preprint arXiv:2210.04860}, 2022.

\bibitem[Arnaboldi et~al.(2023{\natexlab{a}})Arnaboldi, Krzakala, Loureiro, and
  Stephan]{Bruno}
Luca Arnaboldi, Florent Krzakala, Bruno Loureiro, and Ludovic Stephan.
\newblock Escaping mediocrity: how two-layer networks learn hard single-index
  models with {SGD}.
\newblock \emph{arXiv preprint arXiv:2305.18502}, 2023{\natexlab{a}}.

\bibitem[Arnaboldi et~al.(2023{\natexlab{b}})Arnaboldi, Stephan, Krzakala, and
  Loureiro]{arnaboldi2023highdimensional}
Luca Arnaboldi, Ludovic Stephan, Florent Krzakala, and Bruno Loureiro.
\newblock From high-dimensional and mean-field dynamics to dimensionless
  {ODE}s: A unifying approach to {SGD} in two-layers networks,
  2023{\natexlab{b}}.

\bibitem[Arous et~al.(2021)Arous, Gheissari, and
  Jagannath]{ArousInformationExponent}
Gerard~Ben Arous, Reza Gheissari, and Aukosh Jagannath.
\newblock Online stochastic gradient descent on non-convex losses from
  high-dimensional inference.
\newblock \emph{The Journal of Machine Learning Research}, 22\penalty0
  (1):\penalty0 4788--4838, 2021.

\bibitem[Barbier et~al.(2019)Barbier, Krzakala, Macris, Miolane, and
  Zdeborov{\'a}]{barbier2019optimal}
Jean Barbier, Florent Krzakala, Nicolas Macris, L{\'e}o Miolane, and Lenka
  Zdeborov{\'a}.
\newblock Optimal errors and phase transitions in high-dimensional generalized
  linear models.
\newblock \emph{Proceedings of the National Academy of Sciences}, 116\penalty0
  (12):\penalty0 5451--5460, 2019.

\bibitem[Beck(2017)]{Beck}
Amir Beck.
\newblock \emph{First-order methods in optimization}.
\newblock SIAM, 2017.

\bibitem[Ben~Arous et~al.(2022)Ben~Arous, Gheissari, and
  Jagannath]{ben2022high}
Gerard Ben~Arous, Reza Gheissari, and Aukosh Jagannath.
\newblock High-dimensional limit theorems for sgd: Effective dynamics and
  critical scaling.
\newblock \emph{Advances in Neural Information Processing Systems},
  35:\penalty0 25349--25362, 2022.

\bibitem[Biehl and Riegler(1994)]{biehl1994line}
Michael Biehl and Peter Riegler.
\newblock On-line learning with a perceptron.
\newblock \emph{Europhysics Letters}, 28\penalty0 (7):\penalty0 525, 1994.

\bibitem[Biehl and Schwarze(1995)]{biehl1995learning}
Michael Biehl and Holm Schwarze.
\newblock Learning by on-line gradient descent.
\newblock \emph{Journal of Physics A: Mathematical and general}, 28\penalty0
  (3):\penalty0 643, 1995.

\bibitem[Bietti et~al.(2022)Bietti, Bruna, Sanford, and
  Song]{bietti2022learning}
Alberto Bietti, Joan Bruna, Clayton Sanford, and Min~Jae Song.
\newblock Learning single-index models with shallow neural networks.
\newblock In S.~Koyejo, S.~Mohamed, A.~Agarwal, D.~Belgrave, K.~Cho, and A.~Oh,
  editors, \emph{Advances in Neural Information Processing Systems}, volume~35,
  pages 9768--9783. Curran Associates, Inc., 2022.
\newblock URL
  \url{https://proceedings.neurips.cc/paper_files/paper/2022/file/3fb6c52aeb11e09053c16eabee74dd7b-Paper-Conference.pdf}.

\bibitem[Bottou et~al.(2018)Bottou, Curtis, and
  Nocedal]{bottou2018optimization}
L.~Bottou, F.E. Curtis, and J.~Nocedal.
\newblock \href{https://epubs.siam.org/doi/10.1137/16M1080173}{Optimization
  methods for large-scale machine learning}.
\newblock \emph{SIAM Review}, 60\penalty0 (2):\penalty0 223--311, 2018.

\bibitem[Cand{\`e}s and Sur(2020)]{candes2020phase}
Emmanuel~J Cand{\`e}s and Pragya Sur.
\newblock The phase transition for the existence of the maximum likelihood
  estimate in high-dimensional logistic regression.
\newblock 2020.

\bibitem[Candes et~al.(2015)Candes, Li, and Soltanolkotabi]{candes2015phase}
Emmanuel~J Candes, Xiaodong Li, and Mahdi Soltanolkotabi.
\newblock Phase retrieval via wirtinger flow: Theory and algorithms.
\newblock \emph{IEEE Transactions on Information Theory}, 61\penalty0
  (4):\penalty0 1985--2007, 2015.

\bibitem[Celentano et~al.(2021)Celentano, Cheng, and
  Montanari]{celentano2021highdimensional}
Michael Celentano, Chen Cheng, and Andrea Montanari.
\newblock The high-dimensional asymptotics of first order methods with random
  data, 2021.

\bibitem[Chandrasekher et~al.(2023)Chandrasekher, Pananjady, and
  Thrampoulidis]{chandrasekher2021sharp}
Kabir~Aladin Chandrasekher, Ashwin Pananjady, and Christos Thrampoulidis.
\newblock Sharp global convergence guarantees for iterative nonconvex
  optimization with random data.
\newblock \emph{Ann. Statist.}, 51\penalty0 (1):\penalty0 179--210, 2023.
\newblock ISSN 0090-5364,2168-8966.
\newblock \doi{10.1214/22-aos2246}.
\newblock URL \url{https://doi.org/10.1214/22-aos2246}.

\bibitem[{Collins-Woodfin} and {Paquette}(2023)]{CollinsWoodfinPaquette01}
Elizabeth {Collins-Woodfin} and Elliot {Paquette}.
\newblock {High-dimensional limit of one-pass SGD on least squares}.
\newblock \emph{arXiv e-prints}, art. arXiv:2304.06847, April 2023.
\newblock \doi{10.48550/arXiv.2304.06847}.

\bibitem[Damian et~al.(2023)Damian, Nichani, Ge, and Lee]{damian2023smoothing}
Alex Damian, Eshaan Nichani, Rong Ge, and Jason~D. Lee.
\newblock Smoothing the landscape boosts the signal for {SGD}: Optimal sample
  complexity for learning single index models, 2023.

\bibitem[Davis et~al.(2020)Davis, Drusvyatskiy, and Paquette]{DDP}
Damek Davis, Dmitriy Drusvyatskiy, and Courtney Paquette.
\newblock {The nonsmooth landscape of phase retrieval}.
\newblock \emph{IMA Journal of Numerical Analysis}, 40\penalty0 (4):\penalty0
  2652--2695, 01 2020.
\newblock ISSN 0272-4979.
\newblock \doi{10.1093/imanum/drz031}.
\newblock URL \url{https://doi.org/10.1093/imanum/drz031}.

\bibitem[Dieuleveut et~al.(2017)Dieuleveut, Flammarion, and
  Bach]{dieuleveut2017harder}
A.~Dieuleveut, N.~Flammarion, and F.~Bach.
\newblock \href{https://jmlr.org/papers/volume18/16-335/16-335.pdf}{Harder,
  better, faster, stronger convergence rates for least-squares regression}.
\newblock \emph{The Journal of Machine Learning Research}, 18\penalty0
  (1):\penalty0 3520--3570, 2017.

\bibitem[Ethier and Kurtz(1986)]{ethier86markov}
Stewart~N. Ethier and Thomas~G. Kurtz.
\newblock \emph{Markov processes -- characterization and convergence}.
\newblock Wiley Series in Probability and Mathematical Statistics: Probability
  and Mathematical Statistics. John Wiley \& Sons Inc., New York, 1986.
\newblock ISBN 0-471-08186-8.

\bibitem[Gerbelot et~al.(2022)Gerbelot, Troiani, Mignacco, Krzakala, and
  Zdeborova]{gerbelot2022rigorous}
Cedric Gerbelot, Emanuele Troiani, Francesca Mignacco, Florent Krzakala, and
  Lenka Zdeborova.
\newblock Rigorous dynamical mean field theory for stochastic gradient descent
  methods, 2022.

\bibitem[Goldt et~al.(2019)Goldt, Advani, Saxe, Krzakala, and
  Zdeborov{\'a}]{goldt2019dynamics}
Sebastian Goldt, Madhu Advani, Andrew~M Saxe, Florent Krzakala, and Lenka
  Zdeborov{\'a}.
\newblock Dynamics of stochastic gradient descent for two-layer neural networks
  in the teacher-student setup.
\newblock \emph{Advances in neural information processing systems}, 32, 2019.

\bibitem[Goldt et~al.(2020)Goldt, M{\'e}zard, Krzakala, and
  Zdeborov{\'a}]{Goldt}
Sebastian Goldt, Marc M{\'e}zard, Florent Krzakala, and Lenka Zdeborov{\'a}.
\newblock Modeling the influence of data structure on learning in neural
  networks: The hidden manifold model.
\newblock \emph{Physical Review X}, 10\penalty0 (4):\penalty0 041044, 2020.

\bibitem[Goldt et~al.(2022)Goldt, Loureiro, Reeves, Krzakala, M{\'e}zard, and
  Zdeborov{\'a}]{goldt2022gaussian}
Sebastian Goldt, Bruno Loureiro, Galen Reeves, Florent Krzakala, Marc
  M{\'e}zard, and Lenka Zdeborov{\'a}.
\newblock The gaussian equivalence of generative models for learning with
  shallow neural networks.
\newblock In \emph{Mathematical and Scientific Machine Learning}, pages
  426--471. PMLR, 2022.

\bibitem[Gordon(1988)]{gordon1988milman}
Yehoram Gordon.
\newblock On milman's inequality and random subspaces which escape through a
  mesh in $\mathbb{R}^n$.
\newblock In \emph{Geometric Aspects of Functional Analysis: Israel Seminar
  (GAFA) 1986--87}, pages 84--106. Springer, 1988.

\bibitem[{Hardt} et~al.(2016){Hardt}, {Recht}, and {Singer}]{hardt2016train}
M.~{Hardt}, B.~{Recht}, and Y.~{Singer}.
\newblock \href{http://proceedings.mlr.press/v48/hardt16.pdf}{Train faster,
  generalize better: Stability of stochastic gradient descent}.
\newblock In \emph{Proceedings of the 33rd International Conference on Machine
  Learning (ICML)}, volume~48, pages 1225--1234, 2016.

\bibitem[Karimi et~al.(2016)Karimi, Nutini, and Schmidt]{karimi2016linear}
Hamed Karimi, Julie Nutini, and Mark Schmidt.
\newblock Linear convergence of gradient and proximal-gradient methods under
  the polyak-{\l}ojasiewicz condition.
\newblock In \emph{Machine Learning and Knowledge Discovery in Databases:
  European Conference, ECML PKDD 2016, Riva del Garda, Italy, September 19-23,
  2016, Proceedings, Part I 16}, pages 795--811. Springer, 2016.

\bibitem[Le~Roux et~al.(2012)Le~Roux, Schmidt, and Bach]{roux2012stochastic}
Nicolas Le~Roux, Mark Schmidt, and Francis Bach.
\newblock
  \href{http://papers.nips.cc/paper/4633-a-stochastic-gradient-method-with-an-exponential-convergence-_rate-for-finite-training-sets.pdf}{A
  stochastic gradient method with an exponential convergence rate for finite
  training sets}.
\newblock 2012.

\bibitem[Liao and Couillet(2018)]{liao2018dynamics}
Z.~Liao and R.~Couillet.
\newblock \href{http://proceedings.mlr.press/v80/liao18b.html}{The Dynamics of
  Learning: A Random Matrix Approach}.
\newblock \emph{Proceedings of the 35th International Conference on Machine
  Learning (ICML)}, 2018.

\bibitem[{Liao} et~al.(2021){Liao}, {Couillet}, and {Mahoney}]{liao2020Random}
Z.~{Liao}, R.~{Couillet}, and M.~{Mahoney}.
\newblock \href{https://doi.org/10.1088/1742-5468/ac3a77}{A random matrix
  analysis of random {F}ourier features: beyond the {G}aussian kernel, a
  precise phase transition, and the corresponding double descent}.
\newblock \emph{J. Stat. Mech. Theory Exp.}, \penalty0 (12):\penalty0 Paper No.
  124006, 38, 2021.
\newblock \doi{10.1088/1742-5468/ac3a77}.
\newblock URL \url{https://doi.org/10.1088/1742-5468/ac3a77}.

\bibitem[Louart et~al.(2018)Louart, Liao, and Couillet]{louart2018random}
C.~Louart, Z.~Liao, and R.~Couillet.
\newblock \href{https://doi.org/10.1214/17-AAP1328}{A random matrix approach to
  neural networks}.
\newblock \emph{Ann. Appl. Probab.}, 28\penalty0 (2):\penalty0 1190--1248,
  2018.
\newblock \doi{10.1214/17-AAP1328}.
\newblock URL \url{https://doi.org/10.1214/17-AAP1328}.

\bibitem[Maillard et~al.(2020)Maillard, Loureiro, Krzakala, and
  Zdeborov{\'a}]{maillard2020phase}
Antoine Maillard, Bruno Loureiro, Florent Krzakala, and Lenka Zdeborov{\'a}.
\newblock Phase retrieval in high dimensions: Statistical and computational
  phase transitions.
\newblock \emph{Advances in Neural Information Processing Systems},
  33:\penalty0 11071--11082, 2020.

\bibitem[Mar{\v{c}}enko and Pastur(1967)]{marvcenko1967distribution}
V.A. Mar{\v{c}}enko and L.A. Pastur.
\newblock
  \href{https://iopscience.iop.org/article/10.1070/SM1967v001n04ABEH001994/meta}{Distribution
  of eigenvalues for some sets of random matrices}.
\newblock \emph{Mathematics of the USSR-Sbornik}, 1967.

\bibitem[Mignacco et~al.(2020)Mignacco, Krzakala, Urbani, and
  Zdeborov\'{a}]{mignacco2020dynamical}
F.~Mignacco, F.~Krzakala, P.~Urbani, and L.~Zdeborov\'{a}.
\newblock
  \href{https://proceedings.neurips.cc/paper/2020/file/6c81c83c4bd0b58850495f603ab45a93-Paper.pdf}{Dynamical
  mean-field theory for stochastic gradient descent in Gaussian mixture
  classification}.
\newblock In \emph{Advances in Neural Information Processing Systems},
  volume~33, pages 9540--9550, 2020.

\bibitem[Mondelli and Venkataramanan(2021)]{mondelli2021approximate}
Marco Mondelli and Ramji Venkataramanan.
\newblock Approximate message passing with spectral initialization for
  generalized linear models.
\newblock In \emph{International Conference on Artificial Intelligence and
  Statistics}, pages 397--405. PMLR, 2021.

\bibitem[Montanari et~al.(2019)Montanari, Ruan, Sohn, and
  Yan]{montanari2019generalization}
Andrea Montanari, Feng Ruan, Youngtak Sohn, and Jun Yan.
\newblock The generalization error of max-margin linear classifiers:
  High-dimensional asymptotics in the overparametrized regime.
\newblock \emph{arXiv preprint arXiv:1911.01544}, 2019.

\bibitem[Mousavi-Hosseini et~al.(2023)Mousavi-Hosseini, Park, Girotti,
  Mitliagkas, and Erdogdu]{mousavihosseini2023neural}
Alireza Mousavi-Hosseini, Sejun Park, Manuela Girotti, Ioannis Mitliagkas, and
  Murat~A. Erdogdu.
\newblock Neural networks efficiently learn low-dimensional representations
  with {SGD}, 2023.

\bibitem[Paquette and Paquette(2021)]{PAquettes03}
Courtney Paquette and Elliot Paquette.
\newblock Dynamics of stochastic momentum methods on large-scale, quadratic
  models.
\newblock In M.~Ranzato, A.~Beygelzimer, Y.~Dauphin, P.S. Liang, and J.~Wortman
  Vaughan, editors, \emph{Advances in Neural Information Processing Systems},
  volume~34, pages 9229--9240. Curran Associates, Inc., 2021.
\newblock URL
  \url{https://proceedings.neurips.cc/paper/2021/file/4cf0ed8641cfcbbf46784e620a0316fb-Paper.pdf}.

\bibitem[{Paquette} et~al.(2022{\natexlab{a}}){Paquette}, {Paquette}, {Adlam},
  and {Pennington}]{PPAP01}
Courtney {Paquette}, Elliot {Paquette}, Ben {Adlam}, and Jeffrey {Pennington}.
\newblock {Homogenization of SGD in high-dimensions: Exact dynamics and
  generalization properties}.
\newblock \emph{arXiv e-prints}, art. arXiv:2205.07069, May 2022{\natexlab{a}}.

\bibitem[{Paquette} et~al.(2022{\natexlab{b}}){Paquette}, {Paquette}, {Adlam},
  and {Pennington}]{PPAP02}
Courtney {Paquette}, Elliot {Paquette}, Ben {Adlam}, and Jeffrey {Pennington}.
\newblock {Implicit Regularization or Implicit Conditioning? Exact Risk
  Trajectories of SGD in High Dimensions}.
\newblock \emph{To Appear in NeurIPS 2022}, art. arXiv:2206.07252, June
  2022{\natexlab{b}}.

\bibitem[Pillaud-Vivien et~al.(2018)Pillaud-Vivien, Rudi, and
  Bach]{pillaud2018exponential}
Loucas Pillaud-Vivien, Alessandro Rudi, and Francis Bach.
\newblock Exponential convergence of testing error for stochastic gradient
  methods.
\newblock In \emph{Conference on Learning Theory}, pages 250--296. PMLR, 2018.

\bibitem[Protter(2005)]{protter2005stochastic}
P.E. Protter.
\newblock \emph{\href{https://doi.org/10.1007/978-3-662-10061-5}{Stochastic
  integration and differential equations}}, volume~21 of \emph{Stochastic
  Modelling and Applied Probability}.
\newblock Springer-Verlag, Berlin, 2005.
\newblock \doi{10.1007/978-3-662-10061-5}.
\newblock URL \url{https://doi.org/10.1007/978-3-662-10061-5}.

\bibitem[Saad and Solla(1995{\natexlab{a}})]{saad1995dynamics}
David Saad and Sara Solla.
\newblock Dynamics of on-line gradient descent learning for multilayer neural
  networks.
\newblock \emph{Advances in neural information processing systems}, 8,
  1995{\natexlab{a}}.

\bibitem[Saad and Solla(1995{\natexlab{b}})]{saad1995exact}
David Saad and Sara~A Solla.
\newblock Exact solution for on-line learning in multilayer neural networks.
\newblock \emph{Physical Review Letters}, 74\penalty0 (21):\penalty0 4337,
  1995{\natexlab{b}}.

\bibitem[Salehi et~al.(2019)Salehi, Abbasi, and Hassibi]{salehi2019impact}
Fariborz Salehi, Ehsan Abbasi, and Babak Hassibi.
\newblock The impact of regularization on high-dimensional logistic regression.
\newblock \emph{Advances in Neural Information Processing Systems}, 32, 2019.

\bibitem[Sarao~Mannelli et~al.(2020)Sarao~Mannelli, Vanden-Eijnden, and
  Zdeborov{\'a}]{sarao2020optimization}
Stefano Sarao~Mannelli, Eric Vanden-Eijnden, and Lenka Zdeborov{\'a}.
\newblock Optimization and generalization of shallow neural networks with
  quadratic activation functions.
\newblock \emph{Advances in Neural Information Processing Systems},
  33:\penalty0 13445--13455, 2020.

\bibitem[Schmidt and Le~Roux(2013)]{schmidt2013fast}
M.~Schmidt and N.~Le~Roux.
\newblock \href{https://arxiv.org/abs/1308.6370}{Fast convergence of stochastic
  gradient descent under a strong growth condition}.
\newblock \emph{arXiv preprint arXiv:1308.6370}, 2013.

\bibitem[Tan and Vershynin(2023)]{tanvershynin}
Yan~Shuo Tan and Roman Vershynin.
\newblock Online stochastic gradient descent with arbitrary initialization
  solves non-smooth, non-convex phase retrieval.
\newblock \emph{Journal of Machine Learning Research}, 24\penalty0
  (58):\penalty0 1--47, 2023.

\bibitem[Vershynin(2018)]{vershynin2018high}
R.~Vershynin.
\newblock \emph{\href{https://doi.org/10.1017/9781108231596}{High-dimensional
  probability: An introduction with applications in data science}}.
\newblock Cambridge University Press, 2018.
\newblock \doi{10.1017/9781108231596}.
\newblock URL \url{https://doi.org/10.1017/9781108231596}.

\bibitem[Wang et~al.(2017)Wang, Mattingly, and Lu]{wang2017scaling}
Chuang Wang, Jonathan Mattingly, and Yue~M Lu.
\newblock Scaling limit: Exact and tractable analysis of online learning
  algorithms with applications to regularized regression and {PCA}.
\newblock \emph{arXiv preprint arXiv:1712.04332}, 2017.

\bibitem[Wang et~al.(2019)Wang, Hu, and Lu]{wang2019solvable}
Chuang Wang, Hong Hu, and Yue Lu.
\newblock A solvable high-dimensional model of {GAN}.
\newblock \emph{Advances in Neural Information Processing Systems}, 32, 2019.

\bibitem[Ward et~al.(2020)Ward, Wu, and Bottou]{ward2020adagrad}
Rachel Ward, Xiaoxia Wu, and Leon Bottou.
\newblock Adagrad stepsizes: Sharp convergence over nonconvex landscapes.
\newblock \emph{The Journal of Machine Learning Research}, 21\penalty0
  (1):\penalty0 9047--9076, 2020.

\bibitem[Yoshida and Okada(2019)]{Yoshida}
Yuki Yoshida and Masato Okada.
\newblock Data-dependence of plateau phenomenon in learning with neural
  network---statistical mechanical analysis.
\newblock \emph{Advances in Neural Information Processing Systems}, 32, 2019.

\end{thebibliography}
\end{document}